


\documentclass{amsart}

\usepackage{comment}

\usepackage[mathscr]{eucal}
\usepackage{amssymb}
\usepackage{graphicx}
\usepackage{psfrag} 
\usepackage{epstopdf} 
\usepackage[usenames,dvipsnames]{color}
\usepackage[normalem]{ulem}
\usepackage{amsthm}
\usepackage{bbm}
\usepackage{enumerate}
\usepackage{array}
\usepackage{amsmath}
\usepackage{leftindex}

\usepackage{hyperref}
\usepackage[T1]{fontenc}

\hypersetup{colorlinks=true,linkcolor={Brown},citecolor={Brown},urlcolor={Brown}}

\usepackage{cleveref}

\numberwithin{equation}{section}
\setcounter{tocdepth}{1}
	\makeatletter
	\def\l@subsection{\@tocline{2}{0pt}{2.5pc}{5pc}{}}


\makeindex


\usepackage[all]{xy}
\CompileMatrices

\newdir{ >}{{}*!/-10pt/\dir{>}}

\hyphenation{right-equivariant group-oid group-oids Gro-then-dieck}



\swapnumbers 

\newtheorem{Thm}[equation]{Theorem}
\newtheorem*{Thm*}{Theorem}
\newtheorem{Prop}[equation]{Proposition}
\newtheorem{Lem}[equation]{Lemma}
\newtheorem{Cor}[equation]{Corollary}
\newtheorem{Conj}[equation]{Conjecture}
\theoremstyle{remark}
\newtheorem{Def}[equation]{Definition}
\newtheorem{Ter}[equation]{Terminology}
\newtheorem{War}[equation]{Warning}
\newtheorem{Not}[equation]{Notation}
\newtheorem{Exa}[equation]{Example}

\newtheorem{Cons}[equation]{Construction}

\newtheorem*{Conv*}{Conventions}
\newtheorem{Hyp}[equation]{Hypotheses}
\newtheorem{Rec}[equation]{Recollection}

\newtheorem{Rem}[equation]{Remark}

\theoremstyle{definition}
\newtheorem*{Ack*}{Acknowledgements}


\newcommand{\nc}{\newcommand}
\nc{\dmo}{\DeclareMathOperator}

\dmo{\Ab}{\mathsf{Ab}}
\dmo{\AbMon}{AbMon}
\dmo{\Abelem}{Abelem}
\dmo{\Aut}{Aut}
\dmo{\Bi}{bi}
\dmo{\Bisets}{Bisets}
\dmo{\DER}{\mathsf{DER}}
\dmo{\ADDER}{\mathsf{ADDER}}
\dmo{\coev}{coev}
\dmo{\Coloc}{Coloc}
\dmo{\ev}{ev}
\dmo{\Fib}{Fib}
\dmo{\Free}{Free}
\dmo{\Id}{Id}
\dmo{\loc}{Loc}
\dmo{\rmI}{I}
\dmo{\rmL}{L}
\dmo{\rmR}{R}
\dmo{\Spc}{Spc}
\dmo{\thick}{Thick}
\dmo{\chara}{char}%
\dmo{\coh}{coh} 
\dmo{\Coind}{Coind}
\dmo{\coker}{coker}
\newcommand{\colim}{\mathop{\mathrm{colim}}}
\dmo{\cone}{Cone}
\dmo{\Cone}{Cone}
\dmo{\Der}{D}
\dmo{\Ch}{Ch}
\nc{\Rder}{\mathrm{R}} 
\nc{\Lder}{\mathrm{L}} 
\dmo{\Khocat}{K}
\dmo{\End}{End}
\dmo{\Ext}{Ext}
\dmo{\rmH}{H}
\dmo{\Ho}{Ho}
\dmo{\Hom}{Hom}
\dmo{\id}{id}
\dmo{\Img}{Im}
\dmo{\incl}{incl}
\dmo{\Ind}{Ind}
\dmo{\ind}{ind}
\dmo{\Indk}{Ind_{\kappa}}
\dmo{\CoInd}{CoInd}

\dmo{\PSh}{PSh}
\dmo{\Top}{Spt} 
\dmo{\Spt}{Spt}

\dmo{\Ker}{Ker}
\dmo{\Les}{Les}
\dmo{\Map}{Map}%
\dmo{\Mod}{\mathsf{Mod}}
\dmo{\GrMod}{GrMod}
\dmo{\lax}{lax}
\dmo{\modname}{mod}%
\dmo{\grmod}{grmod}
\dmo{\Mor}{Mor}%
\dmo{\Obj}{Obj}
\dmo{\Or}{Or}
\dmo{\Oldorbit}{\mathcal{O}} 
\dmo{\Fix}{Fix} 
\dmo{\Ev}{Ev} 
\dmo{\pr}{pr}
\dmo{\canin}{in} 
\dmo{\Proj}{Proj} 
\dmo{\Inj}{Inj} 
\dmo{\proj}{proj}
\dmo{\Qcoh}{Qcoh}
\dmo{\rank}{rank}
\dmo{\Res}{Res}
\dmo{\Con}{Conj}
\dmo{\res}{res}
\dmo{\Defl}{Def}
\dmo{\Infl}{Inf}
\dmo{\Iso}{Iso}
\dmo{\Rname}{R}
\newcommand{\Sp}{\mathsf{Sp}} 
\nc{\SHp}{\SH_{(p)}}
\dmo{\smallb}{b}
\dmo{\smallperf}{perf}
\dmo{\Spec}{Spec}
\dmo{\Spech}{Spec^h}
\dmo{\Stab}{Stab}
\dmo{\stab}{stab}
\dmo{\supp}{Supp}
\dmo{\switch}{switch}
\dmo{\TTR}{TTR}
\dmo{\Spanname}{{\sf Span}}
\dmo{\map}{map}
\dmo{\Rel}{Rel}

\nc{\Ivo}[1]{{\color{OliveGreen}#1}}
\nc{\Ruben}[1]{{\color{Blue}#1}}
\nc{\Rout}[1]{\Ruben{\sout{#1}}}
\nc{\Iout}[1]{\Ivo{\sout{#1}}}

\nc{\SEcell}{\rotatebox[origin=c]{45}{$\Downarrow$}} 
\nc{\NEcell}{\rotatebox[origin=c]{135}{$\Downarrow$}} 
\nc{\SWcell}{\rotatebox[origin=c]{-45}{$\Downarrow$}} 
\nc{\NWcell}{\rotatebox[origin=c]{-135}{$\Downarrow$}} 
\nc{\Scell}{\rotatebox[origin=c]{0}{$\Downarrow$}} 
\nc{\Ncell}{\rotatebox[origin=c]{0}{$\Uparrow$}} 
\nc{\Ecell}{\Rightarrow}
\nc{\oEcell}[1]{\overset{\scriptstyle #1}{\Ecell}}
\nc{\isoEcell}{\overset{\sim}{\,\Ecell\,}}
\nc{\isocell}[1]{\undersett{ #1}\isoEcell}
\nc{\Isocell}[1]{\undersett{ #1}{\overset{\sim}{\Longrightarrow}}}
\nc{\Wcell}{\rotatebox[origin=c]{90}{$\Uparrow$}} 
\nc{\Span}{\Spanname}
\nc{\Spanhat}{\textrm{\sf S}\widehat{\textrm{\sf pan}}} 
\nc{\tSpan}{\pih{\Spanname}}
\nc{\IFF}{$\Leftrightarrow$}
\nc{\ass}{\mathrm{ass}} 
\nc{\lun}{\mathrm{lun}} 
\nc{\run}{\mathrm{run}} 
\nc{\fun}{\mathrm{fun}} 
\nc{\un}{\mathrm{un}} 
\nc{\Crich}{\underline{\cat{C}}}
\nc{\uA}{\underline{A}}
\nc{\doublequot}[3]{#1\backslash #2/#3}
\nc{\HGK}{\doublequot HGK}
\nc{\quadtext}[1]{\quad\textrm{#1}\quad}
\nc{\qquadtext}[1]{\qquad\textrm{#1}\qquad}
\nc{\PZG}{\cat{C}_{\bbZ}(\bbZ G)}
\nc{\TTRK}{\TTR(\cat K)}
\nc{\psets}{\mathsf{-sets}_\sbull}
\nc{\Gsets}{G\mathsf{-sets}}
\nc{\Hsets}{H\mathsf{-sets}}
\nc{\AddK}{\Add^{\Sigma}(\cat K)}
\nc{\adj}{\dashv\,}
\nc{\adjto}{\rightleftarrows}
\nc{\AK}{A\MModcat{K}}
\nc{\BK}{B\MModcat{K}}
\nc{\bbA}{\mathbb{A}}
\nc{\bbB}{\mathbb{B}}
\nc{\bbC}{\mathbb{C}}
\nc{\bbD}{\mathbb{D}}
\nc{\bbF}{\mathbb{F}}
\nc{\bbI}{\mathbb{I}}
\nc{\bbM}{\mathbb{M}}
\nc{\bbN}{\mathbb{N}}
\nc{\bbP}{\mathbb{P}}
\nc{\bbQ}{\mathbb{Q}}
\nc{\bbR}{\mathbb{R}}
\nc{\bbZ}{\mathbb{Z}}
\nc{\bbZp}{\mathbb{Z}_{(p)}}
\nc{\Sphere}{\mathbb{S}} 
\nc{\cat}[1]{\mathscr{#1}}
\nc{\Displ}{\displaystyle}
\nc{\ie}{{\sl i.e.}\ }
\nc{\cf}{{\sl cf.}\ }
\nc{\into}{\mathop{\rightarrowtail}}
\nc{\inv}{^{-1}}
\nc{\isoto}{\buildrel \sim\over\to}
\nc{\isotoo}{\mathop{\buildrel \sim\over\too}}
\nc{\onto}{\mathop{\twoheadrightarrow}}
\nc{\too}{\mathop{\longrightarrow}\limits}
\nc{\xytriangle}[7]{\xymatrix@C=#7em{#1\ar[r]^-{\Displ #4} & #2 \ar[r]^-{\Displ #5}&#3\ar[r]^-{\Displ #6}&T #1}}
\nc{\ababs}{{\sl ab absurdo}}
\nc{\adh}[1]{\overline{#1}}
\nc{\adhoc}{{\sl ad hoc}}
\nc{\adhpt}[1]{\adh{\{#1\}}}
\nc{\afortiori}{{\sl a fortiori}}
\nc{\aka}{{a.\,k.\,a.}\ }
\nc{\ala}{{\sl \`a la}\ }
\nc{\apriori}{{\sl a priori}}
\nc{\Autcat}[1]{\Aut_{\cat #1}}
\nc{\cO}{\mathcal{O}}
\nc{\calO}{\mathcal{O}}
\nc{\eg}{{\sl e.g.}}
\nc{\eps}{\varepsilon}
\nc{\equalby}[1]{\overset{\textrm{#1}}{=}}
\nc{\gm}{\mathfrak{m}}
\nc{\Homcat}[1]{\Hom_{\cat #1}}
\nc{\Morcat}[1]{\Mor_{\cat #1}}
\nc{\hook}{\hookrightarrow}

\nc{\Idcat}[1]{\Id_{\cat{#1}}}
\nc{\ideal}[1]{\langle #1\rangle}
\nc{\ihom}{{\mathsf{hom}}} 
\nc{\ihomcat}[1]{\ihom_{\cat #1}}
\nc{\Kcat}[1]{#1\MModcat{K}}
\nc{\KP}{\cat{K}_{\cat P}}
\nc{\loccit}{{\sl loc.\ cit.}}
\nc{\lind}{\rmL\!}
\nc{\RR}{\rmR\!}
\nc{\Lotimes}{\otimes^{\rmL}}
\nc{\Mid}{\,\bigm|\,}
\nc{\MMod}{\,\textsf{-}\Mod}%
\nc{\Exact}{\mathfrak K\,\text{-}\mathrm{Exa}^\mathbb Z/2_\infty} 
\nc{\MModcat}[1]{\MMod_{\cat #1}}%
\nc{\mmod}{\,\text{--}\modname}%
\nc{\mmodb}{\mmod^\sbull}%
\nc{\op}{{\mathrm{op}}}
\nc{\co}{{\mathrm{co}}}
\nc{\costar}{**}
\nc{\oto}[1]{\overset{#1}\to}
\nc{\ointo}[1]{\overset{#1}{\rightarrowtail}}
\nc{\loto}[1]{\overset{#1}{\leftarrow}}
\nc{\otoo}[1]{\overset{#1}{\,\longrightarrow\,}}
\nc{\lotoo}[1]{\overset{#1}{\,\longleftarrow\,}}
\nc{\ourfrac}[2]{\genfrac{}{}{0pt}{}{\Displ #1}{\scriptstyle #2}}
\nc{\ouriff}{\Leftrightarrow}
\nc{\oursetminus}{\!\smallsetminus\!}
\nc{\potimes}[1]{^{\otimes #1}}
\nc{\pproj}{\,\text{-}\proj}
\nc{\ptimes}[1]{^{\times #1}}
\nc{\dd}[1]{_{{\scriptscriptstyle(#1)}}}
\nc{\uu}[1]{^{{\scriptscriptstyle(#1)}}}
\nc{\pushout}{\textrm{\rm p.o.}}
\nc{\qp}{q_{_{\scriptstyle \cat P}}\!}%
\nc{\Rcat}[1]{\Rname_{\cat #1}^\sbull}
\nc{\rdto}{}
\nc{\restr}[1]{{|_{\scriptstyle #1}}}
\nc{\RK}{\Rcat{K}}
\nc{\sbull}{{\scriptscriptstyle\bullet}}
\nc{\SET}[2]{\bigl\{\,#1\Mid#2\,\bigr\}}
\nc{\SHA}{\SH{}^{\bbA^{1}}}
\nc{\SHfin}{\SH^{\text{\rm fin}}}
\nc{\smat}[1]{\left(\begin{smallmatrix} #1 \end{smallmatrix}\right)}
\nc{\SpcAK}{\Spc(A\MModcat{K})}
\nc{\SpcK}{\Spc(\cat K)}
\nc{\suppcat}[1]{\supp(\cat #1)}
\nc{\then}{\Rightarrow}
\nc{\tideal}[1]{\ideal{#1}}
\nc{\unit}{\mathbbm{1}}
\nc{\unitcat}[1]{\unit_{\cat #1}}
\nc{\vcorrect}[1]{{\vphantom{\vbox to #1em{}}}}
\nc{\onept}{\mathrm{B}} 
\nc{\undersett}[1]{\underset{\scriptstyle #1}}
\nc\noloc{\nobreak\mspace{6mu plus 1mu}{:}\nonscript\mkern-\thinmuskip\mathpunct{}\mspace{2mu}}

\nc{\HG}{\!{}^{^H}\overline{G}}
\nc{\uY}{\widetilde{Y}}

\nc{\ADD}{\mathsf{ADD}}
\nc{\MONADD}{\mathsf{MONADD}}
\nc{\SMONADD}{\mathsf{SMONADD}}
\nc{\Add}{\mathsf{Add}}
\nc{\SAD}{\mathsf{SAD}}
\nc{\Sad}{\mathsf{Sad}}
\nc{\Cat}{\mathsf{Cat}}
\nc{\CCat}{\textsf{-}\mathsf{Cat}}
\nc{\CAT}{\mathsf{CAT}}
\nc{\Dk}{\dual_{\kappa}}
\nc{\Dkk}{\dual_{\kappa'}}
\nc{\bs}{\backslash}
\nc{\biCpt}{\mathrm{biCpt}}
\nc{\biLCpt}{\mathrm{biLCpt}} 
\nc{\Groupoid}{\mathsf{Groupoid}}
\nc{\groupoid}{\mathsf{gpd}}
\nc{\gpd}{\groupoid}
\nc{\faithful}{\mathsf{faithful}}
\nc{\faith}{\mathsf{faithf}}
\nc{\exact}{\mathsf{ex}}
\nc{\smallfaithful}{\mathsf{f}}
\nc{\smallfused}{\mathsf{fus}}
\nc{\groupoidf}{\groupoid{}^{\smallfaithful}}
\nc{\groconn}{\groupoid_{\mathsf{conn}}}
\nc{\gps}{\mathsf{groups}} 
\nc{\group}{\mathsf{group}} 
\nc{\groupshort}{\mathsf{gr}}
\nc{\gpdG}{{\groupoidf_{\!\smallslash\!G}}} 
\nc{\GinG}{{\groupoidf_{G}}}
\nc{\gpdGfuz}{{\groupoid^{\smallfused}_{\!\smallslash\!G}}} 
\nc{\spanG}{{\widehat{\mathsf{gp}\,\,}\!\!\mathsf{d}}{}^\smallfaithful_{\!{}^{\scriptscriptstyle/}\!G}}
\nc{\biset}{\mathsf{biset}} 
\nc{\rfree}{\mathsf{rf}} 
\nc{\bifree}{\mathsf{bif}} 
\nc{\conj}{\mathrm{conj}} 
\nc{\smallslash}{{}^{\scriptscriptstyle/}}
\nc{\smallbs}{{}^{\scriptscriptstyle\backslash}}
\nc{\doublebs}{\smallbs\!\smallbs}

\nc{\Set}{\mathsf{Set}}
\nc{\set}{\mathsf{set}} 
\nc{\sset}{\textrm{-}\set}
\nc{\ssetfused}{\textrm{-}\underline{\set}} 
\nc{\ssetfuz}{\sset^{\smallfused}} 
\nc{\Comp}{\mathsf{Top}^{\mathsf{comp}}}
\nc{\pih}[1]{\tau_{1}#1}
\nc{\all}{\mathsf{all}}

\dmo{\Fun}{\mathrm{Fun}} 
\dmo{\PsFun}{\mathsf{PsFun}} 
\dmo{\PsFunlax}{\mathsf{PsFun}_{\mathsf{lax}}}
\dmo{\PsFunoplax}{\mathsf{PsFun}_{\mathsf{oplax}}}
\dmo{\BCDex}{\mathsf{BCDex}_{\II\mathsf{-str}}}
\dmo{\BCDexdex}{\mathsf{BCDex}_{\II\mathsf{-dex}}}
\dmo{\biMack}{\mathsf{Mack}} 
\dmo{\Mackey}{\mathsf{Mack}} 
\nc{\MMackey}{\,\textsf{-}\Mackey} 
\dmo{\twoFun}{2\mathsf{Fun}}

\nc{\Muniv}{\cat{M}^{\mathsf{univ}}}

\nc{\lG}{{}_{{\color{Gray}\scriptscriptstyle G}}}
\nc{\lH}{{}_{{\color{Gray}\scriptscriptstyle H}}}
\nc{\rG}{_{{\color{Gray}\!\scriptscriptstyle G}}}
\nc{\rH}{_{{\color{Gray}\!\scriptscriptstyle H}}}
\nc{\rK}{_{{\color{Gray}\!\scriptscriptstyle K}}}

\nc{\dual}{\Delta}

\nc{\ra}{\rightarrow}
\nc{\xra}{\xrightarrow}
\nc{\lto}{\leftarrow}
\nc{\olto}[1]{\overset{#1}\lto}

\nc{\C}{\mathbb{C}} 
\nc{\Cont}{\mathrm{C}} 
\nc{\A}{\mathrm{A}}
\nc{\Rep}{\mathrm{R}} 
\nc{\KK}{\mathsf{KK}} 
\nc{\Calg}{\mathsf{C^*\text{-}Alg}} 
\nc{\Kth}{\mathrm{K}} 
\nc{\Cell}[1]{\mathsf{Cell}(#1)}
\nc{\bigCell}[1]{\mathsf{Cell}(#1)_\mathrm{big}}
\nc{\bigC}{{\cat{C}_\mathrm{big}}} 
\nc{\bigT}{\cat{T}_\mathrm{big}} 
\nc{\Modules}{\mathsf{Mod}}
\nc{\stmod}{\mathsf{stmod}} 
\nc{\SH}{\mathsf{SH}} 
\nc{\Alg}{\mathsf{Alg}}
\nc{\Sep}{\mathsf{Sep}}
\nc{\BurnG}{\cat{A}(G)}
\nc{\Loc}[1]{\loc\langle\, #1 \,\rangle}
\nc{\Locc}[1]{\loc\langle\, #1 \,\rangle_{\aleph_1}}
\nc{\Lock}[1]{\loc\langle\, #1 \,\rangle_\kappa}
\nc{\Thick}[1]{\thick\langle\, #1 \,\rangle}
\nc{\tensLoc}[1]{\loc_\otimes\!\langle\, #1 \,\rangle}
\nc{\tensLocc}[1]{\loc_\otimes\!\langle\, #1 \,\rangle_{\aleph_1}}
\nc{\tensThick}[1]{\thick_\otimes\!\langle\, #1 \,\rangle}
\nc{\Cstar}[1]{\mathrm{C}^*\mathsf{alg}^{#1}}
\nc{\Cstarsep}[1]{\mathsf{C}^*\mathsf{sep}^{#1}}

\nc{\inftyKK}[1]{\mathbf{KK}^{#1}} 
\nc{\inftyKKloc}[1]{\mathbf{KK}^{#1}_{\oplus}} 
\nc{\inftyKKsep}[1]{\mathbf{KK}^{#1}_{\mathsf{sep}}} 
\nc{\inftyInd}[1]{\mathbf{Ind}(#1)}
\nc{\inftyCell}[1]{\mathbf{Cell}(#1)}
\nc{\inftyCellsep}[1]{\mathbf{Cell}(#1)_{\mathsf{sep}}}
\nc{\inftyFun}{\mathrm{Fun}}
\nc{\kk}[1]{\mathrm{kk}^{#1}}
\nc{\kksep}[1]{\mathrm{kk}_{\mathrm{sep}}^{#1}}


\nc{\cD}{\cat{D}}
\nc{\cG}{\cat{G}}
\nc{\cM}{\cat{M}}
\nc{\cN}{\cat{N}}

\nc{\DD}{\cat{D}}
\nc{\MM}{\cat{M}}
\nc{\NN}{\cat{N}}
\nc{\GG}{\mathbb{G}}

\nc{\gammap}[1]{\gamma^{(#1)}}
\nc{\what}[1]{\widehat{\cat{#1}}}

\nc{\und}[1]{{\kern1pt\underline{\kern-1pt{#1}\kern-1.5pt}\kern1.5pt}}

\nc{\Funplus}{\Fun_{+}}
\nc{\Mack}[1]{(Mack\,\ref{Mack-#1})}

%

\begin{document}


\title{Stratification in equivariant Kasparov theory}
\author{Ivo Dell'Ambrogio}
\author{Rub\'en Martos}
\date{\today}

\address{
\noindent Univ.\,Artois, UR\,2462, Laboratoire de Math\'ematiques de Lens, F-62300 Lens, France
}
\email{ivo.dellambrogio@univ-artois.fr}
\urladdr{https://idellambrogio.github.io/}

\address{
\noindent  Univ.\,Lille, CNRS, UMR\,8524 - Laboratoire Paul Painlev\'e, F-59000 Lille, France
}
\email{ruben.martos2@univ-lille.fr}
\urladdr{https://sites.google.com/view/ruben-martos/home}

\begin{abstract}
We study stratification, that is the classification of localizing tensor ideal subcategories by geometric means, in the context of Kasparov's equivariant KK-theory of C*-algebras.
We introduce a straightforward countable analog of the notion of stratification by Balmer--Favi supports and conjecture that it holds for the equivariant bootstrap subcategory of every finite group~$G$. 
We prove this conjecture for groups whose nontrivial elements all have prime order, and we verify it rationally for arbitrary finite groups. 
In all these cases we also compute the Balmer spectrum of compact objects.
In our proofs we use larger versions of the equivariant Kasparov categories which admit not only countable coproducts but all small ones; they are constructed in an Appendix using $\infty$-categorical enhancements and adapting ideas of Bunke--Engel--Land.
\end{abstract}

\subjclass[2020]{
19K35, 
19L47, 
18G80, 
18F99. 
}
\keywords{Equivariant Kasparov theory, bootstrap classes, tensor triangular geometry, stratification}

\thanks{The authors were supported by the Labex CEMPI (ANR-11-LABX-0007-01)}

\maketitle

\tableofcontents

\section{Introduction}
\label{sec:intro}%

\subsection*{Kasparov theory and tt-geometry}
Tensor triangular geometry (\emph{tt-geometry} for short), started by Balmer \cite{Balmer05a}, is based on the idea of exploiting the ambient geometry of any given essentially small tensor triangulated category (\emph{tt-category}) $\cat K$ in order to provide a classification of its \emph{thick tensor ideal subcategories}, and therefore a rough classification of its objects. 
To this end, Balmer introduces a topological space $\text{Spc}(\mathscr{K})$, the \emph{spectrum of~$\cat K$}, and a universal notion of supports $\supp(A)\subseteq \Spc (\cat K)$ for the objects $A$ of~$\cat K$.
This theory illuminates and unifies several classification problems in algebraic geometry, topology, representation theory, and beyond.
In each example of~$\cat K$, the classification proposed by Balmer (which always works abstractly) becomes especially useful if one manages to find a description of the space $\Spc(\cat K)$ and of the support theory $A\mapsto \supp(A)$ which is relevant to the example at hand. 
We refer to \cite{Balmer20} for a rich survey of concrete computations of spectra throughout mathematics.

The application of tt-geometry to noncommutative geometry was initiated by the first author \cite{DellAmbrogio10}. The starting point is the observation, due to Meyer and Nest \cite{MeyerNest06}, that the $G$-equivariant Kasparov category $\KK^G$ of any (second countable) locally compact group~$G$ is an essentially small tt-category equipped with arbitrary countable coproducts.  
Meyer and Nest used the tt-structure in order to reformulate the Baum-Connes conjecture and establish various stability properties for it.
In \cite{DellAmbrogio10}, it was pointed out that sufficient knowledge of the \emph{spectrum} of $\KK^G$ already implies the conjecture. 
Indeed, the Baum-Connes conjecture holds for $G$ as soon as $\Spc(\KK^G)$ is covered (via the natural maps) by the spaces $\Spc(\KK^H)$ with $H$ running through the compact subgroups of~$G$. This result indicates the interest of computing Balmer spectra of equivariant Kasparov categories. 

However, there are serious obstacles to studying $\Spc(\KK^G)$ concretely, because as a tt-category $\KK^G$ is rather unwieldy and falls outside the reach of the usual techniques.
To start with, $\KK^G$ is not a rigid tensor category, \ie its objects do not admit tensor duals in general.
Moreover, it appears not to admit any reasonable generating set, in any useful sense; it seems to have far too many complicated objects.
Both problems already occur when $G$ is the trivial group!

We clearly need to do better. 
And we can: in a companion paper \cite{DellAmbrogioMartos24pp} we explain how to derive the Baum-Connes conjecture from a different covering criterion, where the Balmer spectrum is replaced by a countable variant and where $\KK^G$ and $\KK^H$ are replaced by the decidedly more tractable subcategories $\Cell{G}$ and $\Cell{H}$ of \emph{cell algebras} introduced in~\cite{DellAmbrogio14}. 
For any group~$G$, by definition $\Cell{G}$ is the localizing (`bootstrap') subcategory of $\KK^G$ generated by the orbit algebras $\Cont_0(G/H)$ of all subgroups $H\leq G$; it is again a tt-category at least when $G$ is discrete or compact.
The computation of the above-mentioned countable version of the spectrum is closely related to the problem of \emph{stratification} for $\Cell{G}$, which is the focus of the present paper. 
Our present goal is to establish that, for cell algebras, stratification can be established at least in some interesting cases.

\subsection*{Stratification}
Generally speaking, stratification refers to a set of techniques for classifying \emph{localizing} tensor ideals in a large tt-category, \ie those thick ideals which are also closed under the formation of infinite coproducts. 
A typical situation for applying tt-geometry is when the essentially small tt-category $\cat K$ occurs as the subcategory $\cat T^c$ of `small' objects inside a large tt-category~$\cat T$ which admits arbitrary small coproducts and is itself generated (as a localizing subcategory) by~$\cat K$; the best case is when two notions of smallness, compactness and rigidity, agree in~$\cat T$, in which case one says that $\cat T$ is \emph{rigidly-compactly generated}.
In this situation, tensor-triangular stratification is now fairly well understood thanks to work of Barthel, Heard and Sanders~\cite{BHS23} (building on Neeman~\cite{Neeman92a}, Palmieri--Hovey--Strickland \cite{HoveyPalmieriStrickland97}, Benson--Iyengar--Krause \cite{BensonIyengarKrause08} and others), at least under the mild hypothesis that the spectrum $\Spc(\cat T^c)$ of the rigid-compact objects is known to be a \emph{weakly noetherian} space (\eg\ it is \emph{noetherian}: every open subset is quasi-compact).
In this case, such a big tt-category $\cat T$ is said to be \emph{stratified} if there is a certain canonical inclusion-preserving bijection (induced by an extension of support theory  to big objects due to Balmer--Favi~\cite{BalmerFavi11})
\begin{equation} \label{eq:usual-strat}
\left\{
\begin{array}{c} 
\textrm{localizing tensor ideals of } \cat T
\end{array}
\right\}
\overset{\sim}{\longrightarrow}
\left\{
\begin{array}{c} 
\textrm{subsets of } \Spc(\cat T^c)
\end{array}
\right\}
\end{equation}
between the localizing tensor ideals of $\cat T$ and arbitrary subsets of the Balmer spectrum of the tt-subcategory $\cat T^c$ of the compact-rigid objects in~$\cat T$. 

\subsection*{Results for finite groups}
We want to establish stratification for the category $\Cell{G}$, but we cannot directly apply the Barthel--Heard--Sanders (BHS) theory.
Firstly, $\Cell{G}$ only admits countable coproducts, not arbitrary small ones. 
Secondly, rigid and compact objects in $\Cell{G}$ do not always agree; they do agree when $G$ is finite, but not \eg\ when $G$ is infinite discrete.
For $G$ finite however, $\Cell{G}$ is known to be \emph{rigidly-compactly${}_{\aleph_1}$ generated}, which is a useful replacement of the usual notion for tt-categories with only countable (=~$\aleph_1$-small) coproducts. 
(For $G$ finite moreover, $\Cell{G}$ coincides with the a~priori larger \emph{equivariant bootstrap category} of~\cite{DEM14}, a fact which makes stratification in itself more interesting.)

For such categories $\cat T$, one can easily define a countable version of the Balmer--Favi support theory (see~\Cref{sec:countable-strat}), and we say that $\cat T$ is \emph{countably stratified} if the analogue of the  bijection \eqref{eq:usual-strat} holds, where of course now we read `localizing' as meaning `triangulated and closed under countable coproducts'.

We can now formulate our main result, 
where $\Rep(G)$ denotes the complex representation ring of~$G$:

\begin{Thm}[see \Cref{Thm:strat-big-one}]
\label{Thm:strat-Cell(C_p)}
If $G$ is a finite group in which every nontrivial element has prime order, the category $\Cell{G}$ of $G$-cell algebras is countably stratified. 
In particular, there is a canonical bijection  
\[
\left\{
\begin{array}{c} 
\textrm{localizing subcategories} \\
\textrm{of } \Cell{G}
\end{array}
\right\}
\overset{\sim}{\longrightarrow}
\left\{
\begin{array}{c} 
\textrm{subsets of the Zariski spectrum} \\ 
\Spec( \Rep(G))
\end{array}
\right\}
\]
sending a localizing subcategory $\cat L$ to its countable Balmer--Favi support $\supp (\cat L)$.
\end{Thm}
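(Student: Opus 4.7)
The plan is to deduce the theorem from the countable analog of Barthel--Heard--Sanders (BHS) stratification introduced earlier in the paper. This amounts to verifying: (i) the Balmer spectrum of $\Cell{G}^c$ is (weakly) noetherian; (ii) the local-to-global principle holds; and (iii) minimality of each Balmer--Favi local subcategory. Working through the $\infty$-categorical enhancements of the Appendix, one can pass to the big tt-category $\widehat{\Cell{G}}$ admitting all small coproducts, apply the classical BHS framework there, and then descend the resulting stratification to the countable version on $\Cell{G}$ via the compact-rigid${}_{\aleph_1}$ comparison.

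For (i), I would identify $\Spc(\Cell{G}^c)$ with $\Spec(\Rep(G))$. The endomorphism ring of the tensor unit is $\Rep(G)$, so Balmer's natural comparison map targets $\Spec(\Rep(G))$. Surjectivity uses the classical description of the prime ideals of $\Rep(G)$ in terms of pairs (cyclic subgroup, rational prime) together with induction from the cyclic subgroups; injectivity follows from a support-theoretic argument using the generating family of orbit algebras $\{\Cont_0(G/H)\}_{H\leq G}$. Since $\Rep(G)$ is a noetherian ring, $\Spec(\Rep(G))$ is noetherian and (ii) follows automatically in the BHS framework.

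The heart of the argument is (iii). The hypothesis on $G$ means every cyclic subgroup is trivial or of prime order, so the restriction functors $\Res^G_C \colon \widehat{\Cell{G}} \to \widehat{\Cell{C}}$ for $C$ ranging over conjugacy representatives of such cyclic subgroups, together with the trivial subgroup, exhaust all cyclic restrictions. Frobenius reciprocity and the $\Rep(G)$-module structure on hom-sets then let one detect membership in the local subcategory at a prime of $\Spec(\Rep(G))$ via restriction to the matching cyclic subgroup. This reduces minimality at each point to the base case: stratification of $\Cell{C_p}$ for $C_p$ cyclic of prime order $p$. The latter is the main technical obstacle and the principal place where genuinely equivariant KK-theoretic input is required; my plan here would be to exhibit $\widehat{\Cell{C_p}}$ as equivalent to (or at least closely comparable with) a derived category of modules over $\Rep(C_p)$, for which stratification is standard, and transport the resulting stratification back.
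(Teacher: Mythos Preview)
Your overall architecture matches the paper's: pass to the big tt-category $\bigCell{G}$, establish BHS stratification there, and descend to the countable version on $\Cell{G}$ via \Cref{Prop:strat-restriction}. The spectrum computation and the reduction to cyclic subgroups are also the right moves. However, two steps are underspecified in ways that matter.

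First, the $C_p$ base case. Your plan to compare $\bigCell{C_p}$ with a derived category of $\Rep(C_p)$-modules is not what the paper does, and it is unclear such an equivalence is available. The ring $\Rep(C_p)\cong\mathbb Z[x]/(x^p-1)$ is not regular, so you cannot invoke the monogenic regular criterion of \cite{DellAmbrogioStanley16} directly on the whole category. The paper instead splits the problem via two tt-functors out of $\bigCell{C_p}$: restriction $\Res^{C_p}_1$ to the trivial group (finite \'etale) and the Verdier quotient $Q$ by $\Loc{\Cont(C_p)}$ (finite localization). These induce a disjoint cover of $\Spc(\Cell{C_p}^c)$ into the images of $\Spc(\Cell{1}^c)$ and $\Spc(\mathrm Q(C_p)^c)$, and both targets are monogenic with \emph{regular} noetherian graded endomorphism rings ($\mathbb Z[\beta^{\pm1}]$ and $\mathbb Z[\vartheta,p^{-1}][\beta^{\pm1}]$), hence stratified by \cite{DellAmbrogioStanley16}. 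Minimality at each prime of $\bigCell{C_p}$ then follows by the finite \'etale reduction (\Cref{Thm:etale}) on one piece and the finite localization reduction (\Cref{Prop:local-reduction}) on the other.

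Second, the descent from $G$ to its cyclic subgroups. Frobenius reciprocity alone is not enough: the finite \'etale reduction \Cref{Thm:etale} requires singleton fibers, which fails here. The paper applies the stronger quasi-finite descent criterion \cite[Thm.\,17.16]{BCHS24pp} to $F=(\Res^G_H)_H\colon \bigCell{G}\to\prod_H\bigCell{H}$, and for this one must verify that $\Spc(F^c)$ has \emph{discrete} fibers. This comes from showing that each tt-ring $\Cont(G/H)$ has finite degree (\Cref{Prop:finite-degree}), a nontrivial argument using Balmer's degree theory together with the Arano--Kubota conservativity result.
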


The above hypothesis on $G$ is certainly very restrictive but it still applies to many groups, such as $S_3$, $A_5$ and all $p$-groups of exponent~$p$ (see \Cref{Rem:our-class}).

In this context $\Rep(G)$ occurs as the endomorphism ring of the trivial $G$-C*-algebra $\mathbb C=\Cont_0(G/G)$, which is the tensor unit of $\Cell{G}$. 
The above form of the bijection is a consequence of the following computation of the Balmer spectrum:
\begin{Thm} [{see \Cref{Thm:Spc-prime-order}}]
\label{Thm:Spc-Intro}
For finite groups $G$ with only prime-order nontrivial elements, 
we have a canonical homeomorphism 
\[\Spc(\Cell{G}^c) \cong \Spec(\End(\unit)) = \Spec(\Rep(G)).\]
\end{Thm}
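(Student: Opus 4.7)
The plan is to show Balmer's comparison map
\[
\rho_G \colon \Spc(\Cell{G}^c) \longrightarrow \Spec(\End_{\Cell{G}}(\unit)) = \Spec(\Rep(G))
\]
is a homeomorphism. Since $\Rep(G)$ is a noetherian commutative ring for $G$ finite, $\Spec(\Rep(G))$ is a noetherian spectral space; once $\rho_G$ is known to be bijective, the homeomorphism property will follow from general spectral-space arguments together with the automatic continuity and order-preservation of~$\rho_G$.

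For surjectivity, I would invoke Balmer's general theorem that $\rho$ is surjective whenever the graded endomorphism ring of the unit is graded-noetherian; this is clearly satisfied here, as $\End(\unit) = \Rep(G)$ is a finitely generated commutative ring concentrated in degree zero. Concretely, to each prime $\mathfrak{p} \in \Spec(\Rep(G))$ one associates the tensor-prime ideal
\[
\mathfrak{P}_\mathfrak{p} = \{A \in \Cell{G}^c \mid \mathfrak{p} \notin \supp_{\Rep(G)}(\End^\ast(A))\}
\]
and verifies that $\rho_G(\mathfrak{P}_\mathfrak{p}) = \mathfrak{p}$.

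The bulk of the work is injectivity, and this is where the hypothesis on $G$ enters decisively. The strategy is to exploit the restriction functors $\res^G_H \colon \Cell{G}^c \to \Cell{H}^c$ as $H$ ranges over all subgroups of $G$. Under the hypothesis, every proper nontrivial subgroup is cyclic of prime order, so there are very few subgroups to consider. I would prove that the family of restrictions to the trivial subgroup and to all cyclic subgroups of prime order is \emph{jointly conservative} on compact objects, and then handle the base cases directly: for $G$ trivial, $\Spc(\Cell{\{e\}}^c) \cong \Spec(\bbZ)$ is classical (reduction to the usual bootstrap category and Rosenberg--Schochet UCT); for $G = C_p$ cyclic of prime order, combining the equivariant UCT available on cell algebras with a Thomason--Neeman-type classification of thick tensor ideals over the noetherian ring $\Rep(C_p)$ yields $\Spc(\Cell{C_p}^c) \cong \Spec(\Rep(C_p))$. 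Joint conservativity then promotes the induced map $\bigsqcup_H \Spc(\Cell{H}^c) \to \Spc(\Cell{G}^c)$ to a surjection, and combining this with naturality of $\rho$ with respect to restriction and with the known bijectivity over each proper $H$ forces $\rho_G$ itself to be injective.

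The main obstacle will be establishing the joint conservativity of restrictions to cyclic subgroups of prime order in the Kasparov-theoretic setting: this is an equivariant KK-theoretic analogue of Quillen and Carlson-style detection theorems from modular representation theory, and its proof will likely require both the Mackey-functor structure on $\KK^G$-groups of cell algebras and the $\infty$-categorical enhancements constructed in the appendix, in order to manipulate small coproducts, homotopy colimits and localizations beyond the countable range natively available in $\Cell{G}$.
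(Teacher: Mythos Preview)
Your overall strategy coincides with the paper's: both argue via the naturality square for $\rho$ along the jointly conservative family of restrictions to cyclic subgroups, using the known base cases $H=1$ and $H=C_p$. Surjectivity of $\rho_G$ via noetherianity of the graded endomorphism ring is exactly what the paper does.

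There is, however, a genuine gap in your injectivity step. From the commutative square
\[
\xymatrix{
\coprod_H \Spc(\Cell{H}^c) \ar[r]^-{\varphi} \ar[d]_{\cong} & \Spc(\Cell{G}^c) \ar[d]^{\rho_G} \\
\coprod_H \Spec(\Rep(H)) \ar[r] & \Spec(\Rep(G))
}
\]
knowing that $\varphi$ is surjective and the left vertical map is bijective does \emph{not} force $\rho_G$ to be injective: the bottom map is far from injective, so two preimages $\cat Q_1\in\Spc(\Cell{H_1}^c)$ and $\cat Q_2\in\Spc(\Cell{H_2}^c)$ of $\cat P_1,\cat P_2$ may land on different points $(H_i,\mathfrak q_i)$ of the lower-left corner that happen to map to the same $\mathfrak p\in\Spec(\Rep(G))$. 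The paper closes this gap with two additional ingredients you do not mention. First, Segal's structure theorem for $\Spec(\Rep(G))$: after replacing each $(H_i,\mathfrak q_i)$ by a minimal pair, the two are $G$-conjugate. Second, the conjugation equivalences $\Con_g\colon\KK^{H}\overset{\sim}{\to}\KK^{{}^gH}$ and the relation $\Con_g\circ\Res^G_{{}^gH}\cong\Res^G_H$, which allow one to transport $\cat Q_1$ to $\cat Q_2$ and hence conclude $\cat P_1=\cat P_2$. Without Segal's uniqueness-up-to-conjugacy and the conjugation functors, the diagram chase does not terminate.

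Two smaller corrections. The joint conservativity you need is precisely the Arano--Kubota detection theorem, which is proved directly in $\KK^G$ and requires neither Mackey-functor combinatorics nor any $\infty$-categorical enlargement; the appendix machinery is used in the paper only for stratification, not for this spectrum computation. And the passage from bijectivity of $\rho_G$ to homeomorphism is not automatic spectral-space reasoning: the paper invokes a result of Lau which needs the finiteness hypothesis that $\End^*(A)$ is a finitely generated $\Rep(G)$-module for every compact $A$, established separately as \Cref{Lem:End-finite}.
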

We do not know how $\Spc(\Cell{G}^c)$ looks in general, in fact this is a major obstacle to establishing stratification for more general groups.
However, the above homeomorphism is provided by the canonical continuous `comparison'  map 
\[
\rho\colon \Spc(\Cell{G}^c) \longrightarrow \Spec(\End(\unit)) = \Spec(\Rep(G))
\]
from the tt-spectrum to the Zariski spectrum, which always exists and which moreover is surjective for all finite (or even compact) groups~$G$; see \cite{Balmer10b}. 
Accordingly, we make the following conjectures:

\begin{Conj}
\label{Conj:spc}
Balmer's comparison map $\rho$ is a homeomorphism $\Spc(\Cell{G}^c)\cong \Spec(\Rep(G))$ for every finite group~$G$.
\end{Conj}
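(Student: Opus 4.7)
The plan is to deduce the general case from the prime-order case (\Cref{Thm:Spc-Intro}) by a subgroup descent argument, reducing injectivity of $\rho_G$ to a detection statement along a family of subgroups for which the comparison map is already known to be a homeomorphism.

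First, since $\rho_G$ is always continuous and, for finite $G$, surjective by \cite{Balmer10b}, the crux is injectivity: once $\rho_G$ is bijective, standard tt-geometric arguments upgrade it to a homeomorphism. Next, for each subgroup $H \leq G$ the restriction functor $\Res^G_H\colon \Cell{G}^c \to \Cell{H}^c$ induces a commutative square
\[
\xymatrix{
\Spc(\Cell{H}^c) \ar[r]^-{\rho_H} \ar[d] & \Spec(\Rep(H)) \ar[d] \\
\Spc(\Cell{G}^c) \ar[r]^-{\rho_G} & \Spec(\Rep(G))
}
\]
whose vertical maps are the natural ones induced by restriction of central rings and of tt-functors. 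Letting $H$ range over a family $\mathcal{F}$ of subgroups and assembling these squares, I would seek $\mathcal{F}$ such that (a) each $H \in \mathcal{F}$ satisfies the prime-order hypothesis of \Cref{Thm:Spc-Intro}, so that $\rho_H$ is a homeomorphism, and (b) the left-hand vertical maps are jointly surjective onto $\Spc(\Cell{G}^c)$, mirroring the way their right-hand counterparts cover $\Spec(\Rep(G))$. Given (a) and (b), injectivity of $\rho_G$ reduces to that of the individual $\rho_H$, controlled upstairs by detection along $\mathcal{F}$ and downstairs by Artin--Brauer induction on the representation rings.

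The natural first guess for $\mathcal{F}$ is the family of cyclic subgroups of prime order; depending on what is needed to cover $\Spec(\Rep(G))$ one may have to enlarge it to cyclic subgroups of prime-power order or to $p$-elementary subgroups, which would first require extending \Cref{Thm:Spc-Intro} beyond the prime-order hypothesis. The principal obstacle, however, lies in condition (b) on the tt-geometric side: one needs a Quillen-style detection theorem for equivariant Kasparov theory asserting that the restriction functors to subgroups in $\mathcal{F}$ are jointly conservative on prime tt-ideals. Unlike the modular representation-theoretic setting, equivariant KK-theory does not come equipped with an analogue of Serre's cohomological vanishing theorem that would force such a detection, and this missing input is precisely what keeps \Cref{Conj:spc} open beyond the cases treated in this paper.
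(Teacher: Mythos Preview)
This statement is a conjecture; the paper does not prove it, so there is no proof to compare against. Your proposal is not a proof but a strategy sketch that (correctly) ends by acknowledging the problem remains open.

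That said, you have misidentified the obstruction. Your condition~(b)---a Quillen-style detection theorem for $\Cell{G}^c$ along a family of subgroups---\emph{is} available: by the Arano--Kubota result used in \Cref{Thm:cyclic-generation}, restriction to the family of all cyclic subgroups is jointly conservative, and the paper invokes exactly this in the proof of \Cref{Thm:Spc-prime-order} to show that the map $\Spc(F)$ induced by $(\Res^G_H)_H$ (with $H$ ranging over cyclic subgroups) is surjective. So for $\mathcal{F}=\{\text{all cyclic subgroups}\}$, condition~(b) holds and your descent argument is precisely the one carried out there. The genuine obstacle is condition~(a): a general finite group has cyclic subgroups of composite order, and we do not know that $\rho_H$ is a homeomorphism for such~$H$. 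The paper says this explicitly in the closing remark of \Cref{sec:main}: establishing the conjecture for all finite cyclic groups would, by the same argument, yield it for all finite groups.

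In short, the missing input is not a detection theorem but the base case---the conjecture for cyclic groups of arbitrary order.
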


\begin{Conj}
\label{Conj:strat}
$\Cell{G}$ is countably stratified for every finite group~$G$.
\end{Conj}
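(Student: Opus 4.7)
The plan is to reduce this conjecture to two more tractable inputs in the spirit of the Barthel--Heard--Sanders theory of stratification, suitably adapted to the countable setting developed in the paper: (1) a homeomorphism $\Spc(\Cell{G}^c)\cong \Spec(\Rep(G))$, i.e.\ \Cref{Conj:spc}, which automatically provides a (weakly) noetherian ambient space for support, and (2) minimality of the Balmer--Favi residue localizing ideal $\Gamma_\mathfrak{p}\Cell{G}$ at every point $\mathfrak{p}\in\Spec(\Rep(G))$. The local-to-global principle, which is the remaining ingredient, should come essentially for free from rigid-compact$_{\aleph_1}$ generation together with the existence of idempotents supported at singletons in the rigidly-compactly$_{\aleph_1}$ generated setting of the paper. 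With the spectrum noetherian, minimality plus local-to-global then yields the desired bijection exactly as in the classical BHS theorem.

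For \Cref{Conj:spc}, I would try to exploit Segal's classification of $\Spec(\Rep(G))$ in terms of pairs $(C,\mathfrak{p})$ with $C\leq G$ cyclic and $\mathfrak{p}\in\Spec(\Rep(C))$, together with Artin's induction theorem, to argue that the family of restriction tt-functors $\Res^G_C\colon\Cell{G}\to\Cell{C}$ indexed by cyclic subgroups is jointly conservative on compacts. Combined with the functoriality of the comparison map~$\rho$ and its surjectivity, this should reduce injectivity of~$\rho$ to the case where $G$ is itself cyclic. The prime-order case is already handled by \Cref{Thm:Spc-Intro}; for cyclic $p$-groups of order $p^n$ with $n\geq 2$ one should proceed by induction along the chain $1\leq C_p\leq C_{p^2}\leq\cdots\leq C_{p^n}$, using the subgroup and quotient adjunctions and the fact that $\Rep(C_{p^n}) = \bbZ[\zeta_{p^n}]$ has an explicit, transparent prime spectrum.

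Granting \Cref{Conj:spc}, the heart of the argument is the minimality of $\Gamma_\mathfrak{p}\Cell{G}$. In the prime-order case of \Cref{Thm:strat-Cell(C_p)}, minimality is essentially immediate because only two types of subgroups are involved and the endomorphism ring analysis cuts out the residue fields directly. For general $G$, my plan would be to induct on $|G|$ by reducing minimality at a prime lying over a proper cyclic subgroup $C\lneq G$ to minimality of $\Gamma_\mathfrak{p}\Cell{C}$, via the restriction/coinduction adjunction $\Res^G_C\dashv \CoInd^G_C$ together with a projection formula. The main obstacle, and the place where genuinely new ideas will be required, is to show that these adjunctions interact well with Balmer--Favi localization at individual primes in the countably-cocomplete setting: one needs the coinduction of a localizing subcategory to remain localizing and to have controlled support, which is delicate because coinduction does not obviously preserve countable coproducts. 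Making this work --- presumably by passing to the larger cocomplete enhancements $\inftyCell{G}$ constructed in the Appendix, where ordinary BHS tools apply, and then descending the conclusion back to $\Cell{G}$ --- is where I expect the real technical work of the conjecture to lie.
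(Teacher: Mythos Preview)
This statement is a \emph{conjecture} in the paper, not a theorem: the paper does not prove it in general. What the paper does prove is the special case where every nontrivial element of $G$ has prime order (\Cref{Thm:strat-Cell(C_p)}) and the rational version for all finite~$G$ (\Cref{Thm:intro-rational}). The paper's intended strategy for the general case, spelled out in the final remark of \Cref{sec:main}, is essentially the one you outline: pass to the genuinely big category $\bigCell{G}$, use the family of restrictions to cyclic subgroups together with the quasi-finite descent criterion (\Cref{Thm:quasi-finite}) to reduce both conjectures to the case of cyclic~$G$, and then descend back to $\Cell{G}$ via \Cref{Prop:strat-restriction}. So your overall architecture is aligned with the paper's.

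Two concrete corrections, however. First, $\Rep(C_{p^n})$ is not $\bbZ[\zeta_{p^n}]$; it is the group ring $\bbZ[x]/(x^{p^n}-1)$, whose spectrum has several irreducible components glued along the fibre over~$p$ (cf.\ \Cref{Rec:Spc-decomp} for $n=1$). This extra gluing is precisely what makes the cyclic case of order $p^n$ with $n\geq 2$ hard, and why the paper stops at prime order. Second, your worry about coinduction not preserving countable coproducts is misplaced: for finite groups, induction and coinduction coincide and are simultaneously left and right adjoint to $\Res^G_H$ (\Cref{Hyp:C_p}(d) and \Cref{Thm:bigCell}), so they preserve all coproducts; the passage to $\bigCell{G}$ is used in the paper not to repair adjunctions but simply to have access to the existing BHS machinery without redeveloping it countably. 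The genuine obstacle is neither the local-to-global principle nor the adjunction formalism, but the absence of any method for computing $\Spc(\Cell{C_{p^n}}^c)$ or establishing minimality for cyclic groups of composite prime-power order. Your inductive sketch along the chain $1\leq C_p\leq\cdots\leq C_{p^n}$ does not supply one, and the paper explicitly flags this as the open problem.
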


Note that $\Spec(\Rep(G))$ is a noetherian space, since $\Rep(G)$ is a noetherian commutative ring. 
For this and other reasons, establishing \Cref{Conj:spc} would be a good start towards proving \Cref{Conj:strat} (although not formally necessary).

\begin{Rem} \label{Rem:boot}
When $G=1$ is the trivial group, $\Cell{G}$ is the classical Rosenberg--Schochet bootstrap category, for which both conjectures are already known to be true. 
This is essentially the content of \cite{DellAmbrogio11} (see also the proof of \Cref{Prop:strat-boot}).
The crucial case of \Cref{Thm:Spc-Intro} when $G$ is cyclic of prime order was proved in \cite{DellAmbrogioMeyer21}, and the more general case in our theorem will be derived from the latter thanks to a strong result of Arano--Kubota~\cite{AranoKubota18} (see \Cref{Thm:cyclic-generation}). 
\end{Rem}

Beyond the above special class of groups, we can verify both conjectures \emph{rationally} for arbitrary finite~$G$.
Let $\Rep(G)_\mathbb Q$ be the rationalization of the representation ring, and $\Cell{G}_\mathbb Q$ the rationalization of $\Cell{G}$ (\Cref{Ter:rational_cell}).

\begin{Thm}[see \Cref{Thm:strat-rational} and \Cref{Cor:spc-rational}]
\label{Thm:intro-rational}
For every finite group~$G$, 
$\Cell{G}_\mathbb Q$ is countably stratified and 
the comparison map $\rho$ is a homeomorphism $\Spc(\Cell{G}_{\mathbb{Q}}^c)\cong \Spec(\Rep(G)_{\mathbb{Q}})$.
The latter space is discrete and finite, and its points are in bijection with the conjugacy classes of cyclic subgroups of~$G$.
\end{Thm}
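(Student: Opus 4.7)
The plan combines three ingredients: rational Artin induction (to describe $\Rep(G)_\mathbb{Q}$), the Arano--Kubota cyclic-generation theorem (\Cref{Thm:cyclic-generation}), and the countable Barthel--Heard--Sanders stratification criterion of \Cref{sec:countable-strat}.

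First, by rational Artin induction, $\Rep(G)_\mathbb{Q}\cong\prod_{(C)}K_C$ is a finite product of number fields indexed by conjugacy classes $(C)$ of cyclic subgroups $C\leq G$. In particular $\Spec(\Rep(G)_\mathbb{Q})$ is finite and discrete with points the conjugacy classes of cyclic subgroups, settling the last assertion. Next, the orthogonal Artin idempotents $e_C\in\End_{\Cell{G}_\mathbb{Q}}(\unit)=\Rep(G)_\mathbb{Q}$ lift to tensor-idempotent objects $U_C\in\Cell{G}_\mathbb{Q}$ with $\unit\simeq\bigoplus_{(C)}U_C$, as is standard in rigidly-compactly${}_{\aleph_1}$ generated tt-categories whose unit endomorphism ring is a product of fields. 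This produces a product decomposition $\Cell{G}_\mathbb{Q}\simeq\prod_{(C)}\cat T_C$ into $\aleph_1$-cocomplete tt-factors, each factor $\cat T_C$ having unit $U_C$ and $\End(U_C)=K_C$.

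Since Balmer's comparison map $\rho$ is surjective for finite groups and decomposes across the product, proving it is a homeomorphism reduces to showing $\Spc(\cat T_C^c)=\{*\}$ for each $(C)$. By \Cref{Thm:cyclic-generation}, $\cat T_C$ is generated (as a localizing subcategory) by the objects $U_C\otimes\Cont_0(G/H)$ with $H\leq G$ cyclic. A Frobenius--Mackey analysis, combined with the orthogonality of the Artin idempotents under restriction to cyclic subgroups, should reduce these generators to a single compact object with field endomorphism ring $K_C$, ultimately appealing to the trivial-group case of \Cref{Rem:boot}, where the rational bootstrap category already has one-point spectrum. This forces $\Spc(\cat T_C^c)=\{*\}$ and thus yields the homeomorphism $\rho$.

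Countable stratification then follows from the countable BHS criterion of \Cref{sec:countable-strat}: since $\Spc(\Cell{G}_\mathbb{Q}^c)$ is finite and discrete, the local-to-global principle is automatic, and point-wise minimality is precisely the single-prime statement just established. The main obstacle I anticipate is the minimality step: controlling explicitly how the rational Artin idempotents interact with restriction and induction along cyclic subgroups, so that the induced splittings of $\Cell{H}_\mathbb{Q}$ for cyclic $H$ interface correctly with those of $\Cell{G}_\mathbb{Q}$ and force tt-minimality in each factor.
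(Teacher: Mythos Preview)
Your approach differs substantively from the paper's. The paper does not derive the product decomposition from Artin idempotents but imports it wholesale from an external structural result (\cite{BDM24pp}, Theorem~C), which identifies $\Cell{G}_\mathbb{Q}$ with a finite product of \emph{semisimple abelian} tensor categories of $\mathbb Z/2$-graded modules over explicit skew group rings $\mathbb{Q}(\zeta_{|H|})\rtimes_c W_G(H)$. From this algebraic model, each factor $\cat T_H$ is shown directly to be a (countable) tt-field: pure-semisimple with $\End(\unit)$ a field, hence with faithful tensoring by nonzero objects. Both the one-point spectrum and the absence of nontrivial localizing tensor ideals then follow formally. Your Artin-idempotent route would recover the same product splitting at the tt-category level, but without the explicit model you do not have the semisimplicity or tt-field structure that drives the rest of the argument.

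There is a genuine gap at your stratification step. Establishing $\Spc(\cat T_C^c)=\{*\}$ is \emph{not} the same as establishing minimality at that point: you must further show that $\cat T_C$ has no proper nonzero localizing tensor ideal, and this does not follow from monogenicity with a field endomorphism ring alone. The paper gets it from pure-semisimplicity (any nonzero object contains a nonzero compact summand, which then tensor-generates everything). Your ``Frobenius--Mackey analysis'' would have to do serious work here---essentially reproving the structural theorem of \cite{BDM24pp}---and the obstacle you flag at the end is exactly the missing piece, not a peripheral difficulty. Separately, the paper deliberately refrains from developing a countable BHS minimality/local-to-global criterion (see \Cref{Rem:basic-props}), so your appeal to ``the countable BHS criterion of \Cref{sec:countable-strat}'' is not supported by what that section actually contains; the paper instead proves countable stratification of $\Cell{G}_\mathbb{Q}$ by a direct elementary argument exploiting the semisimple product decomposition.
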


This is a relatively straightforward consequence of a result proved in \cite{BDM24pp}, according to which $\Cell{G}_\mathbb Q$ splits as a finite product of graded module categories over certain rational skew group algebras.
In particular $\Cell{G}_\mathbb Q$ is equivalent to an explicitly described semisimple abelian tensor category so its tt-geometry becomes rather straightforward to work out.
(In view of the closely related results of \cite{MeyerNadareishvili24pp}, we believe the analog of \Cref{Thm:intro-rational} should be true already after inverting $|G|$ instead of rationalizing; but we have not verified all details.)

The proof of \Cref{Thm:strat-Cell(C_p)} is considerably more difficult. 

\subsection*{Adjoining arbitrary small coproducts}
We had a choice of how to proceed, indeed of how to approach the problem of establishing stratification of $\Cell{G}$ for general finite groups. 
On the one hand, we could have developed a countable version of the powerful stratification theory of Barthel--Heard--Sanders so it applies to rigidly-compactly${}_{\aleph_1}$ generated tt-categories such as $\Cell{G}$. 
This would have been straightforward but exceedingly long and ultimately not so interesting, with the added problem that \emph{some} techniques and results would necessarily fail (because of the lack of a countable version of Brown representability for covariant homological functors).
On the other hand we could try to embed $\Cell{G}$ into a larger tt-category $\bigCell{G}$ which admits \emph{all} set-indexed coproducts and is genuinely rigidly-compactly generated, in the usual sense, and which therefore is directly amenable to the Barthel--Heard--Sanders techniques.

We have chosen the second approach, because it is more efficient and also because the tt-category $\bigCell{G}$ is of independent interest.
In \Cref{sec:app-big}, we construct $\bigCell{G}$ by closely adapting a recent construction of Bunke--Engel--Land \cite{BEL23pp} which make heavy use of Lurie's $\infty$-categories. 
To wit, we use general $\infty$-categorical techniques to define a certain presentably symmetric monoidal stable $\infty$-category $\inftyCell{G}$ and set $\bigCell{G}:= \Ho \inftyCell{G}$ to be its homotopy category. (For future reference, we actually construct the analog enlargement of the whole $\KK^G$ and do so for general countable groups.) 
For this enlargement to be useful to us, we must take care to extend the usual functorial properties of KK-theory to the larger category, and we must also ensure that the embedding $\Cell{G}\hookrightarrow \bigCell{G}$ preserves all the tt-structure; in particular, it should preserve the countable coproducts of $\Cell{G}$ and it should identify the two tt-subcategories of compact-rigid objects.

Once all this is done, we can prove that $\bigCell{G}$ is stratified (by the usual Balmer--Favi support theory) as planned, by applying a results from \cite{BHS23} and its sequels; see \Cref{Thm:strat-big-one}. 
Finally, by playing with the embedding we deduce from this the wished countable stratification of the original tt-subcategory~$\Cell{G}$.
This last deduction is not entirely trivial and requires some care, but the argument (which requires results form the theory of Neeman's well-generated triangulated categories) is very general and may be of independent interest; we dedicate \Cref{sec:app-comparison} to it.

\begin{Rem}
Results similar to ours have been recently established in topology. 
To wit, the analogues of Conjectures \ref{Conj:spc} and~\ref{Conj:strat} have been proved (for all finite~$G$) in \cite[Lem.\,8.11 and Thm.\,8.8]{BCHNP23pp} for the module category $\Mod_{KU_G}(\Sp^G)$; here $KU_G$ denotes $G$-equivariant topological complex K-theory seen as a commutative algebra in the symmetric monoidal $\infty$-category $\Sp^G$ of genuine $G$-spectra. This module category (or rather its homotopy category) has very similar properties to $\Cell{G}$.
In conversation with Tobias Barthel, Markus Hausmann and Maxime Ramzi, we were made aware of the possibility that our tt-$\infty$-category $\inftyCell{G}$ and $\Mod_{KU_G}(\Sp^G)$ could actually be equivalent. 
This would immediately imply both of our conjectures and would be very interesting in itself, by providing a robust bridge between equivariant topology and equivariant KK-theory.
\end{Rem}

\begin{Ack*}
We thank Ulrich Bunke for instructive conversations on the matter of \Cref{sec:app-big}, as well as  Henning Krause and Akhil Mathew for useful exchanges on the matter of \Cref{sec:app-comparison}.
\end{Ack*}

\section{Tensor triangular preliminaries}
\label{sec:prelim}%

We briefly review some basics on tensor triangulated categories and their geometry, referring to \cite{BalmerICM} for more background and plenty of examples, and to the further references given along the way for more details.
Our goal is simply to establish the language and collect results for ease of reference, hence the more tt-experienced readers may want to skip this section.

\subsection{Tensor triangulated categories} 
\label{subsec:ttcats}
A \emph{tensor triangulated category}, or just \emph{tt-category}, $\cat K$ is a triangulated category in the sense of Verdier (see \cite{Neeman01}) equipped with a \emph{tensor product}, that is a symmetric monoidal structure which is exact in both variables. We write $\Sigma\colon \cat K\overset{\sim}{\to} \cat K$ for the suspension functor of the triangulation, $\otimes\colon \cat K\times \cat K\to \cat K$ for the tensor functor, and $\unit$ or $\unit_\cat K$ for its tensor unit object.
The tt-category $\cat K$ is \emph{essentially small} if it is so as a category (it is equivalent to one with only a set of objects and morphisms).
It is \emph{rigid} if it is so as a symmetric monoidal category, meaning that every object $A\in \cat K$ is \emph{rigid} (or \emph{dualizable}): there exists a $B\in \cat K$ (the \emph{tensor dual} of~$A$) together with maps $\unit \to A\otimes B$ and $B\otimes A\to \unit$ satisfying the zig-zag equations which, in particular, imply that the endofunctors $A\otimes-$ and $B\otimes -$ of $\cat K$ are adjoint to one another on both sides.

A \emph{tensor triangulated functor}, or \emph{tt-functor}, is a functor $F\colon \cat K\to \cat L$ between two tt-categories which is exact and symmetric monoidal.
A \emph{tensor ideal} (or $\otimes$-ideal) $\cat J\subseteq \cat K$ is a triangulated subcategory (all our triangulated subcategories will be full) such that $A\otimes B \in \cat J$ whenever $A\in \cat J$ and $B\in \cat K$. A tensor ideal is \emph{thick} if it is so as a triangulated subcategory, that is if it is closed under retracts.
The full kernel on objects $\ker(F)= \{A\in \cat K\mid F(A)\cong 0\}\subseteq \cat K$ of any tt-functor $F\colon \cat K\to \cat L$ is a thick tensor ideal of~$\cat K$. 
Conversely, the Verdier quotient $\cat K/\cat J$ by a (thick) tensor ideal inherits a unique structure of tt-category making the canonical functor $\cat K\to \cat K/\cat J$ a tt-functor.

\begin{Not}
We write $\Thick{\cat E}$ and $\tensThick{\cat E}$ for the thick subcategory, respectively the thick tensor ideal, of $\cat K$ generated by a family of objects $\cat E\subseteq \cat K$.
\end{Not}

\subsection{The Balmer spectrum}
\label{subsec:Spc}
For any essentially small tt-category $\cat K$, its \emph{(triangular or Balmer) spectrum} is the topological space whose underlying set is
\[
\Spc(\cat K) := \{ \cat P \subsetneq \cat K\mid \cat P \textrm{ is a prime thick tensor ideal }\}
\]
(where \emph{prime} means $A\otimes B \in \cat P \Rightarrow A,B\in \cat P$) and whose topology is generated by the following basis of closed subsets:
\[
\supp(A):= \{ \cat P \in \Spc(\cat P)\mid A\not\in \cat P\} \quad (A\in \cat K).
\]
%
The set $\supp(A)$ is called the \emph{support of~$A$}; it has quasi-compact open complement.
The assignment $A\mapsto \supp(A)$ satisfies a list of basic compatibilities with the operations in $\cat K$, \ie defines a \emph{support data} on~$\cat K$, and the pair $(\Spc(\cat K), \supp)$ is universal (the finest) among such support data.

A subset $S\subseteq \Spc(\cat K)$ is \emph{Thomason} if it is a union of closed subsets with quasi-compact open complements. 
In case $\Spc(\cat K)$ is \emph{noetherian} (\ie every open subset is quasi-compact), Thomason subsets are the same as those subsets $S$ which are \emph{specialization closed} (\ie if $\cat Q \in \overline{\{\cat P\}}$ and $\cat P\in S$ then $\cat Q\in S $.)

The following  classification result is a major early milestone of tt-geometry:

\begin{Thm}[\cite{Balmer05a}]
\label{Thm:classif-tensorid}
Suppose that $\cat K$ is rigid (just for simplicity). Then there is a canonical bijection 
\[ 
\left\{
\begin{array}{c} 
\textrm{thick tensor ideals of } \cat K
\end{array}
\right\}
\overset{\sim}{\longrightarrow}
\left\{
\begin{array}{c} 
\textrm{Thomason subsets of } \Spc(\cat K)
\end{array}
\right\}
\]
which maps a thick ideal $\cat J$ to its support $\supp(\cat J):= \bigcup_{A\in \cat J} \supp(A)$, and 
whose inverse sends a Thomason subset $S$ to the thick tensor ideal
\[
\cat K_{S}:= \{A \in \cat K \mid \supp(A)\subseteq S\}
\]
of all objects supported on~$S$.
\end{Thm}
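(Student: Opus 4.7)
The plan is to verify both maps land in the claimed target sets, dispose of the easy halves of the round-trip identities, and reduce the non-trivial half to a single generation lemma.

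First I would confirm the maps are well-defined. That $\supp(\cat J)$ is Thomason is immediate from its definition as a union $\bigcup_{A\in\cat J}\supp(A)$ of closed subsets, each with quasi-compact open complement by the very definition of the topology on $\Spc(\cat K)$. That $\cat K_S$ is a thick tensor ideal follows from the formal properties of support: compatibility with suspensions, distinguished triangles, direct sums, retracts, and tensor products ensures that the condition $\supp(A)\subseteq S$ is preserved by all the requisite constructions. The easy halves of the round trips are equally formal: $\cat J\subseteq \cat K_{\supp(\cat J)}$ tautologically, and $\supp(\cat K_S)\subseteq S$ because every $A\in\cat K_S$ already satisfies $\supp(A)\subseteq S$.

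For the non-trivial inclusion $S\subseteq \supp(\cat K_S)$, I would use the standard fact that every quasi-compact open subset of $\Spc(\cat K)$ has the form $\Spc(\cat K)\setminus\supp(A)$ for some $A\in\cat K$ (which relies on $\supp(A\otimes B)=\supp(A)\cap\supp(B)$, so that finite unions of basic opens remain basic). This lets me write any Thomason set as $S=\bigcup_\alpha\supp(A_\alpha)$; each $A_\alpha$ then lies in $\cat K_S$, whence $S\subseteq\supp(\cat K_S)$. The main content is therefore the remaining inclusion $\cat K_{\supp(\cat J)}\subseteq\cat J$, which I would reduce to the generation lemma
\[
A\in\tensThick{\cat E}\quad\Longleftrightarrow\quad\supp(A)\subseteq\bigcup_{E\in\cat E}\supp(E)
\]
applied with $\cat E=\cat J$.

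The hard part will be the reverse implication of this lemma, where rigidity of $\cat K$ enters decisively. My plan is to pass to the Verdier quotient $\cat K/\tensThick{\cat E}$: its Balmer spectrum embeds as the open complement $\Spc(\cat K)\setminus\bigcup_{E\in\cat E}\supp(E)$, so the image $\bar A$ of such an $A$ in the quotient has empty support. I would then invoke the tt-local-to-global principle that an object of an essentially small tt-category with empty support must be zero—equivalently, $\bigcap_{\cat P}\cat P=0$ over all prime thick tensor ideals—proved by a Zorn-style argument that constructs, for any nonzero object, a maximal (and, in the rigid setting, automatically prime) ideal avoiding it. Applied to $\bar A$, this forces $A\in\tensThick{\cat E}$, and the theorem assembles.
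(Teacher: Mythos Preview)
The paper does not prove this theorem: it is stated with a citation to \cite{Balmer05a} and used as a black box, so there is no ``paper's own proof'' to compare against. Your outline is essentially Balmer's original argument and is correct in its broad strokes; the one point worth tightening is the step ``empty support implies zero'' in the quotient, where you should make explicit that (i) the quotient of a rigid tt-category by a thick tensor ideal remains rigid, and (ii) in a rigid tt-category every thick tensor ideal is radical (since $A$ is a retract of $A\otimes A^\vee\otimes A$), so that $\bigcap_{\cat P}\cat P=\sqrt{0}=0$ there---without rigidity one only gets tensor-nilpotence of~$\bar A$, which is why the hypothesis is needed.
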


\subsection{Functoriality of the spectrum}
As explained in \cite{Balmer05a}, 
every tt-functor $F\colon \cat K\to \cat L$ of essentially small tt-categories induces a continuous map on spectra $\Spc(F)\colon \Spc(\cat L)\to \Spc(\cat K)$ by $\Spc(F)(\cat P)=F^{-1}(\cat P)$, and the procedure is functorial: $\Spc(F_1 \circ F_2)= \Spc(F_2)\circ \Spc(F_1)$ and $\Spc(\Id_\cat K)=\Id_{\Spc(\cat K)}$.

If $F\colon \cat K \to \cat L$ is fully faithful and \emph{cofinal} (\ie every $A\in \cat L$ is a retract of $F(B)$ for some $B\in \cat K$), the induced map $\Spc(F)$ is a homeomorphism; this applies in particular to the \emph{idempotent completion} $\cat K \hookrightarrow \cat K^\natural$. 
Thus we may assume that our tt-categories are idempotent complete without changing their spectrum.

If $q\colon \cat K\to \cat K/\cat J$ is a Verdier quotient by a tensor ideal, the induced map $\Spc(q)$ restricts to a homeomorphism onto its image: $\Spc(\cat K/\cat L) \overset{\sim}{\to}\{\cat P\mid \cat J\subseteq \cat P\}\subseteq \cat K$.

A tt-category $\cat K$ is \emph{local} if its spectrum has a unique closed point (one admitting no specializations: $\overline{\{\cat P\}}=\{\cat P\}$).
If $\cat K$ is rigid, this point must be the zero tensor ideal~$\{0\}$.
The \emph{local category} of $\cat K$ at a point~$\cat P$ is defined to be $\cat K_\cat P:= (\cat K/\cat P)^\natural$; it is an idempotent complete local tt-category, equpipped with a  canonical tt-functor $q_\cat P\colon \cat K\to \cat K_\cat P$, the quotient by $\cat P$ followed by idempotent completion.

Note that $q_\cat P(A)\not\cong 0 \Leftrightarrow \cat P \in \supp(A)$, which justifies the name `support': a point $\cat P$ lies in the support of $A$ precisely if $A$ survives in the local category at~$\cat P$.

\subsection{The comparison map}
\label{subsec:rho}
Balmer \cite{Balmer10b} introduces for every essentially small tt-category $\cat K$ a continuous map
\[
\rho= \rho_\cat K \colon \Spc(\cat K) \to \Spec(\End_\cat K(\unit)), 
\quad  \rho(\cat P) = \{f\colon \unit \to \unit \mid \cone(f)\not\in \cat P\}
\]
between the triangular spectrum and the Zariski spectrum of the (automatically commutative) endomorphism ring of the tensor unit~$\unit\in \cat K$.
This map is natural, in the sense that every tt-functor $F\colon \cat K\to \cat L$ gives rise to a commutative square
\[
\xymatrix{
\Spc (\cat L)
 \ar[r]^-{\Spc(F)}
  \ar[d]_{\rho_\cat L} &
 \Spc(\cat K)
  \ar[d]^{\rho_\cat K} \\
\Spec(\End_\cat L(\unit))
 \ar[r]^-{\Spec(F)} &
\Spec(\End_\cat K(\unit))
}
\]
of topological spaces, where the bottom map is induced by the ring morphism $F\colon \End_\cat K(\unit)\to \End_\cat L(\unit)$ obtained simply by restricting the functor~$F$.

As for any triangulated category, the suspension functor $\Sigma$ turns $\cat K$ into a $\mathbb Z$-graded category with graded Hom-spaces $\Hom^*_\cat K(A, B):=\Hom_\cat K(A, \Sigma^*B)$.
The tensor product turns the latter into graded (left and right) modules over $\End^*_\cat K(\unit)= \Hom_\cat K(\unit,\Sigma^*\unit)$, which is a graded commutative ring.
(Strictly speaking, in general here `graded commutative' means $f g = \epsilon^{\deg(f)\deg(g)} g f$ on homogeneous elements, where $\epsilon\in \End(\unit)$ denotes the invertible element whose action on $\Sigma(\unit)\otimes \Sigma(\unit)\overset{\sim}{\to}\Sigma(\unit)\otimes \Sigma(\unit)$ is the symmetry coming with the tensor structure. 
Note however that we have $\epsilon= -1$ whenever the tt-category has a model or enhancement of any kind, as well as under the hypothesis in \Cref{Def:big-tt-cat}(b).)
There is a graded version 
\[
\rho^*_\cat K\colon \Spc(\cat K) \to \Spec(\End^*_\cat K(\unit))
\]
of the comparison map, with similar formal properties, where on the right-hand side we now have the Zariski spectrum of homogeneous prime ideals.

Both comparison maps are surjective as soon as $\End^*_\cat K(\unit)$ is noetherian (or just coherent) in the graded sense (\cite[Thm.\,7.3, Cor.\,7.4]{Balmer10b}).
Injectivity holds less often and when it does it is typically much harder to prove, as we shall experience.

\subsection{tt-rings}
\label{subsection:tt-rings}
Given a tt-category~$\cat K$, one can consider monoids (a.k.a.\ algebras or rings) and modules over them, just as in any tensor category. 
Recall that a \emph{monoid} is an object $A$ equipped with a `multiplication' $m\colon A\otimes A\to A$ and `unit' $u\colon \unit \to A$ morphisms satisfying the usual associativity and unitality conditions via diagrams commuting in~$\cat K$, and similarly for modules (by which, unless otherwise stated, we always mean left modules).
Modules over $A$ and their morphisms form a category $\Mod_\cat K(A)$, the \emph{Eilenbert--Moore category} of~$A$. 
In the context of tt-geometry, this category is particularly useful for monoids of the following kind (because then, among other reasons, it essentially inherits a structure of tt-category):

\begin{Def} [\cite{Balmer14}]
\label{Def:tt-ring}
A \emph{tt-ring} in $\cat K$ is a monoid $(A,m,u)$ which is \emph{commutative}, \ie it satisfying $m\circ \sigma = m$ where $\sigma \colon A\otimes A\overset{\sim}{\to} A \otimes A$ is the symmetry of the tensor structure, and \emph{separable}, in the sense\footnote{This sense of `separable' has strictly nothing to do with the point-set topological meaning of the word used in the context of C*-algebras (in the sense of admitting a countable dense subset).} that its multiplication $m$ admits a section $s\colon A\to A\otimes A$ which is a morphism of both left and right $A$-modules.
\end{Def}

Balmer \cite{Balmer14} introduces a notion of \emph{degree} $\deg(A)\in \mathbb N\cup \{+\infty\}$ for tt-rings~$A$ in an idempotent-complete tt-category~$\cat K$, which we use as a black box and will not need to define.
There exist tt-rings of infinite degree (\cite{Gomez23pp}), and having finite degree is a necessary hypothesis for applying certain results. 
Later we will need the next fact:

\begin{Lem}
\label{Lem:degree}
Let $A_1, \ldots,A_n $ be tt-rings in an (idempotent complete) tt-category $\cat K$, all of which have finite degree. 
Then $\deg (A_1 \times \ldots \times A_n)\leq \deg(A_1)+\ldots + \deg(A_n)$.
\end{Lem}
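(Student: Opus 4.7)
A straightforward induction on $n$ reduces the statement to the case $n=2$, so the task is to show $\deg(A_1 \times A_2) \leq \deg(A_1) + \deg(A_2)$ for two tt-rings of finite degree.

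First, I would identify the product of commutative separable rings concretely inside~$\cat K$: since $\cat K$ is idempotent complete, the product $A_1\times A_2$ in the category of commutative monoids is carried by the biproduct $A_1 \oplus A_2$, with componentwise unit and multiplication (the two orthogonal idempotents $\unit \to A_i \to \unit$ splitting off the factors live in $\End_\cat K(\unit)$ and pull back to the required separability structure). This lets me treat the product tt-ring as a direct-sum decomposition in~$\cat K$ while retaining the ring-theoretic viewpoint.

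Next, I would invoke Balmer's description of the degree via the splitting tower of \cite{Balmer14}: $\deg(A)\leq d$ means that after $d$ iterations of the base-change-and-split-off-a-copy-of-$\unit$ construction, the residual tt-ring becomes zero. The plan is to run this procedure on $A_1 \oplus A_2$ and observe that at each stage the construction respects the biproduct decomposition — the splitting idempotents for $A_1$ and $A_2$ remain orthogonal throughout — so the tower for $A_1\oplus A_2$ splits as the termwise product of the towers for $A_1$ and $A_2$. Consequently it becomes trivial as soon as \emph{both} individual towers do, which happens after at most $\max(\deg A_1,\deg A_2) \leq \deg A_1 + \deg A_2$ steps, giving the claimed bound (and in fact the slightly sharper one with $\max$ in place of the sum).

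I expect the main obstacle to be the bookkeeping that makes precise the statement ``the splitting tower of $A_1 \oplus A_2$ is the termwise product of the individual splitting towers'', because the construction in \cite{Balmer14} is phrased up to the canonical choices of its ingredients, not strictly. If this turns too delicate, a cleaner alternative is to use Balmer's equivalent characterization of degree in terms of the rank function of the separability idempotent on the universal localizations $q_\cat P\colon \cat K\to \cat K_\cat P$: the rank is additive on biproducts of tt-rings ($\mathrm{rk}(A_1\oplus A_2)=\mathrm{rk}(A_1)+\mathrm{rk}(A_2)$ at every point), and taking the supremum over $\cat P\in \Spc(\cat K)$ yields the required inequality immediately.
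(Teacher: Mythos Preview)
Your main approach contains a genuine error: the claimed bound $\deg(A_1\times A_2)\leq \max(\deg A_1,\deg A_2)$ is false. Take $A_1=A_2=\unit$ in any local idempotent-complete tt-category (e.g.\ perfect complexes over a field). Each factor has degree~$1$, but $\unit\times\unit$ has degree~$2$ by \cite[Thm.\,3.9(a)]{Balmer14} (indeed by \cite[Cor.\,3.12]{Balmer14} the degree of a product in a local category is the \emph{sum}). The step that fails is the assertion that the splitting tower of $A_1\oplus A_2$ is the termwise product of the individual towers: at each stage the construction splits off a single copy of the unit in the module category, and this diagonal copy does not respect the biproduct decomposition. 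For $\unit\times\unit$ you must peel off two units successively, not one in each factor simultaneously.

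Your alternative approach---reduce to local categories via $q_\cat P$, use additivity of degree there, and take the supremum---is exactly what the paper does. The paper's proof invokes \cite[Thm.\,3.7(a)]{Balmer14} (degree does not increase along tt-functors) to get $\deg(q_\cat P A_i)\leq \deg(A_i)$, then \cite[Cor.\,3.12]{Balmer14} (additivity in the local case) to compute $\deg(q_\cat P(A_1\times A_2))=\deg(q_\cat P A_1)+\deg(q_\cat P A_2)<\infty$, and finally \cite[Thm.\,3.8]{Balmer14} to identify $\deg(A_1\times A_2)$ with the maximum of these local values. Note that the last citation requires knowing \emph{in advance} that each $q_\cat P(A_1\times A_2)$ has finite degree, which is why the additivity step must precede it; your sketch elides this ordering but otherwise matches the paper.
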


\begin{proof}
We prove the case $n=2$ and the general one follows by an easy induction.
Write $A:=A_1 \times A_2$, and consider the tt-functor $q_\cat K\colon \cat K\to \cat K/\cat P\subseteq (\cat K/\cat P)^\natural$ to the local category  at a point $\cat P\in \Spc(\cat K) $ (\Cref{subsec:rho}).
By \cite[Thm.\,3.7(a)]{Balmer14} the degree cannot increase along tt-functors, so we have $\deg(q_\cat P (A_i))\leq \deg(A_i)$.
In particular the $q_\cat P(A_i)$ all have finite degree, and since the tt-category $(\cat K/\cat P)^\natural$ is local, by \cite[Cor.\,3.12]{Balmer14} we have an equality
\[
\deg (q_\cat P(A)) 
=  \deg (q_\cat P(A_1) \times q_\cat P (A_2))
= \deg(q_\cat P A_1)  + \deg(q_\cat P A_2) < +\infty.
\]
In particular $q_\cat P(A)$ has finite degree for all $\cat P$, hence by \cite[Thm.\,3.8]{Balmer14} we have
\[
\deg(A) = \max_{\cat P \in \Spc(\cat K)} \deg(q_\cat P(A))
\]
and therefore by combining the two
\begin{align*}
\deg(A) &= \max_{\cat P \in \Spc(\cat K)} \deg(q_\cat P(A_1)) + \deg(q_\cat P(A_2)) \\
& \leq \max_{\cat P \in \Spc(\cat K)} \deg(A_1) + \deg(A_2) = \deg(A_1) + \deg(A_2) < +\infty
\end{align*}
as claimed.
\end{proof}

\subsection{Big tt-categories}
\label{subsec:bit-ttcats}

As pointed out in the Introduction, many essentially small tt-categories $\cat K$ occur in nature as subcategories of compact and/or rigid objects in `big' tt-categories.
In the following, suppose that $\cat T$ is a tt-category admitting arbitrary small coproducts (all those indexed by some small set, not a proper class).

A triangulated subcategory $\cat L\subseteq \cat T$ is \emph{localizing} if it is closed under the formation of small coproducts. 
Localizing subcategories are automatically thick (in fact, more generally, any triangulated category admitting arbitrary countable coproducts is idempotent complete by \cite[Prop.\,1.6.8]{Neeman01}).
A \emph{localizing tensor ideal} is a localizing subcategory which is also a tensor ideal.

\begin{Not} 
\label{Not:loc}
We write $\Loc{\cat E}$ and $\tensLoc{\cat E}$ for the localising subcategory, respectively the localising tensor ideal, of $\cat T$ generated by a family of objects $\cat E\subseteq \cat T$.
\end{Not}

An object $C\in \cat T$ is \emph{compact} if the functor $\cat T(C,-)\colon \cat T\to \Ab$ to abelian groups preserves coproducts.
We write $\cat T^c$ for the (full triangulated) subcategory of compact objects of~$\cat T$. 
The triangulated category $\cat T$ is said to be \emph{compactly generated} if there exists a small set $\cat G\subseteq \cat T^c$ of compacts such that $\Loc{\cat G}=\cat T$; 
by results of Neeman \cite{Neeman96}, the condition that $\Loc{\cat G}=\cat T$ is equivalent to the (a~priori weaker) requirement that for all $A\in \cat T$, if $\cat T^*(C,A)\cong 0$ for all $C\in \cat G$ then $A\cong 0$. 
If $\cat T$ is compactly generated then $\cat T^c$ is essentially small, and of course $\Loc{\cat T^c}=\cat T$.

\begin{Def}
\label{Def:big-tt-cat}
A tt-category $\cat T$ with small coproducts is \emph{rigidly-compactly generated}, or is a \emph{big tt-category} for short, if it satisfies the following:
\begin{enumerate}
\item $\cat T$ is compactly generated.
\item The tensor structure and triangulation satisfy the compatibility conditions in \cite[App.\,A]{HoveyPalmieriStrickland97}; in particular, $\otimes$ preserves coproducts in both variables.
\item The rigid and compact objects of $\cat T$ coincide (in particular $\unit \in \cat T^c$).
\end{enumerate}
Since rigid objects always form a tensor subcategory and compact objects a triangulated subcategory, it follows that $\cat T^c$ is a tt-subcategory of~$\cat T$.
\end{Def}

Big tt-categories in the above sense have been intensely studied in recent years, in particular in relation to stratification as explained in \Cref{sec:strat-prelim}.

\subsection{Countably big tt-categories}
A version of big tt-categories which only admit $\alpha$-small coproducts for a given regular cardinal $\alpha$ was introduced in \cite{DellAmbrogio10}. 
 We only mention here the countable version which is relevant to KK-theory.
 
 \begin{Not}
 \label{Not:cbly-loc} 
 Suppose $\cat T$ admits arbitrary countable   (\ie $\aleph_1$-small) coproducts. 
 A \emph{localizing${}_{\aleph_1}$} (or \emph{$\aleph_1$-localizing}) subcategory of $\cat T$ is a triangulated subcategory which is closed under countable coproducts (and therefore is automatically thick).
 The decorated analogues $\Locc{\cat E}$ and $\tensLocc{\cat E}$ of \Cref{Not:loc} will denote the smallest localizing${}_{\aleph_1}$, resp.\ localizing${}_{\aleph_1}$ tensor ideal, subcategory generated by a family of objects~$\cat E\subseteq \cat T$.  
 \end{Not}

\begin{Def}
\label{Def:cbly-cptly-gen}
 Suppose that $\cat T$ is a triangulated category admitting arbitrary countable coproducts. 
An object $A\in \cat T$ is \emph{compact${}_{\aleph_1}$} if the functor $\cat T(A,-)\colon \cat T\to \Ab$ takes values in countable abelian groups and commutes with countable coproducts.
We say that $\cat T$ is \emph{compactly${}_{\aleph_1}$ generated} if there exists a countable set $\cat G\subseteq \cat T$ of compact${}_{\aleph_1}$ objects such that 
$\Locc{\cat G}= \cat T$ (\Cref{Not:cbly-loc}).
\end{Def}

As first observed in \cite{MeyerNest06}, compactly${}_{\aleph_1}$ generated categories still satisfy the basic Brown representability theorem and its many consequences which make Neeman's usual compactly generated categories so useful (see \cite[\S2.1]{DellAmbrogio10} for details).

\begin{Def}
\label{Def:cbly-big-ttcat}
Suppose now that $\cat T$ is a tt-category admitting all countable coproducts. 
We say $\cat T$ is \emph{rigidly-compactly${}_{\aleph_1}$ generated}, or is a \emph{countably big tt-category}, if it satisfies the following:
\begin{enumerate}
\item $\cat T$ is compactly${}_{\aleph_1}$ generated (\Cref{Def:cbly-cptly-gen}).
\item The tensor functor $\otimes$ preserves coproducts in both variables.
\item The rigid objects and compact${}_{\aleph_1}$ objects of $\cat T$ coincide.
\end{enumerate}
As in the case of `genuine' big tt-categories, if rigid and compact${}_{\aleph_1}$ objects agree in $\cat T$ they form a full tt-subcategory $\cat T$ which is essentially small, in fact which admits a countable skeleton (\ie it is equivalent to a tt-category with only countably many objects and morphisms). 
\end{Def}

\begin{Not}
We will still write $\cat T^c\subseteq \cat T$ for the tt-subcategory of compact${}_{\aleph_1}$-rigid objects.
In practice, we will often drop the decorations ${(\ldots)}_{\aleph_1}$ in `compact${}_{\aleph_1}$', `$\Locc{-}$' etc.\ if there is no ambiguity about the ambient category~$\cat T$. 
\end{Not}

\begin{War} \label{War:cardinals}
In the Appendices 
we will use Neeman's notion of \emph{well-generated} triangulated category (\cite{Neeman01}), a generalization of compactly generated categories which still have all small coproducts but admit more general sets of generators.
We will use this theory as a black box, avoiding further recollections. 
Beware that well-generated categories have their own established cardinal-related terminology, and the above use of sub-index decorations $(\ldots)_{\aleph_1}$ is meant to clearly avoid conflicting with it.
Unfortunately, in \Cref{sec:app-big} we will also make heavy use of the cardinal-relative language from Lurie's theory of $\infty$-categories, which itself is well-established and similar but only partially compatible with that of well-generated categories.
\end{War}

\section{Tensor triangulated categories of equivariant KK-theory}
\label{sec:KK-prelim}%

Let $G$ be a second countable locally compact group.
It was first proved, and exploited, by Meyer--Nest \cite{MeyerNest06} that Kasparov's~\cite{Kasparov88} $G$-equivariant bivariant K-theory (or `KK-theory') forms a tensor triangulated category~$\KK^G$.
In this section we recall a few basic features of equivariant KK-theory seen as a tt-category, referring to \cite{MeyerNest06} \cite{Meyer08} \cite{Blackadar98} for details. 
We also record a few more recent results, including possibly new observations (\Cref{Lem:End-finite}, \Cref{Prop:finite-degree}).

\subsection{Basic tt-structure}
The objects $\KK^G$ are separable complex $G$-C*-algebras, its (graded) Hom groups are Kasparov's KK-theory groups, and its composition and tensor product encode the so-called Kasparov product. There is a canonical functor $\Cstarsep{G}\to \KK^G$ from the category of separable $G$-C*-algebras and $G$-equivariant *-homomorphisms which is the identity on objects and satisfies a universal property (the latter will be exploited in \Cref{sec:app-big}), from which it follows that the category $\KK^G$ is essentially small.
Countable ($c_0$-)direct sums of separable $G$-C*-algebras are again separable $G$-C*-algebras (with coordinatewise $G$-action) and provide countable coproducts in the category~$\KK^G$.

The suspension functor of $\KK^G$ is given by $\Sigma\colon A\mapsto \Cont_0(\mathbb R, A)$ and its distinguished triangles arise from semi-split extensions or, equivalently, mapping cone sequences.
Bott periodicity becomes an equivalence $\Sigma \circ \Sigma \cong \Id_{\KK^G}$.
The tensor structure $-\otimes-$ extends the algebra-level minimal tensor product of C*-algebras equipped with the diagonal $G$-action, and its unit object is $\unit = \mathbb C$, the algebra of complex numbers with trivial $G$-action. 

If $G$ is compact, the graded commutative endomorphism ring of the unit is
\[
\End^*_{\KK^G}(\unit)\cong \Rep(G)[\beta^{\pm1}]
\]
where $\beta$  (of degree two) is the invertible Bott element and $\Rep(G)\cong \End(\unit)$ (in degree zero) is the complex representation ring of~$G$. 
(If $G$ is non-compact, the nature of this ring generally remains mysterious and is connected to deep questions such as the Baum--Connes conjecture.)

\subsection{Functoriality in the group}
\label{subsec:KK-fun-G}
Equivariant KK-theory is equipped with a rich functoriality in the group variable, which can even be neatly extended to groupoids.
We will only explicitly use a small part of this structure, namely:
\begin{itemize}
\item The restriction functor $\Res^G_H\colon \KK^G\to \KK^H$, for $H\leq G$ a closed subgroup.
\item The induction functor $\Ind^G_H\colon \KK^H\to \KK^G$, for $H\leq G$ a closed subgroup.
\item The conjugation equivalences $\Con_g=\Con_{g,H}\colon \KK^H\overset{\sim}{\to} \KK^{{}^gH}$, for $H\leq G$ a closed subgroup and $g\in G$ an element, where ${}^gH := gHg^{-1}$ denotes the conjugate subgroup. (On algebras, $\Con_{g}$ sends an $H$-C*-algebra $A$ to the same C*-algebra equipped with the ${}^gH$-action $ghg^{-1}\cdot a:=ga$.)
\end{itemize}
All these functors can be constructed at the level of C*-algebras and then extended to KK-theory using its universal property. 
Both restriction and conjugation functors are special cases of restricting an action along a continuous group morphism with closed image.
Once extended, these functors have nice properties, such as:
\begin{itemize}
\item All three kinds of functors are exact and preserve countable coproducts.
\item Restriction and conjugation functors are symmetric monoidal, hence are tt-functors of tt-categories (\Cref{subsec:ttcats}).
\item When $G/H$ is compact, $\Ind^G_H$ is a \emph{right} adjoint of $\Res^G_H$:
\[ 
\KK^G(A, \Ind^G_H(B))\cong \KK^H(B, \Res^G_H(A)). 
\]
\item When $G/H$ is discrete, $\Ind^G_H$ is a \emph{left} adjoint of $\Res^G_H$:
\[ 
\KK^G(\Ind^G_H(B), A)\cong \KK^H(B, \Res^G_H(A)). 
\]
\item There is a `projection formula' natural isomorphism
\begin{equation} \label{eq:KK-proj-formula}
\Ind^G_H( \Res^G_H(A) \otimes B) \cong A \otimes \Ind^G_H(B).
\end{equation}
\end{itemize}
Moreover, these functors are related by several functoriality and commutation natural isomorphisms, which can be easily checked at the algebra level, such as restriction and induction in stages 
\begin{align*}
& \Res^G_H \circ \Res^H_L   \cong \Res^G_L , \quad \Ind_H^G \circ \Ind_L^H \cong \Ind_L^G \quad (L\leq H\leq G),
\end{align*}
as well as 
\[
\Con_{g,L} \circ \Res^H_L \cong \Res^{{}^gH}_{{}^gL}\circ \Con_{g,H}, \quad \Con_{g,G} \cong \Id_{\KK^G} \quad (L\leq H\leq G),
\]
(used in the proof of \Cref{Thm:Spc-prime-order}), among many others.
\begin{Rem} \label{Rem:G2F}
It is possible to show that the above structure turns the assignment $G\mapsto \KK^G$ (when restricted to \emph{finite} groups~$G$) into a Green 2-functor in the sense of~\cite{DellAmbrogio22}.
We will not make use of this fact, although it would provide alternative proofs for some results obtained later on.
\end{Rem}

\subsection{$G$-cell algebras and the equivariant bootstrap category}
As mentioned in the Introduction, $\KK^G$ does not seem to admit any good generating set. 
To remedy this, one can choose a suitable set of objects and replace $\KK^G$ with the smaller, but more tractable, localizing subcategory generated by this set (\Cref{Not:loc}).

\begin{Def}[{\cite[Def.\,2.2]{DellAmbrogio14}}]
\label{def.CellCat}
	Let $G$ be a locally compact group. 
	A \emph{(standard) orbit algebra} is a commutative $G$-C$^*$-algebra of the form $C_0(G/H)$, for some closed subgroup $H\leq G$. We denote by $\Cell{G}$ the localising subcategory of $\KK^G$ generated by orbit $G$-cell algebras:
		$$\Cell{G}:=\Loc{\Cont_0(G/H)\mid H \textrm{ a closed subgroup of } G}\quad \subset \quad \KK^G.$$
		The objects in $\Cell{G}$ are called \emph{$G$-cell algebras}. 
\end{Def}

As a localising subcategory of $\KK^G$, $\Cell{G}$ is still an essentially small category admitting all countable coproducts. Observe that $\Cell{G}$ contains the tensor unit $\mathbb C = \Cont_0(G/G) $. 
One can show, at least when $G$ is discrete or a compact Lie group, that $\Cell{G}$ is closed under the tensor product and therefore defines a tt-subcategory of $\KK^G$; and moreover the restriction, induction and conjugation functors between equivariant Kasparov categories recalled in \Cref{subsec:KK-fun-G} all restrict to exact functors and/or tt-functors between the corresponding categories of cell algebras (see \cite[Proposition 2.7]{DellAmbrogio14}).

The case of finite groups is particularly nice, because of the following observation:

\begin{Prop} [{\cite[Prop.\,2.9]{DellAmbrogio14}}]
\label{Prop:Cell(G)-cble-bigtt}
Let $G$ be any finite group.
Then $\Cell{G}$ is a rigidly-compactly${}_{\aleph_1}$ generated tt-category (\Cref{Def:cbly-big-ttcat}), with orbit $G$-cell algebras $\Cont_0(G/H)= \Cont(G/H)$ $(H\leq G)$ providing a finite set of compact${}_{\aleph_1}$ and rigid objects (each $\Cont(G/H)$ is its own tensor dual).
In particular $\Cell{G}^c$ is a rigid essentially small tt-category.
\end{Prop}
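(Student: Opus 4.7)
My plan is to check the three defining conditions of \Cref{Def:cbly-big-ttcat} for $\cat T = \Cell{G}$, using the finite generating set $\cat G := \{\Cont(G/H) \mid H \leq G\}$ (finite because $G$ has only finitely many subgroups). Throughout, the crucial structural input is that for $G$ finite every homogeneous space $G/H$ is simultaneously compact and discrete, so by \Cref{subsec:KK-fun-G} the induction $\Ind^G_H$ is \emph{ambidextrous}: it is both the left and the right adjoint of $\Res^G_H$. Also recall $\Cont(G/H) = \Ind^G_H(\unit)$.

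\textbf{Step 1 (tensor structure).} I first check that $\Cell{G}$ is closed under $\otimes$ inside $\KK^G$. The projection formula \eqref{eq:KK-proj-formula} gives
\[
\Cont(G/H) \otimes \Cont(G/K) \;=\; \Ind^G_H(\unit) \otimes \Ind^G_K(\unit) \;\cong\; \Ind^G_H\bigl(\Res^G_H \Ind^G_K(\unit)\bigr),
\]
and by the Mackey decomposition at the C*-algebra level, $\Res^G_H \Cont(G/K)$ is a finite direct sum of orbit $H$-algebras $\Cont(H/L)$; applying $\Ind^G_H$ brings us back to a finite sum of orbit $G$-algebras, which lies in $\Cell{G}$. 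Since both factors of $\otimes$ preserve triangles and countable coproducts in $\KK^G$, a standard two-variable ``generation'' argument (fix one variable in $\cat G$, vary the other in $\Cell{G}$, then swap) shows $A\otimes B\in \Cell{G}$ for all $A,B\in\Cell{G}$. This simultaneously establishes condition (b) of \Cref{Def:cbly-big-ttcat}, since countable coproducts in $\KK^G$ are the $c_0$-sums and the minimal tensor product preserves these.

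\textbf{Step 2 (compact${}_{\aleph_1}$ generation).} I check that each generator $\Cont(G/H)$ is compact${}_{\aleph_1}$ in $\Cell{G}$. Using that $\Ind^G_H$ is the left adjoint of $\Res^G_H$ (because $G/H$ is discrete), we have for every $A\in\Cell{G}$
\[
\KK^G\bigl(\Cont(G/H),A\bigr) \;\cong\; \KK^H\bigl(\unit,\Res^G_H A\bigr) \;=\; \Kth^H_0(\Res^G_H A).
\]
The functor $\Res^G_H$ preserves countable coproducts (\Cref{subsec:KK-fun-G}), equivariant K-theory of a separable C*-algebra preserves countable coproducts and takes countable values, so $\KK^G(\Cont(G/H),-)$ has the required two properties. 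Combined with the defining equality $\Locc{\cat G}=\Cell{G}$, this yields condition (a). The tensor unit $\unit = \Cont(G/G)$ is in particular compact${}_{\aleph_1}$.

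\textbf{Step 3 (rigidity of generators and identification compact${}_{\aleph_1}$\,=\,rigid).} For each subgroup $H$ I construct an evaluation $\varepsilon\colon \Cont(G/H)\otimes \Cont(G/H)\to \unit$ and coevaluation $\eta\colon\unit\to \Cont(G/H)\otimes \Cont(G/H)$ directly at the C*-algebra level: identify $\Cont(G/H)\otimes\Cont(G/H)\cong \Cont(G/H\times G/H)$ with diagonal $G$-action, take $\varepsilon$ to be restriction along the diagonal $G/H\hookrightarrow G/H\times G/H$ followed by the trace $\Cont(G/H)\to \unit$ summing over $G/H$, and take $\eta$ to be the KK-class of extension by zero from the diagonal. (These are not both $*$-homomorphisms, but the $\eta$ side is the standard KK-class associated with a finite $G$-set.) The zig-zag identities then reduce to a finite combinatorial check on $G/H$. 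Thus $\Cont(G/H)$ is rigid, and self-dual. Alternatively, ambidexterity of $\Ind^G_H$ together with the projection-formula identification $\Cont(G/H)\otimes -\cong \Ind^G_H\Res^G_H$ gives both a left and a right adjoint to $\Cont(G/H)\otimes-$ in $\Cell{G}$, which forces rigidity with self-dual.

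\textbf{Step 4 (condition (c)).} It remains to check the two inclusions between compact${}_{\aleph_1}$ and rigid objects in $\Cell{G}$. Rigid objects form a thick tensor subcategory, and by Step 3 it contains all the generators $\Cont(G/H)$; hence it contains the thick subcategory they generate, which by the compact${}_{\aleph_1}$ version of Neeman's theorem (valid in compactly${}_{\aleph_1}$ generated triangulated categories, see \cite[\S2.1]{DellAmbrogio10}) coincides with $\Cell{G}^c$. Conversely, if $A\in\Cell{G}$ is rigid with dual $A^\vee$, then
\[
\KK^G(A,-) \;\cong\; \KK^G(\unit, A^\vee\otimes -)
\]
preserves countable coproducts, because $\otimes$ does (Step 1) and $\unit$ is compact${}_{\aleph_1}$ (Step 2); so rigid objects are compact${}_{\aleph_1}$. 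This gives condition (c) and completes the verification, with the bonus that $\cat G\subseteq \Cell{G}^c$ is itself a finite rigid generating set, so $\Cell{G}^c$ has a countable (indeed finite) skeleton.

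The main obstacle I expect is Step 3: producing honest $\eta$ and $\varepsilon$ and verifying the zig-zags at the C*-algebra/KK level, rather than simply invoking ambidexterity, requires some care because the would-be coevaluation is not an algebra map. However, the combinatorial finiteness of $G/H$ makes the zig-zag identities tractable, and the ambidexterity route provides a reassuring cross-check.
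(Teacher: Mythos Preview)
The paper does not supply its own proof of this proposition: it is stated with a citation to \cite[Prop.\,2.9]{DellAmbrogio14} and nothing more. Your proposal therefore cannot be compared to an in-paper argument; what can be said is that your verification is correct and follows the expected route (essentially the one in the cited reference): check tensor-closure via the Mackey/projection formulas, compact${}_{\aleph_1}$-ness of the generators via the $\Ind^G_H\dashv \Res^G_H$ adjunction and countability of equivariant K-theory of separable algebras, self-duality of $\Cont(G/H)$, and then the standard thick-subcategory argument for rigid $=$ compact${}_{\aleph_1}$.

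One small point worth tightening is Step~3. Your description of $\eta$ and $\varepsilon$ is slightly garbled: in fact the coevaluation $\unit\to \Cont(G/H\times G/H)$, $1\mapsto \chi_{\mathrm{diag}}$, \emph{is} a $*$-homomorphism (the characteristic function of the diagonal is a projection), while it is the evaluation side that requires a transfer-type KK-class (the ``sum over $G/H$'' map $\Cont(G/H)\to\mathbb C$ is not multiplicative). Your alternative ambidexterity argument also works but is not quite as automatic as you suggest: knowing that $\Cont(G/H)\otimes-\cong \Ind^G_H\Res^G_H$ is self-adjoint only yields rigidity once you observe that this adjunction is $\cat T$-linear (i.e.\ the unit and counit are of the form $(-)\otimes\eta_\unit$ and $(-)\otimes\varepsilon_\unit$), which here follows from the projection formula. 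With that caveat, the argument is sound.
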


The case $\Cell{1}= \Loc{\mathbb C}\subset \KK$ of the trivial group is the classical \emph{bootstrap category} of Rosenberg and Schochet \cite{RosenbergSchochet86}, which has many useful characterizations.
Indeed, there are multiple ways one may wish to generalize the bootstrap category to the equivariant setting, for instance by choosing different generating sets of objects than orbit algebras.
For finite groups, it turns out that several of these possible generalizations agree:

\begin{Thm} \label{Thm:cyclic-generation}
When~$G$ is finite, the following full subcategories of $\KK^G$ coincide:
\begin{enumerate}
\item The \emph{$G$-equivariant bootstrap category} defined in~\cite{DEM14}, that is those separable $G$-C*-algebras $\KK^G$-equivalent to a $G$-action on a Type~I C*-algebra.
\item The localizing subcategory generated by \emph{elementary} $G$-C*-algebras, \ie those of the form $\Ind^G_H(B)$ where $H\leq G$ is a subgroup and where $B\in \KK^H$ has for its underlying C*-algebra one of the matrix algebras~$M_n(\mathbb C)$ ($n\in \mathbb N$). 
\item $\Cell{G} = \Loc{\Cont(G/H)\mid H\leq G}$, the above category of $G$-cell algebras.
\item $\Loc{\Cont(G/H)\mid H\leq G \; \mathrm{ cyclic } }$, where we only take cyclic subgroups of~$G$.
\end{enumerate}
\end{Thm}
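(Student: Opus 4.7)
The plan is to establish the four equivalences by a chain of inclusions, closing the loop via one external generation theorem and one piece of internal structural analysis. The \emph{easy} inclusions $(4) \subseteq (3) \subseteq (2) \subseteq (1)$ are essentially definitional: cyclic subgroups are subgroups, so $(4) \subseteq (3)$; the identity $\Cont(G/H) = \Ind^G_H(\mathbb{C}) = \Ind^G_H(M_1(\mathbb{C}))$ exhibits every cell generator as elementary, giving $(3) \subseteq (2)$; and every elementary algebra $\Ind^G_H(B)$ with underlying $M_n(\mathbb{C})$ is a separable Type~I $G$-C*-algebra (the class of $G$-Type~I algebras being closed under induction from closed subgroups), hence lies in the bootstrap class~(1).

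The inclusion $(1) \subseteq (2)$, which gives $(1) = (2)$, is the main theorem of \cite{DEM14}: using the composition series of a $G$-Type~I algebra together with Dixmier--Douady-type information on its elementary subquotients, one inductively realises every bootstrap $G$-algebra from elementary pieces via extensions and countable coproducts. The refinement $(3) \subseteq (4)$ is the Artin-style cyclic generation supplied by Arano--Kubota \cite{AranoKubota18}: their theorem provides, for each subgroup $H \leq G$, an explicit expression of $\Cont(G/H)$ inside the localising (indeed thick) subcategory generated by the $\Cont(G/C)$ with $C \leq G$ cyclic, constructed in the spirit of Artin's induction theorem for characters but realised at the level of equivariant KK-morphisms.

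The main obstacle is the remaining step $(2) \subseteq (3)$: showing that every elementary $\Ind^G_H(B)$ is a $G$-cell algebra. By functoriality of $\Ind^G_H$ on cell algebras (\Cref{subsec:KK-fun-G}), this reduces to showing $B \in \Cell{H}$ for every $H$-action on $M_n(\mathbb{C})$. For trivial or inner $H$-actions this follows from equivariant Morita invariance $M_n(\mathbb{C}) \sim \mathbb{C}$. The delicate case is a genuinely projective action, classified by a nontrivial class $\alpha \in H^2(H;\mathbb{C}^*)$: the plan is to lift the projective representation along a finite central extension $\widetilde{H} \twoheadrightarrow H$ to an honest $\widetilde{H}$-representation on $\mathbb{C}^n$, so that the pullback of $B$ becomes $\widetilde{H}$-equivariantly Morita equivalent to $\mathbb{C}$ endowed with a linear $\widetilde{H}$-action, and then to descend back to $H$ using either a fixed-point construction or an idempotent-splitting argument inside $\KK^H$. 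Verifying that this descent actually lands in $\Cell{H}$ — as opposed to a merely KK-equivalent object — is where the real work of the theorem lies.
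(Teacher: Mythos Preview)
Your chain of easy inclusions $(4) \subseteq (3) \subseteq (2) \subseteq (1)$ matches the paper. The paper, however, closes the loop differently: rather than establishing $(1) \subseteq (2)$, $(2) \subseteq (3)$, $(3) \subseteq (4)$ as three separate steps, it cites \cite[Cor.\,3.3]{MeyerNadareishvili24pp} for the single inclusion $(1) \subseteq (4)$. The Meyer--Nadareishvili argument in turn rests on Arano--Kubota's \cite[Cor.\,3.13]{AranoKubota18}, which is a \emph{conservativity} statement --- if $\Res^G_C(A) \cong 0$ in $\KK^C$ for every cyclic $C \leq G$ then $A \cong 0$ in $\KK^G$ --- and not the explicit Artin-style thick-subcategory generation of $\Cont(G/H)$ that you attribute to them for $(3) \subseteq (4)$. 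Whether Arano--Kubota also supply that explicit form is something you would need to verify separately; the paper uses only the conservativity corollary.

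More seriously, your decomposition forces you to confront $(2) \subseteq (3)$ head-on, a step the paper's route bypasses entirely, and your central-extension sketch has a genuine gap at the descent. Passing from $\widetilde H$ to $H$ along a \emph{quotient} map is not governed by the subgroup restriction/induction adjunctions of \Cref{subsec:KK-fun-G}, and knowing that the inflation $\Infl^{\widetilde H}_H B$ lies in $\Cell{\widetilde H}$ does not by itself imply $B \in \Cell{H}$: the preimage of $\Cell{\widetilde H}$ under inflation may well be strictly larger than $\Cell{H}$. Your phrasing of the residual difficulty is also slightly off: $\Cell{H}$ is by construction a full subcategory of $\KK^H$ closed under $\KK^H$-equivalence, so there is no distinction between ``is a cell algebra'' and ``is $\KK$-equivalent to one''. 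The real issue is whether any descent construction you write down expresses $B$ in terms of the generators $\Cont(H/K)$ \emph{inside} $\KK^H$ at all, and your proposal does not explain how it would.
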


\begin{proof}
This is proved in \cite{MeyerNadareishvili24pp}.
The inclusions (a)$\supseteq$(b)$\supseteq$(c)$\supseteq$(d) are easy, and the equivalence of~$(a)$ and~$(d)$ is \cite[Cor.\,3.3]{MeyerNadareishvili24pp}.
The proof is based on a strong result of Arano--Kubota stating that if $A\in \KK^G$ is such that $\Res^G_H(A)\cong 0$ in $\KK^H$ for every cyclic subgroup $H\leq G$, then $A\cong 0$ in $\KK^G$ (see \cite[Cor.\,3.13]{AranoKubota18}; their result applies more generally to compact Lie groups $G$ and $\sigma$-C*-algebras $A,B$).
\end{proof}

In this article we will only use the equivalence of (c) and~(d).
However, in view of the equivalence with~(a) we also refer to $\Cell{G}$ as the equivariant bootstrap category of the finite group~$G$.
We record a nice feature of its compact objects:

\begin{Lem} \label{Lem:End-finite}
For every finite group $G$,
the graded endomorphism ring  of the tensor unit $\End^*(\unit)\cong \Rep(G)[\beta^{\pm1}]$ is noetherian, and the graded module $\End^*(A)=\KK^G_*(A,A)$ over it is finitely generated for every compact${}_{\aleph_1}$ object $A\in \Cell{G}^c$.
\end{Lem}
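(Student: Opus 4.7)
Noetherianness of $R := \End^*_{\Cell{G}}(\unit) \cong \Rep(G)[\beta^{\pm 1}]$ is immediate from Hilbert's basis theorem: since $G$ is finite, $\Rep(G)$ is free abelian of finite rank (one $\mathbb Z$-generator per irreducible complex representation), hence a finitely generated commutative $\mathbb Z$-algebra and therefore noetherian; the graded Laurent extension $\Rep(G)[\beta^{\pm 1}]$ is then noetherian as well.

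For the second assertion, I would strengthen it to the two-variable statement that $\KK^G_*(A,B)$ is a finitely generated $R$-module for every pair $A, B \in \Cell{G}^c$, and establish it by a double d\'evissage. The class of pairs $(A,B)$ with this property is closed under triangles in each variable (because $R$ is noetherian, so any extension of two finitely generated $R$-modules is finitely generated) and under retracts. By \Cref{Prop:Cell(G)-cble-bigtt}, $\Cell{G}^c$ coincides with the thick subcategory of $\KK^G$ generated by the orbit algebras $\Cont(G/H)$, so it is enough to treat $A = \Cont(G/H)$, $B = \Cont(G/K)$ for subgroups $H, K \leq G$.

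In that case I would exploit the self-duality of orbit algebras (\Cref{Prop:Cell(G)-cble-bigtt}) to write
\[
\KK^G_*\bigl(\Cont(G/H), \Cont(G/K)\bigr) \cong \KK^G_*\bigl(\unit, \Cont(G/H) \otimes \Cont(G/K)\bigr).
\]
Since $\Cont(G/H) \otimes \Cont(G/K) \cong \Cont(G/H \times G/K)$ with diagonal action, the double-coset decomposition $G/H \times G/K \cong \bigsqcup_{g \in H\backslash G/K} G/(H \cap {}^gK)$ realises this as a finite direct sum of orbit algebras. It remains to prove that $\KK^G_*(\unit, \Cont(G/L))$ is finitely generated over $R$ for each $L \leq G$. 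Writing $\Cont(G/L) = \Ind^G_L \unit$ and invoking Frobenius reciprocity in its compact form (\Cref{subsec:KK-fun-G}) yields
\[
\KK^G_*(\unit, \Ind^G_L \unit) \cong \KK^L_*(\unit, \unit) = \Rep(L)[\beta^{\pm 1}],
\]
with $R$-action transported via the restriction ring map $\Res^G_L \colon R \to \Rep(L)[\beta^{\pm 1}]$. Since $\Rep(L)$ is a finitely generated $\mathbb Z$-module and $\mathbb Z \subseteq R$, the right-hand side is even finitely generated over $R$ for trivial reasons.

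The entire argument is a routine noetherian d\'evissage and I do not anticipate a serious obstacle. The two points that deserve a careful write-up are the Mackey decomposition of the tensor product of two orbit algebras (an easy calculation already available at the C*-algebra level) and the verification that the $R$-module structure on $\KK^G_*(\unit, \Ind^G_L \unit)$ induced by composition corresponds, under the adjunction, to the action through $\Res^G_L$ on $\Rep(L)[\beta^{\pm 1}]$ — but this is just the standard compatibility between restriction and the Frobenius reciprocity isomorphism.
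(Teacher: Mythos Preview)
Your proposal is correct and follows essentially the same route as the paper: self-duality of orbit algebras, the Mackey decomposition of $\Cont(G/H)\otimes\Cont(G/K)$, the adjunction $\Res^G_L\dashv\Ind^G_L$ to identify $\KK^G_*(\unit,\Cont(G/L))\cong\Rep(L)[\beta^{\pm1}]$, and a thick-subcategory d\'evissage. The only noteworthy difference is that you deduce finite generation of $\Rep(L)[\beta^{\pm1}]$ over $R$ from the trivial fact that $\Rep(L)$ is finitely generated over~$\mathbb Z\subseteq R$, whereas the paper invokes Segal's theorem that $\Rep(L)$ is finitely generated over $\Rep(G)$ via restriction --- your shortcut is valid for finite groups and slightly more elementary.
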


\begin{proof}
When $G$ is finite (or even compact Lie), $\Rep(G)$ is a finitely generated ring and therefore noetherian by \cite[Cor.\,3.3]{Segal68a}. 
If $H\leq G$ is any (closed) subgroup, $\Rep(H)$ is finitely generated when seen as an $\Rep(G)$-module via the restriction map $\res^G_H\colon \Rep(G)\to \Rep(H)$, by \cite[Prop.\,3.2]{Segal68a} (attributed to Atiyah).
The former is easily seen to be equivalent to the noetherianity of the graded commutative ring $\Rep(G)[\beta^{\pm1}]$, and the latter implies the remaining claim by the following argument.

 For each $H\leq G$,  we deduce by the adjunction $\Res^G_H \dashv \Ind^G_H$ that  the $\Rep(G)$-module
\[ \KK^G_*(\unit, \Cont(G/H)) = \KK^G_*(\unit, \Ind^G_H(\unit))\cong \KK^H_*(\unit,\unit)=\Rep(H)[\beta^{\pm1}]\] 
is finitely generated. 
Since $\Cont(G/H)$ is self-dual, we further deduce that
\[
\End^*(\Cont(G/H)) \cong \KK^G_*(\unit, \Cont(G/H)\otimes \Cont(G/H))\cong \bigoplus_{[x]\in H\backslash G/{}^x\!H} \KK^G_*(\unit , \Cont(G/H \cap {}^xH))
\]
is also finitely generated. (Note that all isomorphisms used here are $\Rep(G)$-linear, because so are the restriction functors to subgroups and hence the adjunction isomorphisms, as one easily checks.)
Since $\Cell{G}^c=\Thick{\Cont(G/H)\mid H\leq G}$, we conclude the same about $\End^*(A)$ for an arbitrary $A\in \Cell{G}^c$ by a routine induction argument on the triangular length of $A$ with respect to the generators.
\end{proof}

\subsection{The induced tt-rings $\Cont(G/H)$}
\label{subsec:C(G/H)}

Recall that $\Cont_0(G/H)\cong \Ind^G_H(\unit)$ for any $G$ and for any closed subgroup $H\leq G$.
If $G/H$ is compact, $\Ind^G_H$ is right adjoint to the tensor functor $\Res^G_H$ and therefore inherits a lax monoidal structure, which applied to the tensor unit $\unit=\mathbb C\in \KK^H$ yields a commutative monoid structure on the object $\Cont(G/H)=\Cont_0(G/H)$ of~$\KK^G$ (see \eg\ \cite[\S2]{BDS15} for details).

If moreover $G$ is finite, the multiplication map of $\Cont(G/H)$ admits a $\Cont(G/H)$-bilinear section, as observed in~\cite[Rem.\,2.10]{BDS15}, so that $\Cont(G/H)$ becomes a tt-ring in $\KK^G$ in the sense of \Cref{Def:tt-ring}. 
(Alternatively, this is also a consequence of the Green 2-functor structure of \Cref{Rem:G2F} together with \cite[Thm.\,9.2]{DellAmbrogio22}.)

\begin{Rem}
\label{Rem:decomp-ttring}
One can easily check (\eg\ from \cite[Constr.\,4.1]{BDS15}) that this multiplication is simply the usual multiplication of the commutative C*-algebra $\Cont(G/H)$.
It follows that decompositions of the form
\[
\Res^G_L \Cont(G/H) \cong \prod_{[x] \in L \backslash G/ H} \Cont(L /L \cap \!{}^xH )  \quad (L,H\leq G)
\]
(and similar ones arising from decompositions of left $G$-sets) are isomorphisms of monoids in $\KK^G$ and therefore of tt-rings.
\end{Rem}

\begin{Prop}
\label{Prop:finite-degree}
For every finite group $G$ and subgroup $H\leq G$, the above tt-ring $\Cont(G/H)$ has finite degree in the sense of~\cite{Balmer14} in the tt-category $\Cell{G}^c$.
\end{Prop}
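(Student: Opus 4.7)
The plan is to prove $\deg_{\Cell{G}^c}(\Cont(G/H))\leq [G:H]$ by strong induction on~$|G|$. The base case $|G|=1$, and more generally the case $H=G$, is trivial: $\Cont(G/H)=\unit$ has degree~$1$ in any idempotent-complete tt-category. So assume $H\subsetneq G$, whence in particular $|H|<|G|$ and the inductive hypothesis applies to all orbit algebras in $\Cell{H}^c$.

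By \Cref{Rem:decomp-ttring} (taking $L=H$), we have in $\Cell{H}^c$ the tt-ring decomposition
\[
\Res^G_H\Cont(G/H)\;\cong\;\prod_{[x]\in H\backslash G/H}\Cont(H/H\cap{}^xH)\;\cong\;\unit\times B_H,
\]
where the unit factor comes from the diagonal double coset $[x]=[e]$ and $B_H:=\prod_{[e]\neq[x]}\Cont(H/H\cap{}^xH)$. Via the tt-equivalence $\Mod_{\Cont(G/H)}(\Cell{G}^c)\simeq \Cell{H}^c$ --- a Morita-theoretic consequence of the projection formula~\eqref{eq:KK-proj-formula} together with the ambidexterity of $\Res^G_H\dashv \Ind^G_H$ for finite~$G$ --- this corresponds to the ``diagonal splitting''
\[
\Cont(G/H)\otimes\Cont(G/H)\;\cong\;\Cont(G/H)\times \Ind^G_H(B_H)
\]
of the free $\Cont(G/H)$-algebra over $\Cont(G/H)$ in $\Cell{G}^c$, which is precisely the kind of splitting used by Balmer to iteratively define and bound the degree of a separable commutative tt-ring (\cite[\S3]{Balmer14}).

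The inductive hypothesis, applied inside $\Cell{H}^c$ (legitimate because $|H|<|G|$), gives finite degree for each factor $\Cont(H/H\cap{}^xH)$, bounded by $[H:H\cap{}^xH]$; by \Cref{Lem:degree} the product $B_H$ then has finite degree in $\Cell{H}^c$, bounded by the orbit-count identity $\sum_{[x]\neq[e]}[H:H\cap{}^xH]=[G:H]-1$. Invoking Balmer's estimate $\deg(A)\leq 1+\deg(B)$ for any separable commutative tt-ring $A$ with a diagonal splitting $A\otimes A\cong A\times B$ (with $B$ viewed as a tt-ring in the module category), we conclude $\deg_{\Cell{G}^c}(\Cont(G/H))\leq [G:H]$, as required.

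The main technical point is the Morita-descent identification $\Mod_{\Cont(G/H)}(\Cell{G}^c)\simeq \Cell{H}^c$ and the verification that it intertwines Balmer's canonical diagonal splitting of $\Cont(G/H)$ with the orbit-decomposition from \Cref{Rem:decomp-ttring}. While both halves of this are essentially standard for finite-group equivariant Kasparov theory, careful bookkeeping is needed to propagate the estimate through Balmer's splitting tower; none of the steps should involve anything substantively new, but the exact compatibility between our module identification and Balmer's inductive setup must be checked explicitly.
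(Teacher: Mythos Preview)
Your approach is correct and takes a genuinely different route from the paper. The paper does not argue by induction on~$|G|$; instead it applies Balmer's conservativity criterion \cite[Thm.~3.7(b)]{Balmer14} twice. First, to the joint restriction $(\Res^G_S)_S$ to all cyclic subgroups $S\leq G$, whose conservativity comes from the Arano--Kubota theorem (via \Cref{Thm:cyclic-generation}); combined with \Cref{Lem:degree} and the Mackey decomposition, this reduces to computing $\deg_{\Cell{S}^c}(\Cont(S/T))$ for cyclic~$S$. Second, to $\Res^S_T$ itself, shown to be conservative on the image of $\Ind^S_T$ via the unit--counit identity; since $\Res^S_T\Cont(S/T)\cong\unit^{\times|S/T|}$, one reads off the exact value~$|S/T|$. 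Your inductive argument is more economical in that it bypasses Arano--Kubota entirely and yields the uniform bound $[G:H]$ directly. The ``Balmer estimate'' you invoke is not stated verbatim in \cite{Balmer14}, but it follows from the very same criterion: extension of scalars along $A=\Cont(G/H)$ (equivalently $\Res^G_H$, via the module identification) is conservative on $\cat K_{\supp(A)}$ because $\supp(A\otimes X)=\supp(A)\cap\supp(X)$ and empty support forces $X=0$ in a rigid tt-category; hence $\deg_{\Cell{G}^c}(A)=\deg_{\Cell{H}^c}(\Res^G_H A)=\deg_{\Cell{H}^c}(\unit\times B_H)\leq 1+\deg_{\Cell{H}^c}(B_H)$ by \Cref{Lem:degree}. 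The module equivalence you use is the compact-objects restriction of what the paper later records as \Cref{Lem:Cell-finite-etale} (from \cite{BDS15}), so there is no circularity; your caveat about ``careful bookkeeping'' is well placed but not a genuine obstacle.
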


\begin{proof}
We are going to appeal (twice) to \cite[Thm.\,3.7(b)]{Balmer16}, which contains the following criterion: Let $A\in \cat K$ be a tt-ring of the tt-category $\cat K$, and suppose $F\colon \cat K\to \cat L$ is a tt-functor such that its restriction to the thick tensor-ideal $\cat K_{\supp(A)}\subseteq \cat K$ is conservative (\ie it detects the vanishing of objecs); then the degree of $A$ equals that of~$F(A)$. (As with the rest of the theory of degree, this criterion assumes both $\cat K$ and $\cat L$ to be idempotent complete.)

First we apply this to the tt-functor $F:=(\Res^G_S)_S \colon \Cell{G}^c\to \prod_S \Cell{S}^c$, where $S$ runs through all cyclic subgroups of~$G$.
We see that $F$ is conservative on the whole category $\Cell{G}^c$ by \Cref{Thm:cyclic-generation}(d) (in fact this follows directly from the Arano--Kubota result \cite[Cor.\,3.13]{AranoKubota18} used in its proof), and therefore we reduce to proving that $F(\Cont(G/H)) = (\Res^G_S\Cont(G/H))_S$ has finite degree in the product tt-category.

Since the degree of a product  of finitely many finite-degree  tt-rings is at most the sum of their degrees (by \Cref{Lem:degree}), and because of the isomorphism
\[
\Res^G_S \Cont(G/H) \cong \prod_{[x] \in S \backslash G/ H} \Cont(S / S \cap \!{}^xH )
\]
 of tt-rings (\Cref{Rem:decomp-ttring}), we further reduce to showing that $\Cont(S/T)$ has finite degree in $\Cell{S}^c$ for every cyclic group $S$ and subgroup $T\leq S$.

To prove the latter we apply Balmer's criterion again, this time taking the tt-functor $F$ to be $\Res^S_T\colon \Cell{S}^c\to \Cell{T}^c$.
To see that this $F$ verifies the required hypothesis, observe that we have an inclusion
\[
\Cell{A}^c_{\supp (\Cont(S/T))} 
= \tensThick{\Cont(S/T)}
\subseteq \Ind^S_T \big(\Cell{T}^c \big) =: \cat J
\]
of the tensor-ideal of objects supported on $\supp(\Cont(S/T))$ into the essential image of the induction functor.
Indeed, the first equality displayed above is by the classification of thick tensor ideals (\Cref{Thm:classif-tensorid}); the inclusion on the right follows from the fact that $\Ind^S_T (\Cell{T}^c)$ is a thick tensor ideal
(an easy consequence of the projection formula \eqref{eq:KK-proj-formula} for $\Res^S_T$ and~$\Ind^S_T$) which contains $\Cont(S/T)= \Ind^S_T(\unit)$.

Hence it suffices to show that $F= \Res^S_T$ is conservative on~$\cat J$. 
Recall that the two adjunctions $\Ind^S_T \dashv \Res^S_T \dashv \Ind^S_T$ can be chosen so that the composite
\[
A \to \Res^S_T \Ind^S_T (A) \to A
\]
of the unit of $\Ind^S_T \dashv \Res^S_T$ followed by the counit of $\Res^S_T \dashv \Ind^S_T$ is the identity map of~$A$, for all $A\in \Cell{T}^c$ (this is well-known; see \Cref{Thm:bigCell}(2)(b)  for a proof if necessary).
In particular, it follows that $\Res^S_T \Ind^S_T (A)\cong 0$ implies $A\cong 0$ and therefore $\Ind^S_T(A)\cong 0$, showing that indeed $\Res^S_T$ is conservative on~$\cat J$.
Thus by Balmer's criterion the degree of $\Cont(S/T)$ equals that of the product tt-ring
\[
\Res^S_T(\Cont(S/T))\cong \prod_{[y] \in T\backslash S / T} \underbrace{\Cont(T/T\cap\!{}^yT)}_{\Cont(T/T)} \cong \unit^{\times |S/T|}
\]
which is equal to the finite index $|S/T|$ by \cite[Thm.\,3.9(a)]{Balmer14}.
\end{proof}

\section{Stratification}
\label{sec:strat-prelim}%

Our main reference for the stratification theory of tt-categories is \cite{BHS23}, whose terminology and notations
we shall adopt.
Throughout $\cat T$ will denote a big tt-category, \ie one which is rigidly-compactly generated (\Cref{Def:big-tt-cat}).

\begin{Def}
\label{Def:general-supp-and-strat}
A \emph{theory of supports}, or \emph{support theory}, on $\cat T$ is a pair $(X,\sigma)$ consisting of a topological space $X$ and a function $\sigma$ assigning a subset $\sigma(A)\subseteq X$ to every object of $\cat T$; this assignment must satisfy five basic compatibility axioms with the suspension, direct sums, triangles, coproducts and tensor products in~$\cat T$ (see \cite[Def.\,7.1]{BHS23}).
We can extend the support function $\sigma$ to arbitrary classes of objects $\cat{S} \subseteq \cat T$ by setting 
$\sigma (\cat{S}) := \bigcup_{A\in \cat{S}} \sigma(A)$. 
The support theory $(X,\sigma)$ is said to \emph{stratify~$\cat T$} if the assignment $\cat{S}\mapsto \sigma(\cat{S})$ restricts to a bijection as follows:
\[
\left\{
\begin{array}{c} 
\textrm{localizing tensor ideals of } \cat T
\end{array}
\right\}
\overset{\sim}{\longrightarrow}
\left\{
\begin{array}{c} 
\textrm{subsets of }X
\end{array}
\right\}.
\]
%
\end{Def}

\begin{Rec} \label{Rec:noeth}
A topological space is \emph{noetherian} if every open subset is quasi-compact. 
For example the Zariski spectrum of any noetherian commutative ring is a noetherian space. 
In \cite{BHS23}, the notion of a \emph{weakly noetherian} space is introduced.
For our present purposes, we only need to know that all noetherian spaces are weakly noetherian.
Weakly noetherian spaces serve the following:
\end{Rec}

\begin{Exa}[Balmer--Favi supports]
\label{ex.BalmerFaviSupp}
The main example of a support theory is due to Balmer--Favi~\cite{BalmerFavi11}, and we simply write it as $\sigma(A)=\supp(A)$ for all~$A$. 
The target space $X$ of this support theory is the Balmer spectrum $\Spc (\cat T^c)$ of the tt-subcategory of compact objects, and the support of an object $A$ is given by
\[
\supp(A) := \left\{  \cat P\in \Spc (\cat T^c) \mid g(\cat P) \otimes A \neq 0  \right\},
\]
where $g(\cat P)$ is a certain $\otimes$-idempotent object of~$\cat T$. 
On compact objects, the Balmer--Favi support agrees with Balmer's universal support data recalled in \Cref{subsec:Spc}, so there is no notational ambiguity.
We refer to \cite[\S2]{BHS23} for details.
Note that the objects $g(\cat P)$ are all well-defined, and therefore the Balmer--Favi support theory is, precisely when $\Spc(\cat K)$ is weakly noetherian (\Cref{Rec:noeth}).
\end{Exa}

\begin{Ter} \label{Ter:BHS-stratified}
Unless otherwise stated,
when we say that $\cat T$ is \emph{stratified in the sense of \cite{BHS23}}, or simply \emph{stratified}, we mean that $\Spc(\cat K)$ is weakly noetherian (\eg\ noetherian) and that $\cat T$ is stratified by its Balmer--Favi support theory. 
\end{Ter}

The following general fact can be useful for computing Balmer--Favi supports:

\begin{Lem}
\label{Lem:general-detection}
Suppose the big tt-category $\cat T$ has noetherian spectrum and is stratified. Let $\cat P\in \Spc (\cat T^c)$ be a point of the spectrum and let $A\in\cat T$ be any object. The following are equivalent:
\begin{enumerate}
\item The Balmer--Favi support of $A$ is $\supp (A) = \{\cat P\}$.
\item $\tensLoc{A} = \tensLoc{g(\cat P)}$.
\item For all $B \in \cat T$, we have $ \cat P\in \supp (B)  \Leftrightarrow  B \otimes A \neq 0 $.
\end{enumerate}
\end{Lem}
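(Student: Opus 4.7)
The plan is to prove the triangle of implications by reducing to four standard facts valid under the stated hypotheses: (i) the Balmer--Favi idempotent satisfies $\supp(g(\cat Q)) = \{\cat Q\}$ for every $\cat Q \in \Spc(\cat T^c)$; (ii) by the very definition in \Cref{ex.BalmerFaviSupp}, $\cat Q \in \supp(X) \iff g(\cat Q) \otimes X \neq 0$ for any $X \in \cat T$; (iii) the support axioms imply $\supp(\tensLoc{X}) = \supp(X)$ for all $X$, and stratification additionally supplies the cancellation property $X \cong 0 \iff \supp(X) = \emptyset$; and (iv) stratification together with the noetherian spectrum hypothesis yields the tensor product formula $\supp(A \otimes B) = \supp(A) \cap \supp(B)$, which is part of the BHS framework~\cite{BHS23}.

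For (a)$\Leftrightarrow$(b), I would apply the stratification bijection of \Cref{Def:general-supp-and-strat} to the two localizing tensor ideals $\tensLoc{A}$ and $\tensLoc{g(\cat P)}$: by (iii) these coincide iff their supports do, and by (i) their respective supports are $\supp(A)$ and $\{\cat P\}$.

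For (a)$\Rightarrow$(c), assuming $\supp(A) = \{\cat P\}$, the tensor product formula (iv) gives $\supp(A \otimes B) = \{\cat P\} \cap \supp(B)$; combined with the cancellation in (iii) this immediately yields $A \otimes B \neq 0 \iff \cat P \in \supp(B)$, which is (c). Conversely, for (c)$\Rightarrow$(a), I would specialize (c) to $B = g(\cat Q)$ for each $\cat Q \in \Spc(\cat T^c)$: by (i) and (ii) the hypothesis becomes the equivalence $\cat Q = \cat P \iff \cat Q \in \supp(A)$, which says precisely $\supp(A) = \{\cat P\}$.

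No step here is really hard; the one genuinely nonformal ingredient is the tensor product formula (iv), which is the main external input from~\cite{BHS23} and embodies the local-to-global principle underlying stratification.
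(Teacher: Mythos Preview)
Your argument is correct, and the implication (c)$\Rightarrow$(a) matches the paper's verbatim. The difference lies in how you close the rest of the cycle. The paper runs (a)$\Rightarrow$(b)$\Rightarrow$(c)$\Rightarrow$(a), and its step (b)$\Rightarrow$(c) is entirely elementary: from $\tensLoc{A}=\tensLoc{g(\cat P)}$ one tensors with~$B$ to get $\tensLoc{B\otimes A}=\tensLoc{B\otimes g(\cat P)}$, so $B\otimes A\neq 0 \Leftrightarrow B\otimes g(\cat P)\neq 0 \Leftrightarrow \cat P\in\supp(B)$. This needs only the definition of Balmer--Favi support and the trivial fact that tensoring a localizing $\otimes$-ideal by a fixed object lands in the ideal generated by the tensored generator. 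By contrast, your (a)$\Rightarrow$(c) imports both the tensor product formula $\supp(A\otimes B)=\supp(A)\cap\supp(B)$ and the detection property ($\supp(X)=\emptyset\Rightarrow X\cong 0$) from~\cite{BHS23}. These are genuine theorems (consequences of stratification, not axioms), so your route is heavier in external input even though it reaches the same conclusion. The paper's path is more self-contained; yours is perhaps more transparent if one already has the full BHS toolkit at hand.
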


\begin{proof}
Since $\cat T$ is stratified, (a) implies (b) by \Cref{Thm:strat-minimality}(b) below.

Now suppose (b) is true and let $B \in\cat T$. From (b) we deduce that
\[ \tensLoc{ B \otimes A} = \tensLoc{ B \otimes g(\cat P) }, \]
so that in particular $B \otimes A\neq 0$ if and only if $B \otimes g(\cat P)\neq 0$, that is, (c) is true.

Finally, suppose (c) is true and let $\cat Q \in \Spc (\cat T^c)$ be any point of the spectrum.
By \cite[Lem.\,2.13]{BHS23}, we know that  $\supp (g(\cat Q)) =\{\cat Q\}$. 
From this together with~(c) (applied to $B = g(\cat Q)$) we deduce  that
\[ 
\cat Q = \cat P 
\quad\Leftrightarrow\quad
 \cat P \in \supp (g(\cat Q)) 
\quad \overset{(c)}{\Leftrightarrow} \quad
g(\cat Q) \otimes A \neq 0
\quad \Leftrightarrow \quad 
\cat Q \in \supp(A) , 
\]
that is $\supp (A) = \{\cat P \}$ as per~(a). 
Hence the three conditions are equivalent.
\end{proof}

The following criterion is a central result of stratification theory.

\begin{Thm}
\label{Thm:strat-minimality}
Let $\cat T$ be a big tt-category, and suppose (for simplicity) that $\Spc (\cat T^c)$ is noetherian.
Then the following are equivalent:
\begin{enumerate}
\item \emph{`Minimality':} $\tensLoc{g(\cat P)}$ is a minimal (nonzero) localizing tensor ideal of~$\cat T$ for every prime $\cat P\in \Spc (\cat T^c)$.
\item For every $A\in \cat T$, we have $\tensLoc{A} = \tensLoc{ g(\cat P) \mid \cat P \in \supp (A) }$.
\item $\cat T$ is stratified (\Cref{Ter:BHS-stratified}).
\end{enumerate}
\end{Thm}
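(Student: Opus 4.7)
The plan is to establish the cyclic implications (a) $\Rightarrow$ (b) $\Rightarrow$ (c) $\Rightarrow$ (a), relying as the one nontrivial input on the \emph{local-to-global principle} from \cite{BHS23}: for any $A \in \cat T$, one has
\[ A \in \tensLoc{g(\cat P) \otimes A \mid \cat P \in \Spc(\cat T^c)}. \]
This holds whenever $\Spc(\cat T^c)$ is (weakly) noetherian. Throughout I will also use the identity $\supp(g(\cat P)) = \{\cat P\}$ (\cite[Lem.\,2.13]{BHS23}) and the five axioms of Balmer--Favi supports. This is the step where the noetherianity hypothesis is essential; the rest of the argument is largely a formal manipulation.

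For (a) $\Rightarrow$ (b), fix $A\in\cat T$. Since $g(\cat P) \otimes A = 0$ for $\cat P \notin \supp(A)$ by definition of Balmer--Favi support, the local-to-global principle yields $A \in \tensLoc{g(\cat P) \otimes A \mid \cat P \in \supp(A)}$. For each such $\cat P$, the object $g(\cat P)\otimes A$ is a nonzero element of the localizing tensor ideal $\tensLoc{g(\cat P)}$, so the minimality assumption~(a) forces $\tensLoc{g(\cat P)\otimes A} = \tensLoc{g(\cat P)}$. Substituting this into the local-to-global decomposition gives $A \in \tensLoc{g(\cat P) \mid \cat P \in \supp(A)}$. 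Conversely, since $g(\cat P)\otimes A \in \tensLoc{A}$, the same minimality gives $g(\cat P) \in \tensLoc{g(\cat P)\otimes A} \subseteq \tensLoc{A}$ for every $\cat P \in \supp(A)$, proving the other inclusion.

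For (b) $\Rightarrow$ (c), I verify that the maps $\Phi\colon \cat L \mapsto \supp(\cat L)$ and $\Psi\colon S \mapsto \tensLoc{g(\cat P) \mid \cat P \in S}$ are mutually inverse. The identity $\supp(g(\cat P))=\{\cat P\}$ together with closure of support under coproducts and tensoring immediately gives $\Phi \circ \Psi = \id$. For $\Psi \circ \Phi = \id$, condition (b) applied to every $A \in \cat L$ yields $\cat L \subseteq \tensLoc{g(\cat P) \mid \cat P \in \supp(\cat L)}$; and the second half of the argument for (a)~$\Rightarrow$(b) gives $g(\cat P)\in \cat L$ for every $\cat P \in \supp(\cat L)$, hence the reverse inclusion. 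For (c) $\Rightarrow$ (a), stratification makes $\Phi$ a bijection; since the zero ideal is the unique preimage of~$\emptyset$, any nonzero localizing tensor ideal inside $\tensLoc{g(\cat P)}$ has nonempty support contained in $\{\cat P\}$, hence equals $\tensLoc{g(\cat P)}$ by injectivity of~$\Phi$.

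The principal obstacle is of course the local-to-global principle, whose proof in \cite{BHS23} requires the weak noetherianity of $\Spc(\cat T^c)$ in order to make sense of and combine the idempotents $g(\cat P)$; once it is granted, each of the three implications is a short formal argument built on \Cref{ex.BalmerFaviSupp}, \cite[Lem.\,2.13]{BHS23}, and the minimality or classification property being unpacked.
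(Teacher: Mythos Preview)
Your proof is correct and follows the same route as the paper, which simply cites \cite[Thm.\,4.1]{BHS23} together with \cite[Thm.\,3.21]{BHS23} (the latter supplying the local-to-global principle under noetherianity). You have essentially unpacked that citation.

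One small slip: in the step (b)~$\Rightarrow$~(c), when you need $g(\cat P)\in\cat L$ for $\cat P\in\supp(\cat L)$, you invoke ``the second half of the argument for (a)~$\Rightarrow$~(b)'', but that half used minimality~(a), which you no longer have. The conclusion is still immediate from~(b) alone: pick $A\in\cat L$ with $\cat P\in\supp(A)$; then (b) gives $g(\cat P)\in\tensLoc{g(\cat Q)\mid\cat Q\in\supp(A)}=\tensLoc{A}\subseteq\cat L$.
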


\begin{proof}
This is \cite[Thm.\,4.1]{BHS23}, improved by \cite[Thm.\,3.21]{BHS23}, which says that the local-to-global principle (which in general must be included as part of requirement~(a)) automatically holds when the Balmer spectrum is noetherian.
\end{proof}

\begin{Ter} \label{Ter:min-at-P}
One says that \emph{minimality holds for $\cat T$ at $\cat P$} if the statement of~(a) holds for a specific tt-prime~$\cat P\in \Spc(\cat T^c)$.
\end{Ter}

The theorem implies that, in order to establish \emph{stratification} for~$\cat T$ with noetherian spectrum, it suffices to prove \emph{minimality} for~$\cat T$ separately at each point of its spectrum (as in \Cref{Ter:min-at-P}). 
In particular, one can work with different methods at different primes. 
We next mention two such methods we will use: a reduction by finite localization and a reduction by finite \'etale morphisms.

\begin{Rec}[Finite localizations]
\label{Rec:fin_locs}
Let $\cat T$ be a big tt-category.
A finite localization $F\colon \cat T\to \cat S$ is a tt-functor between big tt-categories which is (equivalent to) a Verdier quotient of the form $\cat T\to \cat T/\cat L$, whose kernel $\cat L=\Ker(F)$ on objects is generated as a localizing subcategory by a set of compact objects of~$\cat T$, or equivalently (since $\cat L\cap \cat T^c = \cat L^c$) such that $\cat L= \Loc{\cat L^c}$. Moreover, the canonical functor $\cat T^c / \cat L^c \to (\cat T/\cat L)^c$ is a cofinal fully faithful tt-functor (see~\cite{Neeman92b}), so in particular it induces a homeomorphism $  \Spc ((\cat T/\cat L)^c) \cong \Spc (\cat T^c / \cat L^c)$.
It follows that $\cat S = \cat T/\cat L$ is again a big tt-category in our sense.
If $\Spc(\cat T^c)$ is noetherian then so is $\Spc (\cat S^c)$, because the latter identifies with a subspace of the former by the induced embedding 
$\Spc ((\cat T/\cat L)^c) \cong \Spc (\cat T^c / \cat L^c) \to \Spc (\cat T^c)$, $\cat Q\mapsto F^{-1}(\cat Q)$ (\cite[Prop.\,3.13]{Balmer05a}).
\end{Rec}

\begin{Prop} 
\label{Prop:local-reduction}
Let $\cat T$ be a big tt-category with $\Spc (\cat T^c)$ noetherian and let $F\colon \cat T\to \cat  T/\cat L$ be a finite localization functor. 
Then for every $\cat Q\in \Spc ((\cat T/\cat L)^c)$, minimality at $\cat Q$ in $\cat T/\cat L$ holds if and only if minimality at $\cat P:= F^{-1}(\cat Q) \in \Spc (\cat T^c)$ holds in~$\cat T$.

\end{Prop}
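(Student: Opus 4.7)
The plan is to exploit the smashing nature of a finite localization to reduce minimality at $\cat{Q}$ in $\cat{T}/\cat{L}$ to a statement inside $\cat{T}$. Recall first that a finite localization $F\colon \cat{T}\to\cat{T}/\cat{L}$ is smashing: it admits a fully faithful right adjoint $i\colon \cat{T}/\cat{L}\hookrightarrow\cat{T}$, and there is a distinguished triangle $e\to\unit\to f\to\Sigma e$ in $\cat{T}$ with $e\in\cat{L}$, $f:=i(\unit)$ a tensor-idempotent, and $iF(A)\cong A\otimes f$ for all $A\in\cat{T}$. The essential image $i(\cat{T}/\cat{L})$ is the full subcategory of $\cat{L}$-local objects (equivalently, of $f$-modules); it is itself a big tt-category, and $F$ restricts on it to a tt-equivalence with $\cat{T}/\cat{L}$. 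Under the spectral embedding from \Cref{Rec:fin_locs}, the prime $\cat{Q}$ corresponds to $\cat{P}=F^{-1}(\cat{Q})$, and the image of the embedding is precisely $\{\cat{P}'\in\Spc(\cat{T}^c):\cat{L}^c\subseteq\cat{P}'\}=\Spc(\cat{T}^c)\setminus\supp(\cat{L}^c)$.

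The heart of the argument is the compatibility $g_{\cat{T}}(\cat{P})\in i(\cat{T}/\cat{L})$ and $F(g_{\cat{T}}(\cat{P}))\cong g_{\cat{T}/\cat{L}}(\cat{Q})$. Unfolding the construction of \Cref{ex.BalmerFaviSupp}, we have $g_{\cat{T}}(\cat{P})=e_{\cat{V}}\otimes f_{\cat{Z}}$ for suitable Thomason subsets $\cat{V},\cat{Z}\subseteq\Spc(\cat{T}^c)$ with $\cat{V}\cap\cat{Z}=\{\cat{P}\}$. Since $\cat{P}\notin\supp(\cat{L}^c)$, both $\cat{V}$ and $\cat{Z}$ pull back along the spectral embedding to Thomason subsets of $\Spc((\cat{T}/\cat{L})^c)$ with intersection $\{\cat{Q}\}$, and the formation of the associated tensor-idempotents commutes with the smashing quotient~$F$. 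Hence $F$ carries $g_{\cat{T}}(\cat{P})$ to $g_{\cat{T}/\cat{L}}(\cat{Q})$ and in particular $g_{\cat{T}}(\cat{P})$ is $\cat{L}$-local. (Alternatively, one may invoke the uniqueness of a tensor-idempotent with prescribed singleton support on a weakly noetherian spectrum.)

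Once this is established, the conclusion is a direct transfer. Since $i(\cat{T}/\cat{L})$ is closed under tensoring with arbitrary objects of $\cat{T}$ (any $X\otimes f$ lies in $i(\cat{T}/\cat{L})$), the localizing tensor ideal $\tensLoc_{\cat{T}}\{g_{\cat{T}}(\cat{P})\}$ is contained in $i(\cat{T}/\cat{L})$, and under the tt-equivalence $F\colon i(\cat{T}/\cat{L})\isoto \cat{T}/\cat{L}$ it matches $\tensLoc_{\cat{T}/\cat{L}}\{g_{\cat{T}/\cat{L}}(\cat{Q})\}$. Any localizing tensor ideal of $\cat{T}$ lying inside $\tensLoc_{\cat{T}}\{g_{\cat{T}}(\cat{P})\}$ is automatically a localizing tensor ideal of $i(\cat{T}/\cat{L})$, and conversely every localizing tensor ideal of $i(\cat{T}/\cat{L})$ is one of $\cat{T}$ (coproducts and the tensor action are preserved under inclusion). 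So $F$ induces an order-preserving bijection between localizing tensor subideals of $\tensLoc_{\cat{T}}\{g_{\cat{T}}(\cat{P})\}$ in $\cat{T}$ and those of $\tensLoc_{\cat{T}/\cat{L}}\{g_{\cat{T}/\cat{L}}(\cat{Q})\}$ in $\cat{T}/\cat{L}$, and minimality of one side is equivalent to minimality of the other.

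The main obstacle is establishing the compatibility $F(g_{\cat{T}}(\cat{P}))\cong g_{\cat{T}/\cat{L}}(\cat{Q})$: although intuitive, it requires either a careful tracking of the Thomason data and associated tensor-idempotents under the smashing quotient, or an appeal to a uniqueness theorem for tensor-idempotents with prescribed singleton support on a weakly noetherian spectrum. Everything else is a formal manipulation of the smashing adjunction $F\dashv i$.
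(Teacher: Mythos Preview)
Your approach is correct but takes a genuinely different route from the paper's. The paper does not work directly with the smashing adjunction $F\dashv i$ and the Balmer--Favi idempotents. Instead, it quotes \cite[Prop.\,5.2 and Thm.\,3.22]{BHS23}, which say that minimality at a point $\cat P\in\Spc(\cat T^c)$ holds in~$\cat T$ if and only if minimality at the unique closed point~$\{0\}$ holds in the finite localization $\cat T/\Loc{\cat P}$. It then observes that the evident commutative square of finite localizations yields an equivalence $\cat T/\Loc{\cat P}\simeq (\cat T/\cat L)/\Loc{\cat Q}$ (both composites $\cat T\to(\cat T/\cat L)/\Loc{\cat Q}$ have kernel~$\Loc{\cat P}$), and applies the cited criterion to both $\cat T$ and $\cat T/\cat L$ to conclude. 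Your argument is more self-contained: you establish directly that $g_{\cat T}(\cat P)$ is $\cat L$-local with $F(g_{\cat T}(\cat P))\cong g_{\cat T/\cat L}(\cat Q)$, and then transfer the lattice of localizing tensor subideals across the tt-equivalence $i(\cat T/\cat L)\simeq\cat T/\cat L$. This is essentially reproving the relevant part of \cite[\S5]{BHS23} by hand; the paper's route is shorter because it outsources that work. One minor slip: in your construction of $g_{\cat T}(\cat P)=e_{\cat V}\otimes f_{\cat Z}$, the condition should read $\cat V\cap\cat Z^c=\{\cat P\}$ (equivalently $\cat V\smallsetminus\cat Z=\{\cat P\}$), not $\cat V\cap\cat Z=\{\cat P\}$.
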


\begin{proof}
This is essentially \cite[Prop.\,5.2 and Thm.\,3.22]{BHS23}. 
Applied to~$\cat T$, these results say that minimality in $\cat T$ at a point $\cat P\in \Spc (\cat T^c)$ holds if and only if the category $\cat T/ \Loc{\cat P}$ satisfies minimality at its unique closed point $\{0\}$.

Now suppose $\cat P\in \Spc (\cat T^c)$ is of the form $\cat P= F^{-1} (\cat Q)$ for $\cat Q\in \Spc (\cat T/\cat L)^c$, (\ie $\cat L\subseteq \cat P$ and $\cat Q = \cat P/\cat L$). 
Consider the induced commutative diagram of finite localization functors (\Cref{Rec:fin_locs}):
\[
\xymatrix{
\mathscr{ T} 
 \ar[r]^-{F} 
  \ar[d] & 
\mathscr T/\cat L 
 \ar[d] \\
\mathscr T/\Loc{ \cat P}
  \ar[r]^-{\cong} &
(\mathscr T/\cat L) / \Loc{ \cat Q }
}
\]
In details, the vertical arrows are the quotient functors by the localizing tensor ideals $\Loc{ \cat P} = \tensLoc{\cat P}$
 and $\Loc{ \cat Q } = \tensLoc{\cat Q}$, respectively, and the bottom horizontal arrow is the functor induced by the universal property of Verdier quotients; it is an equivalence of big tt-categories, since both composite functors from $\cat T$ to $(\cat T/\cat L)/ \Loc{ \cat Q }$ have the same kernel~$\cat P$.

Note that the above-cited result equally applies to the local category~$\cat T/\cat L$: minimality in $\cat T/\cat L$ at a point $\cat Q\in \Spc (\cat T/\cat L)^c$ holds if and only if the category $(\cat T/ \cat L)/ \Loc{ \cat Q}$ satisfies minimality at its unique closed point~$\{0\}$.  

Combining the two criteria, the commutativity of the square implies that minimality at $\cat P$ in $\cat T$ holds if and only if minimality at $\cat Q$ in $\cat T/\cat L$ holds, as claimed.
\end{proof}

\begin{Def}[Finite \'etale functor]
\label{Def:finite-etale}
A coproducts preserving tt-functor $F\colon \cat T\to \cat S$ between two rigidly-compactly generated tt-categories is said to be \emph{finite \'etale} if there exists a tt-ring (a.k.a.\ commutative separable monoid, see \Cref{subsection:tt-rings}) $A $ in $\cat T^c$ and an equivalence $\cat S\simeq \Mod_{\cat T}(A)$ of tt-categories which identifies $F$ with the free-$A$-module functor $\cat T\to \Mod_{\cat T}(A)$.
We refer to \cite{Sanders22} for explanations and many examples of this notion.
\end{Def}

\begin{War}
Beware of the terminology: finite localizations are smashing localizations and therefore are examples of \'etale functors. However localizations, even \emph{finite} ones, are only rarely \emph{finite \'etale}. (For localizations, `finite' refers to the kernel being generated by compact objects; for \'etale functors, `finite' refers to $U(\unit)$ being compact, where $U$ is the right adjoint.)
\end{War}

\begin{Thm} [{\cite[Thm.\,6.4]{BHS23}}]
\label{Thm:etale}
Let $F\colon \cat T\to \cat S$ be a finite \'etale functor (\Cref{Def:finite-etale}) of big tt-categories, and let $\varphi := \Spc (F^c) \colon \Spc (\cat S^c) \to \Spc (\cat T^c)$ denote the induced map between spectra, both of which we assume to be noetherian.  
Suppose $\cat Q \in \Spc (\cat S^c)$ is such that  $\varphi^{-1} (\{\varphi (\cat Q)\}) = \{\cat Q\}$ (for instance when $\varphi$ is injective). Then minimality at $\cat Q $ in $\cat S$ implies minimality at $\varphi(\cat Q)$ in~$\cat T$. 
\qed
\end{Thm}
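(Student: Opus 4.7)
The plan is to work with a presentation $\cat S \simeq \Mod_\cat T(A)$ provided by the finite \'etale hypothesis, so that $F = A\otimes -$ has right adjoint the forgetful functor $U\colon \cat S\to \cat T$ with $UF(X)=A\otimes X$, and $\supp_\cat T(A)=\Img(\varphi)$. Writing $\cat P:=\varphi(\cat Q)$, we have in particular $\cat P\in \supp(A)$.

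The first step is the base-change compatibility
\[
\supp_\cat S(F(X))=\varphi^{-1}\bigl(\supp_\cat T(X)\bigr)\qquad\text{for all } X\in \cat T.
\]
On compact generators this is immediate from $\varphi=\Spc(F^c)$; to extend it to arbitrary objects one unpacks the Balmer--Favi construction of $g(\cat P)$ from Thomason subsets and uses that $F$ is a coproduct-preserving tt-functor. Specialising to $X=g_\cat T(\cat P)$ and combining with the single-fibre hypothesis $\varphi^{-1}(\{\cat P\})=\{\cat Q\}$ shows that $F(g_\cat T(\cat P))$ is a nonzero $\otimes$-idempotent of support exactly $\{\cat Q\}$. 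Minimality at $\cat Q$ together with \Cref{Lem:general-detection} then yields
\[
\tensLoc{F(g_\cat T(\cat P))}=\tensLoc{g_\cat S(\cat Q)}\qquad\text{in }\cat S.
\]

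To derive minimality at $\cat P$, by \Cref{Thm:strat-minimality} I would pick an arbitrary nonzero $X\in \tensLoc{g_\cat T(\cat P)}$ and aim for $\tensLoc{X}=\tensLoc{g_\cat T(\cat P)}$. The same support calculation gives $\supp(F(X))\subseteq \{\cat Q\}$, and $F$ is conservative on the smashing piece $\tensLoc{g_\cat T(\cat P)}$, since $\cat P\in \supp(A)$ forces $\supp(A\otimes X)=\{\cat P\}$ whenever $X\neq 0$. Minimality at $\cat Q$ therefore forces $\tensLoc{F(X)}=\tensLoc{F(g_\cat T(\cat P))}$ in $\cat S$; applying the coproduct- and triangle-preserving right adjoint $U$ with $UF=A\otimes -$ one descends to
\[
A\otimes g_\cat T(\cat P)\in \tensLoc{X}\qquad\text{and}\qquad A\otimes X\in \tensLoc{g_\cat T(\cat P)}.
\]

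The hard part will be the last descent step: promoting $A\otimes g_\cat T(\cat P)\in \tensLoc{X}$ to $g_\cat T(\cat P)\in \tensLoc{X}$. Since the unit $\unit\to A$ does not split in general, $g(\cat P)$ is not an obvious retract of $A\otimes g(\cat P)$. This is precisely where the single-fibre hypothesis enters decisively: restricted to the smashing localization at $\cat P$, the finite \'etale functor $F$ takes values in the smashing localization at $\cat Q$, a tt-category with no nonzero proper localizing tensor ideals, so $F$ there behaves like a finite \'etale cover with connected target. Combined with the separability of~$A$ (a bimodule section of the multiplication, yielding a degree-wise splitting of the Amitsur complex of $\unit\to A$ in this localization), one can realize $g_\cat T(\cat P)$ as a retract of a finite iterated tensor power $A^{\otimes n}\otimes g_\cat T(\cat P)$, and hence of an object of $\tensLoc{X}$. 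I would import this retract construction from the techniques of~\cite[\S6]{BHS23}; the preceding reductions are essentially formal once the support base-change of the first step is in place.
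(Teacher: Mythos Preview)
The paper does not prove this statement at all: it is quoted verbatim from \cite[Thm.\,6.4]{BHS23} and closed with a \qed, so there is no in-paper argument to compare against. Your outline up to and including the identities $A\otimes g(\cat P)\in\tensLoc{X}$ and $\tensLoc{F(X)}=\tensLoc{g_\cat S(\cat Q)}$ is sound and is essentially how the reduction in \cite{BHS23} proceeds.

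The genuine gap is your ``hard part''. Separability of~$A$ gives a section of $A\otimes A\to A$ as an $A$\nobreakdash-\emph{bimodule}, not a section of $\unit\to A$; it does \emph{not} make $g(\cat P)$ a retract of any $A^{\otimes n}\otimes g(\cat P)$. A concrete counterexample: take $\cat T=\Mod(\mathbb F_2)$, $A=\mathbb F_4$; then $A^{\otimes n}\cong \mathbb F_4^{\,2^{n-1}}$ has no $\mathbb F_2$\nobreakdash-summand for any~$n$, yet the theorem holds trivially. So your proposed Amitsur/separability mechanism cannot be what closes the argument.

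What actually closes it is the \emph{compactness} of~$A$, which you do not use. Since $A\in\cat T^c$ and $\cat P\in\supp(A)$, the closed set $\overline{\{\cat P\}}$ lies in the Thomason set $\supp(A)$, hence
\[
g(\cat P)=e_{\overline{\{\cat P\}}}\otimes f_{Y_2}\ \in\ e_{\supp(A)}\otimes\cat T \ =\ \Loc{(\cat T^c)_{\supp(A)}}\ =\ \tensLoc{A}.
\]
Tensoring with the idempotent $g(\cat P)$ then gives $g(\cat P)=g(\cat P)\otimes g(\cat P)\in\tensLoc{A\otimes g(\cat P)}$, and combining with your established inclusion $A\otimes g(\cat P)\in\tensLoc{X}$ yields $g(\cat P)\in\tensLoc{X}$, as required. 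Replace the separability paragraph with this compactness argument and your proof becomes complete.
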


We conclude our speedy tour of stratification theory by citing the following powerful `quasi-finite descent' (global) criterion for stratification, due to Barthel--Heard--Sanders together with Castellana,  which we will need in a situation when the hypothesis  on the fiber of $\varphi(\cat Q)$ in \Cref{Thm:etale}  fails:

\begin{Thm} [{\cite[Thm.\,17.16]{BCHS24pp}}]
\label{Thm:quasi-finite}
Let $F\colon \cat T\to \cat S$ be a coproducts preserving functor of big tt-categories and let $\varphi := \Spc (F^c)$ be the induced map of Balmer spectra, both of which we assume to be weakly noetherian.
Suppose that the right adjoint $U$ of $F$ preserves compact objects and that $\varphi$ is surjective with discrete fibers. 
Then if $\cat S$ is stratified so is~$\cat T$.
\qed
\end{Thm}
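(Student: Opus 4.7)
My plan is to verify the minimality criterion of \Cref{Thm:strat-minimality} at every point $\cat P \in \Spc(\cat T^c)$, showing that $\tensLoc{g(\cat P)}$ is minimal in~$\cat T$. By surjectivity of $\varphi$, pick some $\cat Q \in \varphi^{-1}(\{\cat P\})$, and use discreteness of the fiber to produce a Thomason subset $Y \subseteq \Spc(\cat S^c)$ with $Y \cap \varphi^{-1}(\{\cat P\}) = \{\cat Q\}$. The weakly noetherian hypothesis on both spectra ensures that the relevant Balmer--Favi idempotents $g(\cat P) \in \cat T$ and $g(\cat Q) \in \cat S$ (as well as the Rickard idempotents associated to~$Y$) are well-defined objects one can manipulate.

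The key structural input is that $U$ preserves compact objects. By Brown representability this forces $U$ to have a further right adjoint, so $F \dashv U \dashv V$ forms a Wirthm\"uller-type triple of adjoints; in particular $F$ acquires good projection formulas $U(X) \otimes Y \cong U(X \otimes F(Y))$ and dual ones for~$V$. Combined with the support-theoretic identity $\varphi^{-1}(\supp(A)) = \supp(F(A))$ on compacts (a formal consequence of $F$ being a coproduct-preserving tt-functor with $U$ preserving compacts), these formulas let us translate statements about $\tensLoc{g(\cat P)} \subseteq \cat T$ into statements about the family $\{\tensLoc{g(\cat Q')}\}_{\cat Q' \in \varphi^{-1}(\{\cat P\})} \subseteq \cat S$. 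Discreteness of the fiber should then be used to show that these contributions are $\otimes$-orthogonal in~$\cat S$, so that testing against $g(\cat Q)$ alone suffices to detect nonvanishing over the single prime~$\cat Q$.

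Once the geometry is set up, minimality at $\cat P$ follows formally. Given nonzero $A, B \in \tensLoc{g(\cat P)}$, I would show that $F(A) \otimes g(\cat Q)$ and $F(B) \otimes g(\cat Q)$ are nonzero in $\tensLoc{g(\cat Q)} \subseteq \cat S$; the stratification hypothesis on $\cat S$ then puts each inside the localizing tensor ideal generated by the other, and applying $U$ (together with the projection formula) transports this containment back to $\tensLoc{g(\cat P)}$ in~$\cat T$. The local-to-global principle in~$\cat T$, automatic from noetherianity (or from the weakly noetherian analogue used in \cite{BHS23}), lets us reassemble the information across the whole fiber.

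The main obstacle, as always in such descent statements, is precisely the multi-point-fiber issue: when $|\varphi^{-1}(\{\cat P\})| > 1$, one cannot invoke \Cref{Thm:etale} prime-by-prime. The discreteness hypothesis is what rescues the argument, allowing a Thomason-type separation of the individual~$\cat Q'$ from each other, but executing this cleanly requires some care with weakly noetherian (as opposed to just noetherian) spectra and with the interaction between the idempotents $g(\cat P)$, $g(\cat Q')$ and the fiber-separating idempotent constructed from~$Y$. Verifying that $F$ sends nonzero objects of $\tensLoc{g(\cat P)}$ to nonzero objects of at least one $\tensLoc{g(\cat Q')}$ -- a form of nil-faithfulness of the restricted adjunction -- is the technical heart I expect to consume most of the work.
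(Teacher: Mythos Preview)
The paper does not prove this theorem: it is stated as a citation of \cite[Thm.\,17.16]{BCHS24pp} and closed immediately with a \qedsymbol, with no argument given. There is therefore no proof in the paper to compare your proposal against.

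Your sketch is a plausible outline of how such a quasi-finite descent result is established in the Barthel--Castellana--Heard--Sanders framework (projection formulas from the Wirthm\"uller context, base-change of Balmer--Favi idempotents along~$F$, Thomason-separation of a discrete fiber, and pushing minimality up via~$U$), but since the present paper treats the theorem as a black box, assessing the correctness or completeness of your sketch would require consulting \cite{BCHS24pp} rather than this paper.
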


\subsection{A countable version of stratification}
\label{sec:countable-strat}%
\medskip

We introduce the countable variant of stratification which is required in order to make sense of the results and conjectures about $\Cell{G}$ we formulated in the Introduction.
This variant is rather straightforward, but we must pay attention to certain details.

Now $\cat T$ denotes a \emph{rigidly-compactly${}_{\aleph_1}$ generated} tt-category as in \Cref{Def:cbly-big-ttcat}, keeping in mind our main example $\cat T=\Cell{G}$ of the equivariant bootstrap category of a finite group~$G$ (\Cref{Prop:Cell(G)-cble-bigtt}). 

\begin{Cons} 
\label{Cons:countable-strat}
With $\cat T$ as above, assume for simplicity that the Balmer spectrum $\Spc (\cat T^c)$ is noetherian.
Let $Y$ be a Thomason (\ie specialization closed) subset of the Balmer spectrum $\Spc (\cat T^c)$ and $\cat{T}^c_Y$ the corresponding thick tensor ideal of~$\cat T^c$, as per \Cref{Thm:classif-tensorid}. 
We denote by $e_Y \to \unit \to f_Y \to \Sigma e_Y$ 
the associated idempotent triangle in~$\cat T$ which is uniquely determined by the properties 
\[
\Ker (f_Y \otimes -)= \Loc{\cat T^c_Y} = e_Y\otimes \cat T
\] 
(see \cite[\S2.1]{DellAmbrogio10} if necessary). 
For every $\cat P \in \Spc (\cat T^c)$, let 
\[
g(\cat P):= e_{Y_1} \otimes f_{Y_2}  \quad \in \cat T
\]
 where $Y_1,Y_2$ are specialization closed subsets such that $Y_1 \cap Y_2^c = \{\cat P\}$, for instance $Y_1=\overline{\{\cat P\}} = \{\cat Q \mid \cat Q \subseteq \cat P\}$ and $Y_2 = \{\cat Q \mid \cat P \not\subseteq \cat Q\}$.
For any object $A\in \cat T$, we define its \emph{countable Balmer--Favi support} to be
\[
 \supp (A) := \{\cat P \in \Spc (\cat T^c)\mid g(\cat P)\otimes A \not\cong 0\}
 \]
and we extend the definition to collections $\cat{S}$ of objects of $\cat T$ by
\[ 
\supp (\cat{S}) := \bigcup_{A\in \cat{S}}\supp (A) . 
\]
We say that $\cat T$ is \emph{stratified}, or \emph{countably stratified} for emphasis, if the resulting application $\cat S\mapsto \supp (\cat S)$ induces a bijection between the set of localizing tensor ideals of $\cat T$ and the set of all subsets of $\Spc (\cat T^c)$. 
\end{Cons}

\begin{Rem}
\label{Rem:basic-props}
The basic properties of the usual Balmer--Favi support remain true for its countable version, with only obvious minimal adjustments necessary. 
Indeed, much of the theory developed in \cite{BHS23} should go through, although there are limits to this due to the lack of a countable version of Brown representability for the dual (\ie for covariant homological functors), see \cite[Ex.\,2.11 and 2.22]{DellAmbrogio10}.
In any case, as explained in the introduction, we have chosen a different path towards applying stratification to equivariant KK-theory.
\end{Rem}

\section{Rational stratification for $G$-cell algebras}
\label{sec:strat-rat}

In this section we prove \Cref{Thm:intro-rational} of the Introduction, which in fact does not need any sophisticated stratification theory, nor the enlarged categories built in \Cref{sec:app-big}, but instead is a rather straightforward consequence of the (semi)simple structure of rational cell algebras.

Fix a finite group~$G$. 
We recall the standard (for tt-categories) notion of rationalization which is compatible with coproducts, as applied to $G$-cell algebras.

\begin{Ter}
\label{Ter:rational_cell}
Let $\Cell{G}$ be the $G$-equivariant bootstrap category of \Cref{def.CellCat}, and consider the Verdier quotient
\[
\Cell{G}_\mathbb Q :=\Cell{G}/\tensLoc{\cone(f)\mid f\in \mathbb Z\smallsetminus\{0\}  \subseteq \Rep(G)= \End_{\Cell{G}}(\unit) }.
\]
This is the \emph{rationalized} bootstrap category.
It is again a rigidly-compactly${}_{\aleph_1}$ generated tt-category, and is such that $\Cell{G}_\mathbb Q(A, B)=\mathbb Q \otimes_\mathbb Z \KK^G(A, B)$ whenever $A$ is a compact${}_{\aleph_1}$ object of $\Cell{G}$ and $B\in \Cell{G}$ arbitrary.
(See \eg\ \cite[\S2]{DellAmbrogio10}.) 
\end{Ter}

As mentioned in the introduction, this category admits a relatively simple purely algebraic model.
Namely, \cite[Thm.\,C]{BDM24pp} provides an equivalence
\begin{align} \label{eq:rat-equiv}
\Cell{G}_{\mathbb{Q}}
\simeq 
{\prod_{\mathrm{Cl}(H),\, H \leq G\; \textrm{cyclic}}}  
\underbrace{ \big( \mathbb{Q}(\zeta_{|H|}) \rtimes_c W_G(H) \big) \MMod^{\mathbb Z/2}_{\aleph_1} }_{=: \; \cat T_H}
\end{align}
 of tensor categories with a finite product of semisimple abelian tensor categories  $\cat T_H$ of countable $\mathbb Z/2$-graded modules over certain (ungraded) skew group rings $\mathbb{Q}(\zeta_{|H|}) \rtimes_c W_G(H)$, one for each conjugacy class of cyclic subgroups $H$ of~$G$.
 The suspension $\Sigma$ of $\Cell{G}$ corresponds on the right-hand side to degree-shift of graded modules.
 
We derive from this structural result the following observation:
 
\begin{Lem}
Each tt-category $\cat T_H$ is a (countable) \emph{tt-field} in the sense of \cite{BKS19} (adapted to the countable setting), meaning that it is rigidly-compactly${}_{\aleph_1}$ generated and satisfies two extra properties:
 \begin{itemize}
 \item[(F1)] The triangulated category $\cat T_H$ is \emph{pure-semisimple}, that is, each object is a coproduct of compact${}_{\aleph_1}$ objects.
 \item[(F2)] For every nonzero object $A\in \cat T_H$, the functor $A\otimes -\colon \cat T_H\to \cat T_H$ is faithful, that is any morphism $f$ in $\cat T_H$ is zero as soon as $A\otimes f$ is zero. (Clearly, by F1 it suffices to check this property for compact objects~$A$.)
 \end{itemize}
\end{Lem}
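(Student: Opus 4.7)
The plan is to exploit the fact that the ring $R_H := \mathbb{Q}(\zeta_{|H|}) \rtimes_c W_G(H)$ is a finite-dimensional \emph{semisimple} $\mathbb{Q}$-algebra. Since $W_G(H)$ is a finite group and $\mathbb{Q}(\zeta_{|H|})$ has characteristic zero (so that $|W_G(H)|$ is invertible in it), a standard Maschke-type argument applies to the skew group ring and shows that $R_H$ is a finite product of matrix algebras over finite-dimensional $\mathbb{Q}$-division algebras. Passing to the $\mathbb{Z}/2$-graded version does not destroy semisimplicity, so the underlying abelian category of $\cat T_H$ is semisimple with finitely many isomorphism classes of simple objects (up to shift), each of which is finite-dimensional over $\mathbb{Q}$, hence countable.

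From this I would read off (F1) and rigid-compact${}_{\aleph_1}$ generation in one stroke. Every object of $\cat T_H$ decomposes as a coproduct of simple graded $R_H$-modules, only countably many nonzero summands being allowed by the cardinality constraint. Finite direct sums of simples are compact${}_{\aleph_1}$: they are finitely presented over $R_H$, so $\Hom$ out of them commutes with coproducts, and their $\Hom$ groups with countable objects are countable since everything in sight is at most countable over~$\mathbb{Q}$. Conversely, if $M$ is compact${}_{\aleph_1}$ then its decomposition forces $\id_M$ to factor through a finite subsum, whence $M$ is itself a finite sum of simples. In particular the finite collection of simples generates $\cat T_H$ as a localizing${}_{\aleph_1}$ subcategory.

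For rigidity, I would unpack the tensor product on $\cat T_H$ transported across~\eqref{eq:rat-equiv}: it is given by $\otimes_{\mathbb{Q}(\zeta_{|H|})}$ with diagonal $W_G(H)$-action and total $\mathbb{Z}/2$-grading, the unit being the base field concentrated in degree zero. Because $\mathbb{Q}(\zeta_{|H|})$ is a field and $W_G(H)$ is a finite group (whose group algebra is a Hopf algebra), every finite-dimensional graded $R_H$-module admits a $k$-linear dual with the contragredient action, proving that compact${}_{\aleph_1}$ objects are dualizable. For property (F2), given a nonzero $A \in \cat T_H$ and $f$ with $A \otimes f = 0$, restricting along the forgetful functor to $\mathbb{Z}/2$-graded $\mathbb{Q}(\zeta_{|H|})$-vector spaces reduces the claim to the elementary fact that tensoring over a field with a nonzero vector space is a faithful operation, forcing $f = 0$.

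The only genuine piece of work is the very first step, namely correctly identifying the monoidal structure transported from $\Cell{G}_{\mathbb{Q}}$ along the equivalence~\eqref{eq:rat-equiv} with the $\otimes_{\mathbb{Q}(\zeta_{|H|})}$-structure described above; once that identification is invoked from~\cite{BDM24pp}, everything else is routine semisimple module theory over a finite-dimensional $\mathbb{Q}$-algebra.
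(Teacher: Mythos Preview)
Your argument is correct, but it diverges from the paper's in two places worth noting.

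For rigid-compact${}_{\aleph_1}$ generation, you construct duals explicitly via the contragredient action, invoking the Hopf structure on the group algebra. The paper bypasses this entirely: since $\Cell{G}_{\mathbb Q}$ is rigidly-compactly${}_{\aleph_1}$ generated and $\cat T_H$ is a direct tensor factor of it, $\cat T_H$ inherits the property for free. This is shorter and avoids any need to unpack the transported tensor product.

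For (F2), your reduction via the forgetful functor to graded $\mathbb Q(\zeta_{|H|})$-vector spaces works, but it requires knowing that the tensor product on $\cat T_H$ is literally $\otimes_{\mathbb Q(\zeta_{|H|})}$ on underlying spaces --- the identification you flag as the ``only genuine piece of work''. The paper instead proves something weaker but sufficient: it shows only that $\End_{\cat T_H}(\unit)$ is a \emph{field}, by computing it as the fixed subfield $\mathbb Q(\zeta_{|H|})^{W_G(H)}$. Then for any nonzero compact $C$, the categorical dimension $\dim(C)\colon \unit \to C\otimes C^\vee \to \unit$ is a nonzero element of this field, hence invertible; tensoring $\dim(C)$ with any $f$ exhibits $f$ as a retract of $C\otimes f$, so $C\otimes f=0$ forces $f=0$. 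This trace argument is intrinsic to the tt-structure and needs less from the cited reference (only the tensor unit, not the full tensor formula). Your route is more hands-on; theirs is more portable.
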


\begin{proof}
Since $\Cell{G}$ is rigidly-compactly${}_{\aleph_1}$ generated, so is its direct factor~$\cat T_H$ as one sees easily.
The rest follows from the next two facts  (\cf \cite[Prop.\,11.5]{BBB24pp}): 
\begin{itemize}
\item
$\cat T_H$ is semisimple abelian, and
\item the endomorphism ring $\End_{\cat T_H}(\unit)$ of its $\otimes$-unit is a field. 
\end{itemize}
The first fact was already mentioned. 
To see why the second fact is true, recall from \cite[\S2]{BDM24pp} that if a group $W$ acts on a ring $S$ via the action~$\alpha$, the tensor-unit for the module category over the skew group ring $S\rtimes_\alpha W$ is $(S,\alpha)$, and one can promptly verify that $\End(S,\alpha)\cong (S^W)^\op$ is the opposite of the fixed-point subring of~$S$, via $f\mapsto f(1)$.
Hence in the present case of $\cat T_H$, the ring $\End_{\cat T_H}(\unit)$ is a fixed-point subfield of the cyclic extension $\mathbb Q(\zeta_{|H|})$, and so it itself is a field.

Now to prove the lemma, note that by construction $\mathbb{Q}(\zeta_{|H|}) \rtimes_c W_G(H)$ is a finite dimensional algebra over the field~$\mathbb Q$ and therefore every (graded, countable) module is a coproduct of finite dimensional (graded, countable) modules; moreover, by semisimplicity, the finite dimensional modules are precisely the compact objects of~$\cat T_H$. This implies~(F1). 

As for~(F2), let $C\in \cat T_{H}^c$ be any compact object. 
If $C\neq 0$, the dimension
\[
\dim (C) \colon \unit \to C \otimes C^\vee \to \unit
\]
(\ie the monoidal trace of $\id_C$, which is well-defined since $C$ is compact and hence rigid) is a nonzero endomorphism of~$\unit$. 
As $\End_{\cat T_H}(\unit)$ is a field, it follows that $\dim (C)$ is invertible.
By tensoring $\dim(C)$ with any map~$f$ of~$\cat T$, we see that $f$ is a retract of~$C\otimes f$.
Hence in particular $C\otimes f=0$ implies $f=0$, proving~(F2).
\end{proof}
 
 At this point, the stratification of $\Cell{G}_\mathbb Q$ and the finite discreteness of its spectrum can be derived as in \cite[Prop.\,18.3 and Thm.\,18.4]{BCHS24pp}, using the pure semisimplicity (property~(F1)) of $\Cell{G}_\mathbb Q$ inherited from that of its factors $\cat T_H$. However, this argument is rather long as it would require adapting many general results to the countable case.  
Here below we provide more direct proofs.
 
\begin{Cor}
\label{Cor:tt-field}
As for any tt-field, the spectrum $\Spc (\cat T_H^c) = \{(0)\}$ has a single point, and the only localizing tensor ideals of $\cat T_H$ are the trivial ones, $(0)$ and~$\cat T_H$.
\end{Cor}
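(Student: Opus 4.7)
The plan is to exploit the defining properties (F1) and (F2) of the tt-field $\cat T_H$ together with rigidity of its compact objects, in close analogy with the standard arguments for genuine tt-fields in \cite{BKS19}.

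First, I would show that $\Spc(\cat T_H^c) = \{(0)\}$. The zero ideal is itself prime in $\cat T_H^c$: indeed, if $A \otimes B \cong 0$ with $A \neq 0$ compact, then by (F2) the functor $A \otimes -$ is faithful, whence $\id_B = 0$ and $B \cong 0$. Conversely, I claim no other prime thick tensor ideal exists. Let $\cat P \subsetneq \cat T_H^c$ be a proper thick tensor ideal and pick any nonzero $C \in \cat P$. Since $C$ is compact and $\cat T_H$ is rigidly-compactly${}_{\aleph_1}$ generated, $C$ is rigid with dual $C^\vee$, and the monoidal trace $\dim(C) \in \End_{\cat T_H}(\unit)$ is the composite $\unit \to C \otimes C^\vee \to \unit$. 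By the argument already used in the proof of (F2), $\dim(C)$ is invertible as soon as $C \neq 0$, since $\End_{\cat T_H}(\unit)$ is a field. Hence $\unit$ is a retract of $C \otimes C^\vee \in \cat P$, and by thickness $\unit \in \cat P$, forcing $\cat P = \cat T_H^c$, a contradiction. So $(0)$ is the unique prime.

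Next, for the classification of localizing tensor ideals of~$\cat T_H$, let $\cat L \subseteq \cat T_H$ be a localizing tensor ideal with $\cat L \neq 0$, and pick any nonzero $A \in \cat L$. Property (F1) gives a decomposition $A \cong \coprod_i C_i$ into compact${}_{\aleph_1}$ objects $C_i$. As $A \neq 0$ at least one $C_i$ is nonzero, and being a retract of $A$, it lies in the thick subcategory $\cat L$. Applying the same rigidity argument to this nonzero compact $C_i \in \cat L$ yields $\unit \in \cat L$ and hence $\cat L = \cat T_H$. The only remaining localizing tensor ideal is $(0)$, which is clearly one.

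There is no real obstacle here: the only thing to be mindful of is that the rigidity/trace argument needs both compactness of $C$ (for dualizability) and the fact that $\End_{\cat T_H}(\unit)$ is a field (to invert $\dim(C)$); the former is supplied by (F1) applied inside the nonzero localizing ideal, and the latter was established in the preceding lemma.
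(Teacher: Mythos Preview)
Your proof is correct and follows essentially the same approach as the paper. The paper cites \cite[Prop.\,5.15]{BKS19} for the spectrum computation and then, for the localizing ideals, uses (F1) to extract a nonzero compact summand $C$ and the spectrum result (equivalently $\tensThick{C}=\cat T_H^c$) to conclude $\tensLoc{C}=\cat T$; you instead spell out both steps directly via the dimension trick already used in the proof of~(F2), which is a perfectly good self-contained substitute for the citation.
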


\begin{proof}
Let $\cat T $ be any tt-field as above. 
The claim on the spectrum, which only involves the tt-category of compacts, is precisely \cite[Prop.\,5.15]{BKS19}; this is equivalent to saying that  any nonzero compact object generates $\cat T^c$ as a thick $\otimes$-ideal.

Similarly $\tensLoc{A}= \cat T$ for any nonzero object $A\in \cat T$.
This is because by (F1) each $A\neq 0$ contains a nonzero compact direct summand~$C$, hence $\tensThick{C}= \cat T^c$ and therefore $\cat T=\tensLoc{\cat T^c}=\tensLoc{C}\subseteq \tensLoc{A}$, so that $\cat T= \tensLoc{A}$.
\end{proof}

\begin{Cor}\label{Cor:spc-rational}
The comparison map $\rho$ is a homeomorphism $\Spc( \Cell{G}^c_{\mathbb{Q}} )\cong \Spec( \Rep(G)_{\mathbb{Q}}) $ of finite discrete spaces, whose points are in bijection with the conjugacy classes of cyclic subgroups of~$G$.
\end{Cor}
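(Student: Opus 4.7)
The plan is to exploit the product decomposition \eqref{eq:rat-equiv} and the fact that each factor $\cat T_H$ is a tt-field (so has a single-point spectrum by \Cref{Cor:tt-field}) to get the shape of both spectra, and then observe that the comparison map $\rho$ decomposes compatibly.

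First I would unpack the equivalence \eqref{eq:rat-equiv} on two levels. On the level of compact-rigid objects, a finite product of rigidly-compactly${}_{\aleph_1}$ generated tt-categories has compacts given by the product of compacts, and its Balmer spectrum is the disjoint union
\[
\Spc\bigl(\Cell{G}_{\mathbb{Q}}^c\bigr) \;\cong\; \bigsqcup_{\mathrm{Cl}(H),\, H\leq G\text{ cyclic}} \Spc(\cat T_H^c).
\]
By \Cref{Cor:tt-field} each $\Spc(\cat T_H^c)$ is a single point, so the left-hand side is a finite discrete space whose points are in bijection with the conjugacy classes of cyclic subgroups of~$G$.

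Next, on the level of endomorphism rings of the tensor unit, the same product decomposition yields
\[
\Rep(G)_{\mathbb{Q}} \;\cong\; \End_{\Cell{G}_{\mathbb{Q}}}(\unit) \;\cong\; \prod_{\mathrm{Cl}(H)} \End_{\cat T_H}(\unit),
\]
and each factor $\End_{\cat T_H}(\unit)$ is a field (this is the second bullet point in the proof of the tt-field lemma, which identifies it with the fixed subfield $\mathbb{Q}(\zeta_{|H|})^{W_G(H)}$). Hence $\Rep(G)_{\mathbb Q}$ is a finite product of fields, so $\Spec(\Rep(G)_{\mathbb{Q}})$ is also finite discrete, with one point per conjugacy class of cyclic subgroups~$H$.

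Finally, by the naturality of Balmer's comparison map (\Cref{subsec:rho}) applied to the projections $\Cell{G}_{\mathbb{Q}}\to \cat T_H$, the map $\rho$ is the disjoint union of the comparison maps $\rho_{\cat T_H}\colon \Spc(\cat T_H^c) \to \Spec(\End_{\cat T_H}(\unit))$, each of which is a map between one-point spaces and is thus automatically a bijection. Therefore $\rho$ is a continuous bijection between two finite discrete spaces, hence a homeomorphism. There is no real obstacle here beyond assembling the pieces; the only small point to verify carefully is that the product decomposition \eqref{eq:rat-equiv} is compatible with the symmetric monoidal structure in the way needed to identify both sides of $\rho$ factor-by-factor, but this is built into the statement of \cite[Thm.\,C]{BDM24pp}.
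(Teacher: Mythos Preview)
Your proposal is correct and follows essentially the same approach as the paper: both use the product decomposition \eqref{eq:rat-equiv}, the naturality of $\rho$ with respect to the projection tt-functors, and \Cref{Cor:tt-field} to reduce to the component maps $\rho_{\cat T_H}$ between one-point spaces. Your version is arguably slightly more explicit in spelling out why the Zariski side is also a disjoint union of points (namely that each $\End_{\cat T_H}(\unit)$ is a field), whereas the paper packages this into the commutative square and the appeal to \Cref{Cor:tt-field}.
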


\begin{proof}
Recall from \Cref{subsec:rho} that the Balmer and Zariski spectra are functorial (for tt-functors and ring morphisms respectively) and note that they send finite products to finite disjoint unions of spaces. 
Moreover, $\rho$ is natural in the tt-category. 

It follows that the equivalence \eqref{eq:rat-equiv} induces a commutative square 
\[
\xymatrix{
{\underset{\mathrm{Cl}(H)}{\coprod}}\Spc(\cat T_{H}^c) 
 \ar[r]^-{\cong} 
  \ar[d]_-{{\coprod} \; \rho_{\cat T^c_H}} & 
\Spc(\Cell G_{\mathbb{Q}}^c) 
 \ar[d]^-{\rho_{\Cell{G}^c_\mathbb{Q}}} \\
{\underset{\mathrm{Cl}(H)}{\coprod}} \Spec(\End_{\cat T_H^c}(\unit))
  \ar[r]^-{\cong} &
\Spec(\Rep(G)_{\mathbb{Q}})
}
\]
where the horizontal maps (induced by the projection tt-functors $\Cell{G}^c_\mathbb Q\to \cat T_H^c$) are isomorphisms of finite discrete spaces. In particular, 
Each map $\rho_{\cat T^c_H}$ on the left is a homeomorphism by \Cref{Cor:tt-field}, hence so is the right vertical map.
\end{proof}

\begin{Thm}\label{Thm:strat-rational}
For every finite group~$G$, the tt-category $\Cell{G}_\mathbb Q$ is countably stratified in the sense of \Cref{Cons:countable-strat}.
\end{Thm}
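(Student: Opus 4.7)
The plan is to reduce directly to the observation, just established, that each factor $\cat T_H$ in the product decomposition \eqref{eq:rat-equiv} is a tt-field, exploiting that finite products of rigidly-compactly${}_{\aleph_1}$ generated tt-categories are compatible with every piece of structure appearing in the countable Balmer--Favi theory of Construction \ref{Cons:countable-strat}: localizing tensor ideals, the Balmer spectrum of compact${}_{\aleph_1}$ objects, Thomason subsets, the idempotent triangles $e_Y\to\unit\to f_Y$, and hence the idempotents $g(\cat P)$ themselves. No serious obstacle is anticipated; the only real content is identifying the $g(\cat P_H)$'s with the unit objects of the factors, after which everything reduces to bookkeeping.

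First I would identify the countable Balmer--Favi idempotents in $\Cell{G}_{\mathbb Q}$. By \Cref{Cor:spc-rational}, $\Spc(\Cell{G}_{\mathbb Q}^c)$ is a finite discrete space with one point $\cat P_H$ per conjugacy class $[H]$ of cyclic subgroups of $G$. Discreteness makes $Y_1 := \{\cat P_H\}$ and its complement $Y_2 := \Spc(\Cell{G}_{\mathbb Q}^c)\setminus\{\cat P_H\}$ both clopen and Thomason, and under \eqref{eq:rat-equiv} the thick tensor ideal of compacts supported on $Y_1$ is precisely the image of the direct summand $\cat T_H^c \hookrightarrow \prod_{[K]} \cat T_K^c$. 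A direct computation of the two idempotent triangles attached to $Y_1$ and $Y_2$ in the product category then yields
\[
g(\cat P_H) \;=\; e_{Y_1}\otimes f_{Y_2} \;\cong\; \unit_{\cat T_H},
\]
viewed inside $\Cell{G}_{\mathbb Q}$ as the tuple with $\unit$ in the $H$-slot and $0$ elsewhere.

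Next I would enumerate both sides of the desired bijection. By \Cref{Cor:tt-field}, each tt-field $\cat T_H$ has exactly two localizing tensor ideals, namely $0$ and $\cat T_H$. The usual componentwise description of localizing tensor ideals of a finite product of rigidly-compactly${}_{\aleph_1}$ generated tt-categories then shows that the localizing tensor ideals of $\Cell{G}_{\mathbb Q}$ are indexed by the power set of the finite set $\mathrm{Cl}$ of conjugacy classes of cyclic subgroups of $G$, the ideal corresponding to $S\subseteq\mathrm{Cl}$ being $\cat L_S := \prod_{[H]\in S}\cat T_H \subseteq \prod_{[H]}\cat T_H \simeq \Cell{G}_{\mathbb Q}$. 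Subsets of the finite discrete space $\Spc(\Cell{G}_{\mathbb Q}^c)$ are indexed by the same set $\mathrm{Cl}$. Finally, for any $A=(A_K)_K\in\Cell{G}_{\mathbb Q}$, the identification of $g(\cat P_H)$ yields $g(\cat P_H)\otimes A \cong A_H$ (in the $H$-slot, zero elsewhere), so $\supp(A)=\{[H] \mid A_H\neq 0\}$; extending to localizing tensor ideals gives $\supp(\cat L_S) = S$, which is exactly the bijection between the two enumerations. This completes the countable stratification of~$\Cell{G}_{\mathbb Q}$.
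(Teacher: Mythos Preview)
Your proposal is correct and follows essentially the same approach as the paper: both identify $g(\cat P_H)\cong\unit_{\cat T_H}$ via the discreteness of the spectrum and the idempotent triangles, then use the componentwise decomposition of localizing tensor ideals together with \Cref{Cor:tt-field} to conclude. The paper organizes this as separate surjectivity and injectivity arguments while you enumerate both sides directly, but the content is the same.
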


\begin{proof}
We must show that the map
\[
\sigma\colon 
\left\{
\begin{array}{c} 
\textrm{localizing tensor ideals of } \Cell{G}_\mathbb Q
\end{array}
\right\}
\longrightarrow
\left\{
\begin{array}{c} 
\textrm{subsets of } \Spc(\Cell{G}_\mathbb Q^c)
\end{array}
\right\}
\]
sending a localizing subcategory $\cat L$ to the subset 
\[
\supp(\cat L) = \bigcup_{A\in \cat L}  \supp(A) = \{ \cat P \mid \exists\, A\in \cat L \textrm{ such that } g(\cat P)\otimes A \not\cong 0\}
\]
is a bijection. 
Note that the space $\Spc(\Cell{G}_\mathbb Q^c)$, being finite discrete, is noetherian, hence the construction of the objects $g(\cat P)$ makes sense for all points~$\cat P$ and the countable Balmer--Favi support is well-defined.
In particular $\sigma$ is surjective (as always in this situation) because for any subset $S\subseteq \Spc(\Cell{G}^c_\mathbb Q)$ we have $\supp (\tensLoc{g(\cat P) \mid \cat P \in S}) = \supp(\{g(\cat P) \mid \cat P\in S\}) = S$ by the basic compatibility axioms of a support theory (\cf \Cref{Rem:basic-props}).

It remains to show injectivity, which in this case can also be easily seen directly.
Indeed, let us identify each $\cat T_H$ with its image inside $\Cell{G}_\mathbb Q$ via \eqref{eq:rat-equiv}, and consider the corresponding decomposition $\unit\cong \bigoplus_{H} \unit_H$ of the tensor unit, with $\unit_H$ being the (image of) the tensor unit of~$\cat T_H$. 
Write $\cat P_H$ for the tt-prime indexed by the conjugacy class of the cyclic subgroup~$H$. 

By discreteness, both $Y_1=\{\cat P_H\}$ and its complement $Y_2$ are Thomason (\ie specialization closed) subsets of the spectrum.
Using the notations of \Cref{Cons:countable-strat} for the defining $\otimes$-idempotents and their idempotent triangles (which are all split by semisimplicity), we immediately deduce from the equalities $\Loc{\cat T^c_H} = \cat T_H = \unit_H \otimes \Cell{G}_\mathbb Q = \Ker (\prod_{L\neq H}\unit_L\otimes -)$ that $g(\cat P_H)= e_{Y_1} = f_{Y_2} = \unit_H$. 

Note that by \eqref{eq:rat-equiv} every object $A\in \Cell{G}_\mathbb Q$ decomposes as $A\cong \bigoplus_H \unit_H\otimes A$ with $\unit_H\otimes A\in \cat T_H$
Similarly, if $\cat L$ is a localizing $\otimes$-ideal (\ie by semisimplicity: a full $\otimes$-ideal closed under translations, retracts and countable direct sums), we have $\cat L = \bigoplus_H \unit_H\otimes \cat L$, with $\unit_H \otimes \cat L$ a localizing ideal of~$\cat T_H$. 
By \Cref{Cor:tt-field}, $\unit_H \otimes \cat L$ must be either zero or the whole of~$\cat T_H$, and the latter happens iff $g(\cat P_H)=\unit_H$ belongs to~$\cat L$.
Therefore we have $\cat L = \bigoplus_{\cat P_H \in \supp (\cat L)} \cat T_H =  \bigoplus_{\cat P_H \in \supp (\cat L)} \tensLoc{g(\cat P_H)}$, and the latter is equal to $ \tensLoc{g(\cat P) \mid \cat P\in \supp(\cat L)}$ by semisimplicity and the fact that the factors $\cat T_H$ are $\otimes$-orthogonal.
Therefore each localizing tensor ideal is determined by its support, showing the injectivity of~$\sigma$.
\end{proof}

\section{Integral stratification for groups with prime-order elements}
\label{sec:main}%


The goal of this section is to prove \Cref{Thm:strat-Cell(C_p)} of the Introduction, which says that the tt-category $\Cell{G}$ is stratified in the countable sense of \Cref{Cons:countable-strat} whenever $G$ is a finite group where every nontrivial element has prime order.

\begin{Rem} \label{Rem:our-class}
The latter condition on a finite group is obviously equivalent to requiring that all its cyclic subgroups are either the trivial group or of prime order; this is the form in which we will use the hypothesis. 
Such groups have been classified, see \cite{CDLS93}~\cite{Deaconescu89}. 
They consist precisely of: 
the $p$-groups with exponent $p$ for a prime number~$p$ (this includes elementary abelian $p$-groups, \ie those of the form $C_p^{r} = C_p \times\ldots \times C_p$ for $r\in \mathbb N$, as well as nonabelian examples for all $p\neq 2$);
Frobenius groups of a certain type; 
and the alternating group~$A_5$.
An example of the Frobenius kind is~$S_3$, the permutation group of three elements.
\end{Rem}

In this section we use the enlargement of KK-theory constructed in \Cref{sec:app-big}. 

\begin{Hyp} 
\label{Hyp:C_p}
We only need the following facts, all part of \Cref{Thm:bigCell}:
\begin{enumerate}
\item For every finite group~$G$, there exists a rigidly-compactly generated tt-category (in the sense of \Cref{subsec:bit-ttcats}) $\bigCell{G}$ together with a fully faithful coproduct-preserving tt-functor $\iota_G\colon \Cell{G}\to \bigCell{G}$.
\item The above embedding $\iota_G$ restricts to an equivalence $\Cell{G}^c\overset{\sim}{\to} \bigCell{G}^c$ between the tt-subcategories of rigid-compacts${}_{\aleph_1}$ and  of rigid-compacts.
\item For every subgroup~$H$ of a finite group~$G$, there is a \emph{restriction} tt-functor $\Res^G_H\colon \bigCell{G}\to \bigCell{H}$ extending (up to isomorphism) the usual restriction functor in KK-theory along the embeddings.
\item Each restriction functor $\Res^G_H$ admits a two-sided adjoint $\Ind^G_H$ extending the usual induction in KK-theory, and the adjunctions can be chosen so that the composite $\Id\to \Res^G_H\Ind^G_H\to \Id$ of the unit of one adjunction followed by the counit of the other one is the identity of~$\Id_{\bigCell{H}}$.
\end{enumerate}
\end{Hyp}

\begin{Rem}
\label{Rem:pres-cpts}
Like any symmetric monoidal functor, each restriction $\Res^G_H$ preserves rigid hence compact objects. 
Each induction $\Ind^G_H$ does as well by \cite[Thm.\,5.1]{Neeman96}, since it has a coproduct preserving right adjoint (namely~$\Res^G_H$).
\end{Rem}

As explained in the Introduction, we deduce \Cref{Thm:strat-Cell(C_p)} from the following:

\begin{Thm}
\label{Thm:strat-big-one}
Let $G$ be a finite group with every nontrivial element of prime order.
Then the (genuinely) rigidly-compactly generated category $\bigCell{G}$ of \Cref{Thm:bigCell} is stratified by the Balmer--Favi support theory, in the sense of~\cite{BHS23}.
\end{Thm}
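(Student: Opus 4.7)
The plan is to verify minimality of the Balmer--Favi support theory at each point of $\Spc(\bigCell{G}^c)$, which by \Cref{Thm:strat-minimality} is equivalent to stratification once we know the spectrum is noetherian. Noetherianity follows by chaining \Cref{Hyp:C_p}(b) (so $\Spc(\bigCell{G}^c) \simeq \Spc(\Cell{G}^c)$), \Cref{Thm:Spc-Intro} ($\Spc(\Cell{G}^c) \cong \Spec(\Rep(G))$), and \Cref{Lem:End-finite} (which says $\Rep(G)$ is noetherian).

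Let $H_1, \ldots, H_n$ be representatives of conjugacy classes of cyclic subgroups of $G$; by our hypothesis each $H_i$ is either trivial or of prime order. I would set up the product restriction tt-functor
\begin{equation*}
F := \prod_i \Res^G_{H_i} \colon \bigCell{G} \longrightarrow \prod_i \bigCell{H_i},
\end{equation*}
which is coproduct-preserving and, by \Cref{Hyp:C_p}(d), admits as right adjoint $U := \bigoplus_i \Ind^G_{H_i}$, which preserves compact objects by \Cref{Rem:pres-cpts}. Using \Cref{Thm:cyclic-generation}(d), the algebras $\Cont(G/H_i) = \Ind^G_{H_i}(\unit)$ generate $\bigCell{G}^c$ as a thick tensor ideal, so the induced map $\varphi := \Spc(F^c)$ on spectra is surjective. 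Moreover, each factor $\bigCell{H_i}$ is stratified: for $H_i = 1$ this is the big enhancement of the classical Rosenberg--Schochet bootstrap, whose small version is stratified per \Cref{Rem:boot} and whose big analog extends via the construction of the appendix; for $H_i = C_p$, the spectrum of $\Cell{C_p}$ is computed in \cite{DellAmbrogioMeyer21}, and stratification of $\bigCell{C_p}$ then follows by direct application of \Cref{Thm:strat-minimality} to its finite noetherian spectrum. Hence $\prod_i \bigCell{H_i}$ is stratified.

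Finally, I would invoke the quasi-finite descent theorem (\Cref{Thm:quasi-finite}) to transfer stratification from $\prod_i \bigCell{H_i}$ to $\bigCell{G}$. All hypotheses are already in place except the requirement that the fibers of $\varphi$ be discrete, which is the main obstacle I expect. Here the prime-order hypothesis on $G$ is essential: it restricts the cyclic-subgroup lattice enough that Segal's classification of primes of $\Rep(G)$ in terms of cyclic subgroups reduces the fiber count over each $\cat P \in \Spc(\bigCell{G}^c)$ to a finite combinatorial problem, so that each fiber of $\varphi$ is finite and hence discrete. As a backup, if this global argument becomes delicate, one could instead apply the étale reduction \Cref{Thm:etale} prime-by-prime, selecting at each $\cat P$ a cyclic $H_i$ with a single preimage of $\cat P$ under $\Spc(\Res^G_{H_i})$; the finite-degree property of the tt-rings $\Cont(G/H_i)$ from \Cref{Prop:finite-degree} would ensure the étale framework applies.
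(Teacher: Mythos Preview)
Your overall architecture matches the paper's: reduce to stratification of the factors $\bigCell{H}$ for $H$ cyclic via the quasi-finite descent criterion \Cref{Thm:quasi-finite} applied to the product restriction functor. The surjectivity of $\varphi$, the compactness-preservation of $U$, and the noetherianity of the spectra are handled correctly. For the discreteness of fibers, your backup idea is in fact what the paper does (though not prime-by-prime via \Cref{Thm:etale}): each $\Res^G_H$ is finite \'etale with tt-ring $\Cont(G/H)$ of finite degree (\Cref{Prop:finite-degree}), and Balmer's going-up theorem then gives finite, hence discrete, fibers for each component map.

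The genuine gap is in your treatment of the base cases. First, for $H=1$ you say stratification of $\bigCell{1}$ ``extends via the construction of the appendix'' from the countable result in \Cref{Rem:boot}; but \Cref{Prop:strat-restriction} only goes the other direction (big $\Rightarrow$ countable). The paper instead proves stratification of $\bigCell{1}$ directly in \Cref{Prop:strat-boot}, by invoking the Dell'Ambrogio--Stanley criterion for monogenic big tt-categories with regular noetherian graded endomorphism ring.

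Second and more seriously, your argument for $\bigCell{C_p}$ is incorrect. The spectrum $\Spc(\Cell{C_p}^c)\cong \Spec(\mathbb Z[x]/(x^p-1))$ is noetherian but certainly not finite, and in any case \Cref{Thm:strat-minimality} does not say that a noetherian spectrum forces stratification --- it says stratification is \emph{equivalent} to minimality at every point, which you still have to prove. This is where the real work lies. The paper (\Cref{Lem:stratif-for-Cp}) splits $\Spc(\Cell{C_p}^c)$ into the image of $\Spc(\Res^{C_p}_1)$ and the image of $\Spc(Q)$ for the finite localization $Q$ killing $\Cont(C_p)$; minimality is then pulled back from $\bigCell{1}$ via the finite \'etale reduction \Cref{Thm:etale} on the first piece, and from the quotient $\mathrm{Q}(C_p)$ via \Cref{Prop:local-reduction} on the second. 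Both targets are handled by \Cref{Prop:strat-boot}. Without this two-step decomposition (or something equivalent), your proof does not close.
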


\begin{proof}[Proof of \Cref{Thm:strat-Cell(C_p)} from \Cref{Thm:strat-big-one}]
For any finite group~$G$ the embedding $\iota_G$ satisfies the hypotheses of \Cref{Prop:strat-restriction} (because of parts (a) and~(b) of \Cref{Hyp:C_p}).
If $G$ only has prime-order nontrivial elements, \Cref{Thm:strat-big-one} says that $\bigCell{G}$ is stratified in the usual sense and it therefore follows from  \Cref{Prop:strat-restriction} that $\Cell{G}$ is stratified in the countable sense.
\end{proof}

In the remainder of this section we will prove \Cref{Thm:strat-big-one} and will only work with the `genuine' big tt-category $\bigCell{C_p}$, to which we can directly apply all known results for big tt-categories including the stratification theory recalled in \Cref{sec:strat-prelim}.
To simplify notations, for all $G$ we will treat the embedding $\iota_G$ as the inclusion of a tt-subcategory and accordingly we will identify their rigid objects
\[
\Cell{G}^c= \bigCell{G}^c.
\]
as well as the restriction and induction functors when restricted to these categories.

In order to prove \Cref{Thm:strat-big-one}, we will build on the case when $G=C_p$ is a cyclic group of prime order~$p$.
By the main result of~\cite{DellAmbrogioMeyer21}, we already know that in this case the canonical comparison map (\Cref{subsec:rho})
\begin{equation} \label{eq:Spc(C_p)}
\rho\colon \Spc (\Cell{C_p}^c) \overset{\sim}{\longrightarrow} \Spec (\Rep(C_p))
\end{equation}
is a homeomorphism.
We begin by generalizing this to our larger class of groups:

\begin{Thm} \label{Thm:Spc-prime-order}
The comparison map $\rho$ is a homeomorphism $\Spc(\Cell{G}^c)\overset{\sim}{\to}\Spec(\Rep(G))$
for any finite group $G$ with only prime-order nontrivial elements.
\end{Thm}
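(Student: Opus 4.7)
Surjectivity of $\rho_G$ is automatic: since $\End^*(\unit) \cong \Rep(G)[\beta^{\pm1}]$ is noetherian by \Cref{Lem:End-finite}, this follows from \cite[Thm.\,7.3]{Balmer10b}. The task therefore reduces to injectivity (after which the upgrade to a homeomorphism follows by a standard spectral-space argument, since the basic closed sets $\supp(A)$ have Zariski-constructible images).

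My strategy is to reduce everything to cyclic subgroups. By hypothesis every cyclic $H\leq G$ is either trivial or of prime order, and the map $\rho_H$ is then already known to be a homeomorphism: the case $H=1$ is part of \Cref{Rem:boot}, and $H=C_p$ is the main theorem of \cite{DellAmbrogioMeyer21}. By \Cref{Thm:cyclic-generation}(d), the family $\{\Res^G_H\}_{H\text{ cyclic}}$ is jointly conservative on $\Cell{G}$ (use the projection formula together with $\Cell{G}^c=\Thick{\Cont(G/H)\mid H\text{ cyclic}}$), and standard tt-geometry then yields a surjective continuous map
\[
\varphi\colon \coprod_{H\text{ cyclic}} \Spc(\Cell{H}^c)\longrightarrow \Spc(\Cell{G}^c).
\]
Similarly, classical representation theory (Segal--Atiyah) gives a surjection $\psi\colon \coprod_H \Spec(\Rep(H))\twoheadrightarrow \Spec(\Rep(G))$. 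Naturality of $\rho$ assembles $\varphi,\psi,\rho_G$ and the homeomorphism $\coprod_H \rho_H$ into a commutative square.

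Injectivity of $\rho_G$ now amounts to showing that the equivalence relation on $\coprod_H \Spc(\Cell{H}^c)$ determined by the fibers of $\varphi$ is at least as coarse as the one determined by $\psi$ (via the left-vertical homeomorphism). By Segal's theorem the $\psi$-equivalence is generated by two moves: (i) $G$-conjugation $(H,\mathfrak{p}_H) \leftrightarrow (gHg^{-1},\Spec(\Con_g)(\mathfrak{p}_H))$, and (ii) passage through cyclic inclusions $K\leq H$, namely $\mathfrak{p}_K \leftrightarrow (\res^H_K)^{-1}(\mathfrak{p}_K)$. Both kinds of identification are witnessed on the KK side: for~(i) via $\Res^G_{gHg^{-1}}\cong \Con_g\circ \Res^G_H$ from \Cref{subsec:KK-fun-G}, and for~(ii) via the transitivity $\Res^G_K \cong \Res^H_K\circ \Res^G_H$. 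Hence every $\psi$-identification lifts to a $\varphi$-identification, forcing $\rho_G$ to be injective.

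\textbf{Main obstacle.} The crucial technical input is the Arano--Kubota-based cyclic generation statement \Cref{Thm:cyclic-generation}(d); once it is in hand, together with the cyclic-group case of \cite{DellAmbrogioMeyer21} and the classical Segal--Atiyah description of $\Spec(\Rep(G))$, the synthesis into the general statement is formal via naturality of~$\rho$. The most delicate point to articulate carefully is the exact form of Segal's description of $\psi$-fibers for our groups (generated by moves~(i) and~(ii) above): beyond this combinatorial bookkeeping, no further KK-theoretic input is required.
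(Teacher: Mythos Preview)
Your proposal is correct and follows essentially the same route as the paper: set up the commutative square of comparison maps over the cyclic subgroups, use \Cref{Thm:cyclic-generation}(d) for surjectivity of~$\varphi$, Segal for~$\psi$, the known cyclic cases for $\coprod_H\rho_H$, and then deduce injectivity of~$\rho_G$ by lifting Segal's conjugacy-and-inclusion identifications to the KK side via the functoriality relations of \Cref{subsec:KK-fun-G}. The paper phrases the injectivity step as a direct chase (lift $\cat P_1,\cat P_2$, minimize the pairs $(H_i,\mathfrak q_i)$ using bijectivity of $\rho_{H_i}$, then invoke Segal's uniqueness-up-to-conjugacy), which is exactly your equivalence-relation argument unwound.

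Two small points where the paper is more precise: for surjectivity of~$\varphi$ it invokes \cite[Thm.\,1.7]{Balmer18} applied to the adjunction $F\dashv\bigoplus_H\Ind^G_H$ (so $\Img(\varphi)=\supp(\bigoplus_H\Cont(G/H))$, which is everything by cyclic generation); and for the upgrade from bijection to homeomorphism it appeals to Lau's criterion \cite[Cor.\,2.8]{Eike21pp}, which needs precisely the finiteness hypotheses provided by \Cref{Lem:End-finite}. Your ``standard spectral-space argument'' is not quite enough here---a continuous bijection of spectral spaces need not be a homeomorphism---so you should cite Lau's result instead.
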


\begin{proof}
Consider the tt-functor on compact objects
\[
F:= (\Res^G_H)_H \colon \Cell{G}^c \longrightarrow \prod_{H} \Cell{H}^c
\]
where $H$ runs through all the finitely many cyclic subgroups of~$G$.
By the hypothesis on $G$, every $H$ is either cyclic of prime order or the trivial group.

Since $\rho$ is natural and maps finite products to finite coproducts, $F$ induces the following commutative diagram of spaces:
\[
\xymatrix{
{\underset{H}{\coprod}}\Spc(\Cell{H}^c) 
 \ar[r]^-{\Spc(F)}
  \ar[d]_-{{\coprod_H} \; \rho_{H}} & 
\Spc(\Cell {G}^c) 
 \ar[d]^-{\rho_G} \\
{\underset{H}{\coprod}} \Spec(\Rep(H))
  \ar[r]^-{\Spec(f)} &
\Spec(\Rep(G))
}
\]
The ring morphism $f = (\res^G_H)_H \colon \Rep(G)\to \prod_H \Rep(H)$ has for components the usual restriction maps of the character ring; it also agree with the the morphism obtained by specializing $F$ to the endomorphism rings of the unit objects. 

We need to show that $\rho_G$ is a homeomorphism. 
In fact, by  \cite[Cor.\,2.8]{Eike21pp} we only need to show that it is bijective, since the tt-category $\Cell{G}^c$ is rigid and satisfies the finiteness hypotheses of \emph{loc.\,cit.\ }precisely thanks to \Cref{Lem:End-finite}. 

Since the graded endomorphism ring $\End^*_{\Cell{G}^c}(\unit)= \Rep(G)[\beta^{\pm1}]$ is graded noetherian (\Cref{Lem:End-finite} again), $\rho_G$ is surjective by  \cite[Cor.\,7.4]{Balmer10b}. 
It remains to see that it is injective.

Note that, in the above diagram, the left vertical map is a homeomorphism because its component at each $H$ is a homomorphism, either by \eqref{eq:Spc(C_p)} (if $H$ has prime order) or else by \cite[Thm.\,1.2]{DellAmbrogio10} (if $H$ is the trivial group).

The bottom map $\Spec(f)$  is surjective by \cite[Prop.\,3.7]{Segal68a}.
More precisely, Segal's proposition implies that every prime ideal $\mathfrak p \subset \Rep(G)$ is of the form $\mathfrak p_{H,\mathfrak q}:= (\res^G_H)^{-1}(\mathfrak q)$ for some prime ideal $\mathfrak q \subset \Rep(H)$, where $H$ is a cyclic subgroup of~$G$ which can be chosen such that $\mathfrak q$ does \emph{not} come from a smaller (cyclic) subgroup (\ie $H$ is the `support' of $\mathfrak p$ in Segal's terminology).
For each $\mathfrak p$, such a minimal pair $(H,\mathfrak q)$ with $\mathfrak p=\mathfrak p_{H,\mathfrak q}$ is unique up to conjugation by an element of~$G$. 

We claim that the top horizontal map $\Spc(F)$ is also surjective. 
Indeed, note that the tt-functor $F$ admits the right adjoint $U:= \bigoplus_H \Ind^G_H$.
Since $\Cell{G}^c$ is rigid, we can apply \cite[Thm.\,1.7]{Balmer18} to the adjunction $F\dashv U$ to conclude that the image of $\Spc(F)$ is equal to the support of the object $U(\unit) = \bigoplus_H \Cont(G/H)$. 
Since we have $\Cell{G}^c = \Thick{ \bigoplus_H \Cont(G/H) \mid H \textrm{ cyclic} }$ by \Cref{Thm:cyclic-generation}, we deduce that this image is the whole spectrum of $\Cell{G}^c$.
Hence $\Spc(F)$ is surjective as claimed.

Finally, to verify the injectivity of~$\rho_G$ take two tt-primes $\cat P_1, \cat P_2 \in \Spc(\Cell{G}^c)$ such that $\rho_G(\cat P_1)= \rho_G(\cat P_2)=: \mathfrak p \in \Spec(\Rep(G))$.
By the surjectivity of $\Spc(F)$, they are of the form $\cat P_i=(\Res^G_{H_i})^{-1}\cat Q_i$ for some $\cat Q_i \in \Spc(\Cell{H_i}^c)$ and $H_i$ ($i=1,2$).
The tt-primes $\cat Q_i$ correspond bijectively to some $\mathfrak q_i := \rho_{H_i}(\cat Q_i) \in \Spec(\Rep(H_i))$. 

Note that we may (and will) assume that each $\mathfrak q_i$ does not come from a smaller subgroup of~$H_i$, because if it does, we can swap it for a prime $\mathfrak q_i' \subset \Rep(H_i')$ ($H_i'\subsetneq H_i$) mapping to $\mathfrak q_i$ and such that $H_i'$ is minimal (among those for which there exists an antecedent of~$\mathfrak p$); and since the vertical maps $\rho_H$ on the left are bijections, we can also swap the tt-primes $\cat Q_i$, in parallel, for the unique $\cat Q_i'$ corresponding to the~$\mathfrak q_i'$. 
Clearly $\Spc(\Res^{H_i}_{H_i'})(\cat Q_i')  = \cat Q_i$ and therefore $\Spc(\Res^G_{H_i'})(\cat Q_i')= \cat P_i$ too, by restriction in stages.

Now, by the commutativity of the above square, we have 
\[
(\res^G_{H_1})^{-1}(\mathfrak q_1) = \mathfrak p = (\res^G_{H_2})^{-1}(\mathfrak q_2).
\]
Since we have assumed that the pairs $(H_i, \mathfrak q_i)$ are both minimal for~$\mathfrak p$, they must be conjugate: there exists an element $g\in G$ such that ${}^gH_1 = H_2$ and 
$(\conj_g)^{-1}(\mathfrak q_1) = \mathfrak q_2$, where $\conj_g\colon \Rep({^g}H_1)\to \Rep(H_1)$ is the conjugation ring map.
The latter is just the restriction, at the endomorphism ring of the unit, of the conjugation functor $\Con_g\colon \Cell{{}^gH_1}\to \Cell{H_1}$.
Therefore we must have $(\Con_g)^{-1}(\cat Q_1)=\cat Q_2$, using once again the bijectivity of the left vertical maps.

Since $\Con_g \circ \Res^G_{{}^gH_1}  \cong \Res^G_{H_1} \circ \Con_g  \cong \Res^G_{H_1}$, we deduce from this that 
\[
\cat P_1 = (\Res^G_{H_1} )^{-1} (\cat Q_1)
=  (\Res^G_{{}^gH_1})^{-1} (\Con_g)^{-1} (\cat Q_1)
= (\Res^G_{H_2} )^{-1} (\cat Q_2)  = \cat P_2.
\]
Thus $\rho_G$ is indeed injective, and therefore is a homeomorphism.
\end{proof}

\begin{Lem} \label{Lem:Cell-finite-etale}
For every subgroup $H\leq G$, restriction $\Res^G_H\colon \bigCell{G}\to \bigCell{H}$ is a finite \'etale tt-functor  (\Cref{Def:finite-etale}).
\end{Lem}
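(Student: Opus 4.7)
The plan is to check that $\Res^G_H$ satisfies the abstract Frobenius/separable criterion for being finite \'etale (as in \cite{BDS15} or~\cite{Sanders22}) in the big tt-category $\bigCell{G}$, with the relevant tt-ring being $A_H := \Ind^G_H(\unit_{\bigCell{H}}) \cong \Cont(G/H)$. The object $A_H$ lies in $\bigCell{G}^c = \Cell{G}^c$ by \Cref{Hyp:C_p}(b) and \Cref{Rem:pres-cpts}, and inherits from $\Cell{G}$ (see \Cref{subsec:C(G/H)}) the structure of a commutative separable monoid, so $A_H$ is a tt-ring in the sense of \Cref{Def:tt-ring}.

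I would first extend the projection formula \eqref{eq:KK-proj-formula} from $\Cell{G}$ to $\bigCell{G}$. The canonical natural transformation $A \otimes \Ind^G_H(B) \to \Ind^G_H(\Res^G_H(A) \otimes B)$ preserves small coproducts in each variable (using \Cref{Hyp:C_p}(c)--(d)), so since it is an isomorphism when $A$ and $B$ are compact (where it reduces to the known formula on $\Cell{G}$), it is an isomorphism throughout $\bigCell{G} \times \bigCell{H}$. Specialising to $B = \unit$ identifies the monad $\Ind^G_H\Res^G_H$ on $\bigCell{G}$ with $A_H \otimes -$, with its monad structure transported into the tt-ring structure on $A_H$. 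This yields a canonical comparison tt-functor $\Phi \colon \bigCell{H} \to \Mod_{\bigCell{G}}(A_H)$, $M \mapsto \Ind^G_H(M)$, through which $\Res^G_H$ corresponds to the free-module functor $F_{A_H} = A_H \otimes -$.

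It remains to verify that $\Phi$ is an equivalence. Since $\Phi$ preserves coproducts (as $\Ind^G_H$ does), and both $\bigCell{H}$ and $\Mod_{\bigCell{G}}(A_H)$ are rigidly-compactly generated (the latter because $A_H$ is rigid, so that $\{A_H \otimes C \mid C \in \bigCell{G}^c\}$ forms a set of compact generators), it suffices to show that $\Phi$ restricts to an equivalence on the compact subcategories. Under the identifications $\bigCell{H}^c = \Cell{H}^c$ and $\bigCell{G}^c = \Cell{G}^c$, this is essentially the content of~\cite[\S4]{BDS15}: the proof there uses only the Frobenius form of the adjunction $\Res^G_H \dashv \Ind^G_H$, the separability of $A_H$ and the projection formula, all available at the compact level thanks to \Cref{Hyp:C_p} and \Cref{subsec:C(G/H)}.

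The expected main obstacle is the faithful transfer of all this Frobenius and separability coherence from $\Cell{G}$ to $\bigCell{G}$. Fortunately, the crucial structural morphisms (unit, counit, multiplication and separability section of $A_H$) are witnessed by compact objects living in the common subcategory $\Cell{G}^c = \bigCell{G}^c$, so this transfer should amount to routine bookkeeping rather than a genuine obstruction.
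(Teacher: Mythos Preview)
Your proposal is correct and follows essentially the same strategy as the paper: identify $A_H=\Ind^G_H(\unit)\cong\Cont(G/H)$ as a compact tt-ring, establish the projection formula, and deduce that the comparison functor $\bigCell{H}\to\Mod_{\bigCell{G}}(A_H)$ is an equivalence. The paper's version is somewhat more streamlined: rather than extending the projection formula from compacts by hand and then reducing the equivalence back to the compact level, it invokes \cite[Prop.\,2.5]{BDS16} (which gives the right projection formula automatically for any coproduct-preserving tt-functor between rigidly-compactly generated tt-categories) and then applies \cite[Thm.\,2.9]{BDS15} directly, using that the counit of $\Res^G_H\dashv\Ind^G_H$ admits a natural section by \Cref{Hyp:C_p}(d).
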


\begin{proof}
The analog result is proved in \cite{BDS15} for the usual restriction functor $\KK^G\to \KK^H$, and a similar argument will work in the present situation.
Namely, as $\Res^G_H$ is a coproduct-preserving tt-functor between two rigidly-compactly generated tt-categories, by \cite[Prop.\,2.5]{BDS16} the adjunction $\Res^G_H\dashv \Ind^G_H$ satisfies the canonical `right projection formula' 
\[
\Ind^G_H(B \otimes \Res^G_H(A)) \cong \Ind^G_H(B) \otimes A
\]
in the sense of \cite[Def.\,2.7]{BDS15}. 
Moreover, by \Cref{Hyp:C_p}(d), the counit of this same adjunction admits a natural section. 
By \cite[Thm.\,2.9]{BDS15}, it follows from the two latter properties and the fact that the categories are idempotent-complete that the canonical comparison functor 
\[ 
\bigCell{H}\overset{\sim}{\to} \Mod_{\bigCell{G}}(\Cont(G/H)) 
\]
is an equivalence, where $\Cont(G/H)= \Ind^G_H(\unit)$ is equipped with the commutative monoid structure in $\Cell{G}^c = \bigCell{G}^c\subseteq \bigCell{G}$ inherited from $\Res^G_H\dashv \Ind^G_H$.

As recalled in \Cref{subsec:C(G/H)}, this monoid is a tt-ring, and we conclude that $\Res^G_H$ is a finite \'etale tt-functor.
\end{proof}

Next, we establish stratification for a group of prime order, $G=C_p$.
For this we need to redeploy the same divide-and-conquer strategy used to establish~\eqref{eq:Spc(C_p)} in~\cite{DellAmbrogioMeyer21}, which now involves considering two tt-functors out of $\bigCell{C_p}$:
\begin{equation} \label{eq:two-tt-functors}
\xymatrix{
\bigCell{1} & \bigCell{C_p} \ar[l]_-{\Res^{C_p}_1} \ar[r]^-{Q} & \bigCell{C_p}/\Loc{ \Cont(C_p) } =: \mathrm Q(C_p) 
}
\end{equation}
The left one is restriction to the trivial subgroup. 
The right one is the Verdier quotient by the localizing subcategory generated by the compact generator $\Cont(C_p)$. 
\begin{Lem}
\label{Lem:two-tt-functors}
The functor $Q$ in \eqref{eq:two-tt-functors} is a finite localization (\Cref{Rec:fin_locs}).
\end{Lem}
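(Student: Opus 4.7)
The plan is to verify directly the two conditions that characterize a finite localization in the sense of \Cref{Rec:fin_locs}: namely, that $Q$ is a Verdier quotient by a localizing \emph{tensor ideal} (not just a localizing subcategory) which is generated by a set of compact objects of $\bigCell{C_p}$. Since $Q$ is defined as the quotient by $\Loc{\Cont(C_p)}$, the only real work is to check (i) that $\Cont(C_p)$ is compact, and (ii) that the localizing subcategory it generates is automatically a tensor ideal.

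For (i), use $\Cont(C_p) = \Cont(C_p/1) \cong \Ind_1^{C_p}(\unit)$. Since $\unit$ is compact in $\bigCell{1}$ and induction preserves compact objects (\Cref{Rem:pres-cpts}), it follows that $\Cont(C_p) \in \bigCell{C_p}^c$, so the generator is a single compact object as required.

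For (ii), I would argue via the projection formula. For any $A \in \bigCell{C_p}$,
\[
\Cont(C_p) \otimes A \;=\; \Ind_1^{C_p}(\unit) \otimes A \;\cong\; \Ind_1^{C_p}\bigl(\Res_1^{C_p}(A)\bigr),
\]
where the natural isomorphism is the projection formula \eqref{eq:KK-proj-formula}, which extends to $\bigCell{C_p}$ as part of the finite \'etale structure of $\Res_1^{C_p}$ (\Cref{Lem:Cell-finite-etale}). On the other hand, $\bigCell{1}$ is rigidly-compactly generated with $\bigCell{1}^c = \Cell{1}^c = \Thick{\unit}$ (by \Cref{Hyp:C_p}(a)(b) together with the generation of $\Cell{1}^c$ by the single orbit algebra $\Cont(1/1) = \unit$), so $\bigCell{1} = \Loc{\unit}$. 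Therefore $\Res_1^{C_p}(A) \in \Loc{\unit}$, and since $\Ind_1^{C_p}$ preserves coproducts and triangles, $\Ind_1^{C_p}(\Res_1^{C_p}(A)) \in \Loc{\Ind_1^{C_p}(\unit)} = \Loc{\Cont(C_p)}$. This shows $\Loc{\Cont(C_p)}$ is closed under tensoring with any $A$, hence is a tensor ideal, so $Q$ is a well-defined tt-quotient and therefore a finite localization.

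The argument is essentially formal, and the main point to be careful about is the tensor ideal property, which is where the projection formula and the single-orbit generation of the trivial-group case $\bigCell{1}$ enter. There is no serious obstacle: all ingredients are packaged in \Cref{Hyp:C_p} (compactness preservation by induction, two-sided adjunction $\Res \dashv \Ind$) together with the previously established \Cref{Lem:Cell-finite-etale}.
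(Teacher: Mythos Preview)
Your argument is correct and reaches the same conclusion as the paper, but the route for the tensor-ideal step differs. The paper argues more elementarily: since $\bigCell{C_p}=\Loc{\mathbb C,\Cont(C_p)}$ and $-\otimes\Cont(C_p)$ is exact and coproduct-preserving, it suffices to check on the two generators, where one uses $\mathbb C\otimes\Cont(C_p)=\Cont(C_p)$ and the explicit decomposition $\Cont(C_p)\otimes\Cont(C_p)\cong\Cont(C_p)^{\oplus p}$. Your version instead invokes the projection formula $\Cont(C_p)\otimes A\cong\Ind_1^{C_p}\Res_1^{C_p}(A)$ together with $\bigCell{1}=\Loc{\unit}$, which is more conceptual and avoids the explicit Mackey computation. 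Two small points: you are forward-referencing \Cref{Lem:Cell-finite-etale} (it comes after this lemma in the paper), though there is no logical circularity since its proof does not use the present lemma; and strictly speaking you have only shown $\Cont(C_p)\otimes A\in\Loc{\Cont(C_p)}$ for all~$A$, so one more (routine) localizing-subcategory argument is needed to upgrade this to all of $\Loc{\Cont(C_p)}\otimes A$. The paper's approach sidesteps both issues by staying at the level of the generators.
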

\begin{proof}
First note that $\Ker (Q) = \Loc { \Cont(C_p) }$ is a tensor ideal (this follows easily from the decomposition $\Cont(C_p)\otimes \Cont(C_p)\cong \Cont(C_p\times C_p) \cong \Cont(C_p)^{\oplus p}$ and the fact that $\bigCell{C_p}=\Loc{ \mathbb C, \Cont(G)}$).
Therefore $\Loc{ \Cont(G) } = \tensLoc { \Cont(G) }$, and $Q$ is a tensor-triangulated functor as claimed. 
It is a finite localization because $\Cont(G)$ is a compact-rigid object of $\bigCell{C_p}$. 
\end{proof}

\begin{Rec}
\label{Rec:Spc-decomp}
As discussed in \cite[Rem.\,6.4]{DellAmbrogioMeyer21}, the Zariski spectrum of the representation ring $\Rep(C_p)\cong \mathbb Z[\widehat{C_p}] \cong \mathbb Z [x]/(x^p-1)$ decomposes via two maps
\begin{equation} \label{eq:spec-decomp}
\vcenter{
\xymatrix{
\Spec (\mathbb Z) \ar[r] &
 \Spec (\mathbb Z[x] / (x^p-1)) &
  \Spec (\mathbb Z[\vartheta,p^{-1}]) \ar[l]
}}
\end{equation}
which are homeomorphisms on their images and are jointly surjective with disjoint images. The left map is induced by sending $x$ to zero, the right one by inverting $p$ and sending the cyclotomic polynomial $\Phi(X)= 1 +2 +\ldots + x^{p-1}$ to zero; here $\vartheta$ denotes the image of~$x$ in the resulting ring.
Now, note that the two functors in~\eqref{eq:two-tt-functors} are symmetric monoidal ($Q$ by~\Cref{Lem:two-tt-functors}) and therefore can be restricted to rigid-compact objects
\[ 
\xymatrix{
\Cell{1}^c & \Cell{C_p}^c \ar[l]_-{\Res^{C_p}_1} \ar[r]^-{Q} & \mathrm Q(C_p)^c 
}
\]
and then induce maps on Balmer spectra.
Together with the natural maps~$\rho$, the induced maps form the following commutative diagram of topological spaces:
\begin{equation*} 
\vcenter{
\xymatrix{
\Spc (\Cell{1}^c) \ar[d]_{\rho}^\cong \ar[rr]^-{\Spc (\Res^{C_p}_1)} &&
 \Spc (\Cell{C_p}^c)  \ar[d]_{\rho}^\cong  &&
  \Spc (\mathrm Q(C_p)^c) \ar[ll]_-{\Spc (Q)} \ar[d]_{\rho}^\cong \\
\Spec (\Rep(1)) \ar[rr]^-{} &&
 \Spec (\Rep(C_p)) &&
  \Spec (\End_{\mathrm Q(C_p)}(\unit)) \ar[ll]_-{}
}}
\end{equation*}
By the computation in \cite[Prop.\,4.10]{DellAmbrogioMeyer21}, the bottom line identifies with~\eqref{eq:spec-decomp}.
As all three instances of $\rho$ are homeomorphisms, the top row similarly displays $\Spc (\Cell{C_p}^c)$ as the disjoint union of the two outer Balmer spectra.
Indeed, the images of the two maps are precisely the complementary sets
\[
\Img (\Spc(\Res^{C_p}_1)) = \supp (\Cont(C_p)) = \{\cat P \mid \Cont(C_p) \not\in \cat P\} 
\]
and
\[
\Img (\Spc(Q)) = \{ \cat P \mid \Cont(C_p) \in \cat P\},
\]
respectively. 
\end{Rec}

\begin{Prop}
\label{Prop:strat-boot}
The big tt-categories $\bigCell{1}$ and $\mathrm Q(C_p)$ are stratified.
\end{Prop}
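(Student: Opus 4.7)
The plan is to reduce both cases to known stratification results for derived categories of noetherian commutative rings, using the $\infty$-categorical enhancements constructed in \Cref{sec:app-big}. The key classical observation, going back to \cite{DellAmbrogio11} in the trivial-group case, is that each of $\Cell{1}$ and $\mathrm Q(C_p)$ is generated by the tensor unit as a tt-category, with noetherian graded endomorphism ring.

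First I would check that in both settings $\unit$ is a compact generator of the big tt-category. For $\bigCell{1}$, this is because $\Cell{1}^c = \Thick{\unit}$ (a standard feature of the Rosenberg--Schochet bootstrap category), hence $\bigCell{1} = \Loc{\bigCell{1}^c} = \Loc{\unit}$. For $\mathrm Q(C_p)$, since $\Cell{C_p}^c = \Thick{\unit, \Cont(C_p)}$ and the second generator is annihilated by the finite localization $Q$, the image of $\unit$ generates $\mathrm Q(C_p)^c$ (see \Cref{Rec:fin_locs}). The graded endomorphism ring of $\unit$ is then $\mathbb Z[\beta^{\pm 1}]$ for $\bigCell{1}$ and $\mathbb Z[\vartheta, p^{-1}][\beta^{\pm 1}]$ for $\mathrm Q(C_p)$, as recorded in \Cref{Rec:Spc-decomp} (following \cite[Prop.\,4.10]{DellAmbrogioMeyer21}); both are noetherian rings.

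Using $\inftyCell{1}$ and the corresponding finite localization of $\inftyCell{C_p}$ at $\Cont(C_p)$, I would then invoke the Schwede--Shipley/Lurie theorem characterizing a presentably symmetric monoidal stable $\infty$-category with a rigid compact $\otimes$-generator as modules over the $\mathbb E_\infty$-ring of endomorphisms of that generator. Combined with formality of the relevant endomorphism $\mathbb E_\infty$-rings (which is routine here, since the homotopy groups are $2$-periodic and free over $\pi_0$), this should yield tt-equivalences $\bigCell{1} \simeq \cat{D}(\mathbb Z)^{\mathbb Z/2}$ and $\mathrm Q(C_p) \simeq \cat{D}(\mathbb Z[\vartheta, p^{-1}])^{\mathbb Z/2}$ with the $\mathbb Z/2$-periodic derived categories of these commutative rings. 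Stratification for the latter is the classical theorem of Neeman classifying the localizing subcategories of $\cat{D}(R)$ for $R$ a commutative noetherian ring by arbitrary subsets of $\Spec(R)$, and it transports back through the equivalences.

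The main obstacle will be the formality step in the identification with module categories; while standard, verifying it in detail within the framework of \Cref{sec:app-big} requires some care with the $\mathbb E_\infty$-structure inherited from the $\infty$-categorical enhancement. An alternative route, which would avoid this step altogether, is to directly verify the minimality criterion of \Cref{Thm:strat-minimality} at each prime of $\Spec(\mathbb Z)$ and $\Spec(\mathbb Z[\vartheta, p^{-1}])$, exploiting that the $\otimes$-idempotents $g(\cat P)$ admit explicit Koszul-type models whose tensor ideals can be analyzed directly.
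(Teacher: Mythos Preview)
Your first paragraph (monogenic with computed noetherian graded endomorphism ring) is exactly right and matches the paper. But your main proposed route through Schwede--Shipley plus formality has a genuine problem: the formality step is \emph{not} routine, and in fact fails. The endomorphism $\mathbb E_\infty$-ring of $\unit$ in $\inftyCell{1}$ is (a form of) $KU$, and $KU$ is not equivalent to $H\mathbb Z[\beta^{\pm1}]$ as an $\mathbb E_\infty$-ring---it is not even an $H\mathbb Z$-module spectrum. Freeness of $\pi_*$ over $\pi_0$ does not imply $\mathbb E_\infty$-formality. So the identification $\bigCell{1}\simeq \cat D(\mathbb Z)^{\mathbb Z/2}$ as symmetric monoidal $\infty$-categories does not go through this way, and the reduction to Neeman's theorem for $\cat D(R)$ is not available as stated.

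The paper bypasses this entirely by invoking \cite[Thm.\,1.3]{DellAmbrogioStanley16}, which works purely at the triangulated level: if a big tt-category is monogenic and $\End^*(\unit)$ is noetherian \emph{and regular} as a graded ring, then it is stratified by the Benson--Iyengar--Krause support, which by \cite[Cor.\,7.11]{BHS23} then agrees with Balmer--Favi support. The regularity of $\mathbb Z[\beta^{\pm1}]$ and $\mathbb Z[\vartheta,p^{-1}][\beta^{\pm1}]$ is the crucial input you did not mention (noetherianity alone is insufficient), and it replaces the role that formality would have played in your argument. Your alternative Koszul-style route is in spirit what \cite{DellAmbrogioStanley16} does, so citing that result is the efficient fix.
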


\begin{proof}
Both cases can be treated with the formal results of \cite{DellAmbrogioStanley16}.
To wit, \cite[Thm.\,1.3]{DellAmbrogioStanley16} shows that if a big tt-category $\cat T$ satisfies the following two hypotheses
\begin{enumerate}[\rm(1)]
\item $\cat T$ is monogenic, \ie $\cat T=\Loc{\unit}$, and
\item as a graded commutative ring, $\End^*_\cat T(\unit)$ is noetherian and regular,
\end{enumerate}
then $\cat T$ is stratified (in the general sense of \Cref{Def:general-supp-and-strat}) by the Benson--Iyengar--Krause support theory of \cite{BensonIyengarKrause08} with respect to the canonical action of $\End^*_\cat T(\unit)$ on~$\cat T$.
Note that we do not need to know what the latter support theory is because, if it stratifies~$\cat T$, it must agree with the Balmer--Favi support theory thanks to \cite[Cor.\,7.11]{BHS23} (which makes use of the finiteness property in~\Cref{Lem:End-finite}).

Thus it remains to show that $\bigCell{1}$ and $\mathrm Q(C_p)$ satisfy conditions (1) and~(2). 
Condition (1) is obviously true by construction in both cases.
Condition (2) is also verified, because we know that $\End^*_{\bigCell{1}}(\unit) \cong \mathbb Z[\beta^{\pm 1}]$ and $\End^*_{\mathrm Q(C_p)} \cong \mathbb Z[\vartheta,p^{-1}] [\beta^{\pm 1}]$, where $\beta$ is invertible of degree two and the ring in front of it lies in degree zero.
In particular, both graded rings are regular and noetherian (\cf \cite[Lem.\,6.2]{DellAmbrogioMeyer21} if needed for the regularity of the second one).
This proves the claim. 

(Alternatively for the Bootstrap category $\bigCell{1}$, the result can be proved almost \emph{verbatim} as the main result of~\cite{DellAmbrogio11}, which is about the usual countable version $\Cell{1}$ of the Bootstrap category, by omitting all countability restrictions in the arguments and by identifying the stratifying support theory $(\Spec (\mathbb Z), \sigma)$ used in \emph{loc.\,cit.\ }first with the Benson-Iyengar-Krauses support theory and then with the Balmer--Favi support theory, once again thanks to \cite[Cor.\,7.11]{BHS23}.)
\end{proof}

\begin{Lem} \label{Lem:stratif-for-Cp}
The big tt-category $\bigCell{C_p}$ is stratified for any prime number~$p$.
\end{Lem}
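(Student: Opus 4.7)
The plan is to deduce stratification of $\bigCell{C_p}$ by checking minimality pointwise on $\Spc(\bigCell{C_p}^c)$ and dispatching the two types of points via the two tt-functors in~\eqref{eq:two-tt-functors}, combining Proposition~\ref{Prop:local-reduction} on one side with Theorem~\ref{Thm:etale} on the other. First I would note that, since $\iota_{C_p}$ restricts to an equivalence on rigid-compacts (\Cref{Hyp:C_p}(b)), we have $\Spc(\bigCell{C_p}^c) = \Spc(\Cell{C_p}^c)$, and by~\eqref{eq:Spc(C_p)} (or by the stronger \Cref{Thm:Spc-prime-order}) this space is homeomorphic to $\Spec(\Rep(C_p))$, hence noetherian. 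Thus \Cref{Thm:strat-minimality} applies and it suffices to prove minimality at each prime $\cat P \in \Spc(\bigCell{C_p}^c)$. The decomposition recalled in \Cref{Rec:Spc-decomp} writes $\Spc(\bigCell{C_p}^c)$ as the disjoint union of $\Img(\Spc(\Res^{C_p}_1))$ and $\Img(\Spc(Q))$, so every $\cat P$ falls in one of these two pieces.

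For a point $\cat P \in \Img(\Spc(Q))$, write $\cat P = Q^{-1}(\cat Q)$ with $\cat Q \in \Spc(\mathrm{Q}(C_p)^c)$. Since $Q$ is a finite localization (\Cref{Lem:two-tt-functors}) and $\mathrm{Q}(C_p)$ is stratified by \Cref{Prop:strat-boot}, minimality holds in $\mathrm{Q}(C_p)$ at every point, hence in particular at~$\cat Q$; by \Cref{Prop:local-reduction}, minimality then holds in $\bigCell{C_p}$ at $\cat P$.

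For a point $\cat P \in \Img(\Spc(\Res^{C_p}_1))$, I would invoke the finite étale tt-functor $\Res^{C_p}_1 \colon \bigCell{C_p} \to \bigCell{1}$ (\Cref{Lem:Cell-finite-etale}). The induced map $\varphi := \Spc(\Res^{C_p}_1)$ is a homeomorphism onto its image by \Cref{Rec:Spc-decomp}, so in particular it is injective and the fiber hypothesis of \Cref{Thm:etale} is automatic at every $\cat Q \in \varphi^{-1}(\cat P)$. Since $\bigCell{1}$ is stratified by \Cref{Prop:strat-boot}, minimality holds in $\bigCell{1}$ at the unique preimage of $\cat P$, and \Cref{Thm:etale} transfers this to minimality in $\bigCell{C_p}$ at $\cat P$.

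Combining the two cases, minimality holds at every $\cat P \in \Spc(\bigCell{C_p}^c)$, so $\bigCell{C_p}$ is stratified by its Balmer--Favi support theory via \Cref{Thm:strat-minimality}. I do not expect any significant obstacle here: the decomposition of the spectrum plus the injectivity of $\varphi$ on the restriction side make both reduction criteria apply cleanly, and the real work has already been done in \Cref{Prop:strat-boot} (stratification of the two `boundary' categories) and in the earlier analysis of the spectrum inherited from \cite{DellAmbrogioMeyer21}. The only place requiring some care is simply checking that every prime genuinely falls in one of the two pieces, which is precisely the content of the disjoint-union decomposition recalled in \Cref{Rec:Spc-decomp}.
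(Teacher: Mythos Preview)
Your proposal is correct and follows essentially the same approach as the paper: reduce to pointwise minimality via \Cref{Thm:strat-minimality}, split the spectrum using \Cref{Rec:Spc-decomp}, and handle the two pieces via \Cref{Prop:local-reduction} (for the finite localization~$Q$) and \Cref{Thm:etale} (for the finite \'etale $\Res^{C_p}_1$, using injectivity of $\varphi$), each time feeding in the stratification of the target from \Cref{Prop:strat-boot}. The only difference is the order in which you treat the two cases.
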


\begin{proof}
Our proof strategy is to apply \Cref{Thm:strat-minimality} by proving minimality separately for each of the two kinds of primes ideals in $\Cell{C_p}^c$, according to the decomposition of \Cref{Rec:Spc-decomp}. Let $\cat P\in \Spc (\Cell{C_p}^c)$.

Suppose first that $\cat P$ lies in the image of $\varphi:=\Spc (\Res^{C_p}_1)$, \ie that there exists a $\cat Q\in \Spc (\bigCell{1})$ such that $\cat P = (\Res^{C_p}_1)^{-1}(\cat Q) = \varphi(\cat Q)$.
We can invoke the finite \'etale reduction \Cref{Thm:etale} to deduce the minimality at $\cat P$ in $\bigCell{C_p}$ from the minimality at $\cat Q $ in $\bigCell{1}$,
because the functor $\Res^{C_p}_1$ is finite \'etale, both the source spectrum $\Spc (\Cell{1}^c) \cong \Spec (\mathbb Z)$ and the target spectrum $\Spc (\Cell{C_p}^c)\cong \Spec (\mathbb Z[x]/(x^p-1))$ are noetherian spaces, and the map $\varphi$ is injective. 
Since $\bigCell{1}$ is stratified by \Cref{Prop:strat-boot}, minimality holds at all such $\cat Q\in \Spc (\bigCell{1})^c$, and we conclude.

Now suppose instead that $\cat P$ is in the image of $\varphi:= \Spc (Q)\colon \Spc (\mathrm Q(C_p)^c) \to \Spc (\Cell{C_p}^c)$, \ie that there exists a $\cat Q\in \Spc (\mathrm Q(C_p)^c)$ such that $\cat P= Q^{-1}(\cat Q)= \varphi(\cat Q)$. 
 By \Cref{Prop:local-reduction} applied to $F:=Q$, minimality at $\cat P$ in $\bigCell{C_p}$ is equivalent to the minimality at $\cat Q$ in~$\mathrm Q(C_p)$.
But the latter holds for all $\cat Q$, since $\mathrm Q(C_p)$ is also stratified  (\Cref{Prop:strat-boot}). 
\end{proof}

\begin{proof}[Proof of \Cref{Thm:strat-big-one}]
We are going to apply \Cref{Thm:quasi-finite} to the functor 
\[
F:= (\Res^G_H)_H \colon \cat T:=\bigCell{G} \longrightarrow \prod_{H} \bigCell{H} =: \cat S
\]
where $H$ runs through the finitely many cyclic subgroups of~$G$ (we already used the restriction of $F$ to compact objects in the proof of \Cref{Thm:Spc-prime-order}; here we need the whole functor as in \Cref{Hyp:C_p}).
Thus $F$ is a coproducts preserving tt-functor between big tt-categories whose Balmer spectra are noetherian by \Cref{Thm:Spc-prime-order}.
As obtained in the course of proving the latter proposition (a consequence of \Cref{Thm:cyclic-generation}), we know that the map $\varphi= \Spc(F^c)\colon \Spc(\cat D^c)\to \Spc(\cat C^c)$ is surjective.

Moreover, the fibers of $\varphi$ are discrete. 
This is the same as saying that each component map $\Spc((\Res^G_H)^c)\colon \Spc(\Cell{H}^c)\to \Spc(\Cell{G}^c)$ has discrete fibers,
which is true by \cite[Thm.\,1.5]{Balmer16} since 
\[
\Res^G_H\colon \bigCell{G}\to \bigCell{H}\simeq \Mod_{\bigCell{G}}(\Cont(G/H))
\]
is a finite \'etale tt-functor (by \Cref{Lem:Cell-finite-etale}) whose tt-ring $\Cont(G/H) = \Ind^G_H(\unit)$ has finite degree (by \Cref{Prop:finite-degree}).

The adjunctions $\Ind^G_H \dashv \Res^G_H \dashv \Ind^G_H$ for each $H$ (\Cref{Hyp:C_p}(d)) can be easily assembled into two adjunctions $U\dashv F \dashv U$, where  $U := \bigoplus_H \Ind^G_H\colon \cat S\to \cat T$. 
In particular, we see that the right adjoint $U$ of $F$ preserves compact objects, since its right adjoint ($F$) itself admits a coproducts-preserving right adjoint~($U$).

By the assumption on~$G$, each $H$ occurring in the product is either trivial or of prime order. 
Therefore each factor $\bigCell{H}$ of $\cat S$ is stratified by \Cref{Prop:strat-boot} (if $H$ is trivial) or \Cref{Lem:stratif-for-Cp} (if $H$ has prime order), and it follows easily that~$\cat S$, as a finite product of stratified big tt-categories, is also stratified.

All in all, we have verified that $F$ satisfies all the hypotheses of \Cref{Thm:quasi-finite} and we conclude that $\cat T= \bigCell{G}$ is stratified.
This completes the proof of \Cref{Thm:strat-big-one} and therefore also of \Cref{Thm:strat-Cell(C_p)}.
\end{proof}

\begin{Rem}
The fact that the map $\varphi$ (as in the proof) has discrete fibers can also be proved by using \Cref{Thm:Spc-prime-order} and the naturality of~$\rho$ to translate the question in terms of the analog map for the Zariski spectra of the representation rings, where it should follow from Segal's more precise structural results in~\cite{Segal68a}.

\end{Rem}

\begin{Rem}
If we could somehow prove  for finite cyclic groups of \emph{any} order that $\rho$ is a homeomorphism for $\Cell{H}^c$ and that $\bigCell{H}$ is stratified, the precise same arguments employed in this section would imply both statements for arbitrary finite groups~$G$, and we could then deduce that Conjectures~\ref{Conj:spc} and~\ref{Conj:strat} hold for all~$G$.
\end{Rem}

\appendix
\section{A genuinely big version of equivariant KK-theory}
\label{sec:app-big}%

In this Appendix we use the toolkit of Lurie's $\infty$-categories \cite{LurieHTT} \cite{LurieHA} in order to enlarge the equivariant Kasparov categories $\KK^G$ and their subcategories of cell algebras $\Cell{G}$ so that they contain arbitrary small coproducts. 
For our purposes, the enlargement must be done in a way which preserves all the structure we already have (the triangulation, the tensor product, the countable coproducts) and which accommodates extensions of the usual basic change-of-group functors (restriction and induction) and the adjunctions and formulas between them.

We do not claim any originality here, indeed our approach is adapted from that of recent work by Bunke--Engel--Land~\cite{BEL23pp}. 
Specifically, we use the same $\infty$-categorical enhancement of $\KK^G$ they use, but then we enlarge it by applying the relative $\Ind_{\aleph_1}$ construction, rather than the usual $\Ind =\Ind_{\aleph_0}$ construction. 
This change is to ensure that the countable coproducts of $\KK^G$ survive the enlargement.
It also causes us to lay out some rather technical preliminaries, because a few results which are well-known in the compactly generated case are more complicated, or are plain wrong, for the $\aleph_1$-relative situation. 
We will collect most of this material straight from Lurie's work, with the exception of the cardinal-accounting comparison result in \Cref{Prop:Ind-as-localization} which appears to be new.

\begin{Rem}
In this appendix we restrict attention to \emph{countable discrete} groups~$G$, because (out of convenience) we want  to cite~\cite{BEL23pp} where this hypothesis is made. But we have no doubts that the general constructions and results presented here extend to second countable locally compact groups.
\end{Rem}

\subsection*{The language of $\infty$-categories}
We will freely use the basic terminology involving $\infty$-categories and their objects, morphisms, equivalences, (full) subcategories, functors, natural transformations, adjunctions, localizations etc., as used in \cite{LurieHTT}.
Limits and colimits in this context always mean homotopy (co)limits, and commutative diagrams are understood to commute up to (given) homotopy. 
Recall that every $\infty$-category $\cat C$ has a homotopy category $\Ho (\cat C)$, which is an ordinary category, and conversely every ordinary category can be viewed as a (`discrete') $\infty$-category via the nerve functor (which we omit from notations). 
Recall also that products and coproducts in an $\infty$-category are the same as products and coproducts in its homotopy category (in the ordinary sense), but that this is false for other kinds of (co)limits.
To avoid confusion later on, we write $\Top$ (rather than~$\cat S$ as Lurie) for the $\infty$-category of spaces (a.k.a.\ Kan complexes, $\infty$-groupoids, anima, homotopy types,\,\ldots).
We say an $\infty$-category is \emph{essentially small} if it is equivalent to a small one (\ie to one whose underlying simplicial set is small).

We also recall the following cardinal-related terminology from~\cite{LurieHTT}, because it is partially at odds with the similar-sounding terminology for triangulated categories that we also use:

\begin{Rec} Let $\kappa$ be an infinite regular cardinal. A simplicial set is \emph{$\kappa$-small} if its set of non-degenerate simplices is $\kappa$-small, \ie of cardinality less than~$\kappa$.
A (homotopy) limit or colimit in an $\infty$-category $\cat E$ is \emph{$\kappa$-small} if it is the (co)limit of some functor $p\colon K\to \cat E$ with $K$ a $\kappa$-small simplicial set, and it is \emph{small} if it is $\kappa$-small for some~$\kappa$. 
For $\kappa$ a regular cardinal, a colimit is \emph{$\kappa$-filtered} if it is the colimit of a diagram $p\colon K\to \cat E$ where $K$ is a (small) \emph{$\kappa$-filtered} simplicial set (\cite[Def.\,5.3.1.7 and Rem.\,5.3.1.11]{LurieHTT}). 
A functor $F\colon \cat E\to \cat E'$ is \emph{$\kappa$-continuous} if it preserves $\kappa$-filtered colimits, and \emph{$\kappa$-right exact} if it preserves $\kappa$-small colimits.
We say $\cat E$ is \emph{(co-)complete} if it admits arbitrary small (co-)limits. 
If  $\cat E$ is cocomplete, an object $C\in \cat E$ is \emph{$\kappa$-compact} if the representable functor $\cat E(C,-)\colon \cat E \to \Top$ is $\kappa$-continuous.
One says $\cat E$ is \emph{$\kappa$-compactly generated} if it is cocomplete, if its full subcategory 
\[
\cat E^\kappa
\]
of $\kappa$-compact objects is essentially small, and if every object $E\in \cat E$ is a $\kappa$-filtered colimit of $\kappa$-compact objects of~$\cat E$ (this is not the definition, but it is equivalent to it by \cite[Thm.\,5.5.1.1]{LurieHTT}). 
An $\infty$-category $\cat E$ is \emph{presentable} if it is $\kappa$-compactly generated for some regular~$\kappa$, and it is \emph{compactly generated} if we can take $\kappa = \aleph_0$ (denoted~$\omega$ by Lurie), the countable infinite cardinal.
Similarly, an object is \emph{compact} if it is $\aleph_0$-compact.
\end{Rec}

\subsection*{Stable $\infty$-categories}

Recall now from \cite[\S1]{LurieHA} that an $\infty$-category is \emph{stable} if it is pointed (\ie final and initial objects coincide), it admits all finite limits and colimits, and pushout and pullback squares in $\cat C$ coincide. It follows that finite limits and finite colimits of any shape are canonically equivalent. 
A functor of stable $\infty$-categories is \emph{exact} if it preserves all finite limits or, equivalently, finite colimits.
 If $\cat C$ is stable, its homotopy category $\Ho (\cat C)$ inherits a canonical structure of triangulated category, and exact functors between stable $\infty$-categories induce exact functors of triangulated categories.

In a stable $\infty$-category the existence of arbitrary ($\kappa$-small) colimits is equivalent to the existence of ($\kappa$-small) coproducts; and similarly, an exact functor between stable $\infty$-categories preserves the former iff it preserves the latter
 (see \cite[Prop.1.4.4.1]{LurieHA}). This is very useful in practice because, as mentioned, a diagram is a coproduct in an $\infty$-category $\cat C$ iff it is a coproduct in its homotopy category~$\Ho(\cat C)$. 
 
 In this vein, by a harmless slight abuse of language, we will be talking about `the thick/ localizing/ ideal subcategory of $\cat C$, when we mean the full ($\infty$-)subcategory of the $\infty$-category~$\cat C$ which is spanned by the same objects as the homonymous full subcategory of the triangulated category~$\Ho(\cat C)$. 
 We will also freely use the same notations $\Loc{-}$, $\tensLoc{-}$, etc.\ as introduced in the main body of the article. 

\subsection*{Ind-completion relative to a cardinal}

For any small $\infty$-category $\cat C$, let
\[ 
\PSh(\cat C) := \Fun(\cat C^\op, \Top)
\] 
be its presheaf $\infty$-category, that is the $\infty$-category of contravariant functors with values in the $\infty$-category of spaces.
It is a complete and cocomplete $\infty$-category.

Fix an infinite regular cardinal~$\kappa$. 
If $\cat C$ admits all $\kappa$-small colimits, we can consider as in \cite[\S5.3.5]{LurieHA} the full subcategory 
\[ \Indk(\cat C)\subset \PSh(\cat C) \]
of those presheaves which send $\kappa$-small colimits of $\cat C$ to (homotopy) limits in~$\Top$ (\cf the characterization in \cite[Cor.\,5.3.5.4]{LurieHTT}). 
The Yoneda embedding $\cat C\to \PSh(\cat C)$ restricts to a fully faithful functor 
\[
j=j_\cat C=j_{\cat C,\kappa}\colon \cat C\to \Indk(\cat C) .
\]
It turns out that an $\infty$-category is $\kappa$-compactly generated if and only if it is equivalent to one of the form $\Indk(\cat C)$ for some essentially small $\cat C$ with $\kappa$-small colimits. We now look more closely at how this works.

\begin{Prop}
\label{Prop:Indk}
Let $\cat C$ be a small (stable) $\infty$-category admitting $\kappa$-small colimits. 
Then $\Indk(\cat C)$ is a $\kappa$-compactly generated (stable) $\infty$-category, and the Yoneda embedding $j_\cat C\colon \cat C\to \Indk(\cat C)$ preserves $\kappa$-small colimits and displays the subcategory of $\kappa$-compact objects $(\Indk(\cat C))^\kappa$ as the idempotent completion of~$\cat C$.
In particular if $\kappa$ is uncountable (i.e.\ $\kappa \neq \aleph_0$) we get an equivalence $\cat C \overset{\sim}{\to}(\Indk(\cat C))^\kappa$.
\end{Prop}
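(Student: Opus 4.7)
The plan is to assemble the claims from the structural theory of $\Ind$-completions relative to a cardinal, as developed in \cite[\S5.3.5]{LurieHTT}, together with a standard closure property of stable $\infty$-categories.

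First I would observe that, by construction, $\Indk(\cat C) \subset \PSh(\cat C)$ is the full subcategory of presheaves sending the $\kappa$-small colimits of~$\cat C$ to limits in $\Top$, and the Yoneda embedding $j_\cat C$ factors through it. The main structure result \cite[Prop.\,5.3.5.12 and Prop.\,5.3.5.14]{LurieHTT} then delivers in one stroke that $\Indk(\cat C)$ is $\kappa$-compactly generated, that $j_\cat C$ is fully faithful and preserves $\kappa$-small colimits, and that every object of $\Indk(\cat C)$ is a $\kappa$-filtered colimit of representables. The identification of the $\kappa$-compacts with the idempotent completion of~$\cat C$ follows from the universal characterization \cite[Prop.\,5.3.4.17]{LurieHTT}: the $\kappa$-compact objects are precisely retracts of objects of the form $j_\cat C(C)$ with $C\in \cat C$, which by fully faithfulness of Yoneda are the same as retracts of objects of~$\cat C$.

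Stability of $\Indk(\cat C)$ when $\cat C$ is stable is a standard inheritance property. The presheaf $\infty$-category $\PSh(\cat C)$ is stable by the pointwise criterion \cite[Prop.\,1.1.3.1]{LurieHA}, and $\Indk(\cat C)$ is closed in $\PSh(\cat C)$ under the finite limits and filtered colimits needed to verify stability; exactness of $j_\cat C$ is then automatic from its preservation of $\kappa$-small colimits together with its preservation of the zero object and of fibers (both of which exist in~$\cat C$ by stability).

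Finally, the `in particular' clause rests on the standard fact (see \cite[\S4.4.5]{LurieHTT}) that any $\infty$-category admitting sequential $\mathbb N$-indexed colimits is automatically idempotent complete, since every idempotent splits as the colimit of its own iterates. When $\kappa$ is uncountable, $\cat C$ admits countable colimits, hence is idempotent complete, and therefore its idempotent completion is~$\cat C$ itself; the equivalence $\cat C \overset{\sim}{\to} (\Indk \cat C)^\kappa$ follows. The main concern throughout is really bookkeeping rather than mathematics: one must verify that the cited $\kappa$-relative statements in \cite[\S5.3.5]{LurieHTT} apply in the stated generality, rather than only in the classical case $\kappa=\aleph_0$.
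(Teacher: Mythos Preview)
Your overall strategy matches the paper's: both proofs are assemblies of structural results from \cite[\S5.3.5]{LurieHTT} and \cite[\S1.1]{LurieHA}, with only cosmetic differences in which precise lemmas are invoked (the paper cites \cite[Thm.\,5.5.1.1]{LurieHTT} and \cite[Prop.\,5.3.5.5, Lem.\,5.4.2.4]{LurieHTT} where you cite 5.3.5.12 and 5.3.4.17, and both use \cite[\S4.4.5]{LurieHTT} for the idempotent-completion clause).

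There is, however, one genuine slip in your stability argument. You claim that $\PSh(\cat C)=\Fun(\cat C^\op,\Top)$ is stable by the pointwise criterion \cite[Prop.\,1.1.3.1]{LurieHA}. That criterion says $\Fun(K,\cat D)$ is stable when $\cat D$ is stable; but here $\cat D=\Top$ is the $\infty$-category of spaces, which is \emph{not} stable, so the citation does not apply and $\PSh(\cat C)$ is in fact not stable. Consequently your closure argument inside $\PSh(\cat C)$ does not establish stability of $\Indk(\cat C)$. The paper sidesteps this by citing \cite[Prop.\,1.1.3.6]{LurieHA} directly, which asserts precisely that $\Indk(\cat C)$ is stable whenever $\cat C$ is. You should replace your paragraph on stability with that citation (or, if you want to argue via an ambient stable category, use spectral presheaves rather than space-valued ones).
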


\begin{proof}
If $\cat C$ is stable, then $\Indk(\cat C)$ is stable by \cite[Prop.\,1.1.3.6]{LurieHA}.

The Yoneda embedding $j_\cat C\colon \cat C \to \Indk(\cat C)$ preserves the $\kappa$-small colimits of $\cat C$ by \cite[Prop.\,5.3.5.14]{LurieHTT}, so in particular it is exact.
As $\cat C$ admits $\kappa$-small colimits, $\Indk(\cat C)$ is $\kappa$-compactly generated (see \cite[Thm.\,5.5.1.1]{LurieHTT} and its proof). 

By \cite[Prop.\,5.3.5.5]{LurieHTT} and \cite[Lem.\,5.4.2.4]{LurieHTT}, the fully faithful embedding $j_\cat C\colon C\to \Indk(\cat C)$ restricts to $\kappa$-compact objects, $j_\cat C\colon \cat C \to (\Indk(\cat C))^\kappa$, and the resulting functor is an idempotent completion in the sense of $\infty$-categories \cite[\S4.4.5]{LurieHTT} (which lifts the usual notion, at least in the stable case, in the sense that a stable $\infty$-category $\cat D$ is idempotent complete iff the additive category $\Ho \cat D$ is; see \cite[Lem.\,1.2.4.6]{LurieHA}).

For the last part, note that if $\kappa$ is uncountable then $\cat C$ admits arbitrary countable colimits, hence is already idempotent complete by \cite[Prop.\,4.4.5.15]{LurieHTT} (this argument already works at the level of triangulated categories by \cite[Prop.\,3.2]{BoekstedtNeeman93}).
\end{proof}

\begin{Prop}
\label{Prop:Indk-f}
Let $\cat C$ and $\cat D$ be two small $\infty$-categories admitting $\kappa$-small colimits, and let $f\colon \cat C\to \cat D$ be a $\kappa$-right exact functor (it preserves $\kappa$-small colimits).
Then there exists a functor $\Indk(f)$, unique up to equivalence, making the square
\[
\xymatrix{
\cat C \ar[d]_f \ar[r]^-{j_\cat C} & \Indk(\cat C) \ar[d]^{\Indk(f)} \\
\cat D \ar[r]^-{j_\cat D} & \Indk(\cat D)
}
\]
 (homotopy) commute.
This functor preserves arbitrary colimits, in fact it has a right adjoint given by precomposition with~$f$.
\end{Prop}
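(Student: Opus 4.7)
The plan is to invoke the universal property of $\Indk$ to build $\Indk(f)$ and then identify its right adjoint directly.

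First I would record the relevant universal property, which is \cite[Prop.\,5.3.5.10]{LurieHTT} combined with \cite[Prop.\,5.5.1.9]{LurieHTT}: for any small $\infty$-category $\cat C$ admitting $\kappa$-small colimits and any cocomplete $\infty$-category $\cat E$, precomposition with $j_\cat C$ induces an equivalence between the $\infty$-category of colimit-preserving functors $\Indk(\cat C)\to \cat E$ and that of $\kappa$-right exact functors $\cat C\to \cat E$. Applying this with $\cat E:= \Indk(\cat D)$ to the $\kappa$-right exact composite $j_\cat D\circ f\colon \cat C\to \Indk(\cat D)$ (right exact because both $f$ and $j_\cat D$ are, by hypothesis and by \Cref{Prop:Indk}) produces a colimit-preserving functor $\Indk(f)\colon \Indk(\cat C)\to \Indk(\cat D)$, uniquely determined up to equivalence by the commutativity of the square. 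This establishes existence, uniqueness, and the first half of the conclusion.

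Next I would exhibit the right adjoint. Since $\Indk(\cat C)$ and $\Indk(\cat D)$ are presentable (\Cref{Prop:Indk}) and $\Indk(f)$ preserves all colimits, the adjoint functor theorem \cite[Cor.\,5.5.2.9]{LurieHTT} immediately yields some right adjoint~$R$. To identify it as precomposition with~$f$, consider the restriction functor $f^*\colon \PSh(\cat D)\to \PSh(\cat C)$, $F\mapsto F\circ f^\op$. If $F\in \Indk(\cat D)$ sends $\kappa$-small colimits in $\cat D$ to limits in $\Top$, then $F\circ f^\op$ does the same for $\cat C$, because $f$ preserves $\kappa$-small colimits by hypothesis. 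Hence $f^*$ restricts to a functor $\bar f\colon \Indk(\cat D)\to \Indk(\cat C)$.

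Finally I would verify that $\Indk(f)\dashv \bar f$. For every $C\in \cat C$ and $G\in \Indk(\cat D)$, the Yoneda lemma inside $\Indk(\cat C)$ and $\Indk(\cat D)$ gives natural equivalences
\[
\Map_{\Indk(\cat C)}(j_\cat C(C), \bar f(G)) \simeq \bar f(G)(C) = G(f(C)) \simeq \Map_{\Indk(\cat D)}(j_\cat D(f(C)), G),
\]
and the commutative square identifies the right-hand side with $\Map_{\Indk(\cat D)}(\Indk(f)\,j_\cat C(C), G)$. Both functors $\Map_{\Indk(\cat C)}(-,\bar f(G))$ and $\Map_{\Indk(\cat D)}(\Indk(f)(-),G)$ send colimits in $\Indk(\cat C)$ to limits in~$\Top$ (the second because $\Indk(f)$ is colimit-preserving), and they agree on the generators $j_\cat C(\cat C)$; since $\Indk(\cat C)$ is generated under colimits by the image of $j_\cat C$, they are naturally equivalent on all of $\Indk(\cat C)$. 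This produces the adjunction $\Indk(f)\dashv \bar f$, and by uniqueness of adjoints $R\simeq \bar f$, as claimed. The main (very minor) technical point is checking that $f^*$ preserves the $\Indk$ subcategories, which is immediate from $\kappa$-right exactness of~$f$; the rest is a formal assembly of standard $\infty$-categorical facts.
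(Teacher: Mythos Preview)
Your proof is correct and follows essentially the same route as the paper: both construct $\Indk(f)$ via the universal property \cite[Prop.\,5.3.5.10]{LurieHTT} applied to $j_{\cat D}\circ f$, and both identify the right adjoint as precomposition with~$f$. The only difference is packaging: the paper obtains $\Indk(f)$ merely as $\kappa$-continuous and then cites \cite[Prop.\,5.3.5.13]{LurieHTT} as a black box for the right adjoint (and hence colimit-preservation), whereas you invoke a stronger form of the universal property up front to get colimit-preservation directly, and then essentially reprove the content of 5.3.5.13 by hand (checking $f^*$ lands in $\Indk(\cat C)$ and verifying the adjunction on representables).
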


\begin{proof}
By \cite[Prop.\,5.3.5.10]{LurieHTT} precomposition with $j_\cat C$ induces, 
for every $\infty$-category $\cat E$ with arbitrary $\kappa$-filtered colimits,
 an equivalence  of $\infty$-categories
\begin{equation} \label{eq:UP-Indk}
j_\cat C^*\colon 
\Fun_\kappa (\Indk(\cat C), \cat E ) 
\overset{\sim}{\longrightarrow} 
\Fun(\cat C, \cat E)
\end{equation}
where $\Fun_\kappa$ on the left hand side denotes the full subcategory of $\Fun$ of all $\kappa$-continuous functors.
Setting $\cat E := \Indk(\cat D)$, we define $\Indk(f)$ to be a $\kappa$-continuous functor $\Indk(\cat C)\to \cat E$ corresponding to $j_\cat D\circ f\colon \cat C\to \cat E$ via the above equivalence.
Since $f\colon \cat C\to \cat D$ preserves $\kappa$-small colimits by hypothesis, $\Indk(f)$ has the claimed right adjoint, and in particular preserves colimits, by \cite[Prop.\,5.3.5.13]{LurieHTT}.
\end{proof}

\begin{Prop}
\label{Prop:Indk-adj}
In the situation of the previous proposition, suppose that $f$ admits a right adjoint $g\colon \cat D\to \cat C$ which also preserves $\kappa$-small colimits.
Then the adjunction $f\dashv g $ extends to an adjunction $\Indk(f)\dashv \Indk(g)$.
\end{Prop}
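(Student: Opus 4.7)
The plan is to leverage the right adjoint of $\Indk(f)$ already produced by \Cref{Prop:Indk-f} and then identify it with $\Indk(g)$ via the universal property~\eqref{eq:UP-Indk}.

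First, I invoke \Cref{Prop:Indk-f} to produce the right adjoint $R\colon \Indk(\cat D)\to \Indk(\cat C)$ of $\Indk(f)$, given explicitly by precomposition with $f$, so that $R(Y) = Y\circ f^\op$ for a presheaf $Y\colon \cat D^\op\to \Top$ lying in $\Indk(\cat D)\subset \PSh(\cat D)$. I check this precomposition is well-defined and $\kappa$-continuous as a functor to $\Indk(\cat C)$: it lands in $\Indk(\cat C)$ because $Y$ sends $\kappa$-small colimits of $\cat D$ to limits in $\Top$ and $f$ preserves $\kappa$-small colimits, so $Y\circ f^\op$ sends $\kappa$-small colimits of $\cat C$ to limits; and it is $\kappa$-continuous because $\kappa$-filtered colimits in $\PSh(\cat C)$ and $\PSh(\cat D)$ (and hence in the full subcategories $\Indk(\cat C)$, $\Indk(\cat D)$) are computed pointwise and are thus preserved by precomposition.

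Next I appeal to the uniqueness in the equivalence~\eqref{eq:UP-Indk}, applied to $\cat E = \Indk(\cat C)$, which says that any $\kappa$-continuous functor $\Indk(\cat D) \to \Indk(\cat C)$ is determined up to equivalence by its restriction along $j_\cat D$. By construction $\Indk(g)\circ j_\cat D \simeq j_\cat C\circ g$. For $R$, the Yoneda lemma and the adjunction $f\dashv g$ give, for every $C\in \cat C$ and $D\in \cat D$, natural equivalences
\[
R(j_\cat D(D))(C) \;=\; \Map_\cat D(f(C), D) \;\simeq\; \Map_\cat C(C, g(D)) \;=\; (j_\cat C\circ g)(D)(C),
\]
so $R\circ j_\cat D\simeq j_\cat C\circ g$ as well. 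The universal property then forces $R\simeq \Indk(g)$, converting the existing adjunction $\Indk(f)\dashv R$ into the desired $\Indk(f)\dashv \Indk(g)$.

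The only substantive checks are the two well-definedness assertions for $R$ (essential image contained in $\Indk(\cat C)$ and $\kappa$-continuity), both of which follow formally from the hypothesis that $f$ preserves $\kappa$-small colimits together with the pointwise computation of filtered colimits in presheaf $\infty$-categories \cite[\S5.3.5]{LurieHTT}. I therefore do not anticipate any genuine obstacle; the argument is essentially an exercise in bookkeeping with the universal property of $\Indk$.
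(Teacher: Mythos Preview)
Your argument is correct, and it reaches the same conclusion as the paper's by a slightly different route. Both proofs start from \Cref{Prop:Indk-f}, which supplies the right adjoint $f^*$ of $\Indk(f)$ given by precomposition, and both then identify $f^*\simeq\Indk(g)$. You do this directly via the universal property~\eqref{eq:UP-Indk}: you verify that $f^*$ is $\kappa$-continuous and that $f^*\circ j_{\cat D}\simeq j_{\cat C}\circ g$ by the Yoneda computation $\Map_{\cat D}(f(-),D)\simeq\Map_{\cat C}(-,g(D))$. The paper instead applies \Cref{Prop:Indk-f} a second time to $g$ to obtain $\Indk(g)\dashv g^*$, observes that the adjunction $f\dashv g$ induces $f^*\dashv g^*$ by whiskering, and then invokes uniqueness of left adjoints of $g^*$ to conclude $f^*\simeq\Indk(g)$. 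Your approach is marginally more hands-on (it requires checking $\kappa$-continuity of $f^*$ explicitly), while the paper's is a touch slicker in that it avoids any direct computation by passing through the auxiliary adjunction $f^*\dashv g^*$; but both are short and entirely valid.
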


\begin{proof}
By \Cref{Prop:Indk-f} we get an adjunction $\Indk(f)\dashv f^*$, where $f^*$ denotes the precomposition functor $\Indk(\cat D)\to \Indk(\cat C)$, $P\mapsto P\circ f$.
Similarly, we can apply the proposition to $g$ in order to obtain an adjunction $\Indk(g)\dashv g^*$.
Moreover, the given adjunction $f\dashv g$ induces an adjunction $g^\op\dashv f^\op$ between the opposite categories and therefore, by whiskering its unit and counit maps, an adjunction $f^*\dashv g^*$ between functor categories. 
By uniqueness of the left adjoint of~$g^*$, we must have an equivalence $f^*\simeq \Indk(g)$. 
From the latter and the adjunction $\Indk(f)\dashv f^*$, we deduce an adjunction $\Indk(f)\dashv \Indk(g)$.
\end{proof}

\begin{Rem} \label{Rem:Indk-is-functorial}
The Ind-construction is functorial, in that it can be upgraded to a functor $\Indk$ from the $\infty$-category of small $\infty$-categories to that of large $\infty$-categories. 
In fact, if $\kappa$ is any uncountable regular cardinal, we can restrict both the functor's source and target so it becomes an equivalence of $\infty$-categories 
\[
\Indk\colon  \mathrm{Cat}_\infty^{Rex(\kappa)} \overset{\sim}{\longrightarrow} \mathrm{Pr}^L_\kappa
\]
from the subcategory $\mathrm{Cat}_\infty^{Rex(\kappa)}$ of small $\infty$-category with $\kappa$-small colimits and functors preserving them, to  $\mathrm{Pr}^L_\kappa$, the subcategory whose objects are $\kappa$-compactly generated $\infty$-categories and whose arrows are functors which preserve small colimits and $\kappa$-compact objects (see \cite[Prop.\,5.5.7.10]{LurieHTT}).
Note that each $\cat C \in \mathrm{Cat}_\infty^{Rex(\kappa)}$ is stable if and only if $\Indk(\cat C)$ is stable, by \Cref{Prop:Indk} and because the former is a full subcategory of the latter closed under finite limits; hence the above equivalence restricts to an equivalence
\[
\Indk\colon  \mathrm{Cat}_\infty^{Rex(\kappa), st} \overset{\sim}{\longrightarrow} \mathrm{Pr}^{L, st}_\kappa
\]
 between the full subcategories of stable $\infty$-categories on both sides.
\end{Rem}

\subsection*{Compactness in the homotopy category}

Before considering tensor structures, we wish to link $\kappa$-compactness with a similar notion in the realm of well generated triangulated categories in the sense of \cite{Neeman01} (see also~\cite{Krause10})). 
We record these somewhat technical results in view of future use, as they apply to the case of \emph{infinite} groups~$G$ (with $\kappa=\aleph_1$) and also because they may be of more general interest.

\begin{Lem}
\label{Lem:infty-vs-tri}
Let $\cat E$ be a stable $\infty$-category with arbitrary small colimits. Then:
\begin{enumerate}
\item An object in $\cat E$ is compact if and only if it is compact in the triangulated category $\Ho(\cat E)$, 
and $\cat E$ is compactly generated as an $\infty$-category if and only if $\Ho(\cat E)$ is compactly generated as a triangulated category.
\item 
Suppose $\cat E$ compactly generated. Then for any regular cardinal~$\kappa>\aleph_0$, an object of $\cat E$ is $\kappa$-compact in $\cat E$ in the sense of Lurie if and only if it is $\kappa$-compact in $\Ho (\cat E)$ in the sense of well generated categories.
Moreover $\cat E^\kappa = \Lock{\cat E^{\aleph_0}}$, that is, the $\kappa$-compact objects  of~$\cat E$ form the $\kappa$-localizing (\ie thick and closed under $\kappa$-small coproducts) subcategory of $\cat E$ generated by the compact objects.
\end{enumerate}
\end{Lem}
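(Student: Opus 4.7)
For part~(a), I would work with the spectrum-enriched mapping functor $\mathrm{map}_{\cat E}(C,-)\colon \cat E \to \Sp$ which exists because $\cat E$ is stable, and recall that $\pi_n\, \mathrm{map}_{\cat E}(C,X) \cong \Hom_{\Ho(\cat E)}(\Sigma^n C, X)$ for all $n\in\mathbb Z$. Filtered colimits of spectra are computed levelwise on homotopy groups, and coproducts of spectra satisfy $\pi_n(\coprod_i Y_i) \cong \bigoplus_i \pi_n Y_i$. Combined with the fact (HTT~1.4.4.1) that in a stable $\infty$-category preserving filtered colimits is equivalent, for an exact functor, to preserving arbitrary small coproducts, one gets: $C$ is compact in $\cat E$ iff $\mathrm{map}_{\cat E}(C,-)$ preserves coproducts iff each $\Hom_{\Ho(\cat E)}(\Sigma^n C,-)$ does iff $C$ is compact in $\Ho(\cat E)$ (where we use that $\Sigma$ is an autoequivalence of $\Ho(\cat E)$). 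The statement about compact generation follows because a set of objects generates $\cat E$ under colimits iff it detects zero objects via all mapping spectra, iff it detects zero objects via all graded $\Hom$ groups in $\Ho(\cat E)$ (using Neeman's reformulation of compact generation in triangulated categories).

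For part~(b), the plan is to describe both notions of $\kappa$-compact objects via the \emph{same} closure recipe applied to $\cat E^{\aleph_0}$. On the $\infty$-categorical side, since $\cat E$ is compactly generated, HTT~5.4.2.2 (combined with HTT~5.3.4.17 in the stable setting) identifies $\cat E^\kappa$ with the smallest full subcategory of $\cat E$ containing $\cat E^{\aleph_0}$ and closed under $\kappa$-small colimits; as $\kappa > \aleph_0$, this subcategory automatically contains retracts by HTT~4.4.5.15. In a stable $\infty$-category, $\kappa$-small colimits are generated by $\kappa$-small coproducts and cofibers, and a full subcategory closed under these two operations is precisely a stable full subcategory closed under $\kappa$-small coproducts; at the level of $\Ho(\cat E)$ this is exactly a triangulated subcategory closed under $\kappa$-small coproducts. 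On the triangulated side, the theory of well-generated triangulated categories (Neeman, Krause) yields $(\Ho\cat E)^\kappa = \Lock{(\Ho\cat E)^{\aleph_0}}$, i.e.\ the thick subcategory closed under $\kappa$-small coproducts generated by the compact objects.

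Combining these two characterizations with the identification $\cat E^{\aleph_0}=(\Ho\cat E)^{\aleph_0}$ established in~(a), I obtain simultaneously that Lurie's $\kappa$-compact objects coincide with Neeman--Krause's $\kappa$-compact objects in $\Ho(\cat E)$, and that this common subcategory equals $\Lock{\cat E^{\aleph_0}}$. The main obstacle I anticipate is keeping the two cardinal-accounting traditions (Lurie's $\mathrm{Pr}^L_\kappa$ framework and Neeman's well-generated framework, \cf\ \Cref{War:cardinals}) in careful alignment, especially in verifying that the `closure recipes' from the two worlds really match term-for-term once one passes between $\kappa$-small colimits in~$\cat E$ and $\kappa$-small coproducts plus triangles in $\Ho(\cat E)$; the stability hypothesis is what makes this work and must be invoked explicitly.
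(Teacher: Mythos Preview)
Your proposal is correct and follows essentially the same two-step strategy as the paper: identify $\cat E^\kappa$ with the closure of $\cat E^{\aleph_0}$ under $\kappa$-small colimits, translate this via stability into $\Lock{\cat E^{\aleph_0}}$, and then match with the triangulated $\kappa$-compacts via \cite[Cor.\,7.2.2]{Krause10}. The one notable difference is in how the $\infty$-categorical closure description is obtained: you cite HTT~5.4.2.2 (plus~5.3.4.17), whereas the paper explicitly remarks that it could not locate a precise reference and instead supplies a direct argument, verifying the conditions of \cite[Prop.\,5.3.5.11(2)]{LurieHTT} to produce an equivalence $\Indk(\overline{\cat C}^\kappa)\overset{\sim}{\to}\cat E$ and then reading off $\overline{\cat C}^\kappa\simeq\cat E^\kappa$. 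Your route is shorter if the cited HTT statements deliver exactly the closure under all $\kappa$-small colimits (rather than only retracts of $\kappa$-small \emph{filtered} colimits, which is what 5.4.2.2 literally yields and requires the extra step, via $\kappa>\aleph_0$ and idempotent completeness, that you allude to); the paper's route is more self-contained and avoids the delicate bookkeeping around $\aleph_0\ll\kappa$.
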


\begin{proof} 
Part (a) is contained in \cite[Prop.\,1.4.4.1]{LurieHTT}. 
Unfortunately this proof does not appear to admit a straightforward generalization to arbitrary cardinals.

Let us prove part~(b).
Suppose $\cat E$ is a compactly generated stable $\infty$-category. 
We can take the (essentially) small subcategory $\cat C:=\cat E^{\aleph_0}$ of its compact object as a generating set of compacts, so that $\cat E\simeq \Ind(\cat C)$.

We claim that~$\cat E^\kappa$, the full subcategory of all $\kappa$-compact objects in~$\cat E$, is equal to the closure of $\cat C$ in $\cat E$ under $\kappa$-small colimits. (This does not use stability and should be well-known; we provide a proof as we could not find a precise reference).

To prove the claim, denote the latter closure by $\overline{\cat  C}^\kappa\subset \cat E$.
Since $\kappa$-compact objects are closed under $\kappa$-small colimits by \cite[Cor.\,5.3.4.15]{LurieHTT} and since compact objects are in particular $\kappa$-compact, we immediately obtain the inclusion $\overline{\cat  C}^\kappa\subseteq \cat E^\kappa$. 
To prove the reverse inclusion $\cat E^\kappa \subseteq \overline{\cat  C}^\kappa$, we can proceed as follows (there may be more efficient ways).
Consider the inclusion functor $f \colon \overline{\cat  C}^\kappa\to \cat E$.
We want to use \cite[Prop.\,5.3.5.11(2)]{LurieHTT} to show that $f$ extends to an equivalence $F\colon \Indk(\overline{\cat  C}^\kappa)\overset{\sim}{\to} \cat E$. For this, it suffices to check that $f$ satisfies the following three hypotheses: 
\begin{enumerate}
\item $f$ is fully faithful (true by definition),
\item $f$ factors through $\cat E^\kappa$ (we already proved that $\overline{\cat  C}^\kappa\subseteq \cat E^\kappa$),
\item and finally, that the objects of $\overline{\cat  C}^\kappa$ generate $\cat E$ under $\kappa$-filtered colimits.
\end{enumerate}
Item (c) can be shown precisely as in the proof of \cite[5.3.5.12]{LurieHTT}, with $\PSh(\cat C)$ replaced by $\cat E = \Ind(\cat C)$ and $\PSh(\cat C)^\kappa$ replaced by~$\overline{\cat  C}^\kappa$.
Namely, every object $X\in \cat E$ is a small colimit (even a filtering colimit) of objects of~$\cat C$.
By the proof of \cite[4.2.3.11]{LurieHTT}, $X$ can be rewritten as a $\kappa$-filtered colimit in~$\cat E$ of a diagram whose values are objects which are $\kappa$-small colimits of objects in~$\overline{\cat  C}^\kappa$, that is, of a diagram with values in~$\overline{\cat  C}^\kappa$.
We therefore obtain the announced equivalence $F\colon \Indk(\overline{\cat  C}^\kappa)\overset{\sim}{\to} \cat E$, which restricts to an equivalence $\overline{\cat C}^\kappa\overset{\sim}{\to} \cat E^\kappa$ by \Cref{Prop:Indk} (since $\kappa>\aleph_0$), which in fact must then be an equality of full subcategories of~$\cat E$.

(Alternatively and slightly more informally, once we know that every object $X\in \cat E$ is the colimit of a $\kappa$-filtering diagram of $\kappa$-small colimits~$Y_\alpha$ of objects in~$\cat C$, we may consider the case of a $\kappa$-compact $X$ and the resulting equivalence $\cat E(X,X)\simeq \cat E(X,\colim_\alpha Y_\alpha)\simeq \colim_\alpha \cat E(X, Y_\alpha)$. Applied to the identity map of~$X$, this shows that $X$ is a retract of one of the $Y_\alpha$, \ie of a $\kappa$-small colimit of objects of~$\cat C$. As retracts are countable colimits and $\kappa>\aleph_0$, we can rewrite $X$ as a $\kappa$-small colimit of objects of~$\cat C$.)

Recall now that $\cat E$ is stable, which implies that the closure $\overline{\cat  C}^\kappa$ under $\kappa$-small colimits agrees with $\Lock{\cat C}$, the $\kappa$-localizing subcategory generated by $\cat C$ (where we close under $\kappa$-small coproducts and mapping cofibers), which can be constructed already in the triangulated category~$\Ho (\cat E)$.
By \cite[Cor.\,7.2.2]{Krause10}, the subcategory $\Lock{\cat C}\subseteq \Ho (\cat E)$ is also precisely the full subcategory of $\kappa$-compact objects in the compactly generated (\ie $\aleph_0$-well generated) triangulated category~$\Ho (\cat E)$.
This proves the moreover part, and in particular that the notions of $\kappa$-compact objects in the sense of $\infty$-categories and of well-generated categories agree at least inside of a \emph{compactly} generated category, as claimed.
 \end{proof}

\begin{Rem}
A triangulated category $\cat T$ with small coproducts is well generated if and only if it is generated by some object as a localizing subcategory. 
By combining this with \cite[1.4.4.2]{LurieHA}, one can deduce that a stable $\infty$-category $\cat E$ is presentable iff $\Ho(\cat E)$ is well generated.
By definition though, `presentable' means $\kappa$-compactly generated for some regular~$\kappa$; so it would be nice to show that $\cat E$ is $\kappa$-compactly generated iff $\Ho (\cat E)$ is $\kappa$-well generated (for the \emph{same}~$\kappa$!).\footnote{The equivalence with the same $\kappa$ is claimed without proof in \cite[\S1.3]{Efimov24pp}, but the conclusion there appears simply to follow from a non-standard use of the terminology for well generated triangulated categories. 
} 
This boils down to showing that part~(b) of the lemma holds in a general presentable~$\cat E$. 
Unfortunately, we can only prove one implication. Fortunately, it is the only one we will use:
\end{Rem}

\begin{Prop} \label{Prop:Ind-as-localization}
Let $\cat E$ be a stable $\infty$-category with small colimits. 
If $\cat E$ is $\kappa$-compactly generated for an infinite regular cardinal~$\kappa$, then $\Ho(\cat E)$ is a $\kappa$-well generated triangulated category.
Or equivalently: if $\cat C$ is any small stable $\infty$-category with $\kappa$-small colimits, then $\Ho(\Indk(\cat C)) $ is $\kappa$-well generated.
\end{Prop}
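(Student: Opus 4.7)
The plan is to prove the second, equivalent formulation by exploiting the presentation $\cat E\simeq \Indk(\cat C)$. As a candidate set of $\kappa$-compact generators for $\Ho(\cat E)$ I take (a small skeleton of) the essential image $\cat C\hookrightarrow\Indk(\cat C)$ of the Yoneda embedding, which by \Cref{Prop:Indk} already lies inside~$\cat E^\kappa$. Appealing to Krause's characterization in~\cite{Krause10}, I then verify three things: that this set generates~$\Ho(\cat E)$, that each of its objects is $\kappa$-small in Neeman's triangulated sense, and that the set is closed (up to isomorphism) under $\kappa$-small coproducts.

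For generation, suppose $Y\in\cat E$ satisfies $\Hom_{\Ho(\cat E)}(\Sigma^n G,Y)=0$ for every $G\in\cat C$ and every $n\in\mathbb Z$. Since $\cat E$ is stable the mapping space $\Map_\cat E(G,Y)$ is an infinite loop space with $\pi_n\Map_\cat E(G,Y)\cong \Hom_{\Ho(\cat E)}(\Sigma^n G,Y)$, so it is contractible, \ie $\Map_\cat E(G,Y)\simeq \ast\simeq \Map_\cat E(G,0)$. The fully-faithful restricted Yoneda functor $\cat E\simeq\Indk(\cat C)\hookrightarrow\PSh(\cat C)$ then forces $Y\simeq 0$. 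For the $\kappa$-smallness of $G\in\cat C$, any coproduct in~$\cat E$ can be written as the $\kappa$-filtered colimit
\[
\coprod_{i\in I}X_i\ \simeq\ \colim_{I'\subseteq I,\ |I'|<\kappa}\coprod_{i\in I'}X_i
\]
indexed by the poset of $\kappa$-small subsets of~$I$, which is $\kappa$-filtered by regularity of~$\kappa$ (any $\kappa$-small family of $\kappa$-small subsets admits the union as an upper bound, and this union is still $\kappa$-small). As $G$ is $\kappa$-compact in Lurie's sense, $\Map_\cat E(G,-)$ commutes with this colimit; taking $\pi_0$ yields that every morphism $G\to\coprod_{i\in I}X_i$ in $\Ho(\cat E)$ factors through some $\kappa$-small sub-coproduct. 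Finally, closure of $\cat E^\kappa$ under $\kappa$-small coproducts (which coincide with the corresponding coproducts in $\Ho(\cat E)$) is \cite[Cor.\,5.3.4.15]{LurieHTT}.

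The main obstacle is not conceptual but notational: one has to carefully match the equivalent formulations of ``$\kappa$-well generated triangulated category'' of \cite{Neeman01,Krause10} with the properties that Lurie's $\infty$-categorical formalism produces directly. Once the translation is in place, each required condition is a transparent consequence of the $\kappa$-compact generation of~$\cat E$ and the compatibility between $\infty$-categorical and triangulated coproducts. The converse direction -- whether $\Ho(\cat E)$ being $\kappa$-well generated forces $\cat E$ to be $\kappa$-compactly generated for the \emph{same}~$\kappa$ -- is genuinely more delicate, which is consistent with the authors' comment that only this one implication is established.
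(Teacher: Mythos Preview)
Your direct approach is natural, but it has a genuine gap. The three conditions you verify---generation, $\kappa$-smallness, and closure under $\kappa$-small coproducts---are \emph{not} the characterization of $\kappa$-well generated given in~\cite{Krause10}. Krause's definition requires the generating set~$\cat S$ to satisfy, in addition to generation and $\kappa$-smallness, the \emph{perfect generation} condition: whenever $(f_i\colon X_i\to Y_i)_{i\in I}$ is a family with $\Hom(S,f_i)$ surjective for every $S\in\cat S$ and every~$i$, then $\Hom(S,\coprod_i f_i)$ is surjective for every $S\in\cat S$. For $\kappa=\aleph_0$ this follows from $\kappa$-smallness (factor through a finite sub-coproduct, which equals the product, and lift componentwise), but for $\kappa>\aleph_0$ it does \emph{not}: a map $S\to\coprod_{I'}Y_i$ with $|I'|<\kappa$ infinite cannot be decomposed into components, so the individual lifting hypothesis gives no purchase. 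Closure under $\kappa$-small coproducts does not rescue this. Your claim that ``the main obstacle is not conceptual but notational'' therefore underestimates the difficulty; the perfect generation condition is precisely the conceptual obstacle you have not addressed.

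The paper sidesteps this entirely by a different, indirect route: it embeds $\Indk(\cat C)$ into the \emph{compactly} generated $\Ind(\cat C)$ (for which $\Ho(\Ind(\cat C))$ is compactly generated by the known $\kappa=\aleph_0$ case), then shows that $\Indk(\cat C)$ is a Bousfield localization of $\Ind(\cat C)$ whose kernel is generated by a set of $\kappa$-compact objects (the cones of the comparison maps $\colim(y\circ p)\to y(\colim p)$ over $\kappa$-small diagrams~$p$), and finally invokes \cite[Thm.\,7.2.1]{Krause10} to conclude that the quotient is $\kappa$-well generated. This bootstrapping from the compact case avoids ever having to verify perfect generation directly. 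If you wish to salvage the direct approach, you would need to supply a separate argument for perfect generation using the $\infty$-categorical $\kappa$-compactness of the generators; this may be possible but is not routine.
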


\begin{proof}
The equivalence of the two statements is because any $\kappa$-compactly generated $\cat E$ is equivalent to $\Indk(\cat E^\kappa)$ (as explained right after \cite[Def.\,5.5.7.1]{LurieHTT}), and moreover $\cat E\simeq \Indk(\cat E^\kappa)$ is stable iff $\cat E^\kappa$ is stable (one implication was already mentioned; for the other one, recall that $\cat E^\kappa$ is closed under $\kappa$-small colimits hence in particular under finite limits, hence is stable whenever $\cat E$ is). So we can take $\cat C=\cat E^\kappa$.

Accordingly, suppose that $\cat E= \Indk(\cat C)$ for $\cat C$ a small stable $\infty$-category with $\kappa$-small colimits.
Since finite colimits in $\cat C$ are $\kappa$-small, we have an inclusion functor 
\[
i\colon \Indk(\cat C)\to \Ind (\cat C)=:\cat D
\]
which by construction is fully faithful and also $\kappa$-continuous (the latter because $\Indk(\cat C)$, resp.\ $\Ind(\cat C)$, is the closure of $\cat C$ in the ambient $\infty$-category $\PSh(\cat C)$ under $\kappa$-filtered colimits, resp.\ all filtered colimits).
Recall also that $\cat D$ is compactly generated by \Cref{Prop:Indk}, hence $\Ho(\cat D)$ is compactly generated (that is $\aleph_0$-well generated) as a triangulated category by part~(a) of \Cref{Lem:infty-vs-tri}  (applied to~$\cat D$).

We claim that $\cat E$ is a Bousfield localization of~$\cat D$ by the localizing subcategory $\cat L= \Loc{\cat L_0}$ generated by an essentially small set $\cat L_0$ of $\kappa$-compact objects of~$\cat 
D$. 

Note that, by part~(b) of \Cref{Lem:infty-vs-tri}, the objects of $\cat L_0$ would then also be $\kappa$-compact in~$\Ho(\cat D)$ in the sense of well-generated triangulated categories.
Since $\Ho (\cat E) = \Ho(\cat D/\cat L) \simeq \Ho(\cat D)/ \Ho(\cat L)$ as triangulated categories, we can deduce from this together with \cite[Thm.\,7.2.1(2)]{Krause10} (choosing $\cat T:=\Ho(\cat D)$, $\cat S:=\Ho(\cat D)$, and $\alpha:=\kappa$ in \emph{loc.\,cit.}) that the quotient triangulated category $\Ho(\cat E)$ is also $\kappa$-well generated, as wished.

Hence it only remains to prove the above claim. 

For this, write $y\colon \cat C \to \Ind (\cat C) = \cat D$ and $j\colon \cat C\to \Indk(\cat C)$ for the two Yoneda embeddings, so that $i\circ j=y$, and let $\cat L_0:= \{ \cone(\varphi_p)\}_p$ be the collection of the mapping cones in $\cat D$ of the comparison maps 
\[
\varphi_p \colon \colim(y \circ p)\to  y (\colim p ) ,
\]
with $p\colon K\to \cat C$ running through all diagrams in $\cat C$ indexed by a $\kappa$-small simplicial set~$K$.
Note that $\cat L_0$ is essentially small as there is essentially only a set of the latter diagrams, and that its objects are $\kappa$-compact since $\cat D^\kappa$ contains the image of~$y$ (which consists of $\aleph_0$-compacts) and is closed under $\kappa$-small colimits in $\cat D$ (which also includes mapping cones). 
We still need to find an equivalence $\cat D/\cat L\simeq \cat E$ with $\cat L:=\Loc{\cat L_0}\subseteq \cat D$.

To this end, consider the following diagram
\[
\xymatrix@R=12pt{
& \cat D = \Ind(\cat C) \ar[rd]^-q \ar@<-1.5ex>@{..>}[dd]_-\ell & \\
\cat C \ar[ur]^-y \ar[dr]_-j  & & \cat D/ \cat L \ar@<-1.5ex>@{..>}[dl]_-{\overline \ell} \\
& \cat E = \Indk(\cat C) \ar[uu]_-i \ar@{..>}[ur]_-{\overline{q}} & 
}
\]
where $q$ is the Bousfield localization functor and $i\circ j=y$ by definition.
First note that $i$ admits a left adjoint~$\ell\colon \cat D\to \cat E$. 
This follows from \cite[Cor.\,5.5.2.9]{LurieHTT} and the $\kappa$-continuity of $i$ if we can show that $i$ preserves limits. Indeed, $i$ actually creates limits: if $P= \lim_\alpha P_\alpha$ is a limit in $\Ind(\cat C)$ of objects of $\Indk(\cat C)$, that is of $\kappa$-small limits-preserving functors $P_\alpha\colon \cat C^\op\to \Top$, then $P$ also preserves $\kappa$-small limits since limits commute with limits (the latter is a routine argument). 
Now, since $i$ is fully faithful the adjunction $\ell\dashv i$ is a localization and in particular the counit of adjunction is a natural equivalence 
\[ 
\ell \circ i\simeq \Id_\cat E.
\]
We also deduce an equivalence
\begin{equation} \label{eq:j=ly}
j\simeq \ell \circ y
\end{equation}
given by the composite $\ell \circ y = \ell \circ (i \circ j) \simeq (\ell \circ i )\circ j\simeq \Id_\cat E \circ j \simeq j$.

Next, note that $\ell$ descends along $q$ to an (exact and) colimits preserving functor $\overline \ell\colon \cat D/\cat L\to \cat E$.
Indeed, as $\ell$ is a left adjoint it is colimits preserving and therefore it suffices to show that $\ell$ sends the above-defined generating  maps~$\varphi_p$ to equivalences.
To see why the latter holds for each~$p$, consider the commutative diagram
\[
\xymatrix{
\colim (j \circ p) \ar[rr]^-\simeq \ar[d]_\simeq && j\big( \colim p \big) \ar[d]^\simeq \\
\ell \big( \colim y \circ p \big) \ar[rr]^-{\ell(\varphi_p)} && \ell \circ y \big(\colim p \big)
}
\]
of canonical maps in~$\cat D$; the right vertical map is by \eqref{eq:j=ly}, the left vertical equivalence is \eqref{eq:j=ly} combined with  the fact that $\ell$ preserves arbitrary colimits (it is a left adjoint), and the top equivalence because $j$ preserves $\kappa$-small colimits. It follows that $\ell(\varphi_p)$ is an equivalence too, as required.

We claim that, conversely, $q$ factors as $\overline{q}\circ \ell$ for a functor $\overline{q}\colon \cat D/\cat L\to \cat E$.
It then follows easily that $\overline{\ell}$ and $\overline{q}$ are mutually inverse equivalences, for instance by the universal property of (Dwyer--Kan) localizations recalled later on (see \eqref{eq:UP-DKloc} and \Cref{Rem:Bousfield-is-DK}).
Thus in order to conclude the proof of the proposition we only need to verify the latter claim.

To this end, note that as $\cat D/\cat L$ is cocomplete it has in particular all $\kappa$-filtering colimits, hence by the universal property \eqref{eq:UP-Indk} the composite $q\circ y$ factors as 
\[
q\circ y \simeq \overline{q} \circ j
\]
for a unique $\kappa$-continuous functor $\overline{q}\colon \cat E= \Indk(\cat C)\to \cat D/\cat L$.
In fact $\overline{q}$ preserves arbitrary small colimits; this follows from \cite[Prop.\,5.5.19]{LurieHTT} (applied to $f:=\overline{q}$) as soon as the following three conditions are verified:
\begin{enumerate}
\item the target of $\overline{q}$ is presentable and its source $\kappa$-accessible (true by hypothesis);
\item $\overline{q}$ is $\kappa$-continuous (true by construction);
\item the restriction of $\overline{q}$ to $\cat E^\kappa$ is fully faithful. 
\end{enumerate}
Only the verification of~(c) needs some work. 
We know that the restriciton of $\overline{q}$ to $\cat E^\kappa$ can be identified with $\overline{q} \circ j$, which is equivalent to $q\circ y$. 
To see that the latter is fully faithful, it suffices (from the general properties of Bousfield localizations) to show that the Yoneda functor $y$ takes values among $\cat L$-local objects; in other words, it suffices to show that $\varphi_p^*:=\Hom_\cat D(\varphi_p, y(C))$ is an equivalence for every generating map $\varphi_p$ and every $C\in \cat C$.
This is a consequence of the following commutative diagram of spaces
\[
\xymatrix{
\Hom_\cat D \big( y(\colim p) , y(C) \big)
  \ar[r]^-{\varphi_p^*} &
 \Hom_\cat D \big( \colim (y \circ p), y(C) \big) 
  \ar[d]^\simeq \\
& \lim \Hom_\cat D (y\circ p , y (C)) \\
\Hom_\cat C (\colim p, C) \ar[uu]^{y}_\simeq \ar[r]^-\simeq & 
\lim \Hom_\cat C (p, C)
 \ar[u]^{\lim \,y}_\simeq
}
\]
where all other maps are evident equivalences. Thus (c) holds and we can deduce that~$\overline{q}$ preserves small colimits and in particular all filtering ones.

It follows from the above that $q$ and $\overline{q}\circ \ell$ are two parallel continuous functors defined on $\Ind(\cat C)$, which moreover agree on $\cat C$ by the following chain of equivalences: 
\[
q\circ y \simeq \overline{q} \circ j \simeq \overline{q} \circ (\ell \circ y) \simeq (\overline{q} \circ \ell) \circ y.
\]
We derive from this and the universal property of the $\Ind$ construction (that is, \eqref{eq:UP-Indk} with $\kappa=\aleph_0$)  that $q \simeq \overline{q}\circ \ell$.
This provides the claimed factorization of~$q$ through~$\ell$, concluding the proof of the proposition.
\end{proof}

We will need the following consequence of the proof, which also uses a purely triangulated-categorical fact proved below (once again, there may be easier ways):

\begin{Cor}
\label{Cor:pres-cpts}
Let $j\colon \cat C\to \Indk(\cat C)$ be the Yoneda embedding for $\cat C$ a
small stable $\infty$-category with all $\kappa$-small colimits. 
Then a representable object $j(C)$ is compact in $\Indk(\cat C)$, provided that $\Hom_\cat C(C,-)\colon \cat C\to \Top$ preserves $\kappa$-small colimits.
\end{Cor}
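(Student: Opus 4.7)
The plan is to identify the representable functor $\Hom_{\Indk(\cat C)}(j(C),-)\colon \Indk(\cat C) \to \Top$ with the canonical extension along $j$ of the covariant representable $f := \Hom_\cat C(C,-)\colon \cat C \to \Top$, and then to exploit the fact that this extension preserves \emph{all} small colimits (not merely $\kappa$-filtered ones) whenever $f$ preserves $\kappa$-small colimits.

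First I would apply the universal property \eqref{eq:UP-Indk} with $\cat E = \Top$ to obtain an essentially unique $\kappa$-continuous functor $\tilde f\colon \Indk(\cat C) \to \Top$ such that $\tilde f \circ j \simeq f$. Next, mimicking the argument used in the proof of \Cref{Prop:Indk-f} (and citing \cite[Prop.\,5.3.5.13]{LurieHTT}), I would use the hypothesis that $f$ is $\kappa$-right exact together with the cocompleteness of $\Top$ to show that $\tilde f$ admits a right adjoint (given concretely by precomposition with $f$), and hence preserves all small colimits.

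On the other hand, by \Cref{Prop:Indk} the object $j(C)$ lies in $(\Indk(\cat C))^\kappa$, so the representable $g := \Hom_{\Indk(\cat C)}(j(C),-)\colon \Indk(\cat C) \to \Top$ is itself $\kappa$-continuous. Full faithfulness of $j$ and Yoneda give a natural equivalence $g \circ j \simeq \Hom_\cat C(C,-) = f$. By the uniqueness clause of \eqref{eq:UP-Indk} we may then conclude $g \simeq \tilde f$, so that $g$ preserves all small colimits and in particular all filtered ones. This is exactly the statement that $j(C)$ is compact in $\Indk(\cat C)$.

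The only subtle step is showing that $\tilde f$ preserves \emph{all} small colimits rather than only the $\kappa$-filtered ones immediately guaranteed by \eqref{eq:UP-Indk}; this is precisely where the hypothesis on $\Hom_\cat C(C,-)$ enters, via the cited proposition of Lurie applied to the presentable target $\Top$. Once this is granted, matching the two $\kappa$-continuous extensions of $f$ is a formality.
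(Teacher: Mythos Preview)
Your approach is correct and takes a genuinely different, more direct route than the paper's. The paper instead reuses the localization $\ell\colon \Ind(\cat C)\to \Indk(\cat C)$ (with kernel $\cat L=\Loc{\cone(\varphi_p)}$) constructed in the proof of \Cref{Prop:Ind-as-localization}: it observes that $y(C)$ is already compact in $\Ind(\cat C)$, verifies via the hypothesis on $\Hom_\cat C(C,-)$ that $y(C)\in{}^\perp\cat L$ (by checking that $\Hom_{\Ind(\cat C)}(yC,-)$ inverts each~$\varphi_p$), and then invokes the purely triangulated \Cref{Prop:abstract-cpt-loc} to conclude that $j(C)\simeq\ell(y(C))$ is compact in the localization. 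Your argument bypasses the passage through $\Ind(\cat C)$ and the separate triangulated proposition entirely, working directly with the universal property of~$\Indk$; the paper's route has the virtue of being a literal corollary of the preceding proof and of motivating \Cref{Prop:abstract-cpt-loc}, which the authors record for independent interest.

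One caveat on your key step: \cite[Prop.\,5.3.5.13]{LurieHTT} as stated (and as used in \Cref{Prop:Indk-f}) concerns functors of the form $\Indk(\cat C)\to\Indk(\cat D)$ between two Ind-categories, so it does not literally apply with target~$\Top$, and ``precomposition with~$f$'' is not the right description of the adjoint in this setting. The fix is easy and standard: the right adjoint to $\tilde f$ is the functor $\Top\to\PSh(\cat C)$ sending $S$ to $\Map_\Top(f(-),S)$, and this presheaf lands in $\Indk(\cat C)\subset\PSh(\cat C)$ (\ie takes $\kappa$-small colimits in~$\cat C$ to limits) precisely because $f$ is $\kappa$-right exact. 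With this adjustment your argument goes through cleanly.
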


\begin{proof}
Recall from the proof of \Cref{Prop:Ind-as-localization} the commutative diagram
\[
\xymatrix@R=12pt{
& \Ind(\cat C)  \ar[dd]_-\ell  \\
\cat C \ar[ur]^-y \ar[dr]_-j  &   \\
& \Indk(\cat C)   
}
\]
where $j$ and $y$ are the Yoneda embeddings, and where $\ell$ is a localization functor with fully faithful right adjoint~$i$ such that $i\circ j= y$, and whose full object-kernel is the localizing subcategory $\cat L=\Loc{\cone(\varphi_p)\mid p}$ generated by the cones of the canonical maps $\varphi_p \colon \colim(y \circ p)\to  y (\colim p )$, with $p\colon K\to \cat C$ ranging over $\kappa$-small diagrams in~$\cat C$.

We claim that, if $C\in \cat C$ is such that that $\Hom_\cat C(C,-)$ preserves $\kappa$-small colimits, then $j(C)$ lies in the left orthogonal ${}^\perp\cat L$ in~$\Ind (\cat C)$. 
Since the image of $y$ consists of compact objects of $\Ind(\cat C)$, the corollary would then immediately follow from \Cref{Prop:abstract-cpt-loc} below applied to the functor $L=\Ho(\ell)$.

To show that $y(C)\in {}^\perp\cat L$, it suffices to verify that the functor $\Hom_{\Ind (\cat C)} (yC,-)$ kills the generating objects $\cone(\varphi_p)$, or equivalently, that it inverts the maps~$\varphi_p$. 
Consider the following commutative diagram in~$\Top$   (using $i\circ j=y$):
\[
\xymatrix{
\Hom_{\Ind (\cat C)} \big( yC , \colim (y \circ p) \big)
  \ar[r]^-{(\varphi_p)_*} &
  \Hom_{\Ind (\cat C)} \big( yC , y( \colim p ) \big)   \\
{ \colim \Hom_{\Ind (\cat C)} ( yC , y \circ p ) }
  \ar[u]^\simeq & 
 \Hom_{\Ind_{\aleph_1}(\cat C)} \big( jC , j ( \colim p ) \big)
  \ar[u]^\simeq_{i} \\
{ \colim \Hom_{\Ind_{\aleph_1}(\cat C)} \big( j C , j\circ p \big) }
   \ar[u]^\simeq_{\colim i} 
    \ar@/_5ex/[ru]_-\simeq &
}
\]
The top left map is an equivalence because $y(C)$ is compact in $\Ind(\cat C)$, as already noted, hence the exact functor $\Hom(yC,-)$ preserves arbitrary colimits (by stability). 
The top right and bottom left maps are equivalences because $i$ is fully faithful.
The bottom curved arrow is an equivalence by the hypothesis on $C\in \cat C$. 
It follows that $(\varphi_p)_*$ is also an equivalence as claimed.
\end{proof}

We record the following purely triangular fact separately, as we could not find it in writing and it may be of independent interest (in case the right adjoint $J$ preserves coproducts, the same conclusion follows by \cite[Thm.\,5.1]{Neeman96}; but we use this precisely in a situation when $J$ does not preserve coproducts, by construction).

\begin{Prop}
\label{Prop:abstract-cpt-loc}
Let $L \colon \cat T \rightleftarrows \cat S \! :\!J$ be a Bousfield localization of triangulated categories, \ie an adjoint pair of exact functors with fully faithful right adjoint~$J$, and suppose that $\cat T$ (and therefore also~$\cat S$) admits arbitrary small coproducts.
If $C$ is a compact object of $\cat T$ belonging to the left orthogonal of the localizing subcategory $\Ker(L)$, then $L(C)$ is a compact object of~$\cat S$.
\end{Prop}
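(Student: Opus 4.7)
The plan is to verify the compactness of $L(C)$ in $\cat S$ directly, by exhibiting the standard isomorphism $\Hom_\cat S(L(C), \bigoplus Y_i) \cong \bigoplus \Hom_\cat S(L(C), Y_i)$ for an arbitrary small family $\{Y_i\}_{i \in I}$ in $\cat S$. The key observation is that $J$ generally fails to preserve coproducts, but the orthogonality hypothesis on $C$ is precisely what closes this gap.

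First I would rewrite the coproduct in $\cat S$. Since $L$ is a left adjoint it preserves coproducts, so the $\cat S$-coproduct is computed as $\bigoplus_i^\cat S Y_i \simeq L\bigl(\bigoplus_i^\cat T JY_i\bigr)$ (using that $LJ \simeq \id_\cat S$ because $J$ is fully faithful). Using the adjunction $L \dashv J$ and full faithfulness of $J$, this gives
$$\Hom_\cat S\bigl(L(C), \textstyle\bigoplus_i^\cat S Y_i\bigr) \;\cong\; \Hom_\cat T\bigl(C, JL\bigl(\textstyle\bigoplus_i^\cat T JY_i\bigr)\bigr).$$

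Next I would exploit orthogonality. For any $X \in \cat T$, the unit $\eta_X \colon X \to JL(X)$ becomes an equivalence after applying~$L$ (again because $LJ \simeq \id$), so $\cone(\eta_X) \in \Ker(L)$. Completing $\eta_X$ to a distinguished triangle with $X := \bigoplus_i^\cat T JY_i$ and applying $\Hom_\cat T(C,-)$, the hypothesis $C \in {}^\perp\Ker(L)$ together with the closure of $\Ker(L)$ under suspension yields $\Hom_\cat T(C, \eta_X)$ is an isomorphism. Hence
$$\Hom_\cat T\bigl(C, JL\bigl(\textstyle\bigoplus_i^\cat T JY_i\bigr)\bigr) \;\cong\; \Hom_\cat T\bigl(C, \textstyle\bigoplus_i^\cat T JY_i\bigr).$$

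Finally I would apply the compactness of $C$ in $\cat T$ and adjunction once more:
$$\Hom_\cat T\bigl(C, \textstyle\bigoplus_i^\cat T JY_i\bigr) \;\cong\; \textstyle\bigoplus_i \Hom_\cat T(C, JY_i) \;\cong\; \textstyle\bigoplus_i \Hom_\cat S(L(C), Y_i).$$
Composing the three displayed isomorphisms proves that $L(C)$ is compact in $\cat S$.

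There is no real obstacle here; the argument is essentially formal once one recognizes that the failure of $J$ to preserve coproducts is measured by an object of $\Ker(L)$, against which $C$ is by assumption orthogonal. The only subtlety worth flagging is the implicit use of $\Ker(L)$ being closed under $\Sigma$ (automatic since $L$ is exact), so that both $\cone(\eta_X)$ and $\Sigma^{-1}\cone(\eta_X)$ lie in $\Ker(L)$ and are killed by $\Hom_\cat T(C,-)$.
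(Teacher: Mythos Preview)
Your proof is correct and uses essentially the same approach as the paper: both arguments hinge on the coproduct formula $\bigoplus_i^{\cat S} Y_i \simeq L\bigl(\bigoplus_i^{\cat T} JY_i\bigr)$, the observation that $\cone(\eta_X)\in\Ker(L)$ so that $\Hom_{\cat T}(C,-)$ inverts the unit~$\eta$, and the compactness of~$C$ in~$\cat T$. The paper packages these steps slightly differently---it first factors the homological functor $\cat T(C,-)$ through a functor $F$ on~$\cat S$, shows $F$ preserves coproducts, and then identifies $F\cong \cat S(LC,-)$---but your direct computation is equivalent and arguably more transparent.
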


\begin{proof}
Recall that $J$ being fully faithful is equivalent to the counit of adjunction being an isomorphism $LJ\cong \Id_\cat S$. 
Also, the coproducts of $\cat S$ are created from those of $\cat T$ by the formula $\coprod^\cat S_i X_i = L(\coprod^\cat T_i JX_i)$. 
(To see the latter, let $\{X_i\}_i$ be a family of objects of~$\cat S$ and $Y\in \cat S$, and use $L\dashv J$ and the fully faithfulness of~$J$ to compute 
\[
\cat S \bigg( L\big (\coprod_i^\cat T JX_i \big) , Y \bigg)
\cong \cat T \big( \coprod_i^\cat T JX_i  , JY \big)
\cong \prod_i \cat T( JX_i, JY )
\cong \prod_i \cat S (X_i , Y),
\]
which shows that $L\big (\coprod_i^\cat T J(X_i) \big)$ is indeed a coproduct in $\cat S$ of the~$X_i$.)

Now consider the homological functor $\cat T(C,-)\colon \cat T\to \Ab$. 
By hypothesis it annihilates $\Ker(L)$, hence it admits a unique factorization through a homological functor on $\cat S$, say~$F$:
\[
\xymatrix{
\cat T \ar[d]_{\cat T(C,-)} \ar[r]^-L & \cat S \ar@{-->}[dl]^F \\
\Ab &
}
\]
As $C$ is also assumed to be compact, the functor $\cat T(C,-)$ preserves coproducts. 
Now for any small family $\{X_i\}_i$ of objects of~$\cat S$ we can compute
\begin{align*}
F\Big( \coprod_i X_i \Big) 
& = FL \Big( \coprod_i J X_i \Big) 
\cong \cat T \Big(C , \coprod_i J X_i \Big)
\cong \coprod_i \cat T(C, JX_i)   \\
& \cong \coprod_i FLJX_i 
 \cong \coprod_i FX_i 
\end{align*}
which shows that $F$ also preserves coproducts.
Hence to prove the proposition it suffices to show that $F \cong \cat S(LC, -)$ as functors on~$\cat S$.

Since localization functors are epic in the category of categories, it suffices to show that the two functors
$F\circ L= \cat T(C,-) $ and $\cat S(LC, - ) \circ L \cong \cat T (C, JL -)$ on $\cat T$ are isomorphic.
To show the latter, we claim that $\cat T(C, - )$ inverts the unit of adjunction  $\eta_X\colon X\to JLX$ for all $X\in \cat T$.

Indeed, recall that $L(\eta)$ is invertible (as for any Bousfield localization), so that $\Cone(\eta)\in \Ker(L)$. 
Hence $\cat T(C, \Cone(\eta))=0$, since $C\in {}^\perp \Ker(L)$ by hypothesis. 
This shows that $\cat T(C,-)$ inverts $\eta$ as claimed, concluding the proof.
\end{proof}

\subsection*{Tensor structures}

We now introduce tensor structures into the picture, that is symmetric monoidal structures  on $\infty$-categories, and on functors between them, in the sense of \cite[\S2]{LurieHA}. 
Most of this is well-known and widely used for the special case $\kappa=\aleph_0$, and the general case works in the same way (whence our brevity).

\begin{Prop} 
\label{Prop:Indk-tensor}
As before, let~$\kappa$ be an infinite regular cardinal.
\begin{enumerate}
\item
As in \Cref{Prop:Indk}, let $\cat C$ be a stable $\infty$-category with all $\kappa$-small colimits.
Suppose moreover that $\cat C$ is symmetric monoidal and that its tensor functor $\otimes$ preserves $\kappa$-small colimits in each variable separately.
Then the stable presentable $\Indk(\cat C)$ admits a symmetric monoidal structure, unique up to equivalence, characterized by the properties that it preserves arbitrary small colimits in both variables and that the Yoneda embedding $\cat C\to \Indk(\cat C)$ refines to a symmetric monoidal functor.
\item Let $\cat C$ and $\cat D$ have $\kappa$-small colimits and let $f\colon \cat C\to \cat D$ be a functor preserving them.
Suppose moreover that $\cat C$ and $\cat D$ are symmetric monoidal with $\otimes$-functor preserving $\kappa$-small colimits in both variables, and that the functor $f\colon \cat C\to \cat D$ is symmetric monoidal. 
Then the extension $\Indk(f)$ of \Cref{Prop:Indk-f} admits a compatible refinement to a symmetric monoidal functor $\Indk(\cat C)\to \Indk(\cat D)$, unique up to homotopy.
\end{enumerate}
\end{Prop}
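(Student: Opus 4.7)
The plan is to closely adapt Lurie's treatment of the case $\kappa=\aleph_0$ in \cite[\S4.8.1]{LurieHA} to our relative setting, via Day convolution combined with monoidal Bousfield localization.

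For part~(a), I would first equip the presheaf $\infty$-category $\PSh(\cat C)$ with the Day convolution symmetric monoidal structure induced by $\otimes_\cat C$. By construction its tensor product preserves arbitrary small colimits separately in each variable, the Yoneda embedding $y\colon \cat C\to \PSh(\cat C)$ refines canonically to a symmetric monoidal functor, and on representables one has $y(C)\otimes y(D)\simeq y(C\otimes D)$. I would then view $\Indk(\cat C)\subseteq \PSh(\cat C)$ as the reflective subcategory of local objects for the class of maps $\varphi_p\colon \colim(y\circ p)\to y(\colim p)$ with $p$ ranging over $\kappa$-small diagrams in~$\cat C$, as in the proof of \Cref{Prop:Ind-as-localization}.

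The heart of the argument is to verify that this Bousfield localization is compatible with the Day convolution in the sense of \cite[Prop.\,2.2.1.9]{LurieHA}; this amounts to showing that the class of local equivalences is stable under tensoring with arbitrary objects of~$\PSh(\cat C)$. Because Day convolution preserves colimits separately in both variables and the representables generate $\PSh(\cat C)$ under colimits, it suffices to check that $\varphi_p\otimes y(C)$ is an equivalence for all $C\in\cat C$ and all $\kappa$-small~$p$. Using the symmetric monoidality of the Yoneda embedding, this map identifies with the canonical comparison map $\varphi_{p\otimes C}$ associated to the $\kappa$-small diagram $p\otimes C$, which is an equivalence precisely because tensoring with $C$ in~$\cat C$ preserves $\kappa$-small colimits (our hypothesis). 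By \emph{loc.\,cit.}, the localization therefore inherits a symmetric monoidal structure on $\Indk(\cat C)$ whose tensor preserves small colimits in both variables, and $j=j_\cat C$ becomes symmetric monoidal. Uniqueness of this structure follows from the colimit-generation of $\Indk(\cat C)$ by the image of~$y$ together with the fact that the tensor is forced to preserve colimits separately.

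For part~(b), the composite $j_\cat D\circ f\colon \cat C\to \Indk(\cat D)$ is symmetric monoidal and preserves $\kappa$-small colimits. Applying the symmetric monoidal enhancement of the universal property~\eqref{eq:UP-Indk} (a formal consequence of part~(a), obtained by restricting to commutative algebra objects in the relevant functor $\infty$-categories), this factors uniquely up to equivalence through $j_\cat C$ via a colimit-preserving symmetric monoidal functor $\Indk(\cat C)\to \Indk(\cat D)$, whose underlying functor must coincide with the functor $\Indk(f)$ produced in \Cref{Prop:Indk-f} by uniqueness of left adjoints. The main technical obstacle is the monoidal-compatibility verification for the localization $\PSh(\cat C)\to \Indk(\cat C)$, since this is the one step that genuinely uses the cardinal~$\kappa$ in a nontrivial way; once it is in place, everything else is formal and works uniformly for every regular~$\kappa$.
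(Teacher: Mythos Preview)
Your approach is correct in outline but genuinely different from the paper's. The paper gives no hands-on construction at all: it simply invokes \cite[Prop.\,4.8.1.10 and Variant~4.8.1.11]{LurieHA}, choosing the classes $\cat K$ and $\cat K'$ to be the $\kappa$-small and all small simplicial sets respectively (as in \cite[Ex.\,5.3.6.8]{LurieHTT}), so that the symmetric monoidal structure on $\Indk(-)$ and its functoriality drop out of Lurie's general machinery for the functor~$\cat P^{\cat K}_{\cat K'}$. Your route via Day convolution on $\PSh(\cat C)$ followed by a monoidal Bousfield localization (\cite[Prop.\,2.2.1.9]{LurieHA}) is more explicit and perhaps more illuminating, since it makes visible exactly where the hypothesis on~$\otimes$ enters; the price is that one must carry out the compatibility verification by hand. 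One small correction: in your key step you should not be checking that $\varphi_p\otimes y(C)$ is an \emph{equivalence} but only a \emph{local} equivalence. The hypothesis that $-\otimes_\cat C C$ preserves $\kappa$-small colimits is what identifies the target $y(\colim p)\otimes y(C)\simeq y((\colim p)\otimes C)$ with $y(\colim(p\otimes C))$, so that $\varphi_p\otimes y(C)$ becomes $\varphi_{p\otimes C}$; the latter is then a local equivalence simply because $p\otimes C$ is again a $\kappa$-small diagram and hence $\varphi_{p\otimes C}$ is one of the generating maps being inverted. With that adjustment your argument goes through.
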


\begin{proof}
This is a all a direct consequence of \cite[Prop.\,4.8.1.10 and Variant 4.8.1.11]{LurieHA}.
Indeed, the most well-known special case $\kappa = \aleph_0$ is explicitly stated as \cite[Cor.\,4.8.1.14]{LurieHA}.
For the more general case, it suffices to apply the result to the situation of \cite[Example 5.3.6.8]{LurieHTT}), that is, in the cited proposition we must choose $\cat K$ and~$\cat K'$ to be the collections of all $\kappa$-small simplicial sets and of all small simplicial sets, respectively.
\end{proof}

\subsection*{Ordinary equivariant KK-theory as an $\infty$-category}

Now we define the tensor $\infty$-categories and functors on which we wish to apply the above $\aleph_1$-relative Ind construction.
Conveniently, it will suffice to quote a few results from~\cite{BEL23pp}.

Given any (essentially) small $\infty$-category $\cat C$ and any set of arrows $W$ in~$\cat C$, one can construct the \emph{Dwyer--Kan localization} of $\cat C$ at~$W$ (\cite[Def.\,1.3.4.1]{LurieHA} \cite{Hinich16}), which is an $\infty$-category $\cat C[W^{-1}]$ equipped with a functor
\[
\cat C \to \cat C[W^{-1}]
\]
sending the arrows of $W$ to equivalences in the target $\infty$-category $\cat C[W^{-1}]$, and such that it induces an equivalence of functor categories 
\begin{equation} \label{eq:UP-DKloc}
\Fun(\cat C[W^{-1}], \cat D)\overset{\sim}{\to} \Fun_W (\cat C, \cat D) 
\end{equation}
for any $\infty$-category~$\cat D$; here $\Fun_W\subset \Fun$ denotes the full subcategory of those functors which map the elements of $W$ to equivalences. 
(Note that even when $\cat C$ is discrete---\ie an ordinary category---there is no reason for $\cat C[W^{-1}]$ to be discrete, in fact this is typically not the case.)

\begin{Rem} \label{Rem:Bousfield-is-DK}
Recall that a \emph{localization} (or \emph{Bousfield localization}) in the more common sense of Lurie~\cite{LurieHTT} is a functor $L\colon \cat C \to \cat D$ of $\infty$-categories which has a fully faithful right adjoint. Any such localization is also a Dwyer--Kan localization, where $W$ can be taken to be the set of all arrows sent by~$L$ to equivalences of~$\cat D$. 
\end{Rem}

\begin{Def} \label{Def:KKsep}
Let $G$ be a countable discrete group.
Let $\Cstarsep{G}$ be the  essentially small (ordinary) category of all separable $G$-C*-algebras and all equivariant *-homomophisms between them. Equip it with the usual symmetric monoidal structure given by the minimal tensor product of C*-algebras with diagonal group action.
For the following, we can view $\Cstarsep{G}$ as a symmetric monoidal discrete $\infty$-category via the nerve functor (omitted from notations). 
Let $W_G$ be the class of all $\KK^G$-equivalences, that is all arrows in $\Cstarsep{G}$ which are sent to isomorphisms in the ordinary Kasparov category~$\KK^G$.
We denote by 
\[
\kksep{G}\colon  \Cstarsep{G} \to \Cstarsep{G}[ W_G^{-1}]  =: \inftyKKsep{G}
\]
the Dwyer--Kan localization (as recalled above) of $\Cstarsep{G}$ by~$W_G$.
\end{Def}

\begin{Prop}
\label{Prop:KKsep}
The $\infty$-category $\inftyKKsep{G}$ is stable, and there is a canonical equivalence of triangulated categories 
\[
\Ho (\inftyKKsep{G}) \simeq \KK^G
\]
identifying the functors $\Cstarsep{G} \overset{\kksep{G}}{\longrightarrow} \inftyKKsep{G} \to \Ho (\inftyKKsep{G})$ and $\Cstarsep{G}\to \KK^G$.
Moreover, $\inftyKKsep{G}$ admits all countable colimits, and it admits an (essentially unique) tensor structure which preserves countable colimits in both variables and which allows $\kksep{G}$ to refine to a (strong) tensor functor.
\end{Prop}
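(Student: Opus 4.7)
The plan is to regard this as a (nearly) verbatim translation of the Bunke--Engel--Land construction \cite{BEL23pp} from the non-equivariant to the $G$-equivariant setting for a countable discrete group~$G$. Their arguments carry over once one invokes the classical universal property of $\KK^G$ in place of that of ordinary~$\KK$.

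First I would handle stability and the identification of homotopy categories. The universal property of the Dwyer--Kan localization $\kksep{G}$ factors the classical functor $\Cstarsep{G}\to \KK^G$ (which by definition inverts $W_G$) through $\kksep{G}$, giving $\bar F\colon \inftyKKsep{G}\to \KK^G$ and hence a canonical functor $\Ho(\bar F)\colon \Ho(\inftyKKsep{G})\to \KK^G$. To show that this is an equivalence and that the source is stable, one verifies that $\inftyKKsep{G}$ inherits an $\infty$-categorical universal property paralleling the classical one of Cuntz--Meyer--Rosenberg--Higson: it is the initial stable $\infty$-category receiving a $C^*$-stable, split-exact, homotopy-invariant functor from~$\Cstarsep{G}$. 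This is precisely the argument in \cite{BEL23pp}, which applies \emph{mutatis mutandis} to countable discrete~$G$ using Meyer's equivariant generalization of the classical universal property. Once stability is in hand, countable colimits in $\inftyKKsep{G}$ reduce to countable coproducts, and the latter are detected on the homotopy category: since $\KK^G$ admits countable coproducts, given by $c_0$-direct sums of separable $G$-C*-algebras, so does~$\inftyKKsep{G}$.

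For the tensor structure I would invoke Lurie's symmetric monoidal refinement of Dwyer--Kan localization, \cite[Prop.\,4.1.7.4]{LurieHA}. Its hypothesis is that $W_G$ be closed under tensoring with arbitrary morphisms of~$\Cstarsep{G}$; this holds because $-\otimes A$ descends to an exact endofunctor of $\KK^G$ for every $G$-C*-algebra~$A$ (part of the tt-structure on $\KK^G$ recalled in \Cref{sec:KK-prelim}), so in particular preserves $\KK^G$-equivalences. The proposition then produces an essentially unique symmetric monoidal structure on $\inftyKKsep{G}$ together with a strong monoidal refinement of~$\kksep{G}$. Preservation of countable coproducts in each variable reduces once again to the corresponding property of the tensor product on $\KK^G$, which is classical, and by stability this extends to countable colimits. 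The one step requiring genuine care is the transfer of BEL's universal-property arguments to the equivariant setting; however, since Meyer's equivariant universal property of $\KK^G$ takes the same form as in the trivial-group case, this transfer is routine.
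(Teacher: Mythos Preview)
Your approach is correct in outline, but it rests on a misconception about \cite{BEL23pp}: that paper already works $G$-equivariantly for countable discrete groups (its title is ``A stable $\infty$-category for equivariant KK-theory''), so there is no need to \emph{transfer} anything from a non-equivariant setting. The paper's proof simply cites \cite[Thm.\,1.3]{BEL23pp} for stability and the identification $\Ho(\inftyKKsep{G})\simeq \KK^G$, and \cite[Prop.\,2.20]{BEL23pp} for the symmetric monoidal structure. The only explicit argument in the paper is for countable colimits, which is exactly the one you give: stability plus countable coproducts (detected on the homotopy category) yields all countable colimits.

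Your treatment of the tensor structure via \cite[Prop.\,4.1.7.4]{LurieHA} is essentially what sits inside the BEL proof, so it is not wrong, just more explicit than necessary here. The remark that ``Meyer's equivariant universal property of $\KK^G$ takes the same form as in the trivial-group case'' is true and is indeed what BEL uses, but you are re-deriving their result rather than quoting it.
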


\begin{proof}
The first sentence is the content of \cite[Thm.\,1.3]{BEL23pp}.
To see why $\inftyKKsep{G}$ has countable colimits, recall that $\KK^G$ admits all countable coproducts, given by the image under $\kksep{G}$ of countable direct sums of $G$-C*-algebras. 
It follows from the equivalence $\Ho (\inftyKKsep{G}) \simeq \KK^G$ that direct sums become countable coproducts also in the $\infty$-category~$\inftyKKsep{G}$.
 As the latter is stable it admits all finite colimits as well, and by combining the two, we see that it admits all countable colimits.
 
The claims on the tensor structure are precisely \cite[Prop.\,2.20]{BEL23pp}.
\end{proof}

Restriction functors between Kasparov categories can also be enhanced in a similar way:

\begin{Prop} 
\label{Prop:Fsep}
Let $\varphi\colon H\to G$ be any morphism between two countable discrete groups, and let $\Res_\varphi\colon \Cstarsep{G}\to \Cstarsep{H}$ be the tensor functor restricting the group actions along~$\varphi$.
Then there exists a (strong) tensor functor of $\infty$-categories
\[
\Res_\varphi \colon \inftyKKsep{G} \to \inftyKKsep{H}
\]
extending the above $\Res_\varphi$ along $\kksep{G}$ and~$\kksep{H}$ and preserving countable colimits.
On homotopy categories and under the identifications of~\Cref{Prop:KKsep}, it agrees with the usual restriction functor $\KK^G\to \KK^H$ of Kasparov theory.
\end{Prop}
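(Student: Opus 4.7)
The plan is to obtain $\Res_\varphi$ by the universal property of the Dwyer--Kan localization, and then bootstrap to the symmetric monoidal setting following the blueprint already in \cite{BEL23pp}.

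First I would establish at the ordinary C*-algebra level that $\Res_\varphi\colon \Cstarsep{G}\to \Cstarsep{H}$ sends $W_G$ into $W_H$. This is classical: the usual restriction functor $\KK^G\to \KK^H$ of Kasparov theory exists and is exact, so its precomposition with the canonical $\Cstarsep{G}\to \KK^G$ inverts $W_G$, forcing $\Res_\varphi$ to map $W_G\subseteq W_H$ at the algebra level. Applying the universal property \eqref{eq:UP-DKloc} of the Dwyer--Kan localization $\kksep{G}$ to the functor $\kksep{H}\circ \Res_\varphi \colon \Cstarsep{G}\to \inftyKKsep{H}$ (which sends $W_G$ to equivalences) then yields an essentially unique functor of $\infty$-categories $\Res_\varphi\colon \inftyKKsep{G}\to \inftyKKsep{H}$ making the evident square commute. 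Passing to homotopy categories and using the equivalence $\Ho(\inftyKKsep{G})\simeq \KK^G$ of \Cref{Prop:KKsep}, the uniqueness part of the corresponding universal property for the ordinary triangulated Kasparov category identifies $\Ho(\Res_\varphi)$ with the classical restriction functor.

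Next I would verify preservation of countable colimits. Since $\inftyKKsep{G}$ and $\inftyKKsep{H}$ are stable and $\Res_\varphi$ will be exact once shown monoidal (alternatively, exactness already follows automatically from the universal property together with exactness of the target $\infty$-category), it suffices by \cite[Prop.\,1.4.4.1]{LurieHA} to check preservation of countable coproducts. But the countable coproducts of $\inftyKKsep{?}$ are created from $c_0$-direct sums of separable $G$-C*-algebras via $\kksep{?}$, as recalled in the proof of \Cref{Prop:KKsep}, and the algebra-level restriction visibly commutes with $c_0$-direct sums (restricting the diagonal action termwise); the claim follows.

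For the (strong) symmetric monoidal refinement I would invoke the symmetric monoidal variant of the Dwyer--Kan universal property used by Bunke--Engel--Land in their construction of the monoidal structure on $\inftyKKsep{G}$ itself (\cite[Prop.\,2.20]{BEL23pp}). The required input is in place: $\Cstarsep{G}$ and $\Cstarsep{H}$ are symmetric monoidal ordinary categories under the minimal tensor product with diagonal action, the classes $W_G$ and $W_H$ are stable under tensoring with arbitrary objects on either side (a standard property of $\KK$-equivalences used to construct the tensor on $\KK^G$ in the first place), and $\Res_\varphi$ is already a strong monoidal functor at the algebra level because restricting a diagonal action along $\varphi\otimes\varphi$ agrees with the diagonal of the restricted actions. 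Applying the same machinery to $\Res_\varphi$ rather than to the identity then produces the desired strong monoidal refinement, and uniqueness up to homotopy follows from the $1$-categorical uniqueness of symmetric monoidal structures guaranteed by \cite[Prop.\,4.8.1.10]{LurieHA}.

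The main obstacle is the monoidal refinement, since one must verify that the argument of \cite[Prop.\,2.20]{BEL23pp} genuinely applies \emph{functorially} to the morphism $\Res_\varphi$ and not only to a single monoidal category. However this is really a matter of bookkeeping in the $\infty$-category of symmetric monoidal $\infty$-categories with countable colimits, and no new Kasparov-theoretic input is required beyond the compatibility of $\KK$-equivalences with the tensor product already exploited in \cite{BEL23pp}.
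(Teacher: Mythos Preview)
Your proposal is correct and follows essentially the same route as the paper: both obtain $\Res_\varphi$ from the Dwyer--Kan universal property using $\Res_\varphi(W_G)\subseteq W_H$, deduce countable-colimit preservation from preservation of $c_0$-direct sums plus exactness, and obtain the monoidal refinement from the symmetric monoidal version of that universal property (the paper cites \cite[Prop.\,3.2.2]{Hinich16} directly, via \cite[Cor.\,4.8]{BEL23pp}, whereas you route through \cite[Prop.\,2.20]{BEL23pp} and \cite[Prop.\,4.8.1.10]{LurieHA}, but this is the same input).
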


\begin{proof}
The existence of an exact functor extending $\Res_\varphi\colon \Cstarsep{G}\to \Cstarsep{H}$ along the canonical functors is part of \cite[Lemma~4.6]{BEL23pp}; indeed this follows from $\Res_\varphi(W_G)\subseteq W_H$, and the extension is the essentially unique functor $\Res_\varphi$ which makes the square
\[
\xymatrix{
{\Cstarsep{G}} 
 \ar[r]^-{\kksep{G}}
 \ar[d]_{\Res_\varphi} &
  \inftyKKsep{G} \ar[d]^{\Res_\varphi} \\
\Cstarsep{H} \ar[r]^-{\kksep{H}} &
 \inftyKKsep{G}
}
\]
commute, obtained by the universal property of Dwyer--Kan localization (exactness essentially follows from the fact that the usual restriction functor on $\KK$-theory is exact).
The tensor structure on the extension is obtained similarly, using that the tensor functor $\kksep{G}$ has the analogous universal property with respect to tensor functors on $\Cstarsep{G}$ by \cite[Prop.\,3.2.2]{Hinich16} 
(\cf the proof of \cite[Cor.\,4.8]{BEL23pp}).

Since $\Res_\varphi$ preserves countable direct sums of separable $G$-C*-algebras, the extension preserves countable coproducts.
Exactness means that it preserves finite (co)limits as well, and we deduce that it preserves arbitrary countable colimits.
The agreement with the restriction functor between ordinary Kasparov categories is automatic, since both functors are extensions of the same functor $\Cstarsep{G}\to \Cstarsep{H}\to \KK^H$ along the canonical functor $\Cstarsep{G}\to \KK^G$, which enjoys a universal property among functors with target an additive category (see \cite[Thm.\,6.6]{Meyer00} or \cite[Prop.\,2.2]{BEL23pp}).
\end{proof}

\subsection*{Adjoining small colimits to KK-theory}
Now we combine all the above construction and compile the resulting properties, adding some extra results on induction and restriction functors.
(For future reference, we also record in \Cref{Thm:main-inftyKK}(c) the extension of the maximum crossed product functors, although they are not used in this article and hence the reader may ignore that part.)

\begin{Def} \label{Def:inftyKK}
For every countable groups~$G$, we write 
\[
\inftyKK{G}:=\Ind_{\aleph_1}(\inftyKKsep{G})
\]
for the $\aleph_1$-relative (or `countable') Ind-completion of the Bunke--Engel--Land enhancement of the Kasparov category $\KK^G$ recalled in \Cref{Def:KKsep}, and we write
\[
\xymatrix{
\kk{G} \colon \Cstarsep{G} \ar[r]^-{\kksep{G}} &
 \inftyKKsep{G} \ar[r]^-{j} &
   \inftyKK{G}
}
\]
for the resulting composite canonical functor.
Informally, $\inftyKK{G}$ is simply the universal $\infty$-category obtained from $\Cstarsep{G}$ by inverting the $\KK^G$-equivalences and adding all colimits in a way which preserves the existing countable colimits.
\end{Def}

\begin{Rem} \label{Rem:Ho-generation}
It follows immediately from \Cref{Prop:Ind-as-localization} that the homotopy category $\Ho(\inftyKK{G})$ is an $\aleph_1$-well generated triangulated category.
\end{Rem}

\begin{Thm} 
\label{Thm:main-inftyKK}
Retain \Cref{Def:inftyKK}. We have:
\begin{enumerate}
\item For $G$ any countable discrete group, $\inftyKK{G}$ is an $\aleph_1$-compactly generated stable $\infty$-category equipped with a symmetric monoidal structure which preserves small colimits in both variables.
The canonical functors induce a fully faithful and (countable) coproducts-preserving tt-functor on~$\KK^G$
\[
\xymatrix{
\Cstarsep{G}
  \ar[r]^-{\kk{G}}
   \ar[d] &
  \inftyKK{G}
   \ar[d]
      \\
\KK^G \ar@{..>}[r]^-{\exists !}& \Ho(\inftyKK{G}) 
}
\]
whose essential image is the homotopy category of $\aleph_1$-compact objects:
\begin{equation} \label{eq:agreement-Ho}
\KK^G \simeq \Ho(\inftyKKsep{G}) \simeq \Ho\!\big( (\inftyKK{G})^{\aleph_1} \big).
\end{equation}

\item If $\varphi\colon H\to G$ is any morphism between countable discrete groups, there is an essentially unique tensor functor 
$
\Res_\varphi\colon \inftyKK{G} \to \inftyKK{H}
$
making the following square of tensor functors commute:
\[
\xymatrix{
{\Cstarsep{G}} 
 \ar[r]^-{\kk{G}}
 \ar[d]_{\Res_\varphi} &
  \inftyKK{G} \ar[d]_{\Res_\varphi} \\
\Cstar{H} \ar[r]^-{\kk{H}} &
 \inftyKK{H} \ar@<-2ex>@{..>}[u]_{\varphi_*}
}
\]
The functor $\Res_\varphi$ admits a right adjoint which we denote~$\varphi_*$, so in particular it is exact and preserves all colimits. 
Under the identification \eqref{eq:agreement-Ho},  $\Res_\varphi$ agrees with the usual KK-theory restriction functor along~$\varphi$.

\item 
Suppose moreover that $\varphi$ is either 
\begin{enumerate}
\item the inclusion $H\hookrightarrow G$ of a subgroup, or
\item a projection morphism $H\twoheadrightarrow 1$ to the trivial group.
\end{enumerate}
Then the restriction functor $\Res_\varphi$ admits also a \emph{left} adjoint, which we may denote by~$\varphi_!$, which under the identification \eqref{eq:agreement-Ho} agrees with the usual induction functor $\Ind^G_H$ of KK-theory (in the first case) and with the usual maximal crossed product $H\ltimes-$ (in the second case).
Moreover, there is an equivalence of functors
\[ \varphi_!\simeq \varphi_*\]
between left and right adjoints when $G/H$ is finite in the first case and when $H$ is finite in the second case.

\item
Let $\Res^G_H:= \Res_\varphi$ be as in part~(c) for a subgroup inclusion $\varphi\colon H\hookrightarrow G$ of finite index. 
Then the adjunctions $\varphi_!\dashv  \Res^G_H\dashv \varphi_*$ and the isomorphism $\varphi_! \simeq \varphi_*$ can be chosen so that the resulting composite natural transformation
\[
\xymatrix{
\Id_{\inftyKK{H}} \ar[r]^-{\mathrm{unit}} & \Res^G_H\circ  \varphi_!  \simeq  \Res^G_H\circ \varphi_* \ar[r]^-{\mathrm{counit}} & \Id_{\inftyKK{H}}
}
\]
is equivalent to the identity.
\end{enumerate}
\end{Thm}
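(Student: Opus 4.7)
Part~(a) is a direct application of the general $\Ind_{\aleph_1}$-machinery to the Bunke--Engel--Land enhancement $\inftyKKsep{G}$ of \Cref{Prop:KKsep}. Applying \Cref{Prop:Indk} with $\cat C = \inftyKKsep{G}$ and $\kappa = \aleph_1$ yields that $\inftyKK{G}$ is an $\aleph_1$-compactly generated stable $\infty$-category, and \Cref{Prop:Indk-tensor}(a) equips it with a symmetric monoidal structure preserving small colimits in both variables. The equivalence \eqref{eq:agreement-Ho} comes from the fact that $\inftyKKsep{G}$ already admits countable colimits (hence is idempotent complete) and $\aleph_1 > \aleph_0$, so the Yoneda embedding restricts to an equivalence $\inftyKKsep{G} \simeq (\inftyKK{G})^{\aleph_1}$. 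Fully faithfulness of $\kk{G}$ on $\KK^G$ and preservation of countable coproducts are inherited from $\kksep{G}$ and from the fact that $j$ preserves $\aleph_1$-small colimits.

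Part~(b) is obtained by applying \Cref{Prop:Indk-f} and \Cref{Prop:Indk-tensor}(b) to the $\aleph_1$-right-exact tensor functor $\Res_\varphi\colon \inftyKKsep{G} \to \inftyKKsep{H}$ of \Cref{Prop:Fsep}. The right adjoint $\varphi_*$ arises as the precomposition functor from \Cref{Prop:Indk-f}, and agreement with the usual KK-theoretic restriction on homotopy categories is automatic from the uniqueness in the universal properties of Dwyer--Kan localization and of $\Ind_{\aleph_1}$.

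Part~(c) requires first lifting induction (resp.\ the maximal crossed product) to the $\inftyKKsep$-level. Since these operations are exact on $\Cstarsep$ and send KK-equivalences to KK-equivalences by classical KK-theory, the universal property of Dwyer--Kan localization (exactly as in the proof of \Cref{Prop:Fsep}) yields exact extensions $\varphi_!$ to $\inftyKKsep$ preserving countable coproducts. One then verifies $\varphi_! \dashv \Res_\varphi$ at this level by lifting the unit and counit of the classical triangulated adjunction, using their explicit descriptions via Kasparov bimodules. Applying \Cref{Prop:Indk-adj} extends the adjunction to $\inftyKK$. The equivalence $\varphi_! \simeq \varphi_*$ in the finite-index (resp.\ finite-$H$) case is the lift of the classical KK-theoretic ambidextrous identification.

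Part~(d) is the main obstacle: it demands a coherent simultaneous choice of unit, counit, and equivalence $\varphi_! \simeq \varphi_*$ such that the snake composite equals \emph{the identity 2-morphism} of $\Id_{\inftyKK{H}}$, not merely one homotopic to it. At the triangulated level this identity encodes the fact that the tt-ring $C(G/H) = \varphi_!(\unit)$, together with its self-duality coming from the explicit separable-monoid structure of \Cref{subsec:C(G/H)} and its finite degree from \Cref{Prop:finite-degree}, is a separable Frobenius algebra whose Frobenius trace is dual to its unit. The cleanest strategy is to build all the relevant data already at the level of explicit Kasparov bimodules for $C(G/H)$ in $\Cstarsep{G}$ (using the canonical self-duality given by the trace on the finite transitive $G$-set $G/H$), transport it through $\kksep{G}$ by the universal property of Dwyer--Kan localization and through the Ind-completion by \Cref{Prop:Indk-adj}, and finally check the identity of the snake composite; this check reduces, via the equivalence $\inftyKK{H} \simeq \Mod_{\inftyKK{G}}(C(G/H))$, to a purely algebraic identity between free-module adjoints, which is classical.
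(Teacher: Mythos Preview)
Your treatment of parts~(a) and~(b) matches the paper's proof essentially verbatim.

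For parts~(c) and~(d), however, you miss the key simplifying observation and in~(d) propose a circular argument. The paper's point is that \emph{all} the adjunction data (units, counits, and the identification $\varphi_!\simeq\varphi_*$) already exist as honest natural transformations between functors on the ordinary category $\Cstarsep{(\ldots)}$, before any localization. Specifically, the maps (1.16), (1.18), (1.19), (4.20) of \cite{BEL23pp} are algebra-level $*$-homomorphisms, and for $G/H$ finite one has a literal equality $\Ind^G_H=\Coind^G_H$ of functors on $\Cstarsep{H}$. The composite in~(d) is then the identity already in $\Cstarsep{H}$, and this relation is transported for free through Dwyer--Kan localization and through the functor $\Ind_{\aleph_1}$ (\Cref{Rem:Indk-is-functorial}). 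Your phrase ``lifting the unit and counit of the classical triangulated adjunction'' is the wrong direction: lifting natural transformations from $\Ho(\cat C)$ to $\cat C$ is generally obstructed, whereas pushing them forward from $\Cstarsep{(\ldots)}$ is automatic.

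Your proposed reduction in~(d) via the equivalence $\inftyKK{H}\simeq \Mod_{\inftyKK{G}}(C(G/H))$ is circular: in this paper that equivalence (\Cref{Lem:Cell-finite-etale}) is deduced from \cite[Thm.\,2.9]{BDS15}, whose hypothesis is precisely that the counit admits a natural section, i.e.\ part~(d) itself. The Frobenius-algebra and finite-degree considerations are unnecessary here.
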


\begin{proof}
All claims of part~(a) follow immediately from  \Cref{Prop:Indk} and \Cref{Prop:Indk-tensor}~(a), applied with $\kappa = \aleph_1$ to the tensor $\infty$-category $\cat C:= \inftyKKsep{G}$; note that the latter satisfies all necessary hypotheses thanks to \Cref{Prop:KKsep}.

Similarly, all claims of part~(b) are directly obtained by applying \Cref{Prop:Indk-f} and \Cref{Prop:Indk-tensor}~(b) to the functor $\Res_\varphi\colon \inftyKKsep{G}\to \inftyKKsep{H}$ of \Cref{Prop:Fsep}.

Let us prove part~(c) and~(d) which, unlike the rest, are not completely formal and will need some more algebra-level observations.

In the two mentioned special cases of $\varphi\colon H\to G$, recall that the usual induction and (maximal) crossed product functors for KK-theory are left adjoint to $\Res_\varphi$ and are induced by algebra-level direct sums-preserving functors $\Ind_H^G\colon \Cstarsep{H}\to \Cstarsep{G}$ and $H\ltimes-\colon \Cstarsep{G}\to \Cstarsep{1}$ such that $\Ind^G_H(W_H)\subseteq W_G$ and $H\ltimes(W_H)\subseteq W_1$, respectively (indeed, note that in the second case $\Res_\varphi=\tau^H$ is the functor adjoining the trivial $H$-action to a plain C*-algebra).

Therefore, just as we did for~$\Res_\varphi$, the universal properties of Dyer--Kan localization and that of $\Ind_{\aleph_1}$ let us extend the latter functors to two (homonymous) colimit-preserving functors fitting into the following commutative diagrams:
\[
\xymatrix{
\Cstarsep{G} \ar[r]^-{\kksep{G}} &
 \inftyKKsep{G} \ar[r]^-j &
  \inftyKK{G} \\
\Cstarsep{H}
 \ar[r]^-{\kksep{H}}
  \ar[u]^{\Ind_H^G} &
 \inftyKKsep{H}
  \ar[r]^-{j}
   \ar[u]_{\Ind_H^G} &
 \inftyKK{H}
  \ar@{..>}[u]_{\Ind_H^G}
}
\quad\quad
\xymatrix{
\Cstarsep{1} \ar[r]^-{\kksep{1}} &
 \inftyKKsep{1} \ar[r]^-j &
  \inftyKK{1} \\
\Cstarsep{H}
 \ar[r]^-{\kksep{H}}
  \ar[u]^{H\ltimes-} &
 \inftyKKsep{H}
  \ar[r]^-{j}
   \ar[u]_{H\ltimes-} &
 \inftyKK{H}
  \ar@{..>}[u]_{H\ltimes-}
}
\]
We wish to lift the adjunctions $\Ind^G_H\dashv \Res^G_H$ and $H\ltimes(-)\dashv \tau^H$, as well as $\Res^G_H\dashv \Ind^G_H$ when $G/H$ is finite and $\tau^H\dashv H\ltimes(-)$ when $H$ is finite  (see \cite[\S4.1]{Meyer08}), which a priori are only defined at the level of homotopy categories (\ie of usual Kasparov categories), to their enhancements~$\inftyKKsep{(\ldots)}$. 
This is not automatic, but it suffices to show that either the unit or the counit of each KK-theoretic adjunction lifts to a natural transformation between the corresponding functors of $\infty$-categories; and for the latter, it suffices that this unit or counit is already defined as a natural transformation between the algebra-level functors (because then it extends to the $\infty$-categories by the universal property of Dwyer--Kan localization). 
And indeed, it is well-known that the units
\[ 
\Id_{\Cstarsep{H}}\to \Res^G_H\circ \Ind^G_H 
\quad
\textrm{ and }
\quad
\Id_{\Cstarsep{1}}\to H\ltimes \tau^H 
\]
and the counits
 \[
 H\ltimes \tau^H\to \Id_{\Cstarsep{1}}
\quad  \textrm{ and }
\quad
\Res^G_H \circ   \Ind^G_H \to \Id_{\Cstarsep{H}}
 \]
already exist at the algebra level in all relevant cases. This is mostly explained in \cite{MeyerNest06} and \cite{Meyer08}; for a complete reference, see respectively the maps (1.16), (1.18), (1.19) and (4.20) in~\cite{BEL23pp} (whose definitions are incidentally also shown to extend to non-necessarily separable C*-algebras), observing for the last map~(4.20) that when $G/H$ is finite the functor $\Coind^G_H$ of \cite[Constr.\,4.18]{BEL23pp} which underlies the right adjoint $\varphi_*$ evidently preserves separable C*-algebras, and is isomorphic already at the level of algebras to the functor~$\Ind^G_H$ underlying the left adjoint~$\varphi_!$ (indeed, the constructions of $\Ind^G_H$ and $\Coind^G_H$ are actually equal for $G/H$ finite).

The last observation allows us also to lift the equivalence between the left and the right adjoints of $\Res^G_H$ from KK-theory to the enrichments;
similarly, if $H$ is finite the left and right adjoints of $\tau^H$ agree on the enrichments because they are induced by the same algebra-level functor~$H\ltimes-$.

Finally, we wish to extend the previous adjunctions and functor isomorphisms from the level of the categories $\inftyKKsep{(\ldots)}$ to that of their $\Indk$-categories~$\inftyKK{(\ldots)}$. 
For this it suffices to recall from \Cref{Rem:Indk-is-functorial} that $\Indk(-)$ is functorial between the relevant $\infty$-categories of $\infty$-categories and thus preserves adjunctions and equivalences of functors.
This concludes the proof of part~(c).

For part~(d), recall that if $G/H$ is finite we already have an equality $\Ind^G_H= \Coind^G_H$ between the algebra-level functors constructed in \cite{BEL23pp}, and one can verify immediately that the composite of the (algebra level!) unit \cite[(1.16)]{BEL23pp} followed by the counit \cite[(4.20)]{BEL23pp} is the identity map (\cf also \cite[Lem.\,4.2 and Rem.\,2.10]{BDS15} for what amounts to the same observation). 
This relation still holds between the extended natural transformations between the extended functors at the level of the categories $\inftyKK{(\ldots)}$, by the universal properties of Dwyer--Kan localization and of the $\Indk$-construction. 
\end{proof}

\subsection*{Cell algebras and finite groups}

We now restrict the above constructions and results to cell C*-algebras and then further to finite groups, as used in the main body of this article.

\begin{Def}
For $G$ any countable discrete group, let 
\[
\cat G_G := \{ \Cont_0(G/H) : H\leq G \}
\]
be the (finite or countable) set of standard orbit algebras $\Cont_0(G/H) \in \Cstarsep{G}$ for all subgroups $H\leq G$, and let 
\[
\inftyCellsep{G}:= \Locc{\kksep{G} (\cat G_G) } \subset \inftyKKsep{G} 
\]
and
\[
\inftyCell{G}:= \Loc{\kk{G} (\cat G_G)} \subset \inftyKK{G} 
\]
be the $\aleph_1$-localizing subcategory of $\inftyKKsep{G}$, resp.\ the localizing subcategory of $\inftyKK{G}$, generated by its canonical image. 
We collect a few properties of these $\infty$-categories:
\end{Def}

\begin{Cor} \label{Cor:inftyCell-generators}
With the above notations, we have for any countable group~$G$:
\begin{enumerate}
\item
$\inftyCellsep{G}$ and $\inftyCell{G}$ are stable tensor subcategories of $\inftyKKsep{G}$ and $\inftyKK{G}$, respectively. 
Moreover $\inftyCell{G}$ is $\aleph_1$-compactly generated and $\inftyCellsep{G}$ identifies with its full tensor subcategory of $\aleph_1$-compact objects: 
\[ 
\inftyCellsep{G}\simeq \inftyCell{G}^{\aleph_1} .
\]
\item There is a canonical equivalence of tt-categories
\[
\Cell{G} \simeq \Ho(\inftyCellsep{G}),
\]
where $\Cell{G} = \Locc{\Cont_0(G/H): H\leq G}\subset \KK^G$ as in \Cref{def.CellCat}.
\item 
If $H$ is finite, the generator $\kk{G}(\Cont_0(G/H))$ is a compact object in~$\inftyKK{G}$.
\item 
If $G$ is a finite group (so that $\Cont_0(G/H)=\Cont(G/H)$ for all subgroups), $\inftyCell{G}$ is a compactly generated stable $\infty$-category, with
$
\cat G_G 
$
as a finite set of compact generators.
\item  
For any subgroup $H\leq G$, the restriction and induction functors $\Res^G_H$ and $\Ind^G_H$ restrict to functors between $\inftyCell{G}$ and $\inftyCell{H}$, satisfying the same adjunctions and isomorphisms as in \Cref{Thm:main-inftyKK}~(c) and~(d).
\end{enumerate}
\end{Cor}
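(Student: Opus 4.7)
The plan is to dispatch the five claims in order, leveraging the $\Indk$-machinery of Propositions~\ref{Prop:Indk}--\ref{Prop:Indk-tensor}, the compactness criterion \Cref{Cor:pres-cpts}, and the functoriality established in \Cref{Thm:main-inftyKK}. For part~(a), I would first note that $\inftyCellsep{G}$ and $\inftyCell{G}$ are stable as triangulated subcategories of stable $\infty$-categories closed under finite (co)limits. The tensor subcategory claim reduces, by localizing closure, to checking that $\kksep{G}(\Cont_0(G/H)) \otimes \kksep{G}(\Cont_0(G/K))$ lies in $\inftyCellsep{G}$ for any two subgroups; this is immediate at the algebra level since $\Cont_0(G/H)\otimes \Cont_0(G/K)\cong \Cont_0(G/H\times G/K)$ decomposes into an at most countable direct sum of orbit algebras indexed by the $G$-orbits of $G/H\times G/K$. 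For the $\aleph_1$-compact generation of $\inftyCell{G}$ and the equivalence $\inftyCellsep{G}\simeq \inftyCell{G}^{\aleph_1}$, I would apply the universal property \eqref{eq:UP-Indk} to the inclusion $\inftyCellsep{G}\hookrightarrow \inftyKK{G}$ to obtain a colimit-preserving functor $\Indk(\inftyCellsep{G})\to \inftyKK{G}$, and verify via \Cref{Prop:Indk} that it is fully faithful with essential image precisely the localizing closure $\inftyCell{G}$ of $\kk{G}(\cat G_G)$. Part~(b) then follows formally: the equivalence $\Ho(\inftyKKsep{G})\simeq \KK^G$ from \Cref{Prop:KKsep} matches countable coproducts and triangulation on both sides and so identifies the $\aleph_1$-localizing subcategories generated by the common set $\cat G_G$.

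The main technical content is part~(c), which I would prove by applying \Cref{Cor:pres-cpts} with $\kappa=\aleph_1$ to the object $C:=\kksep{G}(\Cont_0(G/H))$ in $\cat C:=\inftyKKsep{G}$. This requires verifying that $\Hom_{\inftyKKsep{G}}(C,-)\colon \inftyKKsep{G}\to \Top$ preserves countable colimits. Since both categories are stable, this reduces to showing that $\KK^G_*(\Cont_0(G/H),-)$ preserves countable coproducts. Using the KK-theoretic adjunction $\Ind^G_H\dashv \Res^G_H$ (valid since $G/H$ is discrete), one has $\KK^G_*(\Cont_0(G/H),A)\cong \KK^H_*(\unit,\Res^G_H A)$, and since restriction preserves countable coproducts the question boils down to whether $\KK^H_*(\unit,-)$ does the same for $H$ finite---the classical fact that equivariant topological K-theory of a finite group commutes with countable direct sums. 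The corollary then delivers the desired compactness of $\kk{G}(\Cont_0(G/H))$ in $\inftyKK{G}$.

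Parts (d) and (e) are direct consequences. When $G$ is finite, every subgroup $H\leq G$ is finite, so part~(c) shows each generator in $\cat G_G$ is compact in $\inftyKK{G}$, hence also compact when regarded in the colimit-closed full subcategory $\inftyCell{G}$, giving compact generation by the finite set~$\cat G_G$. For~(e), I would verify that restriction and induction preserve the generating orbit algebras at the algebra level: the double coset decomposition $\Res^G_H \Cont_0(G/K)\cong \bigoplus_{[g]\in H\backslash G/K}\Cont_0(H/(H\cap gKg^{-1}))$ lands in the $\aleph_1$-localizing closure of $\cat G_H$, while induction-in-stages yields $\Ind^G_H \Cont_0(H/K)\cong \Cont_0(G/K)\in \cat G_G$. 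Since both functors preserve colimits by \Cref{Thm:main-inftyKK}, they restrict to $\Res^G_H\colon \inftyCell{G}\to \inftyCell{H}$ and $\Ind^G_H\colon \inftyCell{H}\to \inftyCell{G}$, and the adjunctions together with the equivalence $\varphi_!\simeq \varphi_*$ in the finite-index case descend from the ambient $\infty$-categories.

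The chief obstacle I anticipate is the cardinality accounting in~(a): one must confirm that the essential image of $\Indk(\inftyCellsep{G})\to \inftyKK{G}$ is closed not merely under $\aleph_1$-small but under all small colimits, so that it genuinely coincides with the localizing closure $\inftyCell{G}$ and not a smaller subcategory. A second delicate point, in~(c), is the passage from the $\infty$-categorical mapping space to the graded $\KK$-groups: one must argue carefully (using stability and the spectrum-enrichment of the mapping spaces) that preservation of countable coproducts on homotopy groups suffices to establish preservation of countable colimits at the level of spaces in the sense required by \Cref{Cor:pres-cpts}.
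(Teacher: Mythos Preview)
Your proposal is correct and follows essentially the same approach as the paper: both use Lurie's \cite[Prop.\,5.3.5.11(2)]{LurieHTT} for part~(a), the identification from \Cref{Prop:KKsep} for~(b), \Cref{Cor:pres-cpts} for~(c), and the Mackey/double-coset formulas for~(e). The only notable difference is in the routing of~(c): you apply \Cref{Cor:pres-cpts} directly to $\Cont_0(G/H)$ in $\inftyKKsep{G}$, using the KK-theoretic adjunction $\Ind^G_H\dashv\Res^G_H$ to reduce $\KK^G_*(\Cont_0(G/H),-)$ to $\KK^H_*(\unit,\Res^G_H-)$; the paper instead applies \Cref{Cor:pres-cpts} to~$\mathbb C$ in $\inftyKKsep{H}$ and then transports compactness to~$\inftyKK{G}$ via the $\infty$-categorical induction functor $\Ind^G_H$, which preserves compacts because its right adjoint $\Res^G_H$ is continuous (it has a further right adjoint~$\varphi_*$ by \Cref{Thm:main-inftyKK}(b)). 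Both routes rest on the same underlying fact that $\KK^H_*(\unit,-)$ preserves countable coproducts for finite~$H$, and your anticipated obstacles (closure under all small colimits in~(a), the homotopy-group-to-mapping-space passage in~(c)) are exactly the points handled by the cited Lurie proposition and by stability, respectively.
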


\begin{proof}
In the following, for the sake of simplicity we identify $\inftyKKsep{G}$ with the full subcategory of $\aleph_1$-compacts in $\inftyKK{G}$, as per \Cref{Thm:main-inftyKK}(a), suppressing the Yoneda functor as well as the functors $\kksep{G}$ and $\kk{G}$ from notations. 

Part~(a): 
We clearly have $\inftyCell{G}=\Loc{\cat G_G} = \Loc{\inftyCellsep{G}}$ inside $\inftyKK{G}$, and write $\cat C:= \inftyCellsep{G}$ for short. 
Note that $\cat C$ admits all $\aleph_1$-small colimits, so in particular it is a stable subcategory, 
and similarly $\inftyCell{G}$ is by construction closed under small colimits and is stable; moreover, the latter is $\aleph_1$-presentable as it is generated under colimits by the set $\cat G_G$ of $\aleph_1$-compact objects of~$\inftyKK{G}$.
We claim that the inclusion $\cat C\to \Loc{\cat C}$ extends to an equivalence $\Ind_{\aleph_1}(\cat C)\overset{\sim}{\to} \Loc{\cat C}$. 
This follows from \cite[Prop.\,5.3.5.11(2)]{LurieHTT} (as in the proof of \Cref{Lem:infty-vs-tri}), because the inclusion is fully faithful and its image consists of $\aleph_1$-compact objects in $\Loc{\cat C}$ (since they are $\aleph_1$-compact in the ambient $\infty$-category~$\inftyKK{G}$) which collectively generate $\Loc{\cat C}$ under $\aleph_1$-filtering colimits (since they are already closed under $\aleph_1$-small colimits). 
It follows in particular that $\cat C=\inftyCellsep{G} $ identifies with the $\aleph_1$-compact objects of $\Ind_{\aleph_1}(\cat C) \simeq \Loc{\cat C} = \inftyCell{G}$, as claimed in part~(a).
To conclude this part, we easily deduce from the Mackey formula 
$
\Cont_0(G/H)\otimes \Cont_0(G/K)\simeq \bigoplus_{[x]\in H\backslash G/K} \Cont_0(G/H\cap {}^xK)
$
that $\inftyCellsep{G}$ and $\inftyCell{G}$ are symmetric monoidal full subcategories.

Part~(b) is immediate from the identification \eqref{eq:agreement-Ho}.

For part~(c), we begin by showing that the trivial $H$-algebra $\mathbb C$ is compact in $\inftyKK{H}$ when $H$ is finite. 
Recall that for $H$ finite $\mathbb C$ is compact${}_{\aleph_1}$ in $\KK^H \simeq \Ho(\inftyKKsep{H})$, since it corepresents the ordinary $H$-equivariant topological K-theory functor. 
In particular, the functor $\Hom(\mathbb C,-)\colon \inftyKKsep{H}\to \Top$ preserves countable coproducts.
By stability, this is the same as preserving arbitrary countable colimits. 
It now follows by  \Cref{Cor:pres-cpts}  (with $\kappa=\aleph_1$) that the canonical embedding $\inftyKKsep{G} \to \Ind_{\aleph_1}(\inftyKKsep{G})=\inftyKK{G}$ sends $\mathbb C$ to a compact object of~$\inftyKK{H}$. 
Since $\mathbb C= \Cont_0(G/G)$ belongs to $\inftyCell{H}$, it is \emph{a~fortiori} compact in the latter localizing category, as claimed.
We deduce from this that $\Cont_0(G/H)\simeq \Ind^G_H(\mathbb C)$ is compact in $\inftyCell{G}$, since by \cite[Lem.\,5.5.1.4]{LurieHTT} the induction functor $\Ind^G_H$ preserves compact objects because its right adjoint $\Res^G_H$ is continuous (since it admits a right adjoint by \Cref{Thm:main-inftyKK}(b)).
This proves part~(c).

Part~(d) is now immediate: If $G$ is finite so are all its subgroups, hence $\cat G_G$ is a set of compact generators of $\inftyCell{G}$ by part~(c).

Part~(e) is proved precisely as the analogue statement for ordinary KK-theory, see \cite[Prop.\,2.9]{DellAmbrogio14}, exploiting the elementary fact that the (ordinary) $\Res^G_H$ and $\Ind^G_H$ functors send orbit algebras to orbit algebras or direct sums thereof.
\end{proof}

Finally, the homotopy category $\bigCell{G}$ of the $\infty$-category $\inftyCell{G}$ will provide us with the announced enlargement of $\Cell{G}$ with all the nice properties:

\begin{Thm}
\label{Thm:bigCell}
\begin{enumerate}[\rm(1)]
\item
For every countable discrete group~$G$, there exists a tensor triangulated category $\bigCell{G}$ equipped with a functor 
\[ \iota_G \colon \Cell{G} \to \bigCell{G} \]
satisfying the following properties:
\begin{enumerate}[\rm(a)]
\item $\bigCell{G}$ is an $\aleph_1$-well-generated triangulated category in the sense of Neeman (see \cite{Neeman01} \cite{Krause10}), with the standard orbit algebras $\cat G_G$ as a countable set of $\aleph_1$-compact generators. Moreover, its tensor product is exact and preserves small coproducts in both variables.
\item
The functor $\iota_G$ is symmetric monoidal, exact, fully faithful, and preserves all countable coproducts.
\item
When $G$ is finite, $\bigCell{G}$ is a rigidly-compactly generated tt-category and $\iota_G$ restricts to an equivalence of tt-categories
\[
\iota_G \colon \Cell{G}^c \overset{\sim}{\to} (\bigCell{G})^c
\]
between the rigid-compact${}_{\aleph_1}$ objects of $\Cell{G}$ and the rigid-compact objects of $\bigCell{G}$.
\end{enumerate}

\item
The standard functoriality in $G$ of $\Cell{G}$ (deduced from that of $G\mapsto \KK^G$ by restricting functors) extends as follows:
\begin{enumerate}[\rm(a)]
\item \emph{(Restriction.)} If $\varphi\colon G\to H$ is any group homomorphism of discrete countable groups, there exists an exact, coproduct-preserving and symmetric mon\-oidal restriction functor 
\[ \Res_\varphi\colon \bigCell{H}\to \bigCell{G} \]
extending, up to isomorphism, the usual functor $\Cell{H}\to \Cell{G}$ given by restriction along~$\varphi$. 
\item \emph{(Induction.)}
When $\varphi\colon G\to H$ is the inclusion of a subgroup, the functor $\Res_G^H:=\Res_\varphi$ of part (a) admits a left adjoint, the \emph{induction functor} 
\[
 \Ind_G^H \colon \bigCell{G} \to \bigCell{H}
\]
which extends, up to isomorphism, the usual induction functor of equivariant KK-theory. 
When $G/H$ is finite, $\Ind^G_H$ is also right adjoint to $\Res_G^H$, and we can choose the two adjunctions so that the composite
\[
\Id \longrightarrow \Res^G_H\Ind^G_H  \longrightarrow \Id
\]
of the unit of the adjunction $\Ind^G_H\dashv\Res^G_H$ followed by the counit of the adjunction $\Res^G_H\dashv \Ind^G_H$ is the identity natural transformation of $\Id_{\bigCell{H}}$.
\end{enumerate}
\end{enumerate}
\end{Thm}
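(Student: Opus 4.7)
My plan is to define $\bigCell{G} := \Ho(\inftyCell{G})$, the homotopy category of the stable presentably symmetric monoidal $\infty$-category constructed in \Cref{Cor:inftyCell-generators}. The functor $\iota_G\colon \Cell{G}\to \bigCell{G}$ arises by composing the canonical equivalence $\Cell{G}\simeq \Ho(\inftyCellsep{G})$ of \Cref{Cor:inftyCell-generators}(b) with the homotopy category of the fully faithful Yoneda inclusion $\inftyCellsep{G}\simeq \inftyCell{G}^{\aleph_1}\hookrightarrow \inftyCell{G}$ of part~(a) of that corollary. With this definition in hand, the bulk of the proof will consist in translating each claim of the theorem into a property of the underlying $\infty$-category already established in the appendix.

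For part~(1)(a), I will use the identification $\inftyCell{G}\simeq \Ind_{\aleph_1}(\inftyCellsep{G})$ (established during the proof of \Cref{Cor:inftyCell-generators}(a)) together with \Cref{Prop:Ind-as-localization} to conclude that $\bigCell{G}$ is $\aleph_1$-well-generated as a triangulated category; the orbit set $\cat G_G$ provides an $\aleph_1$-compact generating set, and the exactness and small-coproduct-preservation of the tensor product descend from the $\infty$-categorical statement via \Cref{Prop:Indk-tensor}. For part~(1)(b), the inclusion $\inftyCellsep{G}\hookrightarrow \inftyCell{G}$ is fully faithful, exact, symmetric monoidal, and $\aleph_1$-right exact by \Cref{Prop:Indk} and \Cref{Prop:Indk-tensor}, so in particular preserves countable coproducts; all these properties pass directly to $\iota_G$. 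For part~(1)(c), when $G$ is finite \Cref{Cor:inftyCell-generators}(c) and (d) give that $\inftyCell{G}$ is compactly generated by the finite set $\cat G_G$, whose elements are all rigid since they are self-dual already in $\KK^G$. Hence $\bigCell{G}^c = \thick(\cat G_G)$ is contained in the subcategory of rigid objects (the latter being tensor-thick and containing $\cat G_G$). Conversely, any rigid $A$ in a tt-category with compact tensor unit is itself compact via the identification $\Hom(A,-)\simeq \Hom(\unit, A^\vee\otimes -)$, both factors on the right preserving coproducts. This yields compact $=$ rigid, and the equivalence $\Cell{G}^c\simeq \bigCell{G}^c$ follows because both sides identify with $\thick(\cat G_G)$ in their respective ambient categories and $\iota_G$ is fully faithful.

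For part~(2), I need only verify that the $\infty$-categorical restriction and induction functors from \Cref{Thm:main-inftyKK}(b)(c)(d) preserve the cell subcategories. Restriction along a general group morphism $\varphi\colon G\to H$ sends each orbit $\Cont_0(H/K)$ to the $G$-algebra of functions on the $G$-set $H/K$ (via $\varphi$), which decomposes as a coproduct of standard $G$-orbit algebras and hence lies in $\inftyCell{G}$. When $\varphi$ is a subgroup inclusion, the identification $\Ind^H_G\,\Cont_0(G/K)\simeq \Ind^H_G\Ind^G_K(\unit)\simeq \Ind^H_K(\unit) = \Cont_0(H/K)$ shows that induction restricts as well. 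The adjunctions, the equivalence $\Ind^H_G\simeq \varphi_*$ when $G/H$ is finite, and the identity composite condition in~(2)(b) all carry over verbatim from \Cref{Thm:main-inftyKK}(c)(d), yielding the claim after passage to homotopy categories.

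The main genuinely delicate point is the rigid-equals-compact coincidence in~(1)(c), which requires the standard but non-automatic implication ``rigid $\Rightarrow$ compact'' for a triangulated category with compact unit and coproducts-preserving tensor. Everything else is essentially formal: thanks to the preparatory work of Propositions \ref{Prop:Indk}--\ref{Prop:Indk-adj}, \ref{Prop:Indk-tensor}, and \ref{Prop:Ind-as-localization}, combined with the Bunke--Engel--Land enhancement encoded by \Cref{Thm:main-inftyKK}, the theorem reduces to invoking the appropriate universal property or functoriality of $\Ind_{\aleph_1}$ at each step.
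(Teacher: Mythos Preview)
Your proof is correct and follows essentially the same approach as the paper: define $\bigCell{G}:=\Ho(\inftyCell{G})$ and read off each property from the $\infty$-categorical results already established in \Cref{Cor:inftyCell-generators}, \Cref{Prop:Ind-as-localization}, and \Cref{Thm:main-inftyKK}. Your treatment is in fact slightly more explicit than the paper's in two places: you spell out the standard ``rigid $\Rightarrow$ compact'' argument (via $\Hom(A,-)\simeq \Hom(\unit, A^\vee\otimes -)$ and compactness of~$\unit$), which the paper leaves implicit, and you verify directly that restriction along a \emph{general} group morphism $\varphi$ preserves cell algebras (by decomposing the pulled-back $G$-set $H/K$ into orbits), whereas the paper's reference to \Cref{Cor:inftyCell-generators}(e) literally covers only subgroup inclusions.
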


\begin{proof}
For (1) we set $\bigCell{G}:= \Ho(\inftyCell{G})$ as planned.
Since the $\infty$-category $\inftyCell{G}$ is $\aleph_1$-compactly generated (\Cref{Cor:inftyCell-generators}(a)), the triangulated category $\bigCell{G}$ is $\aleph_1$-well generated by \Cref{Prop:Ind-as-localization}.
The properties of the tensor product induced on the homotopy category are immediate consequences of the tensor product on $\inftyCell{G}\subseteq \inftyKK{G}$ preserving small colimits in both variables (\Cref{Thm:main-inftyKK}(a)). 
The functor $\iota_G$ and its properties as in~(1)(b) are immediately obtained, via the identification in \Cref{Cor:inftyCell-generators}(b), by the tt-functor induced on homotopy categories from the inclusion $\inftyCellsep{G}\to \inftyCell{G}$.

By \Cref{Cor:inftyCell-generators}(d), when $G$ is finite $\inftyCell{G}$ is a compactly generated $\infty$-category, hence $\bigCell{G}$ is compactly generated as a triangulated category by \Cref{Lem:infty-vs-tri}(a); moreover the images under $\iota_G$ of the standard orbit algebras $\cat G_G$ provide a set of compact generators. 
Since $\cat G_G$ generates $\Cell{G}$ as a compactly${}_{\aleph_1}$ generated triangulated category, we conclude that $\iota_G$ restricts to an equivalence
$(\Cell{G})^c=\Thick{\cat G_G} \overset{\sim}{\to}\Thick{\iota_G (\cat G_G)}= (\bigCell{G})^c$.
Recall also that for $G$ finite each orbit algebra $\Cont(G/H)$ is rigid in the tt-category $\Cell{G}$, hence also in $\bigCell{G}$ since tensor functors preserve duals.
This proves part~(1)(c).

For part~(2), it suffices to appeal to \Cref{Cor:inftyCell-generators}(e).
\end{proof}

\section{Comparing big and countable stratifications}
\label{sec:app-comparison}%

In this appendix we show how to deduce countable stratification from the `usual' stratification in settings where both stratification statements  (via usual or countable Balmer--Favi supports) make sense. 
Note that our proof is purely tensor-triangulated, in particular it does not require any $\infty$-categorical models.

\begin{Prop}
\label{Prop:strat-restriction}
Let $\iota\colon \cat T \to \bigT$ be a functor of tensor-triangulated categories having the following properties:
\begin{enumerate}[\rm(1)]
\item $\bigT$ is rigidly-compactly generated (\Cref{Def:big-tt-cat}).
\item $\cat T$ is rigidly-compactly${}_{\aleph_1}$ generated (\Cref{Def:cbly-big-ttcat}). 
\item The functor $\iota$ is fully faithful, exact, symmetric monoidal and preserves all countable coproducts. 
\item The restriction of $\iota$ to rigid (\ie compact${}_{\aleph_1}$) objects of $\cat T$ and rigid (\ie compact) objects of $\bigT$ is an equivalence of tt-categories: $\iota\colon \cat T^c \overset{\sim}{\to} \bigT^c$.
\item The topological space $\Spc (\cat T^c) \cong \Spc (\bigT^c)$ is weakly noetherian.
\end{enumerate}
Then if the big tt-category $\bigT$ is stratified in the sense of \cite{BHS23}, it follows that $\cat T$ is stratified in the sense of \Cref{Cons:countable-strat}. 
More precisely, if $\bigT$ is stratified then we have a commutative diagram of bijections
\begin{equation} \label{eq:bijections-square}
\vcenter{
\xymatrix@C-5pt{
\{\textrm{localizing${}_{\aleph_1}$\! $\otimes$-ideals of }\cat T\}
 \ar[d]_\cong
 \ar@<2pt>[rr]^-{\tensLoc{\iota (-)}}&&  
 \{ \textrm{localizing $\otimes$-ideals of }\bigT \} 
  \ar[d]^\cong
 \ar@<2pt>[ll]^-{\iota^{-1}}  \\
\{\textrm{subsets of } \Spc(\cat T^c)\} \ar@{->}[rr]^{\Spc(\iota)^{-1}}_-\cong^-{} &&
  \{\textrm{subsets of } \Spc ( \bigT^c)\}
}
}
\end{equation}
where the vertical ones are the stratification bijections $\cat L \mapsto \supp \cat L$ via the Balmer--Favi supports and their countable version (\Cref{Cons:countable-strat}), and the bottom horizontal one is via the homeomorphism $\Spc(\iota)= \iota^{-1}\colon  \Spc ( \bigT^c) \overset{\sim}{\to} \Spc(\cat T^c)$.
\end{Prop}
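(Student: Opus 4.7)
The plan is to realize the top horizontal maps of the diagram as a pair of inverse bijections $\Phi\colon \cat L\mapsto \tensLoc{\iota(\cat L)}$ and $\Psi\colon \cat M\mapsto \iota^{-1}(\cat M)$, and to show separately that the square commutes; granted the right-vertical bijection from BHS stratification of $\bigT$ and the bottom-horizontal bijection from the homeomorphism $\Spc(\iota)$, this would immediately force the left-vertical map to be a bijection, which is exactly the countable stratification of $\cat T$ with the stated description. Well-definedness of $\Psi$ on $\otimes$-ideals is immediate from $\iota$ being exact, symmetric monoidal, and countable-coproduct-preserving.

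The first half of the argument is the commutativity of the square, which I would reduce to a key lemma: the functor $\iota$ carries the countable Balmer--Favi idempotents of $\cat T$ to the standard Balmer--Favi idempotents of $\bigT$. Precisely, for every Thomason subset $Y\subseteq \Spc(\cat T^c)=\Spc(\bigT^c)$ one has $\iota(e_Y^\cat T)\simeq e_Y^\bigT$ and $\iota(f_Y^\cat T)\simeq f_Y^\bigT$, and hence $\iota(g^\cat T(\cat P))\simeq g^\bigT(\Spc(\iota)^{-1}(\cat P))$ for every (weakly noetherian) point~$\cat P$. The proof applies the exact monoidal $\iota$ to the defining triangle $e_Y^\cat T\to \unit\to f_Y^\cat T$ in~$\cat T$; the resulting triangle in~$\bigT$ has its first term in $\Loc{\bigT^c_Y}$ (because $\iota$ preserves countable coproducts and identifies $\cat T^c_Y$ with $\bigT^c_Y$), and its third term satisfies $\iota(f_Y^\cat T)\otimes_\bigT \Loc{\bigT^c_Y}=0$, as can be checked on compact generators $\iota(B')\in \bigT^c_Y$ using that $\iota(f_Y^\cat T)\otimes_\bigT(-)$ preserves arbitrary coproducts in~$\bigT$ and that $\iota$ is faithful symmetric monoidal. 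Uniqueness of idempotent triangles with prescribed smashing kernel then identifies our triangle with the Balmer--Favi one in~$\bigT$. From this and faithfulness of~$\iota$, we obtain the support-compatibility formula $\supp A=\Spc(\iota)(\supp \iota A)$ for $A\in\cat T$, which extends to arbitrary collections and yields the commutativity of the square.

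The second half is the bijectivity of $\Phi$ and~$\Psi$. Here one first verifies that $\iota$ identifies $\cat T$ with the subcategory $\bigT^{\aleph_1}$ of $\aleph_1$-compact objects of~$\bigT$ in Neeman's sense: the image $\iota(\cat T)$ is an $\aleph_1$-localizing subcategory of~$\bigT$ containing $\bigT^c=\iota(\cat T^c)$, and hence contains $\Loc{\bigT^c}_{\aleph_1}=\bigT^{\aleph_1}$, while conversely $\iota(\cat T)=\iota(\Locc{\cat T^c})\subseteq \Locc{\bigT^c}=\bigT^{\aleph_1}$. One then invokes Neeman's classification theorem for well-generated triangulated categories (see e.g.\ \cite[Thm.\,7.2.1]{Krause10}) which, applied to the compactly generated~$\bigT$ with $\alpha=\aleph_1$, provides a bijection between $\aleph_1$-localizing subcategories of~$\bigT^{\aleph_1}$ and those localizing subcategories of~$\bigT$ that are generated by a set of $\aleph_1$-compact objects, via $\cat M\mapsto \cat M\cap \bigT^{\aleph_1}$ with inverse $\cat L\mapsto \Loc{\cat L}$; this restricts to $\otimes$-ideals since the localizing closure of a $\otimes$-ideal in~$\bigT$ is again a $\otimes$-ideal (tensor in~$\bigT$ preserving small coproducts in each variable). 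Finally BHS stratification of~$\bigT$ supplies the last ingredient: it gives $\cat M=\tensLoc{g^\bigT(\cat P):\cat P\in \supp \cat M}$ for every localizing $\otimes$-ideal $\cat M$, and by the key lemma each $g^\bigT(\cat P)=\iota(g^\cat T(\cat P))$ lies in $\iota(\cat T)=\bigT^{\aleph_1}$; hence every localizing $\otimes$-ideal of~$\bigT$ is generated by $\aleph_1$-compact objects, so Neeman's bijection restricts to the desired inverse pair $\Phi,\Psi$.

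The principal technical obstacle I foresee is the correct invocation of Neeman's classification theorem and the careful matching of cardinality-related notions (our compact${}_{\aleph_1}$ versus Neeman's $\aleph_1$-compact, the generating-cardinal hypothesis, etc.; compare \Cref{War:cardinals}). Without BHS stratification, Neeman's bijection would cover only part of the localizing $\otimes$-ideals of~$\bigT$; it is the combination of Neeman's theorem with the explicit Balmer--Favi description of ideals supplied by stratification, together with the idempotent-preservation lemma, that makes the argument go through.
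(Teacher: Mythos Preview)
Your proposal is correct and follows essentially the same approach as the paper: the idempotent-preservation lemma and resulting support-compatibility give commutativity of the square, while the combination of Neeman/Krause well-generated theory (identifying $\iota(\cat T)$ with $\bigT^{\aleph_1}$ and giving the bijection $\cat L\mapsto \Loc{\cat L}$, $\cat M\mapsto \cat M\cap\bigT^{\aleph_1}$) together with the BHS description $\cat M=\tensLoc{g(\cat P):\cat P\in\supp\cat M}$ establishes that $\Phi$ and $\Psi$ are mutually inverse. The paper organizes the second half as two separate lemmas ($\Psi\Phi=\Id$ via Neeman, $\Phi\Psi=\Id$ via stratification) rather than invoking the bijection form of the classification theorem, and it isolates the fact $\tensLoc{\iota\cat L}=\Loc{\iota\cat L}$ as a standalone lemma, but the substance is the same.
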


\begin{Rem}
By replacing $\cat T$ with its image $\iota (\cat T)$ in~$\bigT$, we could assume that $\iota$ and the induced homeomorphism $\iota^{-1}$ are identity maps.
Nonetheless, we will preserve both notations $\iota$ and $\iota^{-1}$ throughout the proof in order to better keep track of the two ambient categories $\cat T$ and~$\bigT$ for our various constructions.
\end{Rem}

\begin{Exa}
Suppose $\cat S$ is a rigidly-compactly generated tt-category such that its subcategory $\cat S^c$ of rigid-compacts admits a countable skeleton (one with countably many objects and arrows). 
Let $\cat T$ be the localizing${}_{\aleph_1}$ subcategory of $\cat S$ generated by~$\cat S^c$.
Then the inclusion functor $\iota\colon \cat T\to \cat S=:\bigT $ satisfies all the hypotheses of the proposition. 
Indeed, the only non-evident part of the claim is whether $\cat T(C,X)$ is countable for every $C\in \cat T^c$ and $X\in \cat T$ (in part~(2)), but this can be verified by an inductive argument starting with the countability assumption on~$\cat S^c= \cat T^c$.
\end{Exa}

\begin{proof}[Proof of \Cref{Prop:strat-restriction}]
By hypothesis $\bigT$ is stratified, \ie we have the right vertical bijection in the diagram~\eqref{eq:bijections-square}.
We also have the bottom bijection, as already noted in the statement.
To prove the proposition, we are first going to show that the square~\eqref{eq:bijections-square} (with the right-going top arrow) is commutative and then we will show that the top two arrows are mutually inverses; this will provide the claimed left vertical bijection. 

We also assumed that $\cat T$ is rigidly-compactly${}_{\aleph_1}$ generated.
In particular, for every tt-prime $\cat P\in \Spec (\bigT^c)$ we may construct the countable version  $g(\iota^{-1}\cat P) \in \cat T$ of the Balmer--Favi idempotent for the corresponding tt-prime $\iota^{-1}\cat P$ of~$\cat T^c$. 

We need to compare the countable and the ordinary Balmer--Favi supports:

\begin{Lem}
\label{Lem:isomorphic-g's}
We have $\iota (g (\iota^{-1}\cat P)) \cong g (\cat P)$ for every prime $\cat P \in \Spc (\bigT^c)$. 
\end{Lem}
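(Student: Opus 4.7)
The plan is to deduce the lemma from a stronger statement: for every Thomason subset $Y \subseteq \Spc(\cat T^c) \cong \Spc(\bigT^c)$, the tt-functor $\iota$ carries the countable idempotent triangle $e_Y^{\cat T} \to \unit \to f_Y^{\cat T}$ in $\cat T$ to the Balmer--Favi idempotent triangle $e_Y^{\bigT} \to \unit \to f_Y^{\bigT}$ in $\bigT$, compatibly under $\unit$. Granted this, choosing Thomason subsets $Y_1, Y_2$ with $Y_1 \cap Y_2^c = \{\iota^{-1}(\cat P)\}$ (equivalently $\{\cat P\}$, under the homeomorphism $\Spc(\iota)$) and using that $\iota$ is symmetric monoidal yields
\[
\iota(g(\iota^{-1}\cat P))
= \iota(e_{Y_1}^{\cat T} \otimes f_{Y_2}^{\cat T})
\cong \iota(e_{Y_1}^{\cat T}) \otimes \iota(f_{Y_2}^{\cat T})
\cong e_{Y_1}^{\bigT} \otimes f_{Y_2}^{\bigT}
= g(\cat P).
\]

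For the triangle identification I would invoke uniqueness of Bousfield localizations. The genuine idempotent triangle in $\bigT$ is characterized, up to unique isomorphism under $\unit$, by the two properties that $e_Y^{\bigT}$ lies in the smashing ideal $\Loc{\bigT^c_Y}$ and that $f_Y^{\bigT}$ is right-orthogonal to it. Applying the exact monoidal $\iota$ to the countable idempotent triangle produces a candidate triangle $\iota(e_Y^{\cat T}) \to \unit_\bigT \to \iota(f_Y^{\cat T})$ in $\bigT$, for which both properties must be verified. The first is immediate: since $e_Y^{\cat T} \in \Locc{\cat T^c_Y}$ by construction and $\iota$ is exact, preserves countable coproducts, and identifies $\cat T^c$ with $\bigT^c$, we obtain $\iota(e_Y^{\cat T}) \in \Locc{\bigT^c_Y} \subseteq \Loc{\bigT^c_Y}$. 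For the second, the class of $X \in \bigT$ with $\Hom_\bigT(X, \iota(f_Y^{\cat T})) = 0$ is always a localizing subcategory, so it suffices to check vanishing on the generating set $\bigT^c_Y = \iota(\cat T^c_Y)$; by fully faithfulness of $\iota$ this further reduces to the claim that $\Hom_\cat T(X', f_Y^{\cat T}) = 0$ for every $X' \in \cat T^c_Y$.

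The main point, therefore, is this countable locality statement: $f_Y^{\cat T}$ lies in the right-orthogonal to $\Locc{\cat T^c_Y}$ in $\cat T$. This is where the rigidly-compactly${}_{\aleph_1}$ generation of $\cat T$ is essential. By the countable analogue of Neeman's Bousfield localization theorem, available via the $\aleph_1$-version of Brown representability (see \cite[\S2.1]{DellAmbrogio10}), the $\aleph_1$-compactly generated tensor ideal $\Locc{\cat T^c_Y}$ determines a smashing Bousfield localization of~$\cat T$, and the countable idempotent triangle from Construction~\ref{Cons:countable-strat} is precisely the unit-to-localization triangle for this Bousfield localization applied to~$\unit$. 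In particular $f_Y^{\cat T}$ lies in the local subcategory $\bigl(\Locc{\cat T^c_Y}\bigr)^\perp$, which supplies the required vanishing and completes the proof. I expect the verification that the triangle of Construction~\ref{Cons:countable-strat} genuinely arises from a Bousfield localization to be the most delicate step, though it is morally standard in the rigidly-compactly${}_{\aleph_1}$ generated setting.
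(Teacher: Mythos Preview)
Your proposal is correct and follows essentially the same strategy as the paper: reduce to showing that $\iota$ carries the countable idempotent triangle for a Thomason subset $Y$ to the genuine idempotent triangle in~$\bigT$, then conclude by tensoring. The only difference is in how the identification of triangles is verified. The paper matches the smashing subcategories directly, showing $\iota(e_Y)\otimes \bigT = \Loc{\bigT^c_Y}$ by a two-inclusion argument that stays entirely on the $e_Y$ side; you instead verify the (acyclic, local) pair condition, which obliges you to check that $f_Y^{\cat T}$ lies in $\bigl(\Locc{\cat T^c_Y}\bigr)^\perp$ and hence to invoke the countable Bousfield localization machinery explicitly. Both routes rest on the same uniqueness principle for idempotent triangles and the same underlying $\aleph_1$-Brown representability, so the difference is cosmetic; the paper's version is marginally slicker in that it never needs the orthogonality statement for~$f_Y$.
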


\begin{proof}
Consider the idempotent triangle $e_Y \to \unit \to f_Y\to \Sigma e_Y$ of $\cat T$ associated to a Thomason subset $Y \subseteq \Spc (\cat T^c)$. 
After applying the tt-functor $\iota$ we obtain an idempotent triangle 
\[
\iota e_Y \longrightarrow \unit \longrightarrow \iota f_Y \longrightarrow \Sigma \iota e_Y
\]
in~$\bigT$.
On the other hand, we also have the idempotent triangle 
\[
e_{\tilde Y} \longrightarrow \unit \longrightarrow f_{\tilde Y} \longrightarrow \Sigma e_{\tilde Y}
\]
of $\bigT$ associated with the Thomason subset $\tilde Y :=\Spc(\iota)^{-1}(Y)$ of $\Spc (\bigT^c)$.

We claim that these two triangles are isomorphic.
For this, it will suffice to show that the (full) images of the two left idempotents agree,
\[ 
\iota e_Y \otimes \bigT = e_{\tilde Y} \otimes \bigT,
\]
 because the latter localizing subcategories of $\bigT$ determine the idempotent triangles up to a (unique) isomorphism.
By construction we have $e_{Y}\otimes \cat T= \Locc{\cat C_Y}$ in $\cat T$ and $e_{\tilde Y} \otimes \bigT = \Loc{\cat C_{\tilde Y}}$ in~$\bigT$, where $\cat C_Y \subseteq \cat T^c$ and $\cat C_{\tilde Y} \subseteq \bigT^c$ are the thick tensor ideals of compacts supported on~$Y$ and~$\tilde Y$, respectively.
Thus we need to show that 
\[
\iota e_Y \otimes \bigT = \Loc{ \cat C_{\tilde Y}} \quad \textrm{ in }\bigT.
\]
Note that $\iota (\cat C_Y) = \cat C_{\tilde Y}$ since $\iota$ identifies compacts by hypothesis~(4).

To prove the inclusion~$\subseteq$, note that $\iota e_Y \in \iota (\Locc{\cat C_Y}) \subseteq \Loc {\iota \cat C_Y} = \Loc {\cat C_{\tilde Y}}$ and thus, since the latter is a localizing tensor ideal, also $\iota e_Y \otimes \bigT \subseteq \Loc {\cat C_{\tilde Y}}$.

For the reverse inclusion~$\supseteq$, use $\iota (\cat C_Y) = \cat C_{\tilde Y}$ again and then apply the coproducts preserving tt-functor $\iota$ to the equality $\Locc{\cat C_Y} = e_{Y} \otimes \cat T$ to get the inclusion
\[
\cat C_{\tilde Y}
\subseteq \iota (\Locc{\cat C_Y}) 
= \iota e_Y \otimes \iota (\cat T) 
\subseteq \iota e_Y \otimes \bigT 
\]
and therefore, since the latter is a localizing subcategory, also 
$
\Loc{\cat C_{\tilde Y}} 
\subseteq \iota e_Y \otimes \bigT^c .
$
This proves the claimed equality of localizing subcategories and therefore of the two idempotent triangles.

Now to prove the lemma, choose Thomason subsets $Y_1,Y_2$ in $\Spc( \cat T^c)$ such that $\{\iota^{-1} \cat P\} = Y_1 \cap Y_2^c$ and deduce from the above observation that 
\[ 
\iota ( g(\iota^{-1} \cat P) )
= \iota ( e_{Y_1} \otimes f_{Y_2} ) 
= \iota (e_{Y_1}) \otimes \iota (f_{Y_2}) 
\cong e_{\tilde Y_1} \otimes f_{\tilde Y_2}
= g(\cat P)
\]
as claimed, since $\{\cat P\} = \tilde Y_1 \cap \tilde Y_2^c$ (with notations $\tilde Y_i = \Spc(\iota)^{-1}(Y_i) $ as before).
\end{proof}

\begin{Cor}
\label{Cor:comparison-of-supports}
$\supp (\iota A)  = \Spc(\iota)^{-1} (\supp (A))$ for every object $A\in \cat T$.
\end{Cor}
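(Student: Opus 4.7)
The strategy is to unwind the definitions of the two support theories and use the previous lemma together with the monoidality and fully-faithfulness of~$\iota$. Recall that
\[
\supp(\iota A) = \{\cat P \in \Spc(\bigT^c) \mid g(\cat P) \otimes \iota A \not\cong 0\}
\quad \text{and} \quad
\supp(A) = \{\cat Q \in \Spc(\cat T^c) \mid g(\cat Q) \otimes A \not\cong 0\},
\]
and that $\Spc(\iota)$ is given by $\cat P \mapsto \iota^{-1}(\cat P)$. So I would fix an arbitrary tt-prime $\cat P \in \Spc(\bigT^c)$ and show the equivalence
\[
\cat P \in \supp(\iota A) \iff \iota^{-1}(\cat P) \in \supp(A).
\]

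For this, by \Cref{Lem:isomorphic-g's} we have an isomorphism $\iota(g(\iota^{-1}\cat P)) \cong g(\cat P)$ in~$\bigT$. Combining this with the fact that $\iota$ is a symmetric monoidal functor, I would compute
\[
g(\cat P) \otimes \iota A \;\cong\; \iota(g(\iota^{-1}\cat P)) \otimes \iota A \;\cong\; \iota\bigl( g(\iota^{-1}\cat P) \otimes A \bigr) ,
\]
reducing the comparison of supports to whether this object vanishes.

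Finally, since $\iota\colon \cat T \to \bigT$ is fully faithful and exact by hypothesis~(3), it is conservative: an object $X \in \cat T$ satisfies $\iota X \cong 0$ in $\bigT$ if and only if $X \cong 0$ in $\cat T$ (because $\cat T(X,X) \cong \bigT(\iota X, \iota X)$ and $X \cong 0$ iff $\id_X = 0$). Applying this to $X = g(\iota^{-1}\cat P) \otimes A$ yields
\[
g(\cat P) \otimes \iota A \not\cong 0 \iff g(\iota^{-1}\cat P) \otimes A \not\cong 0,
\]
which is precisely the desired equivalence. There is no significant obstacle here; the entire content is packaged into the preceding lemma, and this corollary is a direct computation.
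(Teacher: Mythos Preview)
Your proof is correct and follows essentially the same approach as the paper: fix a prime $\cat P$, use \Cref{Lem:isomorphic-g's} and monoidality of $\iota$ to identify $g(\cat P)\otimes \iota A$ with $\iota(g(\iota^{-1}\cat P)\otimes A)$, then invoke conservativity of the fully faithful~$\iota$. Your explicit justification of conservativity from full faithfulness is a helpful detail the paper leaves implicit.
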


\begin{proof}
For every prime $\cat P\in \Spc (\bigT^c)$ and object $A\in \cat T$, we have
$\cat P\in \supp (\iota A)$ 
$\Leftrightarrow$
$0\neq \iota(A)\otimes g(\cat P) \cong \iota (A) \otimes \iota g(\iota^{-1}\cat P) = \iota (A \otimes g(\iota^{-1} \cat P)) $ (by \Cref{Lem:isomorphic-g's}) 
$\Leftrightarrow$
$A \otimes g(\iota^{-1}\cat P) \neq 0$ (since $\iota$ is conservative)
$\Leftrightarrow$
$\iota^{-1}\cat P\in \supp (A)$.
\end{proof}

\begin{Lem}
\label{Lem:square-commutes}
The square 
$\vcenter{ \xymatrix@1@C=12pt@R=12pt{ \bullet \ar[r] \ar[d] & \bullet \ar[d] \\  \bullet \ar[r] & \bullet }}$ 
in \eqref{eq:bijections-square} is commutative.
\end{Lem}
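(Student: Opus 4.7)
The goal is to verify the commutativity
\[
\supp\bigl(\tensLoc{\iota(\cat L)}\bigr) \;=\; \Spc(\iota)^{-1}\bigl(\supp(\cat L)\bigr)
\]
for every localizing${}_{\aleph_1}$ tensor ideal $\cat L \subseteq \cat T$. My plan is to reduce this to the single-object comparison already supplied by \Cref{Cor:comparison-of-supports}, together with the standard closure properties of Balmer--Favi support.

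First I will unfold the right-hand side: using \Cref{Cor:comparison-of-supports},
\[
\Spc(\iota)^{-1}\bigl(\supp(\cat L)\bigr)
\;=\; \bigcup_{A\in \cat L} \Spc(\iota)^{-1}\bigl(\supp(A)\bigr)
\;=\; \bigcup_{A\in \cat L} \supp\bigl(\iota A\bigr).
\]
Next I will unfold the left-hand side by definition: $\supp(\tensLoc{\iota(\cat L)}) = \bigcup_{B\in \tensLoc{\iota(\cat L)}} \supp(B)$. The inclusion $\supseteq$ is then trivial from $\iota(\cat L)\subseteq \tensLoc{\iota(\cat L)}$.

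For the harder inclusion $\subseteq$, I will consider the subset
\[
S := \bigcup_{A\in \cat L} \supp(\iota A) \;\subseteq\; \Spc(\bigT^c)
\]
and the full subcategory
\[
\cat J := \{\,B\in \bigT \mid \supp(B)\subseteq S\,\}.
\]
I will verify that $\cat J$ is a localizing tensor ideal of $\bigT$; this is immediate from the five basic axioms satisfied by any support theory (\cite[Def.\,7.1]{BHS23}), namely that Balmer--Favi support is stable under $\Sigma$, closed under triangles in the sense $\supp(B)\subseteq \supp(A)\cup \supp(C)$, closed under coproducts, and satisfies $\supp(A\otimes B)\subseteq \supp(A)\cap \supp(B)$. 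Since $\iota(\cat L)\subseteq \cat J$ by construction of~$S$, it follows that $\tensLoc{\iota(\cat L)}\subseteq \cat J$, which delivers the desired inclusion.

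I do not anticipate a real obstacle here: the argument is essentially a bookkeeping exercise that combines \Cref{Cor:comparison-of-supports} with the formal closure properties of Balmer--Favi supports. The only point deserving slight care is that the generating class $\iota(\cat L)$ already consists of objects whose (ordinary) support is controlled, so the localizing tensor ideal it generates in $\bigT$ cannot acquire any new points of support; making this precise is exactly what the auxiliary subcategory $\cat J$ accomplishes.
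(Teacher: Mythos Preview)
Your proof is correct and follows the same route as the paper: both reduce to \Cref{Cor:comparison-of-supports} and then use the closure properties of Balmer--Favi support to pass from $\supp(\iota\cat L)$ to $\supp(\tensLoc{\iota\cat L})$. The only difference is cosmetic: where the paper dispatches the equality $\supp(\iota\cat L)=\supp(\tensLoc{\iota\cat L})$ in one line by citing \cite[Rem.\,2.12]{BHS23}, you spell out the argument via the auxiliary ideal~$\cat J$, which is exactly the content of that remark.
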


\begin{proof}
Let $\cat L $ be an $\aleph_1$-localizing tensor ideal of~$\cat T$. 
Mapping it down-then-right in \eqref{eq:bijections-square} yields $\Spc(\iota)^{-1}(\supp (\cat L))$, the image in $\Spc (\bigT^c)$ of its countable support $\supp (\cat L) $.
Mapping it right-then-down yields $\supp (\tensLoc{\iota \cat L})$, the Balmer--Favi support of the localizing tensor-ideal generated by $\cat L$ in~$\bigT$. 
But \Cref{Cor:comparison-of-supports} implies that the two are equal:
\begin{align*}
\Spc(\iota)^{-1}  (\supp (\cat L))  
&= \bigcup_{A\in \cat L} \Spc(\iota)^{-1} (\supp (A))\overset{\eqref{Cor:comparison-of-supports}}{=} \bigcup_{A\in \cat L}  \supp (\iota A)\\
&= \supp (\iota \cat L)
= \supp (\tensLoc{\iota \cat L})
\end{align*}
where the last equality is an immediate consequence of the elementary properties of the Balmer--Favi support (see \cite[Rem.\,2.12]{BHS23}).
\end{proof}

\begin{Lem}
\label{Lem:technical-tensor-generation}
$\tensLoc{ \cat{F} } = \Loc{ \cat{F} \otimes \iota (\cat T^c) }$ for  any class of objects $\cat{F} \subseteq \bigT$.
\end{Lem}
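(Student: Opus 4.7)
The plan is to prove the two inclusions separately, using as the only real input that $\iota(\cat T^c) = \bigT^c$ (hypothesis~(4)) and that $\bigT$ is compactly generated by its rigid-compact objects (hypothesis~(1)).

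For the inclusion $\Loc{\cat F \otimes \iota(\cat T^c)} \subseteq \tensLoc{\cat F}$, I would simply observe that $\tensLoc{\cat F}$ is a localizing $\otimes$-ideal containing $\cat F$, hence it contains $F \otimes \iota(C)$ for every $F\in \cat F$ and $C \in \cat T^c$, and then take the localizing closure on both sides.

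For the reverse and more substantial inclusion, I first note that $\Loc{\cat F\otimes \iota(\cat T^c)}$ already contains $\cat F$, because $\unit_{\bigT} \simeq \iota(\unit_\cat T) \in \iota(\cat T^c)$ gives $F \simeq F \otimes \unit \in \cat F\otimes \iota(\cat T^c)$. So it suffices to verify that $\cat L := \Loc{\cat F \otimes \iota(\cat T^c)}$ is a $\otimes$-ideal of~$\bigT$, and then apply the definition of $\tensLoc{\cat F}$ as the smallest such ideal containing~$\cat F$.

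To check the ideal property, the key move is a standard two-step Brown-style argument. First, for fixed $Y\in \cat L$, the class $\{X \in \bigT \mid X\otimes Y \in \cat L\}$ is localizing (since $\otimes$ preserves small coproducts and triangles in both variables by hypothesis~(1)), so it suffices to show this class contains $\iota(\cat T^c)$; for then, since $\bigT = \Loc{\bigT^c} = \Loc{\iota(\cat T^c)}$ by hypothesis~(4), it automatically contains all of~$\bigT$. So fix $C \in \cat T^c$, and consider dually $\{Y \in \bigT \mid \iota(C)\otimes Y \in \cat L\}$, which is again localizing. This class contains every generator $F \otimes \iota(D)$ of~$\cat L$, because
\[
\iota(C) \otimes (F \otimes \iota(D)) \simeq F \otimes \iota(C \otimes D)
\]
and $C \otimes D \in \cat T^c$ since $\cat T^c$ is a tensor subcategory of~$\cat T$. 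Hence the class contains all of~$\cat L$, concluding the second step and thereby the proof.

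No step looks like a serious obstacle; the only thing to take some care with is the nesting of the two localizing-subcategory bootstrap arguments, and the invocation of hypothesis~(4) to identify rigid-compact generators of~$\bigT$ with the image of $\iota$, which is what allows the inductive closure arguments to terminate.
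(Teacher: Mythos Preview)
Your proof is correct and uses the same ingredients as the paper (compact generation, closure of $\cat T^c$ under~$\otimes$, and a bootstrap argument to verify the tensor-ideal property). The only organizational difference is that the paper factors through an intermediate step: it proves the chain
\[
\tensLoc{\cat F} = \Loc{\cat F \otimes \bigT} = \Loc{\cat F \otimes \bigT^c} = \Loc{\cat F \otimes \iota(\cat T^c)},
\]
so that the tensor-ideal check happens for $\Loc{\cat F \otimes \bigT}$, where it requires only a \emph{single} localizing-closure argument (because $(\cat F \otimes \bigT)\otimes X = \cat F \otimes \bigT$ absorbs $X$ immediately); the reduction from $\bigT$ to $\bigT^c$ is then a separate, easy step. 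Your direct approach collapses these two steps into one, at the cost of the nested two-variable bootstrap you correctly flagged. Either decomposition is fine; the paper's is marginally slicker, yours marginally more direct.
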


\begin{proof}
We are going to verify the following chain of equalities:
\[
\tensLoc{ \cat{F} }
= \Loc{ \cat{F} \otimes \bigT }
= \Loc{ \cat{F} \otimes \bigT^c}
= \Loc{ \cat{F} \otimes \iota (\cat T^c)}.
\]
The last one is obvious since we have $\iota (\cat T^c)= \bigT^c$ by hypothesis~(4).

The middle one follows from the hypothesis that $\bigT$ is compactly generated, in part~(1), which in particular implies that $\bigT = \Loc{\bigT^c}$ and therefore $\cat{F} \otimes \bigT = \cat{F} \otimes \Loc{\bigT^c} \subseteq \Loc{ \cat{F} \otimes \bigT^c}$; whence we get $\Loc{\cat{F} \otimes \bigT} \subseteq \Loc{ \cat{F} \otimes \bigT^c}$, and the opposite inclusion is obvious.

For the first equality, the inclusion $\supseteq$ is obvious. 
To see~$\subseteq$, since $\Loc{ \cat{F} \otimes \bigT }$ is a localizing subcategory of $\bigT$ containing~$\cat{F}$, it will suffice to show that it is also a tensor ideal. 
To this end, fix an arbitrary object $X\in \bigT$ and consider the full subcategory $\cat{S}_X:= \{ Y  \in \bigT \mid Y \otimes X \in \Loc{ \cat{F} \otimes \bigT } \}$; we must show that $\cat{S}_X$ contains $\Loc{ \cat{F} \otimes \bigT }$. 
Since the functor $-\otimes X$ is exact and commutes with coproducts, $\cat{S}_X$ is localizing.
Since $\cat{F} \otimes \bigT \otimes X = \cat{F} \otimes \bigT \subseteq \Loc{\cat{F} \otimes \bigT}$, it contains $\cat{F} \otimes \bigT$. Therefore $\cat{S}_X$ contains $\Loc{ \cat{F} \otimes \bigT }$, as claimed.
\end{proof}

\begin{Lem}
\label{Lem:left-right=id}
$\cat U =  \tensLoc{\iota(\iota^{-1} \cat U)}$ for every localizing $\otimes$-ideal $\cat U \subseteq \bigT$. 
\end{Lem}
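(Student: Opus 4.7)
The inclusion $\tensLoc{\iota(\iota^{-1}\cat U)} \subseteq \cat U$ is immediate from the definition of $\iota^{-1}\cat U$ together with the fact that $\cat U$ is itself a localizing tensor ideal of $\bigT$. So the substance of the lemma lies entirely in the reverse inclusion $\cat U \subseteq \tensLoc{\iota(\iota^{-1}\cat U)}$.

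The plan is to exhibit a generating family for $\cat U$ (as a localizing tensor ideal) whose members already lie in $\iota(\iota^{-1}\cat U)$, by invoking the stratification of $\bigT$. Concretely, since by standing hypothesis $\bigT$ is stratified and $\Spc(\bigT^c)$ is weakly noetherian, the stratification bijection applied to $\cat U$ yields
\[
\cat U \;=\; \tensLoc{\, g(\cat P) \,\bigm|\, \cat P \in \supp(\cat U) \,} .
\]

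It then remains to verify that each distinguished generator $g(\cat P)$ appearing above actually belongs to $\iota(\iota^{-1}\cat U)$. This is exactly what \Cref{Lem:isomorphic-g's} makes possible: it provides a canonical isomorphism $g(\cat P) \cong \iota\bigl(g(\iota^{-1}\cat P)\bigr)$, where the countable Balmer--Favi idempotent $g(\iota^{-1}\cat P) \in \cat T$ is well-defined thanks to hypothesis~(5). Since $g(\cat P)$ is one of the chosen generators, it lies in $\cat U$, so $g(\iota^{-1}\cat P) \in \iota^{-1}\cat U$ by the fully faithful functor $\iota$, and therefore $g(\cat P) \in \iota(\iota^{-1}\cat U)$. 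Feeding this back into the displayed formula gives the desired inclusion $\cat U \subseteq \tensLoc{\iota(\iota^{-1}\cat U)}$.

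The main point (and the only real obstacle) is the production of a generating family for $\cat U$ drawn from $\iota(\cat T)$: the purely formal \Cref{Lem:technical-tensor-generation}, which one might hope to use directly, does not obviously suffice, because a typical object $X \in \cat U$ need not be approximable inside $\cat U$ by compact objects coming from $\iota(\cat T^c)$. Stratification is precisely what circumvents this issue by supplying the canonical generators $g(\cat P)$, and \Cref{Lem:isomorphic-g's} is what places each of them back into $\iota(\cat T)$.
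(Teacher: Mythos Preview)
Your proof is correct and follows essentially the same approach as the paper: use stratification to write $\cat U = \tensLoc{g(\cat P)\mid \cat P\in\supp(\cat U)}$, then invoke \Cref{Lem:isomorphic-g's} to recognize each $g(\cat P)$ as lying in $\iota(\iota^{-1}\cat U)$. The final paragraph of commentary is extra but the mathematical argument is the same.
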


\begin{proof}
By stratification (\Cref{Thm:strat-minimality}\,(b)), we have the equality
\[
\cat U = \tensLoc{ g(\cat P) \mid \cat P \in \supp (\cat U) }
\]
for any localizing $\otimes$-ideal $\cat U$ of~$\bigT$, 
hence
\[
\cat U = \tensLoc{ \iota g(\iota^{-1} \cat P) \mid \cat P \in \supp (\cat U) }
\]
by \Cref{Lem:isomorphic-g's}. 
Since the objects $\iota g (\iota^{-1} \cat P)$ come from~$\cat T$ by construction and belong to~$\cat U$ by the above, this immediately implies that $\cat U \subseteq \tensLoc{\iota (\iota^{-1}) \cat U}$.
The reverse inclusion is clear since $\cat U$ is a localizing $\otimes$-ideal containing~$\iota (\iota^{-1} \cat U)$.
\end{proof}

\begin{Lem}
\label{Lem:right-left=id}
$\cat L = \iota^{-1} (\tensLoc{\iota \cat L})$ for every localizing tensor-ideal $\cat L \subseteq \cat T$. 
\end{Lem}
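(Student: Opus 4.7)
\emph{Plan.} The inclusion $\cat L \subseteq \iota^{-1}(\tensLoc{\iota\cat L})$ is immediate, since by construction $\iota(\cat L) \subseteq \tensLoc{\iota\cat L}$. The work lies in the reverse inclusion.

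The first step is to simplify the localizing tensor ideal. Since $\cat L$ is a tensor ideal of $\cat T$ and $\iota$ is symmetric monoidal, we have $\iota(\cat L) \otimes \iota(\cat T^c) \subseteq \iota(\cat L)$; combined with Lemma~\ref{Lem:technical-tensor-generation} applied to $\cat F := \iota(\cat L)$, this yields $\tensLoc{\iota \cat L} = \Loc{\iota(\cat L)\otimes \iota(\cat T^c)} = \Loc{\iota(\cat L)}$ in $\bigT$.

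Next, I plan to identify $\iota(\cat T)$ with the subcategory $\bigT^{(\aleph_1)}$ of $\aleph_1$-compact objects of the compactly generated triangulated category $\bigT$ (in the sense of well-generated triangulated categories \cite{Krause10}). Indeed, by hypothesis~(2) we have $\cat T = \Locc{\cat T^c}$ in $\cat T$, and since $\iota$ is fully faithful, exact and preserves countable coproducts (hypothesis~(3)), its essential image in $\bigT$ equals the $\aleph_1$-localizing subcategory $\Locc{\iota(\cat T^c)} = \Locc{\bigT^c}$ generated by the compact objects of $\bigT$ (using hypothesis~(4)). By \cite[Thm.\,7.2.1]{Krause10}, in a compactly generated triangulated category the $\aleph_1$-localizing closure of the compact objects coincides with the $\aleph_1$-compact objects; hence $\iota(\cat T) = \bigT^{(\aleph_1)}$.

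With these preparations, let $Y \in \cat T$ be such that $\iota Y \in \Loc{\iota\cat L}$. Then $\iota Y$ is an $\aleph_1$-compact object of $\bigT$ lying in the localizing subcategory generated by the class of $\aleph_1$-compact objects $\iota(\cat L) \subseteq \bigT^{(\aleph_1)}$. I would invoke \cite[Thm.\,7.2.1]{Krause10} once more, now applied to the localizing subcategory $\Loc{\iota\cat L}$ of $\bigT$: this theorem identifies its $\aleph_1$-compact objects with the $\aleph_1$-localizing subcategory of $\bigT$ generated by $\iota(\cat L)$. Therefore $\iota Y \in \Locc{\iota(\cat L)}$, and since $\iota$ preserves triangles and countable coproducts, $\Locc{\iota(\cat L)} = \iota(\Locc{\cat L}) = \iota(\cat L)$, the last equality using that $\cat L$ is already $\aleph_1$-localizing. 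Full faithfulness of $\iota$ then forces $Y \in \cat L$, completing the proof.

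The main technical obstacle I anticipate is ensuring the two applications of Krause's theorem are valid at the required level of generality; in particular, $\iota(\cat L)$ must be replaced by a set of isomorphism class representatives to be used as a generating set, which is legitimate because $\bigT^{(\aleph_1)}$ is essentially small (as a full subcategory of an $\aleph_1$-well-generated category admitting a set of $\aleph_1$-compact generators, namely $\bigT^c$).
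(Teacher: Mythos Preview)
Your proof is correct and follows essentially the same route as the paper's: reduce $\tensLoc{\iota\cat L}$ to $\Loc{\iota\cat L}$ via \Cref{Lem:technical-tensor-generation}, identify $\iota(\cat T)$ with the $\aleph_1$-compact objects of~$\bigT$, and then invoke Krause's results \cite[Thm.\,7.2.1, Cor.\,7.2.2]{Krause10} to conclude that $\Loc{\iota\cat L}\cap\iota(\cat T)=\iota(\cat L)$. The only cosmetic difference is that the paper separates the two inputs from Krause (the intersection formula $(\Loc{\iota\cat L})^{\aleph_1}=\Loc{\iota\cat L}\cap\bigT^{\aleph_1}$ from Thm.\,7.2.1 and the generation statement from Cor.\,7.2.2), whereas you bundle them together; your closing remark about replacing $\iota(\cat L)$ by a set of representatives is exactly the set-theoretic care needed.
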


\begin{proof}
The inclusion $\cat L \subseteq \iota^{-1}( \tensLoc{\iota \cat L})$ is immediate, since $\iota \cat L \subseteq \tensLoc{\iota \cat L}$.

It remains to show $\iota^{-1} (\tensLoc{\iota \cat L}) \subseteq \cat L$.
By \Cref{Lem:technical-tensor-generation}, we have $\tensLoc{\iota \cat L} = \Loc{ \iota \cat L \otimes \iota \cat T^c} = \Loc{ \iota ( \cat L \otimes \cat T^c)} = \Loc { \iota \cat L }$, the latter equality because $\cat L$ is a tensor ideal of~$\cat T$.
We are thus reduced to proving that $\iota^{-1} (\Loc{\iota \cat L}) \subseteq \cat L$.

The latter claim is non obvious. In order to prove it, we will appeal to Neeman's theory of well-generated triangulated categories (\cite{Neeman10}), as presented in~\cite{Krause10}. For the remainder of the proof we will adopt the terminology of \emph{loc.\,cit.}

By hypothesis, the triangulated category $\bigT$ is compactly generated, that is $\aleph_0$-well generated. Since compact (\ie $\aleph_0$-compact) objects are also $\aleph_1$-compact, $\bigT$ is also $\aleph_1$-well generated with $\iota \cat T^c = \bigT^c$ as an (essentially) small set of $\aleph_1$-compact generators.
Thus by \cite[Cor.\,7.2.2]{Krause10} its full subcategory $(\bigT)^{\aleph_1}$ of all $\aleph_1$-compact objects in~$\bigT$ is precisely the $\aleph_1$-localizing ($=$~localizing${}_{\aleph_1}$) subcategory generated by~$\iota \cat T^c$, that is, $\iota \cat T$.
Incidentally, it also follows that the subcategory $\iota \cat L \subseteq \iota \cat T$ consists of $\aleph_1$-compact objects. 
Therefore by \cite[Thm.\,7.2.1]{Krause10}, the localizing subcategory $\Loc{\iota \cat L} \subseteq \bigT$ is itself an $\aleph_1$-well generated triangulated category, and its full subcategory $(\Loc{\iota \cat L})^{\aleph_1}$ of $\aleph_1$-compact objects equals $\Loc{\iota \cat L} \cap (\bigT)^{\aleph_1}$, that is, by the above discussion, $\Loc{\iota \cat L} \cap \iota \cat T$.

But another application of \cite[Cor.\,7.2.2]{Krause10} implies that $(\Loc{\iota \cat L})^{\aleph_1}$ is also the $\aleph_1$-localizing subcategory generated by~$\iota \cat L$, which is $\iota \cat L$ itself.
We conclude that $\Loc{\iota \cat L} \cap \iota \cat T = \iota \cat L$, as claimed.
\end{proof}

We can now conclude the proof of \Cref{Prop:strat-restriction}.
By Lemmas \ref{Lem:right-left=id} and~\ref{Lem:left-right=id}, the top horizontal arrows in the square \eqref{eq:bijections-square} are mutually inverse bijections. 
By \Cref{Lem:square-commutes} the square is commutative, and we deduce that the left vertical arrow is a bijection, since the other three are. In other words, $\cat T$ is countably stratified.
\end{proof}

\bibliographystyle{alpha}

\begin{thebibliography}{BCH{\etalchar{+}}23}

\bibitem[AK18]{AranoKubota18}
Yuki Arano and Yosuke Kubota.
\newblock A categorical perspective on the {A}tiyah-{S}egal completion theorem
  in {KK}-theory.
\newblock {\em J. Noncommut. Geom.}, 12(2):779--821, 2018.

\bibitem[Bal05]{Balmer05a}
Paul Balmer.
\newblock The spectrum of prime ideals in tensor triangulated categories.
\newblock {\em J. Reine Angew. Math.}, 588:149--168, 2005.

\bibitem[Bal10a]{Balmer10b}
Paul Balmer.
\newblock Spectra, spectra, spectra -- tensor triangular spectra versus
  {Z}ariski spectra of endomorphism rings.
\newblock {\em Algebr. Geom. Topol.}, 10(3):1521--1563, 2010.

\bibitem[Bal10b]{BalmerICM}
Paul Balmer.
\newblock Tensor triangular geometry.
\newblock In {\em International {C}ongress of {M}athematicians, Hyderabad
  (2010), {V}ol. {II}}, pages 85--112. Hindustan Book Agency, 2010.

\bibitem[Bal14]{Balmer14}
Paul Balmer.
\newblock Splitting tower and degree of tt-rings.
\newblock {\em Algebra Number Theory}, 8(3):767--779, 2014.

\bibitem[Bal16]{Balmer16}
Paul Balmer.
\newblock Separable extensions in tensor-triangular geometry and generalized
  {Q}uillen stratification.
\newblock {\em Ann. Sci. \'Ec. Norm. Sup\'er. (4)}, 49(4):907--925, 2016.

\bibitem[Bal18]{Balmer18}
Paul Balmer.
\newblock On the surjectivity of the map of spectra associated to a
  tensor-triangulated functor.
\newblock {\em Bull. Lond. Math. Soc.}, 50(3):487--495, 2018.

\bibitem[Bal20]{Balmer20}
Paul Balmer.
\newblock A guide to tensor-triangular classification.
\newblock In {\em Handbook of homotopy theory}, CRC Press/Chapman Hall Handb.
  Math. Ser., pages 145--162. CRC Press, Boca Raton, FL, [2020] \copyright
  2020.

\bibitem[BBB24]{BBB24pp}
Scott Balchin, David Barnes, and Tobias Barthel.
\newblock Profinite equivariant spectra and their tensor-triangular geometry.
\newblock Preprint, \url{https://arxiv.org/abs/2401.01878}, 2024.

\bibitem[BCH{\etalchar{+}}23]{BCHNP23pp}
Tobias Barthel, Natalia Castellana, Drew Heard, Niko Naumann, and Luca Pol.
\newblock Quillen stratification in equivariant homotopy theory.
\newblock \url{https://arxiv.org/abs/2301.02212}, 2023.

\bibitem[BCHS24]{BCHS24pp}
Tobias Barthel, Natalia Castellana, Drew Heard, and Beren Sanders.
\newblock Cosupport in tensor triangular geometry.
\newblock Preprint, \url{https://arxiv.org/abs/2303.13480}, 2024.

\bibitem[BDM24]{BDM24pp}
Serge Bouc, Ivo Dell'Ambrogio, and Rub\'en Martos.
\newblock A generalized {G}reenlees--{M}ay splitting principle.
\newblock Preprint, 2024.

\bibitem[BDS15]{BDS15}
Paul Balmer, Ivo Dell'Ambrogio, and Beren Sanders.
\newblock Restriction to finite-index subgroups as \'etale extensions in
  topology, {KK}-theory and geometry.
\newblock {\em Algebr. Geom. Topol.}, 15(5):3025--3047, 2015.

\bibitem[BDS16]{BDS16}
Paul Balmer, Ivo Dell'Ambrogio, and Beren Sanders.
\newblock Grothendieck-{N}eeman duality and the {W}irthm\"{u}ller isomorphism.
\newblock {\em Compos. Math.}, 152(8):1740--1776, 2016.

\bibitem[BEL23]{BEL23pp}
Ulrich Bunke, Alexander Engel, and Markus Land.
\newblock A stable $\infty$-category for equivariant {KK}-theory.
\newblock Preprint, \url{https://arxiv.org/abs/2102.13372}, 2023.

\bibitem[BF11]{BalmerFavi11}
Paul Balmer and Giordano Favi.
\newblock Generalized tensor idempotents and the telescope conjecture.
\newblock {\em Proc. Lond. Math. Soc. (3)}, 102(6):1161--1185, 2011.

\bibitem[BHS23]{BHS23}
Tobias Barthel, Drew Heard, and Beren Sanders.
\newblock Stratification in tensor triangular geometry with applications to
  spectral {M}ackey functors.
\newblock {\em Camb. J. Math.}, 11(4):829--915, 2023.

\bibitem[BIK08]{BensonIyengarKrause08}
Dave~J. Benson, Srikanth~B. Iyengar, and Henning Krause.
\newblock Local cohomology and support for triangulated categories.
\newblock {\em Ann. Sci. \'Ec. Norm. Sup\'er. (4)}, 41(4):573--619, 2008.

\bibitem[BKS19]{BKS19}
Paul Balmer, Henning Krause, and Greg Stevenson.
\newblock Tensor-triangular fields: ruminations.
\newblock {\em Selecta Math. (N.S.)}, 25(1):Paper No. 13, 36, 2019.

\bibitem[Bla98]{Blackadar98}
Bruce Blackadar.
\newblock {\em K-theory for operator algebras}, volume~5.
\newblock Cambridge University Press, 1998.

\bibitem[BN93]{BoekstedtNeeman93}
Marcel B{\"o}kstedt and Amnon Neeman.
\newblock Homotopy limits in triangulated categories.
\newblock {\em Compositio Math.}, 86(2):209--234, 1993.

\bibitem[CDLS93]{CDLS93}
Kai~Nah Cheng, M.~Deaconescu, Mong-Lung Lang, and Wu~Jie Shi.
\newblock Corrigendum and addendum to: ``{C}lassification of finite groups with
  all elements of prime order'' [{P}roc.\ {A}mer.\ {M}ath.\ {S}oc.\ {\bf 106}
  (1989), no.\ 3, 625--629; {MR}0969518 (89k:20038)] by {D}eaconescu.
\newblock {\em Proc. Amer. Math. Soc.}, 117(4):1205--1207, 1993.

\bibitem[Dea89]{Deaconescu89}
Marian Deaconescu.
\newblock Classification of finite groups with all elements of prime order.
\newblock {\em Proc. Amer. Math. Soc.}, 106(3):625--629, 1989.

\bibitem[Del10]{DellAmbrogio10}
Ivo Dell'Ambrogio.
\newblock Tensor triangular geometry and {$KK$}-theory.
\newblock {\em J. Homotopy Relat. Struct.}, 5(1):319--358, 2010.

\bibitem[Del11]{DellAmbrogio11}
Ivo Dell'Ambrogio.
\newblock Localizing subcategories in the bootstrap category of separable
  {$C^\ast$}-algebras.
\newblock {\em J. K-Theory}, 8(3):493--505, 2011.

\bibitem[Del14]{DellAmbrogio14}
Ivo Dell'Ambrogio.
\newblock Equivariant {K}asparov theory of finite groups via {M}ackey functors.
\newblock {\em J. Noncommut. Geom.}, 8(3):837--871, 2014.

\bibitem[Del22]{DellAmbrogio22}
Ivo Dell'Ambrogio.
\newblock Green 2-functors.
\newblock {\em Trans. Am. Math. Soc.}, 375(11):7783--7829, 2022.

\bibitem[DEM14]{DEM14}
Ivo Dell'Ambrogio, Heath Emerson, and Ralf Meyer.
\newblock An equivariant {L}efschetz fixed-point formula for correspondences.
\newblock {\em Doc. Math.}, 19:141--194, 2014.

\bibitem[DM]{DellAmbrogioMartos24pp}
Ivo Dell'Ambrogio and Rub\'{e}n Martos.
\newblock The {B}aum--{C}onnes conjecture via support theory.
\newblock {I}n preparation.

\bibitem[DM21]{DellAmbrogioMeyer21}
Ivo Dell'Ambrogio and Ralf Meyer.
\newblock The spectrum of equivariant {Kasparov} theory for cyclic groups of
  prime order.
\newblock {\em Ann. \(K\)-Theory}, 6(3):543--558, 2021.

\bibitem[DS16]{DellAmbrogioStanley16}
Ivo Dell'Ambrogio and Donald Stanley.
\newblock Affine weakly regular tensor triangulated categories.
\newblock {\em Pacific J. Math.}, 285(1):93--109, 2016.

\bibitem[Efi24]{Efimov24pp}
Alexander~I. Efimov.
\newblock {K}-theory and localizing invariants of large categories.
\newblock \url{https://arxiv.org/abs/2405.12169}, 2024.

\bibitem[G{\'{o}}m23]{Gomez23pp}
Juan~Omar G{\'{o}}mez.
\newblock A family of infinite degree tt-rings.
\newblock Preprint, \url{https://arxiv.org/abs/2307.00768}, 2023.

\bibitem[Hin16]{Hinich16}
Vladimir Hinich.
\newblock Dwyer-{K}an localization revisited.
\newblock {\em Homology Homotopy Appl.}, 18(1):27--48, 2016.

\bibitem[HPS97]{HoveyPalmieriStrickland97}
Mark Hovey, John~H. Palmieri, and Neil~P. Strickland.
\newblock Axiomatic stable homotopy theory.
\newblock {\em Mem. Amer. Math. Soc.}, 128(610), 1997.

\bibitem[Kas88]{Kasparov88}
G.~G. Kasparov.
\newblock Equivariant {$KK$}-theory and the {N}ovikov conjecture.
\newblock {\em Invent. Math.}, 91(1):147--201, 1988.

\bibitem[Kra10]{Krause10}
Henning Krause.
\newblock Localization for triangulated categories.
\newblock In {\em Triangulated categories}, volume 375 of {\em London Math.
  Soc. Lecture Note Ser.}, pages 161--235. Cambridge Univ. Press, Cambridge,
  2010.

\bibitem[Lau22]{Eike21pp}
Eike Lau.
\newblock The {B}almer spectrum of certain {D}eligne--{M}umford stacks.
\newblock Preprint, 34 pages. \texttt{arXiv:2101.01446v2 [math.AG]}, 2022.

\bibitem[Lur09]{LurieHTT}
Jacob Lurie.
\newblock {\em Higher topos theory}, volume 170 of {\em Annals of Mathematics
  Studies}.
\newblock Princeton University Press, Princeton, NJ, 2009.

\bibitem[Lur17]{LurieHA}
Jacob Lurie.
\newblock Higher algebra.
\newblock \url{https://www.math.ias.edu/~lurie/papers/HA.pdf}, 18 September
  2017.

\bibitem[Mey00]{Meyer00}
Ralf Meyer.
\newblock Equivariant {K}asparov theory and generalized homomorphisms.
\newblock {\em $K$-Theory}, 21(3):201--228, 2000.

\bibitem[Mey08]{Meyer08}
Ralf Meyer.
\newblock Categorical aspects of bivariant {$K$}-theory.
\newblock In {\em {$K$}-theory and noncommutative geometry}, EMS Ser. Congr.
  Rep., pages 1--39. Eur. Math. Soc., Z\"urich, 2008.

\bibitem[MN06]{MeyerNest06}
Ralf Meyer and Ryszard Nest.
\newblock The {B}aum-{C}onnes conjecture via localisation of categories.
\newblock {\em Topology}, 45(2):209--259, 2006.

\bibitem[MN24]{MeyerNadareishvili24pp}
Ralf Meyer and George Nadareishvili.
\newblock A universal coefficient theorem for actions of finite groups on
  {C}*-algebras.
\newblock Preprint \url{https://arxiv.org/abs/2406.11787}, 2024.

\bibitem[Nee92a]{Neeman92a}
Amnon Neeman.
\newblock The chromatic tower for {$D(R)$}.
\newblock {\em Topology}, 31(3):519--532, 1992.

\bibitem[Nee92b]{Neeman92b}
Amnon Neeman.
\newblock The connection between the {$K$}-theory localization theorem of
  {T}homason, {T}robaugh and {Y}ao and the smashing subcategories of
  {B}ousfield and {R}avenel.
\newblock {\em Ann. Sci. \'Ecole Norm. Sup. (4)}, 25(5):547--566, 1992.

\bibitem[Nee96]{Neeman96}
Amnon Neeman.
\newblock The {G}rothendieck duality theorem via {B}ousfield's techniques and
  {B}rown representability.
\newblock {\em J. Amer. Math. Soc.}, 9(1):205--236, 1996.

\bibitem[Nee01]{Neeman01}
Amnon Neeman.
\newblock {\em Triangulated categories}, volume 148 of {\em Annals of
  Mathematics Studies}.
\newblock Princeton University Press, 2001.

\bibitem[Nee10]{Neeman10}
Amnon Neeman.
\newblock Derived categories and {G}rothendieck duality.
\newblock In {\em Triangulated categories}, volume 375 of {\em London Math.
  Soc. Lecture Note Ser.}, pages 290--350. Cambridge Univ. Press, Cambridge,
  2010.

\bibitem[RS86]{RosenbergSchochet86}
J.~Rosenberg and C.~Schochet.
\newblock {T}he {K}\"{u}nneth theorem and the universal coefficient theorem for
  equivariant ${K}$-theory and ${K}{K}$-theory.
\newblock {\em {A}merican {M}athematical {S}oc.}, 348, 1986.

\bibitem[San22]{Sanders22}
Beren Sanders.
\newblock A characterization of finite \'{e}tale morphisms in tensor triangular
  geometry.
\newblock {\em \'{E}pijournal G\'{e}om. Alg\'{e}brique}, 6:Art. 18, 25, 2022.

\bibitem[Seg68]{Segal68a}
Graeme Segal.
\newblock The representation ring of a compact {L}ie group.
\newblock {\em Inst. Hautes \'Etudes Sci. Publ. Math.}, (34):113--128, 1968.

\end{thebibliography}
\newcommand{\etalchar}[1]{$^{#1}$}

\printindex
\end{document}